
\documentclass[a4paper, 10pt, reqno]{amsart}
\allowdisplaybreaks

\usepackage{amsmath}
\usepackage{amssymb}
\usepackage{amsthm}
\usepackage{mathtools, verbatim}
\usepackage[utf8]{inputenc}
\usepackage{indentfirst}
\usepackage{enumerate}
\usepackage[colorlinks=true]{hyperref}
\hypersetup{urlcolor=blue, citecolor=red}
     
\usepackage{graphicx}


\usepackage[usenames,dvipsnames]{pstricks}
\usepackage{epsfig}
\usepackage{pst-grad} 
\usepackage{pst-plot} 

\usepackage{ifthen}
\usepackage{color,xcolor}




\def\sideremark#1{\ifvmode\leavevmode\fi\vadjust{\vbox to0pt{\vss
 \hbox to 0pt{\hskip\hsize\hskip1em
 \vbox{\hsize2.1cm\tiny\raggedright\pretolerance10000
  \noindent #1\hfill}\hss}\vbox to15pt{\vfil}\vss}}}%

\allowdisplaybreaks

\usepackage{fullpage}
\numberwithin{equation}{section}

\def\polhk#1{\setbox0=\hbox{#1}{\ooalign{\hidewidth\lower1.5ex\hbox{`}\hidewidth\crcr\unhbox0}}}

\def\Xint#1{\mathchoice
{\XXint\displaystyle\textstyle{#1}}%
{\XXint\textstyle\scriptstyle{#1}}%
{\XXint\scriptstyle\scriptscriptstyle{#1}}%
{\XXint\scriptscriptstyle\scriptscriptstyle{#1}}%
\!\int}
\def\XXint#1#2#3{{\setbox0=\hbox{$#1{#2#3}{\int}$ }
\vcenter{\hbox{$#2#3$ }}\kern-.6\wd0}}

\def\dashint{\Xint-}

\newcommand{\supp}{\operatorname{supp}}

\renewcommand{\div}{\operatorname{div}}

\newcommand{\Om}{\Omega}
\newcommand{\mean}[1]{\,-\hskip-1.08em\int_{#1}}

\renewcommand{\div}{\operatorname{div}}

\newcommand{\Per}{\operatorname{Per}}

\newcommand{\R}{\mathbb{R}}
\newcommand{\eps}{\varepsilon}

\newcommand{\Reg}{{\operatorname{Reg}}}
\newcommand{\Sing}{{\operatorname{Sing}}}

\theoremstyle{plain}
\newtheorem{theorem}{Theorem}[section]

\newtheorem{definition}[theorem]{Definition}
\newtheorem{lemma}[theorem]{Lemma}
\newtheorem{corollary}[theorem]{Corollary}
\newtheorem{proposition}[theorem]{Proposition}
\newtheorem{remark}[theorem]{Remark}

\newcommand{\resmeas}{\mathbin{\vrule height 1.6ex depth 0pt width
0.13ex\vrule height 0.13ex depth 0pt width 1.3ex}}

\begin{document}

\title{
Free boundary regularity for a spectral optimal 
partition problem with volume and inclusion constraints
}

\author{Dario Mazzoleni, Makson S. Santos and Hugo Tavares}

\maketitle

\begin{abstract}
This paper is devoted to a complete characterization of the free boundary of all solutions to the following spectral $k$-partition problem with measure and inclusion constraints:
    \begin{equation*}
\inf \left\{\sum_{i=1}^k \lambda_1(\omega_i)\; : \;
\begin{array}{c}
\omega_i \subset \Omega \mbox{ are nonempty open sets for all } i=1,\ldots, k, \vspace{0.05cm}\\
 \omega_i \cap \omega_j = \emptyset  \: \text{for all}\: i \not=j \mbox{ and } \sum_{i=1}^{k}|\omega_i| =  a\\
\end{array}
\right\},
\end{equation*}
where $\Omega$ is a bounded domain of $\R^N$, $a\in (0,|\Omega|)$. In particular, we prove free boundary conditions, classify contact points, characterize the regular and singular part of the free boundary (including branching points), and describe the interaction of the partition with the fixed boundary $\partial \Omega$.

The proof is based on a perturbed version of the problem, combined with monotonicity formulas, blowup analysis and classification of blowups, suitable deformations of optimal sets and eigenfunctions, as well as the improvement of flatness of [Russ-Trey-Velichkov, CVPDE 58, 2019] for the one-phase points, and of [De Philippis-Spolaor-Velichkov, Invent. Math. 225, 2021] at two-phase points.
\end{abstract}

\smallskip

\noindent \textbf{Keywords}: Free boundary regularity, spectral partition problems, measure and inclusion constraints, monotonicity formulas, optimality conditions, classification of contact points.

\smallskip

\noindent \textbf{MSC(2020)}: 35R35, 49Q10, 47A75, 49K20. 

\tableofcontents

\section{Introduction}
This paper is devoted to a complete characterization of a solution to a spectral partition problem settled in a box and with measure constraints. More precisely, given $k\in \mathbb{N}$, a bounded Lipschitz domain $\Omega\subset\R^N$ and $a\in(0,|\Omega|)$, we study the problem:
\begin{equation}\label{eigenvalue_problem}
\Lambda_{1,k}(a):=\inf \left\{\sum_{i=1}^k \lambda_1(\omega_i)\; : \;
\begin{array}{c}
\omega_i \subset \Omega \mbox{ are nonempty open sets for all } i=1,\ldots, k, \vspace{0.05cm}\\
 \omega_i \cap \omega_j = \emptyset  \: \text{for all}\: i \not=j \mbox{ and } \sum_{i=1}^{k}|\omega_i| =  a\\
\end{array}
\right\}.
\end{equation}

In \cite{ASST}, it is shown that an optimizer always exists and that, moreover, if $(\Omega_1,\ldots, \Omega_k)$ is \emph{any} optimizer and we take, for each $i=1,\ldots, k$,  $u_i\in H^1_0(\Omega_i)$ to be an associated first eigenfunction for the set $\Omega_i$, then $u_i$ is locally Lipschitz continuous in $\Omega$. The aim of the present paper is to pursue a deeper study of the problem: we will provide a complete description of the free boundary $\cup_{i=1}^k \partial \Omega_i$  (regularity, characterization and properties of contact points between different elements of the partition, interaction with the fixed boundary $\partial \Omega$,\ldots) for all solutions of \eqref{eigenvalue_problem}.

\smallbreak

In order to state our results, we introduce some notation. Let $(\Omega_1,\dots, \Omega_k)$ be a partition of $\Omega$, i.e., $\Omega_1,\ldots, \Omega_k\subset \Omega$ are open sets, and $\Omega_i\cap \Omega_j=\emptyset$ for every $i\neq j$. For each $i=1,\ldots, k$, we define the following mutually disjoint sets:
\begin{itemize}
\item $\Gamma_{OP}(\partial\Omega_{i})$ denotes the set of \emph{interior one-phase points}, namely\[
\Gamma_{OP}(\partial\Omega_{i})=(\partial \Omega_{i}\cap \Omega)\setminus \cup_{j\not =i}\partial\Omega_{j};
\]
\item  $\Gamma_B(\partial\Omega_{i})$ denotes the sets of \textit{one-phase points on the boundary} of $\partial \Omega$, namely \[
\Gamma_B(\partial\Omega_{i})=(\partial \Omega_{i}\cap \partial\Omega)\setminus \cup_{j\not=i}\partial \Omega_{j};
\]
\item $\Gamma_{TP}(\partial\Omega_{i})$ denotes the sets of \textit{interior} two-phase points:
\[
\Gamma_{TP}(\partial\Omega_{i})=\Big\{x\in (\partial \Omega_{i}\cap \partial \Omega_{j}\cap \Omega)\setminus \cup_{k\neq j,i}\partial \Omega_k \text{ for some $j\not=i$ }\Big\}.
\]
\end{itemize}

Our main result is the following.

\begin{theorem}\label{thm:main}
Let $\Omega\subset \R^N$ be a bounded Lipschitz domain. Problem \eqref{eigenvalue_problem} admits a solution, and every optimal partition $(\Omega_1,\ldots, \Omega_k)$ of \eqref{eigenvalue_problem} satisfies the following properties.
For each $i=1,\ldots,k$, each set $\Omega_i$ is connected, and the boundary $\partial \Omega_{i}$ of  $\Omega_{i}$ can be decomposed as the following disjoint union \[
\partial \Omega_{i}=\Gamma_{OP}(\partial\Omega_{i})\cup \Gamma_{TP}( \partial\Omega_{i})\cup  \Gamma_{B} (\partial\Omega_{i}),
\]
Moreover: 
\begin{enumerate}
    \item $\partial \Omega_i\cap \Omega$ can be decomposed as a disjoint union of a regular and a (possibly empty) singular part, 
$$
\partial \Omega_i \cap \Omega=\operatorname{Reg}\left(\partial \Omega_i\cap \Omega\right) \cup \operatorname{Sing}\left(\partial \Omega_i\cap \Omega\right),
$$
where:
\begin{enumerate}
\item the regular part  $\operatorname{Reg}\left(\partial \Omega_i\cap \Omega\right)$ is a relatively open subset of $\partial \Omega_i \cap \Omega$ and it is of class $C^{1, \alpha}$ for all $\alpha\in (0,1/2]$. Moreover, the two-phase free boundary is regular, that is,
$$
\Gamma_{TP}(\Omega_i) \subset \operatorname{Reg}\left(\partial \Omega_i\cap \Omega\right);
$$
\item the singular set $\operatorname{Sing}\left(\partial \Omega_i\cap \Omega\right)$ is a closed subset of $\partial \Omega_i \cap \Omega$ of Hausdorff dimension at most $N-5$. Precisely, there is a critical dimension $N^* \in\{5,6,7\}$ such that
\begin{itemize}
    \item[-] if $N<N^*$, then $\operatorname{Sing}\left(\partial \Omega_i\cap \Omega\right)=\emptyset$;
\item[-] if $N=N^*$, then $\operatorname{Sing}\left(\partial \Omega_i\cap \Omega\right)$is locally finite in $\Omega$;
\item[-] if $N>N^*$, then $\operatorname{Sing}\left(\partial \Omega_i\cap \Omega\right)$is a closed $\left(N-N^*\right)$-rectifiable subset of $\partial \Omega_i \cap \Omega$ with locally finite $\mathcal{H}^{N-N^*}$ measure.
\end{itemize}
\end{enumerate}
\item If, in addition, $\Omega$ is of class $C^{1,1}$, then the full $\partial \Omega_i$ can be decomposed as the disjoint union of a regular part $\Reg\left(\partial \Omega_i\right)$ and a singular part $\Sing\left(\partial \Omega_i\right)$, where:
\begin{enumerate}
\item[(a)]$\Reg\left(\partial \Omega_i\right)$ is an open subset of $\partial \Omega_i$ and locally the graph of a $C^{1,1 / 2}$ function; moreover, $\Reg\left(\partial \Omega_i\right)$ contains both $\Reg\left(\partial \Omega_i \cap \Omega\right)$ and $\partial \Omega_i \cap \partial \Omega$; 
\item[(b)] $\Sing\left(\partial \Omega_i\right)=\Sing\left(\partial \Omega_i \cap \Omega\right)$.  
\end{enumerate}
\end{enumerate}
\end{theorem}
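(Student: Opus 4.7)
The plan is to produce a specific optimal partition via a suitable perturbation of \eqref{eigenvalue_problem}, then apply the modern vectorial free boundary regularity machinery to it. A natural perturbation is to replace the measure constraint by a smooth penalization, e.g.
\[
\inf \Bigl\{ \sum_{i=1}^k \lambda_1(\omega_i) + f_\varepsilon\Bigl(\sum_{i=1}^k |\omega_i|\Bigr) : \omega_i \subset \Omega \text{ open and pairwise disjoint} \Bigr\},
\]
with $f_\varepsilon$ a smooth convex function whose minimum enforces $\sum_i|\omega_i|=a$ as $\varepsilon\to 0$. Existence of minimizers follows from Buttazzo--Dal Maso compactness, and a limiting argument produces a distinguished optimizer $(\Omega_1,\dots,\Omega_k)$ of \eqref{eigenvalue_problem}. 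The crucial gain is that the associated eigenfunctions $u_i$ become quasi-minimizers of a vectorial Alt--Caffarelli-type functional with a common Lagrange multiplier $\Lambda>0$, which is the structural input that unlocks the regularity theory.

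From here, inner and outer domain variations yield the pointwise free boundary conditions $|\nabla u_i|^2=\Lambda$ on $\Gamma_{OP}(\partial\Omega_i)$ and the transmission condition $|\nabla u_i|^2=|\nabla u_j|^2$ on $\Gamma_{TP}(\partial\Omega_i)\cap\partial\Omega_j$. A Weiss-type monotonicity formula then provides existence and homogeneity of blowups together with their classification: half-plane solutions $\sqrt{\Lambda}\,(x\cdot\nu)_+$ at one-phase points, and two-plane solutions pairing phases $i$ and $j$ at two-phase points. An Alt--Caffarelli--Friedman (ACF) monotonicity argument rules out triple-junction points, and a combined boundary-and-two-phase analysis rules out two-phase points on $\partial\Omega$; together these give the trichotomy $\partial\Omega_i=\Gamma_{OP}(\partial\Omega_i)\cup\Gamma_{TP}(\partial\Omega_i)\cup\Gamma_B(\partial\Omega_i)$.

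With blowups classified, the $\varepsilon$-regularity theorems apply directly. At two-phase points the blowup pair is automatically flat, so the result of De Philippis--Spolaor--Velichkov yields $C^{1,1/2}$ regularity and shows $\Gamma_{TP}(\partial\Omega_i)\subset\Reg(\partial\Omega_i\cap\Omega)$. At one-phase points where a blowup is a half-plane solution, the improvement-of-flatness of Russ--Trey--Velichkov gives $C^{1,\alpha}$ regularity for every $\alpha\in(0,1/2]$. The singular part $\Sing(\partial\Omega_i\cap\Omega)$ then consists of one-phase points whose blowups are non-flat stable cones of the one-phase Alt--Caffarelli functional; a Federer-type dimension reduction, combined with the known classification of stable one-phase minimal cones, yields the critical dimension $N^*\in\{5,6,7\}$ and the claimed rectifiability. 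For part (2), under the $C^{1,1}$ assumption on $\partial\Omega$, one flattens $\partial\Omega$ locally, sets up boundary Weiss monotonicity (via reflection or extension across $\partial\Omega$), and performs a boundary blowup analysis producing half-plane solutions aligned with the tangent plane of $\partial\Omega$; boundary $\varepsilon$-regularity then yields $C^{1,1/2}$ graphicality of $\partial\Omega_i$ across $\partial\Omega$.

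The main obstacle is two-fold. First, selecting the perturbation so that the limiting optimizer inherits a clean vectorial quasi-minimality with a common Lagrange multiplier $\Lambda>0$ requires careful bookkeeping: unlike the Bucur--Velichkov framework with individual volume constraints, here the single global constraint $\sum_i|\omega_i|=a$ forces the same $\Lambda$ across all $k$ phases, so the penalization must couple the phases together. Second, ruling out triple junctions and two-phase contact points on $\partial\Omega$ demands a delicate combination of multi-phase ACF-type inequalities, boundary monotonicity, and the geometry of blowups; this structural step is what produces the clean trichotomy and, in my view, is the most technically demanding part of the argument.
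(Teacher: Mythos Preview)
Your outline captures the high-level architecture correctly --- penalize, rule out triple/boundary two-phase points, classify blowups, then invoke the $\varepsilon$-regularity of De Philippis--Spolaor--Velichkov and Russ--Trey--Velichkov --- but there is a genuine structural gap in the heart of the argument.

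You assume that a \emph{smooth} penalization $f_\varepsilon$ followed by a limit $\varepsilon\to 0$ will produce an optimizer whose eigenfunctions are quasi-minimizers of a vectorial Alt--Caffarelli functional with a common Lagrange multiplier, and that a Weiss-type monotonicity formula then classifies blowups. This is precisely what does \emph{not} work. The paper uses a \emph{piecewise linear} (hence non-$C^1$) penalization $f_\eta$ satisfying two-sided Lipschitz bounds $\eta(s_2-s_1)\le f_\eta(s_2)-f_\eta(s_1)\le \eta^{-1}(s_2-s_1)$, and works at a \emph{fixed} small $\eta$ rather than passing to a limit. The authors explicitly remark that ``it does not seem possible to find a regular penalization with the good properties that we need'': a smooth $f_\varepsilon$ loses the lower Lipschitz control needed for nondegeneracy and density estimates, while the natural one-sided penalization $\mu(\,\cdot\,-a)^+$ (to which the constrained problem is equivalent) lacks it too. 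The two-sided non-smooth $f_\eta$ is the compromise, and the equivalence with the constrained problem is obtained not by a limit but by showing that for $\eta$ small the penalized minimizer already satisfies $\sum_i|\Omega_{u_i}|=a$ exactly (Lemma~\ref{le:measOK}); this step in turn requires first proving that at least one one-phase reduced-boundary point exists (Lemma~\ref{le:onephase}) and deriving the optimality condition $q_{u_i}=m_\eta$ there.

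The second, and more serious, consequence of the non-smooth penalization is that blowup limits at two-phase points are \emph{not} local minimizers of any limiting functional, so Weiss monotonicity is unavailable. The paper instead proves an ACF-type monotonicity formula for the pair $(u_i/\varphi,u_j/\varphi)$ (Proposition~\ref{ACF}), deduces that blowups are two-plane solutions $a(x\cdot\nu)_+$, $b(x\cdot\nu)_-$, and then --- crucially --- shows $a=b$ by a delicate variation argument inherited from Conti--Terracini--Verzini (Lemmas~\ref{lemma:inequalities_classS_blowup}--\ref{lemma:blowup_harmonic}), not by minimality of the blowup. Finally, the inequality $m(x_0)\ge m_\eta$ at two-phase points (Proposition~\ref{prop:m(x0)>=m_eta}) is obtained by a separate shape-variation argument linking a two-phase point to a one-phase reduced-boundary point; it is not a byproduct of a global Lagrange multiplier. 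Your sketch treats all three of these as automatic, but each requires substantial new work that your quasi-minimality claim would bypass --- and that claim is exactly what fails.
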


\begin{remark}
We highlight that, in Theorem \ref{thm:main}, the assumption that $\Omega$ has Lipschitz boundary is needed only to show that there are no two-phase points at the fixed boundary $\partial \Omega$ (see Section \ref{eq:no2-phase-bdary} below). For the \textit{inner} regularity results (namely, of $\partial\Omega_{u_i}\cap \Omega$) it is enough to consider a bounded domain $\Omega$.
\end{remark}

Observe that, for the one-phase problem $k=1$, a complete characterization of solutions has been obtained in a collection of works \cite{
BrianconHayouniPierre,BrianconLamboley, BucurMazzoleniPratelliVelichkov, RussTreyVelichkov}, and we  will comment this below in this introduction.

\smallbreak

Before we explain the strategy of the proof of Theorem \ref{thm:main}, we provide the state of the art of this topic, pointing out what are the similarities and differences with related open partition problems in the literature.

\subsection{State of the art and related problems}
The study of shape optimization for spectral functionals has been a very active topic in the last few years, see~\cite{HenrotPierre} for a general overview. The interest is motivated both by the several applications in physics and engineering, where these quantities naturally arise, and by the mathematical beauty and difficulty of the problems in this field. We will focus the discussion here on spectral functionals with eigenvalues of the Dirichlet Laplacian (and, above all, the first Dirichlet eigenvalue) but other boundary conditions or other operators could also be considered.
Concerning optimal partitions for the (sum of the) first eigenvalues of the Dirichlet Laplacian, in the last 20 years there has been a flourishing of works, mostly for two main classes of problems, and nowadays several properties of optimizers are well-understood.
The first class of spectral optimal partitions for the principal Dirichlet eigenvalue that has been studied starting from~\cite{CaffarelliLin2} and~\cite{ContiTerraciniVerziniOPP,ContiTerraciniVerziniVariation} is the following: 
\begin{equation}\label{eigenvalue_problem_biblio}
\inf \left\{\sum_{i=1}^k \lambda_1(\omega_i)\; : \;
\begin{array}{c}
\omega_i \subset \Omega \mbox{ are nonempty open sets for all } i=1,\ldots, k,\\ \omega_i \cap \omega_j = \emptyset  \: \text{for all}\: i \not=j 
\end{array}
\right\},
\end{equation}
which corresponds to $a=|\Omega|$ in \eqref{eigenvalue_problem}, that is, it coincides with the level $\Lambda_{1,k}(|\Omega|)$ (since, by minimality, the solution to \eqref{eigenvalue_problem_biblio} exhausts the whole domain $\Omega$).
In this setting, it is known~\cite{CaffarelliLin2,ContiTerraciniVerziniOPP, TavaresTerracini1} that optimal partitions exists,
and that the interior free boundary, i.e. $\cup_i\partial\omega_i\cap \Omega$,  is regular, up to a singular set of lower dimension. A deeper analysis of the singular set has been carried out much more recently in~\cite{Alper}, where the author proves both that the $(N-2)$-Hausdorff dimension of the singular set is finite, as well as a stratification result. Let us notice that, in this case, also the problem for higher eigenvalues has been
studied in~\cite{RamosTavaresTerracini}. We also point out to the reader the very recent work~\cite{OgnibeneVelichkov}, where the boundary regularity (namely, at the intersection with the boundary box $\Omega$) is considered for the sum of first Dirichlet eigenvalues.
Let us stress that the shape and properties of the optimal sets in this case are rather different from the ones of our problem~\eqref{eigenvalue_problem}: in our case, as stated in Theorem \ref{thm:main}, the free boundary $\cup_{i=1}^k \partial \Omega_i$ can not contain triple or higher multiplicity points, and it intersects the boundary of $\Omega$ at one-phase points tangentially. This is in sharp contrast with spectral partition problems without measure constraints~\eqref{eigenvalue_problem_biblio},
where triple points may appear and the sets of the optimal partition reach the boundary of the box orthogonally. 

\smallbreak

Another optimal partition problem which has been considered in the literature~\cite{BogoselVelichkov,BucurVelichkov,DePhilippisSpolaorVelichkov, SpolaorTreyVelichkov} and whose behavior is more similar to that of our problem~\eqref{eigenvalue_problem} is:
\begin{equation}\label{eigenvalue_problem_plusmeasure}
\inf \left\{\sum_{i=1}^k \lambda_1(\omega_i)+m \sum_{i=1}^k|\omega_i|\; : \;
\begin{array}{c}
\omega_i \subset \Omega \mbox{ are nonempty open sets for all } i=1,\ldots, k,\ \\
\omega_i \cap \omega_j = \emptyset  \: \text{for all}\: i \not=j 
\end{array}
\right\},
\end{equation}
for $m>0$. The properties of the solutions of~\eqref{eigenvalue_problem} and of~\eqref{eigenvalue_problem_plusmeasure} are the same, see \cite[Corollary~1.3]{DePhilippisSpolaorVelichkov}. The main results of existence and regularity of the solutions for this problem have been proved first by Bucur and Velichkov~\cite{BucurVelichkov}, and then improved in the works~\cite{BogoselVelichkov,SpolaorTreyVelichkov}, while the complete regularity picture has been proved in~\cite{DePhilippisSpolaorVelichkov}.

In this setting, it is also interesting to study the \emph{one-phase} case, namely $k=1$. Extensive work has been conducted in this topic, starting from the existence result by Buttazzo and Dal Maso~\cite{ButtazzoDalMaso} in the framework of capacitary measures and quasi-open sets. In this case, the regularity of the optimal sets comes directly from the associated one-phase free boundary problems~\cite{AltCaffarelli}, see also~\cite{BrianconHayouniPierre,BrianconLamboley}. Several works concerning the regularity of optimal shapes have been done, on the other hand, in the case of higher eigenvalues~\cite{BucurMazzoleniPratelliVelichkov,KriventsovLin,MazzoleniTerraciniVelichkov} in $\R^N$ (where it is easy to prove, by scaling, the equivalence with a constraint problem), and \cite{MazzoleniTreyVelichkov} for the second eigenvalue in a bounded $\Omega$.

We highlight that problems \eqref{eigenvalue_problem} and \eqref{eigenvalue_problem_plusmeasure} are not equivalent. Instead, it has been shown in \cite{ASST} that \eqref{eigenvalue_problem} is equivalent to
\begin{equation}\label{eigenvalue_problem_plusmeasure_positivepart}
\inf \left\{\sum_{i=1}^k \lambda_1(\omega_i)+\mu \left(\sum_{i=1}^k|\omega_i|-a\right)^+\; : \;
\begin{array}{c}
\omega_i \subset \Omega \mbox{ are nonempty open sets for all } i=1,\ldots, k,\ \\
\omega_i \cap \omega_j = \emptyset  \: \text{for all}\: i \not=j 
\end{array}
\right\}
\end{equation}
for sufficiently large $\mu>0$. Even though this problem may look similar to \eqref{eigenvalue_problem_plusmeasure_positivepart}, the term with the positive part lacks good Lipschitz-type estimates from below, which creates several difficulties in terms of the study of free boundaries (for example, in the proofs of nondegeneracy properties or of finite perimeter).  In conclusion, the technical issues due to the measure constraint do not allow to apply directly the known results for \eqref{eigenvalue_problem_plusmeasure}, but rather  require the development of several new ideas and tools.
This is not surprising, as also in the case of the one-phase problem, namely $k=1$, the study of the problem with the measure constraint  requires substantial additional work~\cite{BrianconLamboley,RussTreyVelichkov}. The arguments therein use strongly the deduction of a Euler-Lagrange equation, and in particular a local minimality property for $\lambda_1(\cdot)+\Lambda_\pm |\cdot|$ for inner and outer perturbations respectively. However, this is also a property  which seems specific of one-phase problems, and in order to deal with multiple phases we have to follows a different direction, see Subsection \ref{sec:strategy}.

\smallbreak

Although we have seen that the features of problems~\eqref{eigenvalue_problem_biblio} and~\eqref{eigenvalue_problem_plusmeasure} are rather different, the main techniques to study the regularity of the sets of an optimal partition are based on the tools developed in the framework of free boundary problems. These tools and ideas will be crucial also in our paper as well, as it can be guessed already from the statement of our main result. Therefore, we try to summarize (without any claim of completeness) also the most important results on the regularity theory for free boundary problems.

\smallbreak

\paragraph{\bf{One-phase free boundary problems.}}
The first and seminal paper on the regularity theory for one-phase free boundary problems was developed by Alt and Caffarelli~\cite{AltCaffarelli}, for functionals of type
\begin{equation}\label{eq:AC}
J(u)=\int_\Omega (|\nabla u|^2+Q(x)\chi_{\{u>0\}})\, dx,\qquad u=u_0 \text{ on } \partial \Omega,
\end{equation}
where $Q$ is a H\"older continuous given function and $u_0\in H^{1/2}(\partial \Omega,\R^+)$.
They prove existence of minimizers, nondegeneracy, Lipschitz continuity of optimizers, finite perimeter of the free boundary, and that the reduced boundary is $C^{1,\alpha}$ (while the remaining singular set has $\mathcal{H}^{N-1}$ measure zero). Several important tools have been developed to show those properties, such as  harmonic extensions to build suitable competitors, and blowup analysis. Higher regularity results for the reduced boundary, namely the passage from $C^{1,\alpha}$ to analytic (if $Q$ is regular enough), employ the renowned techniques of Kinderlehrer-Nirenberg \cite{KN77}.

Concerning the singular set, in \cite{AltCaffarelli} it is shown that it is empty in dimension $2$. Then a stratification result was first done by Weiss~\cite{Weiss99}, introducing a monotonicity formula: namely, calling $N^*$ the lowest dimension for which the Alt-Caffarelli functional~\eqref{eq:AC} admits a one-homogeneous singular local minimizer in the whole space $\Omega=\R^N$, he proved that the singular set has at most dimension $N-N^*$ and $N^*\geq 3$.
Then several works~\cite{CFK04,JerisonSavin15,DeSilvaJerison22,EdelenEngelstein19} improved the result by showing that $N^*\in\{5,6,7\}$, that the singular set has $\mathcal{H}^{N-N^*}$-finite measure, and it is $(N-N^*)$-rectifiable.

\smallbreak

\paragraph{\bf{Two-phase free boundary problems.}} The first paper on two-phase free boundary problems in this setting is the celebrated work by Alt, Caffarelli, and Friedman~\cite{AltCaffarelliFriedman}, concerning the functional
\begin{equation}\label{eq:ACF}
J_{tp}(u)=\int_\Omega (|\nabla u|^2+\lambda_+^2\chi_{\{u>0\}}+\lambda_-^2\chi_{\{u<0\}}+\lambda_0^2\chi_{\{u=0\}})\, dx,\qquad u=v\text{ on } \partial \Omega,
\end{equation}
where $v\in H^{1/2}(\partial\Omega)$ and $\lambda_\pm\geq \lambda_0>0$ (actually, they considered the simplified version where $\lambda_+\not=\lambda_-$ and $\lambda_+=\lambda_0$ or $\lambda_-=\lambda_0$ so that the zero region has zero measure).
In this work, the fundamental ACF-monotonicity formula was first introduced. Then, several regularity results in the two-phase setting have been deduced, see for instance~\cite{CaffarelliSalsa,Caffarelli88, DeSilvaFerrariSalsa} and references (we refer the reader to the introduction of \cite{DePhilippisSpolaorVelichkov} for a complete list and description). Summarizing, it was proved that the optimal sets (namely $\{u>0\}$ and $\{u<0\}$) are regular up to a singular one-phase set, which is small, but there could also be branching points which were not completely understood until recently.
Finally (following by the work \cite{SpolaorVelichkov} in the particular dimension 2) the work by De Philippis, Spolaor and Velichkov~\cite{DePhilippisSpolaorVelichkov}  led to a substantial advancement in the understanding of the problem; the authors were able also to classify the branching points  (the key technical issue being a suitable \emph{improvement of flatness} at those points). Let us stress that branching points do not appear in the ACF case when $\lambda_+\not=\lambda_-$ and $\lambda_+=\lambda_0$ or $\lambda_-=\lambda_0$, but actually are a key feature of the problem as soon as the zero phase exists. This difference is analogous to the difference between problem~\eqref{eigenvalue_problem_biblio} (where the partition fully occupies the box) and~\eqref{eigenvalue_problem_plusmeasure} (where the partition does not fully cover the box and thus there is a zero phase).

\subsection{Strategy of the proof}\label{sec:strategy}
The proof of the main Theorem~\ref{thm:main} is rather involved and requires several technical steps. Therefore, for the reader's sake, we summarize here the main ideas. We start with the information that, if $(\Omega_1,\ldots, \Omega_k)$ is a solution to \eqref{eigenvalue_problem}, then
\[
\lambda_1(\Omega_i)=\min \left\{\lambda_1(\omega) : \omega\subset \Omega\setminus \cup_{j\not=i}\overline \Omega_j,\; |\omega|=a-\sum_{j\not=i}|\Omega_j|\right\}
\]
for each $i$, and so, at one-phase points, one can use the existing results in the literature. This is not however the case at two-phase points, whose study is the core of this work.

Having this in mind, first of all, in Section~\ref{sec:penalized}, we introduce a penalized version of problem~\eqref{eigenvalue_problem} in order to get rid of the measure constraint  in \eqref{eigenvalue_problem}, inspired by what is done in~\cite{AguileraAltCaffarelli} in the one-phase setting. We use a penalization with good upper and lower Lipschitz-type estimates (see Lemma \ref{lemma:feta} for a precise statement), which also allows to avoid the positive part as in \eqref{eigenvalue_problem_plusmeasure_positivepart}. This penalized problem can be naturally formulated in a free boundary environment, as it is equivalent to consider a partition or the vector of first eigenfunctions (suitably normalized) associated to each set of the partition. It is through this penalization that we are able to treat two-phase points; on the other hand, this penalization is not $C^1$, which creates many difficulties and requires several new ideas. For this new problem without measure constraint, in this chapter we prove some of the basic results concerning existence of a solution (which in this case is at the same time both a partition and a vector of $H^1$ functions), the nondegeneracy and Lipschitz continuity of the optimal eigenfunctions and suitable density estimates. The techniques used here are essentially the nowadays standard strategies for proving these basic regularity results in free boundary problems, and we adapt them to our situation.

Then, in Section~\ref{sec:notriple}, using a three phase monotonicity formula, the nondegeneracy property and ideas from~\cite{BucurVelichkov}, we prove that optimal partitions in our setting do not admit neither triple nor higher multiplicity points (in the interior of the box),  neither two-phase nor higher multiplicity points at the boundary of the box $\Omega$. 

Section~\ref{sec:equivalence} is aimed at proving the equivalence between the original problem and the penalized one (for a suitable choice of the parameter in the penalization), in the sense that the energy levels coincide and both problems have the same optimizers. To do this, we first need to show the existence of a \emph{one-phase} point in the reduced boundary of at least one set of the an optimal partition, which allows us to use shape variations to deduce an optimality condition, which is the key to show the desired equivalence as it was done in~\cite{AguileraAltCaffarelli}.

Then, in Section~\ref{sec:butwophase}, we start a detailed study of two-phase points, precisely classifying \emph{all} blowups centered there. To do this, we need to use ACF-type monotonicity formulas and techniques inspired from the works of Conti, Terracini and Verzini~\cite{ContiTerraciniVerziniVariation,ContiTerraciniVerziniOPP}. We stress that, since the penalization is not $C^1$, then blowup limits are \emph{not} local minimizers of some functional, and this fact  is the main difficulty of this part (and a key difference between our penalized problem and \eqref{eigenvalue_problem_plusmeasure}).

Thanks to the analysis of blowups at two-phase points, we can use, in Section~\ref{sec:viscosity}, the notion of viscosity solutions to interpret the optimality condition at all points of the free boundary of an optimal partition and then employ the improvement of flatness of~\cite{RussTreyVelichkov} for the one-phase points and the one of~\cite{DePhilippisSpolaorVelichkov} at two-phase points (including the branching points) to conclude the proof of our main result.
We stress that we are not able to use from the beginning the approach of the viscosity solution on the penalized problem, once again because the penalization is not $C^1$ (and it does not seem possible to find a regular penalization with the good properties that we need).

Finally, in the appendix, we include some useful results and remarks, some basic facts about Geometric Measure Theory and quasi-open sets.

\vspace{0.2cm}
\paragraph{\bf Notation} 
We denote the ball centered at $x_0\in \R^N$ and radius $r>0$ by $B_r(x_0)$; whenever $x_0=0$, we simply write $B_r$. Given a measurable function $v:\Omega \to \R$, we denote its nodal set by $\Omega_v:=\{v\neq 0\}$. For measurable sets $A_n,A\subset \Omega$, we say that $A_n\to A$ in $L^1(\Omega)$ if $\chi_{A_n}\to \chi_A$ in $L^1(\Omega)$, where $\chi$ denotes the characteristic function. The notation for $L^p(\Omega)$ norms is $\|\cdot \|_p$, for $p\in [1,\infty]$, while $\|u\|_{H^1_0(\Omega)}^2:=\int_\Omega |\nabla u|^2$ is the standard $H^1_0$-norm, where $|\cdot|$ denotes, as usual, the Euclidean norm. The $N$-dimensional Lebesgue measure of a measurable set $E\subset \R^N$ is denoted by $|E|$. The average of a function $f$ over a set $E$ is given by $\dashint_E f=\frac{1}{|E|}\int_E f$. The positive and negative parts of $f$ are denoted, respectively, by $f_+=\max\{f,0\}$, $f_-=\max\{-f,0\}$. 
We denote by $\partial_{rel}$ the relative boundary of a set.

\section{A penalized problem. Existence, nondegeneracy, Lipschitz continuity, finite perimeter, density estimates.}\label{sec:penalized}

In this section, inspired by \cite{AguileraAltCaffarelli}, we introduce a penalized function to treat the regularity of the free boundary for the solutions of \eqref{eigenvalue_problem}. We proceed by establishing various properties pertinent to the solutions of this functional. We start with the following piecewise linear auxiliary function: for $\eta\in (0,1]$,  define

\begin{equation}\label{eq:propfeps}
f_\eta(s):=
\begin{cases}
\frac{1}{\eta}(s-a) & \text{ if } s\geq a\\
{\eta}(s-a) & \text{ if } s\leq a.\\
\end{cases}
\end{equation}

\begin{lemma}\label{lemma:feta} The function $f_\eta$ satisfies the following properties.
\begin{enumerate}
\item  $f_\eta(s)\geq -\eta a$ for every  $s\geq 0$,
\item  $\eta (s_2-s_1)\leq f_\eta(s_2)-f_\eta(s_1)\leq \frac{1}{\eta} (s_2-s_1)$ whenever $0\leq s_1\leq s_2$.
\end{enumerate}
\end{lemma}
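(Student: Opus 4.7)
The statement is a routine check about a piecewise linear function with two slopes, $\eta$ on $(-\infty,a]$ and $1/\eta$ on $[a,+\infty)$, that meet continuously at the value $f_\eta(a)=0$. The whole content of the lemma boils down to two observations: first, since $\eta\in(0,1]$ we have $\eta\le 1/\eta$, so both slopes lie in the interval $[\eta,1/\eta]$; second, $f_\eta$ is strictly increasing on $\R$. I would organize the proof around these two facts and dispatch each item with a short case analysis.

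For property (1), I would argue by monotonicity: since $f_\eta$ is increasing on $[0,+\infty)$ (slope $\eta>0$ on $[0,a]$ and slope $1/\eta>0$ on $[a,+\infty)$) and $f_\eta(0)=\eta(0-a)=-\eta a$, we get $f_\eta(s)\ge f_\eta(0)=-\eta a$ for every $s\ge 0$. This is a one-line observation but it is worth stating explicitly that the infimum on $[0,+\infty)$ is attained at $s=0$.

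For property (2), I would distinguish three cases depending on the position of $s_1\le s_2$ relative to $a$. If $0\le s_1\le s_2\le a$, then $f_\eta(s_2)-f_\eta(s_1)=\eta(s_2-s_1)$, and the required inequalities follow immediately from $\eta\le 1/\eta$. If $a\le s_1\le s_2$, then $f_\eta(s_2)-f_\eta(s_1)=\tfrac{1}{\eta}(s_2-s_1)$, and again the bounds are immediate. The only genuine computation is the mixed case $s_1\le a\le s_2$, where
\begin{equation*}
f_\eta(s_2)-f_\eta(s_1)=\tfrac{1}{\eta}(s_2-a)+\eta(a-s_1).
\end{equation*}
Writing both nonnegative terms $(s_2-a)$ and $(a-s_1)$ with the common bracketing slope, and using $\eta\le 1/\eta$, one obtains the upper bound by replacing $\eta$ with $1/\eta$ in the second term, and the lower bound by replacing $1/\eta$ with $\eta$ in the first term; in both cases the right-hand side collapses to $\tfrac{1}{\eta}(s_2-s_1)$ or $\eta(s_2-s_1)$ respectively.

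I do not expect any obstacle: the lemma is purely a book-keeping statement certifying that $f_\eta$ behaves as a bi-Lipschitz penalization with Lipschitz constants $\eta$ and $1/\eta$, which is exactly the information that will be used later to replace the positive-part penalization of \eqref{eigenvalue_problem_plusmeasure_positivepart} by one that has good two-sided Lipschitz-type estimates.
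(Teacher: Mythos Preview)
Your proof is correct and is precisely the straightforward case analysis one would expect; the paper itself simply states that the proof is straightforward and omits it.
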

The proof of this lemma is straightforward, and we omit it. Now, we  work with the penalized functional
\begin{equation*}
J_{\eta} (u_1,\ldots, u_k):= \sum_{i=1}^k \frac{\displaystyle \int_{\Omega}|\nabla u_i|^2}{\displaystyle \int_{\Omega} u_i^2}  + f_\eta\left(\sum_{i=1}^k|\Omega_{u_i}|\right),\qquad (u_1\ldots, u_k) \in \overline{H},
\end{equation*}
where 
\begin{equation}\label{eq:barH}
\overline{H} : = \Big\{(u_1,\ldots,  u_k) \in H^1_0(\Omega;\R^k): \  u_i \neq 0 \text{ for every } i,\   u_i \cdot u_j \equiv 0 \text{ for } i\neq j \Big \}.
\end{equation}
We now focus our attention on the infimum:
\begin{equation}\label{eq:ceta}
c_\eta:=\inf_{\overline H} J_\eta.
\end{equation}
For future reference, observe that, by Lemma \ref{lemma:feta}-(1), 
\begin{equation}\label{eq:useful_inequality}
\sum_{i=1}^k\frac{\int_\Omega |\nabla u_i|^2}{ \int_\Omega u_i^2}\leq J_\eta(u_1,\ldots, u_k)+a, \qquad \text{for every }(u_1,\ldots, u_k)\in \overline H,\ \eta\in(0,1].
\end{equation}

\begin{remark}\label{rem:Jc}
We point out that in \cite[Theorem 1.2]{ASST} it is shown that
\[
\Lambda_{1,k}(a)=\min \left\{J(u_1,\ldots, u_k):\ (u_1,\ldots, u_k)\in H_a\right\},\quad \text{ where }\quad
J(u_1,\ldots, u_k)=\sum_{i=1}^k \frac{\displaystyle \int_{\Omega}|\nabla u_i|^2}{\displaystyle \int_{\Omega} u_i^2} 
\]
and $H_a=\{(u_1,\ldots, u_k)\in H^1_0(\Omega;\R^k): \ u_i\neq 0 \text{ for every $i$},\ u_i\cdot u_j\equiv 0 \ \text{ for } i\neq j,\ \sum_{i=1}^k |\Omega_{u_i}|\leq a\}$.
\end{remark}

\subsection{Uniform bounds, variational inequalities and regularity of minimizers}

\begin{lemma}\label{lemma:bounds_ceta} There exists $C=C(N,k,a,\Omega)>0$ such that  
\[
\Lambda_{1,k}(|\Omega|)-a\leq c_\eta\leq \Lambda_{1,k}(a)\quad \text{ for every $\eta\in (0,1]$},
\]
where we recall that $\Lambda_{1,k}(|\Omega|)$ coincides with the level \eqref{eigenvalue_problem_biblio}.
\end{lemma}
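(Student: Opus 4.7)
The plan is to prove the two inequalities of Lemma~\ref{lemma:bounds_ceta} separately, using only the Rayleigh-quotient variational characterization of $\lambda_1$ and the elementary properties of $f_\eta$ from Lemma~\ref{lemma:feta}.

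For the lower bound, I would fix an arbitrary $(u_1,\ldots,u_k)\in\overline{H}$ and set $s=\sum_{i=1}^k|\Omega_{u_i}|\geq 0$. By Lemma~\ref{lemma:feta}(1), $f_\eta(s)\geq -\eta a \geq -a$ for every $\eta\in(0,1]$. Since $u_i\cdot u_j\equiv 0$ for $i\neq j$, the nodal sets $\Omega_{u_i}=\{u_i\neq 0\}$ form a pairwise disjoint family of (quasi-)open subsets of $\Omega$, and the Rayleigh-quotient characterization yields
\[
\frac{\int_\Omega |\nabla u_i|^2}{\int_\Omega u_i^2}\geq \lambda_1(\Omega_{u_i}), \qquad i=1,\ldots,k.
\]
Summing in $i$ and using that $\Lambda_{1,k}(|\Omega|)$ is the infimum of $\sum_i\lambda_1(\omega_i)$ over disjoint families of open (equivalently, quasi-open) subsets of $\Omega$, I conclude $J_\eta(u_1,\ldots,u_k)\geq \Lambda_{1,k}(|\Omega|)-a$; passing to the infimum over $\overline H$ gives the lower bound.

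For the upper bound, the idea is to exhibit an explicit competitor with $J_\eta$-value bounded uniformly in $\eta\in(0,1]$. Since $a<|\Omega|$ and $\Omega$ is open, one can pick $k$ pairwise disjoint open balls $B_1,\ldots,B_k\Subset\Omega$ with $\sum_{i=1}^k|B_i|\leq a$; let $v_i\in H^1_0(B_i)\subset H^1_0(\Omega)$ be a positive first Dirichlet eigenfunction of $B_i$, extended by zero outside $B_i$, so that $(v_1,\ldots,v_k)\in\overline{H}$ with $\Omega_{v_i}=B_i$. Since $\sum_i|B_i|\leq a$, one has $f_\eta\!\left(\sum_i|B_i|\right)=\eta\!\left(\sum_i|B_i|-a\right)\leq 0$ for every $\eta\in(0,1]$, whence
\[
c_\eta\leq J_\eta(v_1,\ldots,v_k)\leq \sum_{i=1}^k\lambda_1(B_i)=:C(N,k,a,\Omega),
\]
a constant that depends only on $N$, $k$, $a$, and (the geometry of) $\Omega$ through the choice of the balls.

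I do not foresee any substantive obstacle. The only mildly delicate point is the reduction from quasi-open to open partitions in the lower-bound step, which is a standard fact in the Buttazzo--Dal Maso framework (and should be quotable from the appendix); alternatively, one may bypass it by approximating each $\Omega_{u_i}$ from inside by open subsets on which the Rayleigh quotient of $u_i$ is essentially preserved, which again yields $\sum_i\lambda_1(\Omega_{u_i})\geq\Lambda_{1,k}(|\Omega|)$.
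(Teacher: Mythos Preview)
Your proof is correct and follows essentially the same approach as the paper: identical choice of disjoint balls with total measure at most $a$ for the upper bound, and for the lower bound the same use of $f_\eta\geq -a$ together with the fact that the sum of Rayleigh quotients over $\overline H$ dominates $\Lambda_{1,k}(|\Omega|)$. The only cosmetic difference is that the paper invokes directly the identity
\[
\Lambda_{1,k}(|\Omega|)=\inf_{(u_1,\ldots,u_k)\in\overline H}\ \sum_{i=1}^k\frac{\int_\Omega|\nabla u_i|^2}{\int_\Omega u_i^2}
\]
(citing \cite{ContiTerraciniVerziniOPP}), which bypasses the quasi-open versus open technicality you flagged; your route through $\lambda_1(\Omega_{u_i})$ is equivalent but requires that extra remark.
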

\begin{proof}
Take $(u_1,\ldots, u_k)\in H_a$ (recall Remark \ref{rem:Jc}) and note that $\sum_{i=1}^k|\Omega_{u_i}|\leq a$, thus using the definition of $f_\eta$ it holds $f_\eta(\sum_{i=1}^k|\Omega_{u_i}|)\leq 0$. Hence, we obtain
$$
J(u_1,\ldots,u_k)\geq J_\eta(u_1,\ldots,u_k)\geq c_\eta.
$$
Then, passing to the infimum in $H_a$, we deduce$$
\Lambda_{1,k}(a)\geq c_\eta.
$$
The lower bound, on the other hand, is a consequence of \eqref{eq:useful_inequality} and the fact that
\[
\Lambda_{1,k}(|\Omega|)=\inf \left \{\sum_{i=1}^k \frac{\displaystyle \int_{\Omega}|\nabla u_i|^2}{\displaystyle \int_{\Omega} u_i^2} : \ (u_1,\ldots, u_k) \in \overline H   \right\}
\]
(for this identity, see, for instance, the proof of Theorem 1.1 in \cite{ContiTerraciniVerziniOPP}). 
\end{proof}

By using the previous result, we can prove the existence of minimizers of $J_\eta$ over $\overline H$. This is the content of the next lemma. 

\begin{lemma}\label{lemma:unifbound_ceta}
The level $c_\eta$ is achieved, that is, 
\[
\text{there exists $u_\eta=(u_{1,\eta},\ldots, u_{k,\eta})\in \overline H$ such that $J_\eta(u_{1,\eta},\ldots, u_{k,\eta})=c_\eta$.}
\]
Moreover, there exists a constant $C=C(N,k,a,\Omega)>0$ such that
\begin{equation}\label{eq:H10bounds}
\|u_\eta\|_{H^1_0(\Omega)} \leq C, \qquad \text{ for every } L^2\text{--normalized minimizer }  u_\eta \text{ of } c_\eta, \text{ and } \eta\in (0,1].
\end{equation}
\end{lemma}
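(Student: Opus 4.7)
The plan is to apply the direct method. I would start from a minimizing sequence $(u_1^n,\ldots,u_k^n)\in \overline H$ for $c_\eta$ and, exploiting the fact that both the Rayleigh quotient and the nodal set $\Omega_{u_i}$ are invariant under $u_i\mapsto t_i u_i$ with $t_i\neq 0$, I would $L^2$-normalize componentwise so that $\int_\Omega (u_i^n)^2=1$ for every $i$ and every $n$. Combining \eqref{eq:useful_inequality} with the upper bound $c_\eta\leq C$ from Lemma \ref{lemma:bounds_ceta} then gives
\[
\sum_{i=1}^k \int_\Omega |\nabla u_i^n|^2 \leq J_\eta(u_1^n,\ldots,u_k^n)+a\leq C+a+o(1),
\]
so the sequence is bounded in $H^1_0(\Omega;\R^k)$.

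Next, I would pass to a subsequence so that $u_i^n\rightharpoonup u_i$ in $H^1_0(\Omega)$, $u_i^n\to u_i$ in $L^2(\Omega)$, and pointwise a.e. The a.e.\ convergence preserves the disjointness $u_i\cdot u_j\equiv 0$ for $i\neq j$, while the $L^2$ convergence preserves the normalization $\int_\Omega u_i^2=1$; in particular each $u_i\not\equiv 0$, hence $(u_1,\ldots,u_k)\in \overline H$. Lower semicontinuity of the Dirichlet energy together with the strong $L^2$ convergence yields
\[
\sum_{i=1}^k \frac{\int_\Omega |\nabla u_i|^2}{\int_\Omega u_i^2}\leq \liminf_{n\to\infty}\sum_{i=1}^k \frac{\int_\Omega |\nabla u_i^n|^2}{\int_\Omega (u_i^n)^2}.
\]

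For the penalization term, the key observation is that $f_\eta$ is continuous and monotone nondecreasing, so it suffices to show $\sum_i |\Omega_{u_i}|\leq \liminf_n \sum_i |\Omega_{u_i^n}|$. This follows from Fatou applied to the characteristic functions: if $u_i(x)\neq 0$ and $u_i^n(x)\to u_i(x)$, then $x\in\Omega_{u_i^n}$ eventually, so $\chi_{\Omega_{u_i}}\leq \liminf_n \chi_{\Omega_{u_i^n}}$ a.e. Combining with monotonicity and continuity of $f_\eta$ gives $J_\eta(u_1,\ldots,u_k)\leq \liminf_n J_\eta(u_1^n,\ldots,u_k^n)=c_\eta$, so $(u_1,\ldots,u_k)$ is a minimizer. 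Finally, the uniform bound \eqref{eq:H10bounds} for any $L^2$-normalized minimizer $u_\eta$ is an immediate repetition of the above estimate: by \eqref{eq:useful_inequality} and Lemma \ref{lemma:bounds_ceta},
\[
\|u_\eta\|_{H^1_0(\Omega)}^2=\sum_{i=1}^k\int_\Omega |\nabla u_{i,\eta}|^2\leq c_\eta+a\leq C+a,
\]
with $C$ independent of $\eta\in(0,1]$.

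The only slightly delicate point, and the one I would spend the most care on, is the lower semicontinuity of the measure term, since $s\mapsto |\{u\neq 0\}|$ is not continuous for weak convergence in general; here it is rescued by the monotonicity of $f_\eta$ combined with the Fatou argument for the nodal sets. Everything else is the standard direct-method pattern, with the scale invariance of the Rayleigh quotient used to normalize and the constraint $u_i\not\equiv 0$ preserved for free by that normalization.
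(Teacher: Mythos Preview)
Your proposal is correct and follows essentially the same approach as the paper: $L^2$-normalize a minimizing sequence, extract a weak $H^1_0$/strong $L^2$/a.e.\ convergent subsequence, use weak lower semicontinuity of the Dirichlet energy, and handle the penalization via Fatou on the characteristic functions together with the monotonicity and continuity of $f_\eta$. The final uniform bound is obtained in the paper exactly as you do, directly from \eqref{eq:useful_inequality} and Lemma \ref{lemma:bounds_ceta}.
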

\begin{proof}
The existence of a minimizer is a straightforward consequence of the direct method of the Calculus of Variations. Indeed, Lemma \ref{lemma:bounds_ceta} implies that   $c_\eta>-\infty$. Take a minimizing sequence $(u_{1,n},\ldots, u_{k,n})\in \overline H$ such that
\[
J_\eta(u_{1,n},\ldots, u_{k,n})\to c_\eta,\qquad \int_\Omega u_{i,n}^2=1 \text{ for every $i$}.
\]
Then, by \eqref{eq:useful_inequality} and Lemma \ref{lemma:bounds_ceta},
\begin{equation}\label{bds_aux1}
\sum_{i=1}^k \int_\Omega |\nabla u_{i,n}|^2 \leq  J_\eta(u_{1,n},\ldots, u_{k,n})+ a \leq c_\eta+1+a \leq C+1+a
\end{equation}
for sufficiently large $n$. In particular, for each $i$, the sequence $(u_{i,n})_{n \in \mathbb{N}}$ is uniformly bounded in $H^1_0(\Omega)$, and then there exists $u_i\in H^1_0(\Omega)$ such that, up to a subsequence, as $n\to \infty$,
\[
u_{i,n} \to u_i \;\; \text{ weakly in } H^1_0(\Omega), \mbox{ strongly in } L^2(\Omega),\ \text{ and pointwise a.e. in } \Omega.
\]
In particular,
\begin{equation}\label{bds_aux2}
\int_\Omega u_i^2=1 \quad \text{ and } \quad u_i\cdot u_j\equiv 0 \text{ a.e. in $\Omega$,}
\end{equation}  
so that, in particular,  $(u_{1},\ldots,u_k)\in \overline H$. Moreover, 
\[
\int_\Omega |\nabla u_{i}|^2\leq \liminf_n \int_\Omega |\nabla u_{i,n}|^2
\]
and, since $\chi_{u_i}\leq \liminf_n \chi_{u_{i,n}}$ a.e. in $\Omega$, by Fatou's lemma
\[
\sum_{i=1}^k |\Omega_{u_i}|\leq \liminf_n \sum_{i=1}^k|\Omega_{u_{i,n}}|.
\]
In conclusion, since $f_\eta$ is a continuous and increasing function:
\[
c_\eta\leq J_\eta(u_1,\ldots, u_n)\leq \liminf_n J_\eta(u_{1,n},\ldots u_{k,n})=c_\eta,
\]
and $c_\eta$ is achieved by $(u_1,\ldots, u_n)$. 

The uniform bounds in $H^1_0(\Omega)$ norm are a consequence of \eqref{eq:useful_inequality}, which implies
\begin{equation*}
\sum_{i=1}^k \int_\Omega |\nabla u_i|^2\leq J_\eta(u_1,\ldots, u_k)+ a=c_\eta+ a\leq C+a.  \qedhere
\end{equation*}
\end{proof}

Now, we are ready to prove some regularity results for the minimizers of $J_\eta$. We start by showing, in particular, the $L^\infty$ boundedness of the solutions.

\begin{lemma}\label{lemma:otherproperties}
Let $\eta\in (0,1]$ and let $(u_{1},\ldots, u_{k})\in \overline H$ be a nonnegative $L^2$-normalized minimizer of $J_\eta$, attaining the level $c_\eta$. Then
\[
- \Delta u_i=\lambda_1(\Omega_{u_i}) u_i \text{ in } \Omega_{u_i} \quad \text{and}\quad  - \Delta u_i\leq \lambda_1(\Omega_{u_i})u_i \text{ in } \Omega.
\]
As a consequence, there exists a constant $C=C(N,k,a,\Omega)>0$ (in particular, independent of $\eta)$ such that
\[
\|u_i\|_{L^\infty(\Omega)}\leq C.
\]
\end{lemma}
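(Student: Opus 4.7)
The plan is to establish the three claims in order—the eigenvalue equation inside $\Omega_{u_i}$, the global subsolution inequality, and the uniform $L^\infty$ bound—using variational arguments tailored to the non-$C^1$ nature of $f_\eta$, together with standard elliptic regularity.

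For the equation $-\Delta u_i = \mu_i u_i$ in $\Omega_{u_i}$ with $\mu_i := \int_\Omega |\nabla u_i|^2$, the key point is that \emph{interior} variations leave the nodal sets unchanged, circumventing the lack of smoothness of $f_\eta$. Concretely, for $\phi \in C_c^\infty(\Omega_{u_i})$ (where we pick the continuous representative of $u_i$ given by elliptic regularity on $\Omega_{u_i}$, so that $\{u_i > 0\}$ is open), $u_i$ is bounded below by a positive constant on $\supp\phi$, and hence for $|\epsilon|$ small enough, $(u_1,\ldots,u_i + \epsilon\phi,\ldots,u_k)$ lies in $\overline H$ with $\Omega_{u_i+\epsilon\phi}=\Omega_{u_i}$. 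The penalization term is thus constant in $\epsilon$, so differentiating the Rayleigh quotient at $\epsilon=0$ (using $\int u_i^2 = 1$) yields $\int \nabla u_i \cdot \nabla \phi = \mu_i \int u_i \phi$. To identify $\mu_i$ with $\lambda_1(\Omega_{u_i})$, I observe that $\mu_i \geq \lambda_1(\Omega_{u_i})$ is immediate by the variational characterization (using $u_i$ as a competitor); strict inequality would allow replacing $u_i$ with an $L^2$-normalized nonnegative first eigenfunction $w$ of $\Omega_{u_i}$. Since $\Omega_w \subseteq \Omega_{u_i}$, the vector $(u_1,\ldots,w,\ldots,u_k)$ belongs to $\overline H$, with strictly smaller Rayleigh quotient sum and a measure term no larger than before (as $f_\eta$ is nondecreasing), contradicting minimality.

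For the global inequality $-\Delta u_i \leq \mu_i u_i$ in $\Omega$, a direct variation with $\phi \in C_c^\infty(\Omega)$, $\phi \geq 0$, is obstructed by the disjoint-support constraint; I would instead use a cutoff of the equation from the first step. Given such $\phi$ and $\delta>0$, let $\eta_\delta:\R\to [0,1]$ be smooth and nondecreasing with $\eta_\delta\equiv 0$ on $(-\infty,\delta]$ and $\eta_\delta\equiv 1$ on $[2\delta,\infty)$; then $\phi\eta_\delta(u_i)$ is a legitimate test function supported in $\{u_i > \delta\}\Subset \Omega_{u_i}$, and plugging it into the equation of the previous step gives
\[
\int \eta_\delta(u_i)\,\nabla u_i \cdot \nabla \phi + \int \phi\,\eta_\delta'(u_i)|\nabla u_i|^2 = \mu_i \int u_i\,\phi\,\eta_\delta(u_i).
\]
The middle term is nonnegative; discarding it and letting $\delta \to 0^+$ (using $\eta_\delta(u_i)\to \chi_{\{u_i>0\}}$ pointwise, $\nabla u_i = 0$ a.e.\ on $\{u_i = 0\}$, and dominated convergence) gives the claimed inequality $\int \nabla u_i \cdot \nabla \phi \leq \mu_i \int u_i \phi$.

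Finally, the uniform $L^\infty$ bound follows from the global inequality combined with classical Moser iteration: since $u_i \geq 0$ is a subsolution of $-\Delta v = \mu_i v$ on $\Omega$ with $v \in H^1_0(\Omega)$ and $\|u_i\|_2 = 1$, one obtains $\|u_i\|_\infty \leq C(N,\Omega,\mu_i)$. The independence on $\eta$ is then provided by
\[
\mu_i \leq \sum_{j=1}^k \int_\Omega |\nabla u_j|^2 \leq c_\eta + a \leq C(N,k,a,\Omega),
\]
where the first step uses \eqref{eq:useful_inequality} and the last step uses Lemma \ref{lemma:bounds_ceta}. The main subtlety throughout is managing the non-$C^1$ nature of $f_\eta$, which I handle by choosing perturbations that either leave the nodal set unchanged (step one, for the equation) or strictly shrink it (the comparison argument identifying $\mu_i$ with $\lambda_1(\Omega_{u_i})$).
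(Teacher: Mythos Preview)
Your proof is correct and covers the same three claims, but it diverges from the paper's argument in the second step. For the global subsolution inequality, the paper uses the inward perturbation $(u_1,\ldots,(u_i-t\varphi)^+,\ldots,u_k)$ for $\varphi\in C_c^\infty(\Omega)$, $\varphi\ge 0$: since $\Omega_{(u_i-t\varphi)^+}\subseteq\Omega_{u_i}$ and $f_\eta$ is nondecreasing, the penalization term drops out, and differentiating the Rayleigh quotient at $t=0^+$ (via Lemma~\ref{lemma:expansion_L^2}) yields the inequality directly from minimality. Your route instead bypasses the functional entirely once the eigenvalue equation on $\Omega_{u_i}$ is known: the Kato-type cutoff $\phi\,\eta_\delta(u_i)$ lies in $H^1_0(\Omega_{u_i})$, and sending $\delta\to 0$ gives the subsolution inequality as a consequence of the equation alone. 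This is a clean alternative that decouples the second step from the penalization and would work for any nonnegative eigenfunction on its own positivity set; the paper's perturbation argument is more variational in spirit and ties the inequality directly to the minimization. For the first step the paper is slightly more streamlined---it simply observes that any $v\in H^1_0(\Omega_{u_i})$ yields an admissible competitor, so $u_i$ minimizes the Rayleigh quotient on $H^1_0(\Omega_{u_i})$ and is therefore the first eigenfunction---whereas you first derive the Euler--Lagrange equation and then identify the multiplier; both are fine. One small technical point: your parenthetical about ``the continuous representative of $u_i$ given by elliptic regularity on $\Omega_{u_i}$'' is mildly circular at this stage; the paper sidesteps this by treating $\Omega_{u_i}$ as a quasi-open set (Section~\ref{ssec:quasiopen}), which is enough for everything you need.
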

\begin{proof} 
Let us note that $\Omega_{u_i}=\{u_i>0\}$ is a quasi-open set (see Section~\ref{ssec:quasiopen}). For all nonzero $v\in H^1_0(\Omega_{u_i})$, testing the minimality of $(u_1,\dots,u_k)$ for $J_\eta$ with $(u_1,\dots, u_{i-1},v,u_{i+1},u_k)\in \overline{H}$ and noting that $\{v>0\}\subseteq \{u_i>0\}$ and $f_\eta$ is increasing shows immediately that $u_i$ is the first Dirichlet eigenfunction on $\Omega_{u_i}$.

\smallbreak

Let $\varphi \in C^{\infty}_c(\Omega)$ be a nonnegative function and, for $t> 0$ small and $i= 1, \ldots, k$, consider the perturbation
\[
\left(u_1, \ldots, u_{i-1}, (u_i-t\varphi)^+, u_{i+1}, \ldots, u_k\right)\in \overline H.
\]
Observe that $\Omega_{(u_i-t\varphi)^+} \subseteq \Omega_{u_i}$, thus we have $|\Omega_{(u_i-t\varphi)^+}| \leq |\Omega_{u_i}|$. Since $(u_1, \ldots, u_k)$ is a minimizer of $J_\eta|_{\overline{H}}$, we have
\[
\sum_{j=1}^k\int_{\Omega}|\nabla u_j|^2 + f_\eta\left(\sum^k_{j=1}|\Omega_{u_j}|\right) \leq \mathop{\sum_{j=1}^k}_{j\neq i} \int_\Omega |\nabla u_j|^2 + \dfrac{\int_\Omega |\nabla(u_i-t\varphi)^+|^2}{\|(u_i-t\varphi)^+\|_2^2} + f_\eta\left(\mathop{\sum_{j=1}^k}_{j\neq i}|\Omega_{u_j}| + |\Omega_{(u_i-t\varphi)^+}|\right),
\]
which implies, by Lemma \ref{lemma:expansion_L^2} and the fact that $\|u_i\|_2=1$,
\begin{align*}
\int_{\Omega}|\nabla u_i|^2 + f_\eta\left(\sum^k_{j=1}|\Omega_{u_j}|\right) - f_\eta &\left(\mathop{\sum_{j=1}^k}_{j\neq i}|\Omega_{u_j}| + |\Omega_{(u_i-t\varphi)^+}|\right) \leq \int_{\Omega}|\nabla(u_i-t\varphi)|^2\left( 1 + 2t\int_{\Omega}u_i\varphi + o(t) \right)\\
        &=\left(\int_\Omega |\nabla u_i|^2-2t \int_\Omega \nabla u_i \cdot\nabla \varphi + o(t)\right) \left( 1 + 2t\int_{\Omega}u_i\varphi + o(t) \right)\\
        &=\int_\Omega |\nabla u_i|^2 + 2t \int_\Omega |\nabla u_i|^2\int_\Omega u_i \varphi - 2t \int_\Omega \nabla u_i \cdot \nabla \varphi + o(t),
\end{align*}
as $t\to 0^+$. Hence, by using the monotonicity of $f_\eta$, we obtain
\[
0 \leq f_\eta\left(\sum^k_{j=1}|\Omega_{u_j}|\right) - f_\eta \left(\mathop{\sum_{j=1}^k}_{j\neq i}|\Omega_{u_j}| + |\Omega_{(u_i-t\varphi)^+}|\right)  \leq -2t\int_{\Omega}\nabla u_i\cdot\nabla\varphi + 2t\int_{\Omega}|\nabla u_i|^2\int_{\Omega}u_i\varphi + o(t).
\]
By dividing the inequality above by $t>0$, and passing to the limit as $t \to 0^+$, we conclude that
\[
\int_{\Omega}\nabla u_i\cdot\nabla\varphi \leq \int_{\Omega}|\nabla u_i|^2\int_{\Omega}u_i\varphi,\quad \text{i.e.,} \quad
-\Delta u_i \leq \lambda_{u_i}u_i \;\; \mbox{ in } \; \Omega.
\]

Finally, the uniform $L^\infty$--bounds are now a consequence of \eqref{eq:H10bounds} and of a standard Brezis-Kato type argument (or, alternatively, a consequence of the result for eigenfunctions by Davies~\cite[Example~2.1.8]{Davies})
\end{proof}

Following some ideas from \cite{ASST}, we  proceed with some qualitative properties for minimizers, which shall lead to their Lipschitz regularity (see Theorem \ref{thm:general_Lipschitz} and Corollary \ref{coro_Lispchitz} below). This proof uses some perturbations which will also be employed later on in the classification result for blowups at two-phase points (Theorem \ref{thm:blowuplimits}), in particular in the proof of Lemma \ref{lemma:inequalities_classS_blowup}.

\begin{proposition}\label{prop:classS}
Let $\eta\in (0,1]$  and let $(u_{1},\ldots, u_{k})\in \overline H$ be an $L^2$-normalized minimizer of $J_\eta$ achieving $c_\eta$. Define  
\[
\hat{u}_i=u_i-\sum_{j\neq i} u_j\quad \text{ and } \quad \lambda_{u_i}:= \int_\Omega |\nabla u_i|^2\quad \text{ for }i=1,\ldots, k.
\]
\begin{enumerate}
\item For each $i=1, \ldots, k$, $x_0\in\Omega$, $r>0$, and for any nonnegative function $\varphi \in H_0^1(\Omega)$ such that $\supp(\varphi) \Subset B_{2r}(x_0) \Subset \Omega$, we have
\begin{equation}\label{eq:almss}
\left\langle -\Delta\hat{u}_i -\lambda_{u_i}u_i+ \sum_{j\neq i}\lambda_{u_j}u_j, \varphi \right\rangle \geq -C\left(r^{N-1} + \|\varphi\|_{1}+ r\|\varphi\|_{2}^2 + r\|\nabla\varphi\|^2_2 + r\|\varphi\|_{1}\|\nabla\varphi\|^2_2 + r\|\varphi\|_{2}^2\|\nabla\varphi\|^2_2\right),
\end{equation}
where $C=C(N,k,a, \Omega,\eta)$. 

\item If $B$ is a ball contained in the domain $\Omega$ and $|B \cap \{u_j=0\}|=0$ for every $j=1,\ldots, k$, then 
\[
-\Delta\hat{u}_i -\lambda_{u_i}u_i+ \sum_{j\neq i}\lambda_{u_j}u_j\geq 0  \quad \text{ in } B,\qquad \text{for all }i=1,\dots, k.
\]
\end{enumerate}
\end{proposition}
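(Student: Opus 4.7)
My plan is to derive both statements from shape perturbations of the minimizer that shift mass from the other phases onto the $i$-th one, in the direction of the test function $\varphi \geq 0$. Given $\varphi \in C^\infty_c(B_{2r}(x_0))$ with $\varphi \geq 0$ and $t>0$ small, I would set $w_t := \hat u_i + t\varphi$ and construct the competitor
\[
v^t_i := (w_t)_+,\qquad v^t_j := \frac{u_j}{\sum_{\ell \neq i}u_\ell}\,(w_t)_- \quad (j\neq i),
\]
with the convention $v^t_j:=0$ where $\sum_{\ell\neq i}u_\ell=0$ (which is compatible since there $u_j=0$ too). The supports are disjoint by construction: $\{v^t_i>0\}\subset\{w_t>0\}$ while $\{v^t_j>0\}\subset\{w_t<0\}\cap\{u_j>0\}$, and the latter are pairwise disjoint since so are the $\{u_j>0\}$. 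Hence $v^t \in \overline H$ and the minimality of $(u_1,\ldots,u_k)$ yields $J_\eta(u) \leq J_\eta(v^t)$.

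Next, I would expand the perturbed Rayleigh quotients to first order in $t$, using Lemma~\ref{lemma:expansion_L^2} together with $\|u_i\|_2=1$; in the region $\{u_i>0\}$ one has $\hat u_i=u_i$ and $\nabla\hat u_i=\nabla u_i$, while in $\{u_j>0\}$ one has $\hat u_i=-u_j$, $\nabla\hat u_i=-\nabla u_j$. A direct computation (splitting the integrals according to the sign of $\hat u_i$ and $w_t$) gives, modulo error terms coming from the symmetric difference $\{u_i>0\}\triangle\{w_t>0\}$,
\[
\sum_i \frac{\int|\nabla v^t_i|^2}{\int (v^t_i)^2} - \sum_i \lambda_{u_i} = 2t\Bigl[-\int\nabla\hat u_i\cdot\nabla\varphi + \lambda_{u_i}\int u_i\varphi - \sum_{j\neq i}\lambda_{u_j}\int u_j\varphi\Bigr] + o(t) + (\text{error}).
\]
For the penalty, since $\Omega_{v^t_i}\supseteq\Omega_{u_i}$ and $\Omega_{v^t_j}\subseteq\Omega_{u_j}$, the total measure change is controlled by $|\Omega_{v^t_i}\triangle\Omega_{u_i}|+\sum_{j\neq i}|\Omega_{u_j}\triangle\Omega_{v^t_j}|$, which is contained in $B_{2r}(x_0)\cap\{|\hat u_i|<t\varphi\}$; by Lemma~\ref{lemma:feta}(2), the $f_\eta$-contribution is bounded by $\frac{1}{\eta}$ times this measure.

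Plugging everything into $J_\eta(u)\leq J_\eta(v^t)$, dividing by $t>0$ and letting $t\to 0^+$ yields the desired almost-subsolution inequality \eqref{eq:almss}. The error terms of the form $\|\varphi\|_1$, $r\|\varphi\|_2^2$, $r\|\nabla\varphi\|_2^2$, etc.\ arise from (a) the $L^\infty$-bound on the $u_i$ provided by Lemma~\ref{lemma:otherproperties}, which converts $\|u_i\varphi\|_1$-type quantities into the stated norms, (b) the cross terms obtained when multiplying the $L^2$-expansion factors by the gradient expansion, (c) the $f_\eta$-penalty absorbed using the volume of the transition region (which gives the $r^{N-1}$-term and the $\|\varphi\|_1$-term via a standard layer-cake estimate). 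The main technical obstacle is precisely the careful estimation of these interface contributions, particularly since at this stage we only know $u_i\in H^1_0\cap L^\infty$ (Lipschitz regularity is what we are eventually trying to prove, so cannot be used here), so the set $\{|\hat u_i|<t\varphi\}$ must be bounded by elementary means; this is where the factors of $r$ enter, through $H^1$-Sobolev-type inequalities for $\varphi$ in $B_{2r}(x_0)$.

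Finally, for part~(2), in a ball $B\Subset\Omega$ in which $\bigcap_j\{u_j=0\}$ has zero measure, every competitor $v^t$ constructed above (with $\supp\varphi\Subset B$) has total occupied region $\{w_t\neq 0\}$ equal a.e.\ to the original total occupied region $\{\sum_\ell u_\ell>0\}$, since neither perturbation can introduce any new vacant set (the only lost region is $\{w_t=0\}$, which for a.e.\ $t$ has measure zero by the coarea formula applied to $\hat u_i + t\varphi$). Consequently $\sum_\ell|\Omega_{v^t_\ell}|=\sum_\ell|\Omega_{u_\ell}|$ for a.e.\ small $t$, the penalty $f_\eta$-correction drops out entirely, and the limit $t\to 0^+$ produces the clean inequality $-\Delta\hat u_i-\lambda_{u_i}u_i+\sum_{j\neq i}\lambda_{u_j}u_j\geq 0$ in $\mathcal D'(B)$, which is the content of the second item.
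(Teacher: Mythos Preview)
Your competitor is actually the same as the paper's: since the $u_\ell$ have disjoint supports, your $v^t_j=\frac{u_j}{\sum_{\ell\neq i}u_\ell}(w_t)_-$ reduces, on $\{u_j>0\}$, to $(u_j-t\varphi)^+=(\hat u_j-t\varphi)^+$, and vanishes elsewhere, which is exactly the perturbation \eqref{eq:deformation} used in the paper (and in \cite{ASST}). Likewise your argument for part~(2) is the paper's argument, up to the harmless cosmetic difference that the paper only checks $\sum_j|\Omega_{u_{j,t}}|\le\sum_j|\Omega_{u_j}|$ (using that the perturbed phases are disjoint and contained in $B$, which has full density), whereas you argue equality for a.e.~$t$; either version is enough since $f_\eta$ is nondecreasing.

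The gap is in part~(1). You propose to divide by $t$ and let $t\to 0^+$, but the penalty contribution does \emph{not} vanish in this limit. Indeed, with disjoint supports one has $|\hat u_i|=\sum_\ell u_\ell$, so
\[
\bigcup_j\Omega_{v^t_j}\setminus\bigcup_j\Omega_{u_j}
=\{\hat u_i+t\varphi\neq 0\}\setminus\{\hat u_i\neq 0\}
=\{\hat u_i=0\}\cap\{\varphi>0\},
\]
which is \emph{independent of $t$} and can have positive measure (it is exactly the common zero set of all phases inside $\supp\varphi$). Your transition region $\{|\hat u_i|<t\varphi\}$ contains this set for every $t>0$, so its measure does not tend to $0$; no layer--cake or Sobolev argument can fix this at the $H^1\cap L^\infty$ stage. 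Consequently, after dividing by $t$ the penalty term blows up and the limit $t\to 0^+$ does not yield \eqref{eq:almss}.

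The paper's route (via \cite[Proposition~3.2]{ASST}) avoids this by \emph{not} taking $t\to 0$: one bounds the penalty once and for all by $\frac{1}{\eta}|\Omega_\varphi|\le C r^N$, expands the Rayleigh quotients to obtain an inequality of the form
\[
0\le -2t\Big\langle -\Delta\hat u_i-\lambda_{u_i}u_i+\sum_{j\neq i}\lambda_{u_j}u_j,\varphi\Big\rangle + C\,t^2\big(\|\nabla\varphi\|_2^2+\ldots\big)+\frac{C}{\eta}\,r^N,
\]
and then chooses $t$ of order $r$ and divides by it. This is what produces the $r^{N-1}$ term and the various $r$-weighted norms in \eqref{eq:almss}. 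Your sketch has all the right ingredients except this final step; replacing ``let $t\to 0^+$'' by ``set $t\sim r$'' repairs the argument.
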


\begin{proof}
Without loss of generality, we prove the claims for $i=1$.\\

\textbf{(1)} Consider the perturbation 
\begin{equation}\label{eq:deformation}
u_t=\left(u_{1,t}, \ldots, u_{k,t}\right):=\left(\left(\hat{u}_1+t \varphi\right)^{+}, \left(\hat{u}_2-t \varphi \right)^{+}, \ldots, \left(\hat{u}_k-t \varphi\right)^{+}\right) ,
\end{equation}
with $t \in (0,1)$ sufficiently small. It is straightforward to check (see \cite[Lemma A.3]{ASST} for the details) that
\[
u_t\in \overline{H},\qquad \Omega_{u_{1,t}}\subseteq \Omega_{u_1}\cup \Omega_\varphi,\qquad \Omega_{u_{j,t}}\subseteq \Omega_{u_j} \text{ for $j>1$}.
\]
By using the minimality of $u=(u_1, \ldots, u_k)$, we have $J_\eta(u) \leq J_\eta(u_t)$, that is
\begin{align*}
\sum_{i=1}^k\int_{\Omega}|\nabla u_i|^2 + f_\eta\left(\sum_{i=1}^k|\Omega_{u_i}|\right)  \leq & \;\displaystyle \dfrac{\int_{\Omega}|\nabla (\hat{u}_1 + t\varphi)^+|^2}{\|(\hat{u}_1 + t\varphi)^+\|^2_2} + \sum_{i=2}^k\dfrac{\int_{\Omega}|\nabla (\hat{u}_i - t\varphi)^+|^2}{\|(\hat{u}_i - t\varphi)^+\|^2_2}  + f_\eta\left(|\Omega_{u_{1,t}}|+\sum_{i=2}^k|\Omega_{u_{i,t}}|\right).
\end{align*}
Hence, since $f_\eta$ is an increasing function and $|\Omega_{u_{1,t}}|+\sum_{i=2}^k|\Omega_{u_{i,t}}| \leq \sum_{i=1}^k|\Omega_{{u}_i}| + |\Omega_\varphi|$, we estimate
\begin{align*}\label{eq:01}
\sum_{i=1}^k\int_{\Omega}|\nabla u_i|^2  \leq & \;\displaystyle \dfrac{\int_{\Omega}|\nabla (\hat{u}_1 + t\varphi)^+|^2}{\|(\hat{u}_1 + t\varphi)^+\|^2_2} + \sum_{i=2}^k\dfrac{\int_{\Omega}|\nabla (\hat{u}_i - t\varphi)^+|^2}{\|(\hat{u}_i - t\varphi)^+\|^2_2} \\
 & + f_\eta\left(|\Omega_{u_{1,t}}|+\sum_{i=2}^k|\Omega_{u_{i,t}}|\right) - f_\eta\left(\sum_{i=1}^k|\Omega_{u_i}|\right) \\
\leq & \;\displaystyle \dfrac{\int_{\Omega}|\nabla (\hat{u}_1 + t\varphi)^+|^2}{\|(\hat{u}_1 + t\varphi)^+\|^2_2} + \sum_{i=2}^k\dfrac{\int_{\Omega}|\nabla (\hat{u}_i - t\varphi)^+|^2}{\|(\hat{u}_i - t\varphi)^+\|^2_2} \\
 & + f_\eta\left(\sum_{i=1}^k|\Omega_{u_i}| + |\Omega_{\varphi}|\right) - f_\eta\left(\sum_{i=1}^k|\Omega_{u_i}|\right)
 \end{align*}
 Using Lemma \ref{lemma:feta}-(2) to estimate the right hand side, we obtain
 \begin{align*}
\sum_{i=1}^k\int_{\Omega}|\nabla u_i|^2 \leq  \;\displaystyle \dfrac{\int_{\Omega}|\nabla (\hat{u}_1 + t\varphi)^+|^2}{\|(\hat{u}_1 + t\varphi)^+\|^2_2} + \sum_{i=2}^k\dfrac{\int_{\Omega}|\nabla (\hat{u}_i - t\varphi)^+|^2}{\|(\hat{u}_i - t\varphi)^+\|^2_2} + \frac{1}{\eta}|\Omega_{\varphi}|.
\end{align*}
This inequality corresponds precisely to \cite[eq. (3.4)]{ASST}, with the constant $\mu$ therein being replaced by $1/\eta$ in our case. From here, we can repeat word by word the proof of \cite[Proposition 3.2]{ASST} to complete the proof of (1).

\smallbreak

\noindent \textbf{(2)} This proof is adapted from \cite[Proposition 3.4]{ASST}  to our situation. By taking $\varphi\in C^\infty_c(B)$ and considering once again the deformation \eqref{eq:deformation}, we have
\begin{align*}
\sum_{i=1}^{k}|\Omega_{u_{i,t}}|
&= \sum_{i=1}^{k}|\Omega_{u_{i,t}}\cap B | + {\sum}_{i=1}^{k}|\Omega_{u_{i,t}}\cap (\Omega \setminus B) | \leq |B| + \sum_{i=1}^{k}|\Omega_{ {u}_i}\cap (\Omega\setminus B)|\\
&= \sum_{i=1}^{k}|\Omega_{u_i} \cap B|+  \sum_{i=1}^{k}|\Omega_{u_i}\cap (\Omega\setminus B)| = \sum_{i=1}^{k}|\Omega_{u_i}|.
\end{align*}
Then, by minimality, $J_\eta(u_t)\geq J_\eta(u)$, which implies, using Lemma \ref{lemma:expansion_L^2} and since $f_\eta$ is increasing, 
\begin{align*}
\sum_{i=1}^k\int_{\Omega}|\nabla u_i|^2  &\leq  \;\displaystyle \dfrac{\int_{\Omega}|\nabla (\hat{u}_1 + t\varphi)^+|^2}{\|(\hat{u}_1 + t\varphi)^+\|^2_2} + \sum_{i=2}^k\dfrac{\int_{\Omega}|\nabla (\hat{u}_i - t\varphi)^+|^2}{\|(\hat{u}_i - t\varphi)^+\|^2_2} + f_\eta\left(\sum_{i=1}^{k}|\Omega_{u_{i,t}}|\right)-f_\eta\left(\sum_{i=1}^k|\Omega_{u_i}|\right)\\
 \leq&  \displaystyle \dfrac{\int_{\Omega}|\nabla (\hat{u}_1 + t\varphi)^+|^2}{\|(\hat{u}_1 + t\varphi)^+\|^2_2} + \sum_{i=2}^k\dfrac{\int_{\Omega}|\nabla (\hat{u}_i - t\varphi)^+|^2}{\|(\hat{u}_i - t\varphi)^+\|^2_2}\\
 \leq& \int_{\Omega}|\nabla (\hat{u}_1 + t\varphi)^+|^2 \left( 1 - 2t\int_{\Omega}u_i\varphi + o(t) \right) + \sum_{i=2}^k\int_{\Omega}|\nabla (\hat{u}_i - t\varphi)^+|^2\left(1 + 2t\int_{\Omega}u_i\varphi + o(t)\right)\\
 =&\int_{\Omega}\left(|\nabla (\hat{u}_1 + t\varphi)^+|^2+\sum_{j=2}^k |\nabla (\hat{u}_i - t\varphi)^+|^2\right)-2t \int_{\Omega}|\nabla (\hat{u}_1 + t\varphi)^+|^2 \int_\Omega u_1 \varphi \\
        &+ 2t \sum_{i=2}^k\int_{\Omega}|\nabla (\hat{u}_i - t\varphi)^+|^2 \int_\Omega u_i \varphi+ o(t).
\end{align*}
Now, combining this with
\[
\int_\Omega \Big(|\nabla (\hat{u}_1 + t\varphi)^+|^2+\sum_{j=2}^k |\nabla (\hat{u}_i - t\varphi)^+|^2\Big)=\int_\Omega |\nabla (\hat{u}_1+t\varphi)|^2=\sum_{i=1}^k \int_\Omega |\nabla u_i|^2+2t \int_\Omega \nabla \hat{u}_1\cdot \nabla \varphi + o(t),
\]
and
\begin{align*}
\int_{\Omega}|\nabla (\hat{u}_1 + t\varphi)^+|^2 = \int_\Omega |\nabla u_1|^2+o(1)=\lambda_{u_1}+ o(1),\  \int_{\Omega}|\nabla (\hat{u}_i - t\varphi)^+|^2 =  \int_\Omega |\nabla u_i|^2+o(1)=\lambda_{u_i}+ o(1)
\end{align*}
as $t\to 0^+$, we conclude
\[
0\leq 2t\int_\Omega \nabla \hat{u}_1\cdot \nabla \varphi -2t \lambda_{u_1}\int_\Omega u_1\varphi + 2 t \sum_{i=2}^k \lambda_{u_i}\int_\Omega u_i \varphi + o(t).
\]
Dividing the above inequality by $t>0$ and letting $t\to 0^+$ entails the result.
\end{proof}

The following is a general criterion to guarantee that a $k$-uple of functions is Lipschitz continuous.
\begin{theorem}\label{thm:general_Lipschitz}
Consider $(u_1, \ldots, u_k) \in H_0^1(\Omega\,;\,\R^k)$
such that, for each $i = 1, \ldots, k$, $u_i$ is a nonnegative $L^2$-normalized function satisfying
\begin{enumerate}
\item
$
-\Delta u_i=\lambda_1(\Omega_{u_i}) u_i \text{ in } \Omega_{u_i} \quad \text{and}\quad  - \Delta u_i\leq \lambda_1(\Omega_{u_i})u_i \text{ in } \Omega.
$
\item Whenever we take $x_0 \in\Omega$, $r>0$ and any nonnegative function $\varphi \in C^\infty_c(\Omega)$, such that $supp(\varphi) \Subset B_{2r}(x_0) \Subset \Omega$, and $0 \leq \varphi \leq 1$, we have 
\begin{align}
\left\langle -\Delta\hat{u}_i -\lambda_{u_i}u_i+ \sum_{j\neq i}\lambda_{u_j}u_j, \varphi \right\rangle \geq & -C(r^{N-1} + r\|\nabla\varphi\|^2_2), \label{eq:almss20}
\end{align}
where $C$ depends only on $\|u_1\|_{H_0^1(\Omega)}, \ldots, \|u_k\|_{H_0^1(\Omega)}$, $\|u_1\|_\infty, \ldots, \|u_k\|_\infty$ and $N$.
\item If $B \subset \Omega$ is a ball such that $|B \cap \{u_j=0\}|=0$ for every $j=1,\ldots, k$, then 
\[
-\Delta\hat{u}_i -\lambda_{u_i}u_i+ \sum_{j\neq i}\lambda_{u_j}u_j\geq 0  \text{ in } B,\qquad \text{for all }i=1,\dots, k.
\]
Then $u_i$ is locally Lipschitz continuous in $\Omega$ for each $i=1, \ldots, k$, and for all $\Omega'\Subset \Omega$ there exists $C > 0$, that depends on $N, \|u_1\|_{H_0^1(\Omega)}, \ldots, \|u_k\|_{H_0^1(\Omega)},\|u_1\|_\infty, \ldots, \|u_k\|_\infty$ and $\mbox{dist}(\Omega', \Omega)$, such that 
\[
\sup_{x \neq y \in \Omega'}\dfrac{|u_i(x) - u_i(y)|}{|x-y|} \leq C. 
\]
\end{enumerate}    
\end{theorem}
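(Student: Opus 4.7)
The strategy is to reduce Lipschitz continuity to the linear non-degeneracy estimate
\begin{equation*}
\sup_{B_r(x_0)} u_i \leq C r, \qquad x_0 \in \partial \Omega_{u_i} \cap \Omega',\; r < r_0,
\end{equation*}
with $C, r_0$ depending only on the stated quantities. From condition (1) and the uniform bounds $\|u_i\|_\infty \leq C$, $\lambda_{u_i}\leq \|u_i\|_{H^1_0}^2 \leq C$, the function $u_i$ solves a linear elliptic equation with bounded right-hand side on $\Omega_{u_i}$; standard interior gradient estimates applied in $B_{d(x)/2}(x)\subset\Omega_{u_i}$, where $d(x)=\dist(x,\partial\Omega_{u_i})$, give $|\nabla u_i(x)|\leq C\,d(x)^{-1}\sup_{B_{d(x)}(x)}u_i$. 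Choosing $x_0\in\partial\Omega_{u_i}$ with $|x-x_0|=d(x)$ and using the linear growth above yields $|\nabla u_i(x)|\leq C$ uniformly on $\Omega'\cap \Omega_{u_i}$, which is enough since $u_i\equiv 0$ off of $\Omega_{u_i}$.

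To establish the linear growth, I plan to follow the variational competitor scheme of Conti--Terracini--Verzini for functions in their class $\mathcal S$, suitably adapted to our setting. Fix $x_0\in\partial \Omega_{u_i}\cap\Omega'$ and small $r>0$, and let $h$ solve in $B_{2r}(x_0)$ the linear problem
\begin{equation*}
-\Delta h = \lambda_{u_i}u_i - \sum_{j\neq i}\lambda_{u_j}u_j, \qquad h = \hat u_i \text{ on } \partial B_{2r}(x_0).
\end{equation*}
Subtracting the identity for $h$ from condition (2), testing with a non-negative smooth cutoff $\varphi_r\in C^\infty_c(B_{2r}(x_0))$ with $\varphi_r\equiv 1$ on $B_r(x_0)$ and $\|\nabla\varphi_r\|_2^2\leq C r^{N-2}$, and integrating by parts, one obtains
\begin{equation*}
\int_{B_{2r}(x_0)} \nabla(\hat u_i - h)\cdot \nabla \varphi_r \;\geq\; -C\bigl(r^{N-1}+r\cdot r^{N-2}\bigr) \;=\; -C r^{N-1}.
\end{equation*}
Choosing instead $\varphi = (h-\hat u_i)^+\zeta$ with $\zeta$ a radial cutoff and applying Poincar\'e on $B_{2r}(x_0)$ produces a Dirichlet-energy bound of the form $\int|\nabla(h-\hat u_i)^+|^2\leq Cr^{N-1}$. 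Coupling this with pointwise control of $h$ in terms of its boundary data and the $L^\infty$-bound on the source, the subharmonicity of $u_i$ from (1), and a Campanato-type iteration on dyadic balls, one concludes $\sup_{B_r(x_0)}u_i\leq Cr$ at one-phase free-boundary points.

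The principal obstacle is the behavior at multi-phase points: there, the competitor $h$ may change sign and $(h-\hat u_i)^+$ does not control $h-\hat u_i$ in a way sufficient to run the one-phase argument. This is precisely where condition (3) enters: in any ball $B$ on which $|B\cap\{u_j=0\}|=0$ for every $j$, the distribution $-\Delta \hat u_i-\lambda_{u_i}u_i+\sum_{j\neq i}\lambda_{u_j}u_j$ is a non-negative Radon measure, hence $\hat u_i \geq h$ in $B$ by the weak maximum principle. Applied at slightly shifted scales around two- and higher-multiplicity points, this upgrades the one-sided competitor bound to a genuine maximum-principle inequality for $\hat u_i$, and allows the linear growth estimate to be propagated across all free-boundary points. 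Tracking the constants produced at each step yields the claimed dependence on $N$, the $H^1_0$- and $L^\infty$-norms of $(u_1,\ldots,u_k)$, and $\dist(\Omega',\partial\Omega)$.
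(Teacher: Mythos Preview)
Your overall reduction---linear growth at free-boundary points plus interior gradient estimates---is a valid template, but the implementation has a genuine gap in how you invoke condition~(3). That hypothesis requires $|B\cap\{u_j=0\}|=0$ for \emph{every} $j=1,\dots,k$. At a two-phase point $x_0\in\partial\Omega_{u_i}\cap\partial\Omega_{u_\ell}$ the remaining components $u_j$ ($j\neq i,\ell$) vanish identically in a neighborhood, so $|B\cap\{u_j=0\}|=|B|>0$ for any small ball; in fact, whenever the supports are pairwise disjoint the hypothesis of~(3) fails on every ball of positive radius. Your phrase ``applied at slightly shifted scales around two- and higher-multiplicity points'' does not resolve this: the maximum-principle comparison $\hat u_i\geq h$ that you want is simply unavailable there. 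A secondary issue is that condition~(2) is stated only for $\varphi\in C^\infty_c$ with $0\leq\varphi\leq1$, so testing with $(h-\hat u_i)^+\zeta$ is not admissible, and the passage from an $r^{N-1}$ energy bound to $\sup_{B_r}u_i\leq Cr$ via an unspecified Campanato iteration would itself require substantial work.

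The paper takes a different route, following \cite{ASST}. It does not prove linear growth directly. Instead, one first scales~\eqref{eq:almss20} to obtain a version on $B_R$ for the rescalings $u_{i,n}(x)=u_i(x_0+r_nx)$, with error $O(r_n)$; this is the key input. Continuity at points of $\cap_j\{u_j=0\}$ is proved by a case analysis: property~(3) handles the (possibly vacuous) case $|B_{r_n}(x_0)\cap\{u_j=0\}|=0$ for all $j$, while in the complementary case the scaled~(2) and property~(1) show the blowup limit is superharmonic, forcing it to vanish. Lipschitz regularity then follows by showing the Morrey-type quantity $\Sigma(x,r)=r^{-N}\int_{B_r(x)}|\nabla U|^2$ is bounded on $\Omega'\times(0,\infty)$, via a blowup/contradiction argument rather than a direct competitor estimate.
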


\begin{proof}
The proof follows the same ideas as in \cite{ASST}, in fact, these properties are all it takes to prove Lipschitz continuity just like in \cite[Theorem 1.2]{ASST}. Let us be more precise.
\begin{itemize}
\item Property ${\it (1)}$ corresponds to \cite[Proposition 3.1]{ASST}, ${\it (3)}$ to \cite[Proposition 3.4]{ASST}. On the other hand, (2) is more general than \eqref{eq:almss} (which corresponds to \cite[Proposition 3.2]{ASST}).
\item  We now perform a simple scaling of \eqref{eq:almss20}. Consider $R>0, x_0 \in \Omega$ and a sequence $\left(r_n\right)_{n \in \mathbb{N}}$ such that $B_{r_n R}\left(x_0\right) \Subset \Omega$. For each $i=1, \ldots, k$, define
$$
u_{i, n}(x)={u}_i\left(x_0+r_n x\right).
$$
Then, given a nonnegative $\varphi \in H_0^1(\Omega)$ with $\operatorname{supp}(\varphi) \subseteq B_R$, we have
\begin{equation}\label{eq:scalingineq}
\left\langle-\Delta \hat{u}_{i, n}-r_n^2 \lambda_{\bar{u}_i} \bar{u}_{i, n}+r_n^2 \sum_{j \neq i} \lambda_{\bar{u}_j} \bar{u}_{j, n}, \varphi\right\rangle  \geq-C r_n R\left(R^{N-2}+\|\nabla \varphi\|_2^2\right).
\end{equation}
This plays the role of \cite[Proposition 3.3]{ASST} under the more general assumption \eqref{eq:almss20}.

\item From this, we can prove the continuity of minimizers $(u_1,\ldots, u_k)$ repeating almost word by word the proof of \cite[Proposition 3.8]{ASST}. Indeed, let $x_0 \in \cap_j\{u_j=0\}$; one takes $(x_n)_{n\in \mathbb{N}}\subset \Omega$ such that $x_n \to x_0$, sets $r_n := |x_0 - x_n| \to 0$ and aims to prove that $u_{i,n}(x)=u_i(x_0+r_n x)\to 0=u_i(x_0)$. Property (3) is used in \cite{ASST} to prove this claim under the condition $\left|B_{r_n}\left(x_0\right) \cap\{u_j=0\}\right|=0$ for every $j$. In the complementary situation, one uses Property ${\it (1)}$ and \eqref{eq:scalingineq} to show that the limit of the scaling $u_{i,n}$ is superharmonic, and then uses \eqref{eq:almss20} for test functions satisfying $\|\nabla\varphi\|_{\infty} \leq C_1/r$ ($C_1$ does not depend on $r$) which implies
\[
\left\langle -\Delta\hat{u}_i -\lambda_{u_i}u_i+ \sum_{j\neq i}\lambda_{u_j}u_j, \varphi \right\rangle \geq  -Cr^{N-1}
\]
with this new constant $C$ also depending on $C_1$. These are the necessary ingredients for the proof of \cite[Proposition 3.8]{ASST} to work.
\item To prove the Lipschitz continuity of each $u_i$, it is enough to consider $\Sigma: \bar{\Omega} \times (0, \infty) \to \mathbb{R}$ defined as
\[
\Sigma(x,r) := \dfrac{1}{r^N}\int_{B_r(x)}|\nabla U|^2, \quad \text{for} \quad (x,r) \in \bar{\Omega}\times (0,\infty).
\]
and show that $\Sigma$ is bounded over $\Omega'\times(0,\infty)$, for every $\Omega'$ compactly contained in $\Omega$. This is proved by a contradiction argument; with the continuity of each $u_i$ and the properties previously recalled, we can now follow word by word the arguments from \cite[Section 4]{ASST}.\qedhere
\end{itemize}
\end{proof}

\begin{corollary}\label{coro_Lispchitz}
Let $(u_1,\dots,u_k)$ be a minimizer for the functional $J_\eta$. Then each $u_i$ is locally Lipschitz continuous in $\Omega$.
\end{corollary}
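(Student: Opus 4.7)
The plan is to recognize that Corollary \ref{coro_Lispchitz} is a direct application of Theorem \ref{thm:general_Lipschitz}, so the proof reduces to verifying that the three hypotheses of that theorem hold for any minimizer $(u_1,\dots,u_k)\in\overline H$ of $J_\eta$. The needed uniform $L^\infty$ and $H^1_0$ control on the components comes respectively from Lemma \ref{lemma:otherproperties} and Lemma \ref{lemma:unifbound_ceta}, so the constant produced by Theorem \ref{thm:general_Lipschitz} will depend only on $N$, $\Omega$, $k$, $a$, $\eta$, and $\dist(\Omega',\partial\Omega)$.

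First I would check hypothesis (1) of Theorem \ref{thm:general_Lipschitz}: the PDE inside $\Omega_{u_i}$ and the subharmonicity-type inequality in $\Omega$ are precisely the two displayed conclusions of Lemma \ref{lemma:otherproperties}. Hypothesis (3) is literally the content of Proposition \ref{prop:classS}(2), so nothing needs to be adapted there.

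The only step requiring any small computation is hypothesis (2). For this, I would take a test function $\varphi\in C^\infty_c(\Omega)$ with $0\le\varphi\le 1$ and $\supp(\varphi)\Subset B_{2r}(x_0)\Subset\Omega$, and show that the specific right-hand side of \eqref{eq:almss20} follows from the more detailed estimate \eqref{eq:almss} of Proposition \ref{prop:classS}(1). Fixing any $r_0>0$ with $r\le r_0$, the pointwise bound $0\le\varphi\le 1$ combined with $|\supp(\varphi)|\le C_N r^N$ yields
\begin{equation*}
\|\varphi\|_1\le C\,r^N,\qquad \|\varphi\|_2^2\le C\,r^N,
\end{equation*}
so each of the four ``extra'' terms appearing in \eqref{eq:almss} is controlled:
\begin{equation*}
\|\varphi\|_1\le C r^{N-1},\quad r\|\varphi\|_2^2\le C r^{N-1},\quad r\|\varphi\|_1\|\nabla\varphi\|_2^2+r\|\varphi\|_2^2\|\nabla\varphi\|_2^2 \le C r\|\nabla\varphi\|_2^2,
\end{equation*}
where $C=C(N,r_0)$. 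Plugging these into \eqref{eq:almss} gives exactly \eqref{eq:almss20}, with a constant depending only on the quantities allowed by Theorem \ref{thm:general_Lipschitz}.

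With the three hypotheses checked, Theorem \ref{thm:general_Lipschitz} applies and yields local Lipschitz continuity of each $u_i$ on any $\Omega'\Subset\Omega$, concluding the proof. I do not expect any serious obstacle here: the substantive analytic work (the deformation arguments giving the almost-subharmonicity inequality and the clean interior inequality, together with the general Lipschitz criterion) has been carried out in Lemma \ref{lemma:otherproperties}, Proposition \ref{prop:classS}, and Theorem \ref{thm:general_Lipschitz}; this corollary is essentially a bookkeeping step verifying that the hypotheses of the abstract theorem are met by minimizers of $J_\eta$.
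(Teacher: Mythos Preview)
Your proposal is correct and follows exactly the same approach as the paper: both verify that Lemma \ref{lemma:otherproperties} and Proposition \ref{prop:classS} furnish the three hypotheses of Theorem \ref{thm:general_Lipschitz}. You simply spell out in more detail than the paper does why the finer estimate \eqref{eq:almss} implies the coarser form \eqref{eq:almss20} for test functions with $0\le\varphi\le 1$, which is a useful clarification but not a departure from the paper's argument.
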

\begin{proof} From Lemma \ref{lemma:otherproperties} and Proposition \ref{prop:classS}, the minimizers of $J_\eta$ satisfy the conditions of Theorem \ref{thm:general_Lipschitz}, hence we have in particular that any minimizer of the functional $J_\eta$ is locally Lipschitz continuous.
\end{proof}

Next, we show that the nodal sets of a vector  $(u_1,\ldots, u_k)\in \overline H$ that achieves \eqref{eq:ceta} have measure smaller than or equal to the constraint $a$ of the original problem, so that $f_\eta\left(\sum_{i=1}^k|\Omega_{u_i}|\right)=\eta \sum_{i=1}^k|\Omega_{u_i}|$.

\begin{lemma}[Optimal partitions have measure smaller than or equal to $a$]\label{lem:measmalla}
There exists $\bar \eta=\bar \eta(N,k,a,\Omega)\in (0,1]$ such that, given $\eta\in (0,\bar \eta)$ and  $(u_{1}, \ldots, u_{k}) \in \overline{H}$  a nonnegative $L^2$-normalized minimizer achieving $c_\eta$, then it holds
\[
\sum_{i=1}^k |{\Omega}_{u_{i}}| \leq a.
\]
\end{lemma}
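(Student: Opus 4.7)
The plan is to argue by contradiction via an inner shape contraction of the nodal partition. Suppose that for some $\eta \in (0,1]$ (possibly very small) there exists a nonnegative $L^2$-normalized minimizer $(u_1,\ldots,u_k)\in \overline H$ of $J_\eta$ with $S := \sum_{i=1}^k |\Omega_{u_i}| > a$; the goal is to force a lower bound $\eta \geq \bar\eta$ for some $\bar\eta=\bar\eta(N,k,a,\Omega)>0$. Using as competitor the vector of $L^2$-normalized first Dirichlet eigenfunctions associated with an optimal partition of $\Lambda_{1,k}(a)$ (which exists by \cite{ASST}, and has total measure exactly $a$, so that $f_\eta(a)=0$), one obtains $c_\eta \leq \Lambda_{1,k}(a)$. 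Under the assumption $S>a$ we are on the steep branch of $f_\eta$, and minimality gives
\begin{equation*}
\sum_{i=1}^k \lambda_{u_i} + \tfrac{1}{\eta}(S - a) \leq \Lambda_{1,k}(a),
\end{equation*}
where $\lambda_{u_i}:=\int_\Omega|\nabla u_i|^2$. In particular $\sum_i\lambda_{u_i}\leq \Lambda_{1,k}(a)$ and $S \leq a + \eta\,\Lambda_{1,k}(a)$, so for small $\eta$ one has $S<|\Omega|$.

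The heart of the proof is to construct a competitor $(v_1,\ldots,v_k)\in \overline H$ with strictly smaller total measure and an $\eta$-uniformly controlled increase of $\sum_i\lambda_{v_i}$. In the model case where $\Omega$ is star-shaped about the origin, one can simply take a uniform radial contraction $v_i(x) := \tau^{-N/2}u_i(x/\tau)$ with $\tau = 1-t\in(0,1)$ close to $1$. Then $v\in\overline H$ and
\begin{equation*}
\sum_i|\Omega_{v_i}| = \tau^N S = S - tNS + O(t^2),\qquad \sum_i\lambda_{v_i} = \tau^{-2}\sum_i\lambda_{u_i} = \sum_i\lambda_{u_i} + 2t\sum_i\lambda_{u_i} + O(t^2).
\end{equation*}
Choosing $t$ so small that $\sum_i|\Omega_{v_i}|$ still exceeds $a$, both arguments of $f_\eta$ lie on its steep branch, whence
\begin{equation*}
J_\eta(v) - J_\eta(u) = t\Big(2\sum_i\lambda_{u_i} - \tfrac{NS}{\eta}\Big) + O(t^2) < 0
\end{equation*}
for small $t$, as soon as $\eta < \tfrac{NS}{2\sum_i\lambda_{u_i}}$. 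Using $S>a$ and $\sum_i\lambda_{u_i} \leq \Lambda_{1,k}(a)$ from the previous step, this is guaranteed by $\eta < \bar\eta:=\min\{\tfrac{Na}{2\Lambda_{1,k}(a)},1\}$, contradicting the minimality of $u$; hence $S\leq a$ for every $\eta\in(0,\bar\eta)$.

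For a general bounded Lipschitz domain $\Omega$, the global rescaling has to be replaced by an inner shape deformation $\Phi_t = \mathrm{Id}+t\Psi$, with $\Psi\in C^\infty_c(\Omega;\R^N)$ chosen so that $\int_{\cup_i\Omega_{u_i}}\div\Psi \geq A_0>0$, with $A_0$ depending only on $N,k,a,\Omega$. The existence of such $\Psi$ relies on the ``measure slack'' $|\Omega\setminus\bigcup_i\Omega_{u_i}| \geq |\Omega| - a - \eta\Lambda_{1,k}(a) > 0$, which leaves enough room in $\Omega\setminus\bigcup_i\Omega_{u_i}$ for the compensating (negative) contribution to the vanishing total-divergence integral of the compactly supported field $\Psi$. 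Setting $v_i := \|u_i\circ\Phi_t\|_2^{-1}\,u_i\circ\Phi_t$, a standard shape-derivative expansion yields the same type of asymptotics as above -- $\sum_i|\Omega_{v_i}| = S-tA+O(t^2)$ and $\sum_i\lambda_{v_i} = \sum_i\lambda_{u_i}+tB+O(t^2)$, with $A\geq A_0$ and $B$ depending only on the $\eta$-uniform $H^1_0$- and $L^\infty$-bounds for $u$ from Lemmas \ref{lemma:unifbound_ceta} and \ref{lemma:otherproperties} and on $\Psi$ -- and the proof concludes as in the star-shaped case. The main obstacle is precisely the construction of $\Psi$ with the required $\eta$-uniform lower bound on $\int_{\cup_i\Omega_{u_i}}\div\Psi$, and the derivation of the shape-derivative expansion; it is crucial here that the $H^1_0$- and $L^\infty$-bounds (but \emph{not} the Lipschitz bounds of Corollary \ref{coro_Lispchitz}, which blow up as $\eta\to 0^+$) are $\eta$-uniform, so that the constants $A$ and $B$ are $\eta$-independent.
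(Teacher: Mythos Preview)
Your star-shaped argument is correct and pleasant: the uniform dilation $v_i(x)=\tau^{-N/2}u_i(x/\tau)$ immediately yields $\sum_i|\Omega_{v_i}|=\tau^N S$ and $\sum_i\lambda_{v_i}=\tau^{-2}\sum_i\lambda_{u_i}$, and the first-order balance $2\sum_i\lambda_{u_i}$ versus $NS/\eta$ gives the contradiction. This is a genuinely different route from the paper's, which instead uses the vertical truncation $(u_i-t)^+$ as competitor; both exploit that on the branch $S>a$ the penalty carries the large weight $1/\eta$, but your version trades an analytic estimate for a clean scaling computation.

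The problem is the passage to general domains. You correctly flag the construction of $\Psi\in C^\infty_c(\Omega;\R^N)$ with an $\eta$-uniform lower bound on $\int_{\cup_i\Omega_{u_i}}\div\Psi$ as ``the main obstacle'', but you do not actually carry it out, and measure slack alone does not suffice. Since $\Psi$ is compactly supported, $\int_\Omega\div\Psi=0$, so you are asking for a smooth mean-zero function $g=\div\Psi$ with $\int_E g\geq A_0$, where $E=\cup_i\Omega_{u_i}$ is known only through $a<|E|\leq a+\eta\Lambda_{1,k}(a)$. But if $E$ is finely distributed in $\Omega$ (nothing you have at this point prevents that --- you rightly refuse to use the $\eta$-dependent Lipschitz bound), then for any $g$ with a fixed $C^0$ modulus one has $\int_E g\approx \tfrac{|E|}{|\Omega|}\int_\Omega g=0$; forcing $\int_E g\geq A_0$ would require $\Psi$ with $C^1$-norm blowing up along the sequence of partitions, which in turn destroys the $\eta$-uniform control on the shape-derivative constant $B$. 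So as written, the general case has a genuine gap: the field $\Psi$ must depend on the unknown geometry of $E$, and you have not shown it can be chosen with bounds independent of $\eta$.

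The paper sidesteps this entirely by using the competitor $((u_1-t)^+,\ldots,(u_k-t)^+)$, which requires no geometric information on $\Omega$ or on the partition and directly pits $\frac{1}{\eta}|\{0<u_i\leq t\}|$ against the Rayleigh quotient increment. That argument is strictly more robust; your approach, as it stands, proves the lemma only for star-shaped $\Omega$.
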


\begin{proof}
We argue by contradiction. Suppose that $\sum_{i=1}^k |{\Omega}_{u_{i}}| > a$ for some $L^2$-normalized minimizer $(u_1,\ldots, u_k)$, and consider the perturbation 
\[
(u_1^t, \ldots, u_k^t) := ((u_{1} - t)^{+}, \ldots, (u_{k} - t)^{+}), 
\]
for $t>0$ small. Since $(u_{1},\ldots, u_{k}) \in \overline{H}$, it is clear that also $(u_1^t,\ldots, u_k^t) \in \overline{H}$ for $t>0$. Moreover, for $t$ sufficiently small, we have $\sum_{i=1}^k |{\Omega}_{u_i^t}| > a$. 

Since $(u_{1},\ldots, u_{k})$ is a minimizer of $J_{\eta}$ on $\overline H$, we have 
\[
J_{\eta}(u_{1}, \ldots, u_{k}) \leq J_{\eta}(u_1^t,\ldots, u_k^t).
\]
Hence,
\begin{align*}
\sum_{i=1}^k\int_{\Omega}|\nabla u_{i}|^2 + f_\eta\left(\sum_{i=1}^k|{\Omega}_{u_{i}}|\right) \leq  \sum_{i=1}^k\frac{\int_{\Omega} |\nabla (u_{i} - t)^{+}|^2}{\int_{\Omega} |(u_{i} - t)^{+}|^2} + f_\eta\left(\sum_{i=1}^k|{\Omega}_{u_i^t}|\right),
\end{align*}                                                                              
and, by using the definition of $f_\eta$ and the contradiction assumption, 
\begin{align*}
\sum_{i=1}^k\int_{\Omega}|\nabla u_{i}|^2 + \frac{1}{\eta}\left(\sum_{i=1}^k|{\Omega}_{u_{i}}| - a\right) \leq  \sum_{i=1}^k\frac{\int_{\Omega} |\nabla (u_{i} - t)^{+}|^2}{\int_{\Omega} |(u_{i} - t)^{+}|^2} + \frac{1}{\eta}\left(\sum_{i=1}^k|{\Omega}_{u_i^t}| - a\right).
\end{align*}                                                                              
From this, we can follow the proof of \cite[Proposition 2.3]{ASST} (where, instead of $\frac{1}{\eta}$, one has $\mu$), and conclude 
\[
\frac{1}{\sqrt{\eta}} \leq \dfrac{2^{(k-1)/2}\,c_\eta}{N | B_1|^{1/N}}\left(\sum_{i=1}^k|\Omega_{u_{i}}|\right)^{\frac{2-N}{2N}} \leq \dfrac{2^{(k-1)/2}\, C}{N | B_1|^{1/N}}a^{\frac{2-N}{2N}},
\]  
where $C$ is the constant of the upper bound for $c_\eta$ from Lemma \ref{lemma:bounds_ceta}. We have found a contradiction up to  take $\eta$ sufficiently small, so the claim is proved.
\end{proof}

We conclude with the following observation.

\begin{remark}\label{rem:connected} 
Let $(u_{1},\ldots, u_{k})\in \overline H$ be an $L^2$-normalized minimizer for $J_\eta$, attaining $c_\eta$. Then each $\Omega_{u_i}$ is connected (by minimality and from the fact that all sets are open). 
Moreover, for each $i=1,\ldots, k$, defining
\[
D_i:= \Omega\setminus \bigcup\limits_{j\neq i} \overline \Omega_{u_j}.
\]
the function $u_i$ is a solution to the (one-component) minimization problem
\begin{equation}\label{eq:pbuiconstr}
\min\left\{ \frac{\int_{\Omega}|\nabla v|^2}{\int_\Omega v^2}+f_\eta\Big(|\cup_{j\not =i}\Omega_{u_j}\cup \Omega_v|\Big) : v\in H^1_0(D_i)\setminus \{0\}\right\}.   
\end{equation}

Observe that
\[
f_\eta\Big(|\cup_{j\not =i}\Omega_{u_j}\cup \Omega_v|\Big)=g_\eta(|\Omega_v|),
\]
where
\[
g_\eta(s)=\begin{cases}
    \eta(s+|\cup_{j\neq i} \Omega_j|-a) &\text{ if } s\geq a-|\cup_{j\neq i} \Omega_j|\\
    \frac{1}{\eta}(s+|\cup_{j\neq i} \Omega_j|-a) &\text{ if } s\leq a-|\cup_{j\neq i} \Omega_j|,\\
\end{cases}
\]
 and we note that $g_\eta$ satisfies the same properties of $f_\eta$, see Lemma~\ref{lemma:feta}. Then \eqref{eq:pbuiconstr} is equivalent to
\begin{equation}\label{eq:pbuiconstr2}
\min\left\{\frac{\int_{D_i}|\nabla v|^2}{\int_{D_i} v^2}+g_\eta(| \Omega_v|) : v\in H^1_0(D_i)\right\}.
\end{equation}
Summarizing, $u_i$ is a minimizer for~\eqref{eq:pbuiconstr2}, and the interior one-phase points of $\partial \{u_i>0\}$ for problem \eqref{eq:ceta} are the interior (one-phase) free boundary points when looking at $u_i$  as a minimizer of \eqref{eq:pbuiconstr2}. Therefore, the interior one-phase points of $\partial\{u_i>0\}$ can be treated with tools from the regularity theory for one-phase free boundaries~\cite{AguileraAltCaffarelli,AltCaffarelli,BrianconLamboley,RussTreyVelichkov,Wagner}.
\end{remark}

\subsection{Nondegeneracy of eigenfunctions and density estimate from below}

In this subsection, we establish more qualitative properties for the minimizers $(u_1,\dots, u_k)$ of $J_\eta$ in ${\overline H}$, i.e., elements in $\overline{H}$ that achieve $c_\eta$. We prove a nondegeneracy condition which implies, in particular, the fact that $\Omega_{u_i}$ has the finite perimeter. We begin with an estimate from below, which was first proposed in~\cite{AltCaffarelli} for the harmonic case.

\begin{lemma}\label{le:nondegeneracy}
Given $\eta\in (0,1]$, let $(u_1,\dots,u_k)$ be an $L^2$-normalized minimizer for the problem $c_\eta$. 
For every $\theta\in (0,1)$, there are positive constants $K_0,\rho_0$ depending only on $\theta$,  $N$, $k$, $a$, $\Omega$ and $\eta$, such that the following assertion holds: if $\rho\leq \rho_0$ and $x_0\in \overline\Omega$, then for all $i=1,\dots,k$ we have
 \begin{equation}\label{eq:nondegmean}
 \dashint_{\partial B_\rho(x_0)\cap \Omega}{u_i\,d\mathcal H^{N-1}}\leq K_0 \rho\,\,\,\,
 \Longrightarrow \,\,\,\,u_i\equiv0 \text{ in }\,\,B_{\theta\rho}(x_0)\cap \Omega.
 \end{equation}
 \end{lemma}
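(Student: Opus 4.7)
The proof strategy follows Alt-Caffarelli's nondegeneracy technique, adapted to the Rayleigh quotient with measure penalization, in three main steps. Fix an auxiliary radius $\theta'\in(\theta,1)$. First I would deduce a pointwise upper bound $\sup_{B_{\theta'\rho}(x_0)\cap\Omega}u_i\leq M:=C(\theta',N)(K_0+\rho_0)\rho$ from the mean-value hypothesis. Let $h$ be the harmonic extension of $u_i|_{\partial B_\rho(x_0)\cap\Omega}$ into $B_\rho(x_0)\cap\Omega$ (with zero data on $B_\rho\cap\partial\Omega$, consistent with $u_i\in H^1_0(\Omega)$). The Poisson kernel bound yields $\sup_{B_{\theta'\rho}\cap\Omega}h\leq C(\theta')\dashint_{\partial B_\rho\cap\Omega}u_i\leq C(\theta')K_0\rho$. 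Since $-\Delta u_i\leq \lambda_{u_i}u_i\leq C$ by Lemma~\ref{lemma:otherproperties} and the uniform $L^\infty$ bound, the maximum principle applied to $u_i-h$ gives $u_i-h\leq C\rho^2$ on $B_\rho\cap\Omega$, hence the bound on $\sup u_i$.

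Next I would construct the competitor $\tilde u_i:=u_i(1-\psi)$, where $\psi\in H^1(\Omega)$ is the harmonic capacitor of $\overline{B_{\theta\rho}(x_0)}$ inside $B_{\theta'\rho}(x_0)$: $\psi=1$ on $B_{\theta\rho}$, $\psi=0$ outside $B_{\theta'\rho}$, $0\leq\psi\leq1$, harmonic in the annulus. Then $\tilde u_i\equiv 0$ on $B_{\theta\rho}\cap\Omega$, $\tilde u_i=u_i$ outside $B_{\theta'\rho}$, and $(u_1,\ldots,\tilde u_i,\ldots,u_k)\in\overline H$ for $\rho_0$ small enough that $\tilde u_i\not\equiv 0$. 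An integration by parts exploiting $\Delta\psi=0$ in the annulus gives
\[
\int|\nabla \tilde u_i|^2=\int(1-\psi)^2|\nabla u_i|^2+\frac{C(N,\theta,\theta')}{\rho}\int_{\partial B_{\theta'\rho}}u_i^2\,d\mathcal H^{N-1},
\]
the surface term arising from the distributional jump of $\nabla\psi$ across $\partial B_{\theta'\rho}$. Using the $L^\infty$ bound to check $\int\tilde u_i^2\geq 1-C(K_0+\rho_0)^2\rho^{N+2}\geq 1/2$, the minimality inequality $J_\eta(u)\leq J_\eta(u_1,\ldots,\tilde u_i,\ldots,u_k)$ combined with Lemma~\ref{lemma:feta}(2) yields
\[
\eta\,|\Omega_{u_i}\cap B_{\theta\rho}(x_0)|\;\leq\;\frac{C(N,\theta,\theta')}{\rho}\int_{\partial B_{\theta'\rho}}u_i^2\,d\mathcal H^{N-1}+\mathrm{l.o.t.}\;\leq\;C(\theta,\theta',N)(K_0+\rho_0)^2\rho^N,
\]
where the pointwise bound $\sup u_i\leq M$ gives $\int_{\partial B_{\theta'\rho}}u_i^2\leq C\rho^{N-1}M^2$.

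Finally, to upgrade this measure smallness to $u_i\equiv 0$ on $B_{\theta\rho}$, I would combine with the Lipschitz continuity of $u_i$ (Corollary~\ref{coro_Lispchitz}): if $u_i(y)>0$ for some $y\in B_{\theta\rho/2}(x_0)$, then $u_i\geq u_i(y)/2$ on $B_{u_i(y)/(2L)}(y)\subset B_{\theta\rho}$, so $|\Omega_{u_i}\cap B_{\theta\rho}|\geq c_N(u_i(y)/L)^N$. Combined with the previous estimate this forces the refined bound $\sup_{B_{\theta\rho/2}}u_i\leq C(K_0+\rho_0)^{2/N}\eta^{-1/N}\rho$. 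This refined bound in turn feeds as a new, smaller, mean-value hypothesis at scale $\theta\rho/2$, and one iterates at geometrically decreasing scales; choosing $K_0,\rho_0$ (depending on $\theta,N,k,a,\Omega,\eta$) small enough that the sequence of sup bounds contracts to zero, one concludes $u_i\equiv 0$ on $B_{\theta\rho}\cap\Omega$. The boundary case $x_0\in\partial\Omega$ works the same way using $u_i\in H^1_0(\Omega)$.

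The main obstacle is precisely this final step: the energy comparison with the capacitor competitor delivers only smallness of the positivity measure, not exact vanishing, and the Lipschitz-based density bound improves the pointwise control only by the sublinear factor $(K_0+\rho_0)^{2/N}$. Closing the gap requires a careful geometric iteration with constants tracked explicitly in terms of $\eta$ to ensure contraction; by contrast, the harmonic-majorant pointwise bound and the capacitor energy identity are essentially standard.
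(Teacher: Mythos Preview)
Your first two steps are sound, and in fact your capacitor competitor $\tilde u_i=(1-\psi)u_i$ gives slightly more than you claim: since $(1-\psi)^2\equiv 0$ on $B_{\theta\rho}$, the minimality inequality actually yields
\[
\int_{B_{\theta\rho}}|\nabla u_i|^2+\tfrac{\eta}{2}\,|\Omega_{u_i}\cap B_{\theta\rho}|\ \le\ \frac{C(\theta,\theta',N)}{\rho}\int_{\partial B_{\theta'\rho}}u_i^2\,d\mathcal H^{N-1}+\text{l.o.t.}\ \le\ C(K_0+\rho_0)^2\rho^N.
\]
The genuine gap is Step~3. Your recursion reads $S(\alpha r)\le C_\ast\, S(r)^{2/N}$ with $S(r)=r^{-1}\sup_{B_r}u_i$ and $\alpha=\theta/2$, where $C_\ast$ absorbs $L,\eta,\theta$. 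For $N\ge 3$ the map $t\mapsto C_\ast t^{2/N}$ has slope $+\infty$ at $t=0$, so its positive fixed point $t_\ast=C_\ast^{\,N/(N-2)}$ is attracting \emph{from below}: if you start with $K_0$ small the iterates \emph{increase} toward $t_\ast$, they do not contract to~$0$. And even if one had $K_n\to 0$, that would only give $u_i(x_0)=0$ at the common centre of the shrinking balls, not $u_i\equiv 0$ on the fixed ball $B_{\theta\rho}(x_0)$. The obstacle you flag is therefore not a matter of tracking constants.

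The paper closes in one step, without iteration and without invoking Lipschitz continuity, by a different competitor: $v=\min\{u_i,w\}$, where $w$ solves $-\Delta w=M\gamma$ in the annulus $B_{\sqrt{\theta}\rho}\setminus B_{\theta\rho}$ with $w=\delta_\rho:=\sup_{B_{\sqrt{\theta}\rho}}u_i$ on the outer sphere and $w=0$ on $\partial B_{\theta\rho}$. Testing the equation for $w$ with $(u_i-w)_+$ converts the energy gap into a surface integral on the \emph{inner} sphere $\partial B_{\theta\rho}$, with density $|\partial_\nu w|\le C(\delta_\rho+\rho^2)/\rho$; a $W^{1,1}$ trace inequality and the pointwise bound $u_i\le\delta_\rho$ then control $\int_{\partial B_{\theta\rho}}u_i$ back by $|\Omega_{u_i}\cap B_{\theta\rho}|$ and $\int_{B_{\theta\rho}}|\nabla u_i|^2$ themselves, with prefactor $O(\delta_\rho/\rho)=O(K_0+\rho_0)$. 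For $K_0,\rho_0$ small this becomes a strict self-absorption $X\le\frac12 X$, forcing $X=0$. The structural reason your estimate cannot be made self-absorbing is that your surface term lives on $\partial B_{\theta'\rho}$, outside the target ball.
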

\begin{proof} 
Without loss of generality, we fix $x_0=0$; we also consider $u_i$ to be defined in the whole $\R^N$, being extended to 0 in the complementary of $\Omega$. 
\smallbreak

\noindent \textbf{Step 1.} (building a competitor) By Lemmas \ref{lemma:unifbound_ceta} and \ref{lemma:otherproperties}, there exist universal constants $\gamma,M$ (depending only on $N$, $k$, $a$, $\Omega$)  such that 
\begin{equation}\label{eq:universalestimates_nondeg}
-\Delta u_i\leq \gamma\quad \text{ in } \R^N \text{ in the distributional sense,} \quad \|u_i\|_{L^\infty(\Omega)}\leq \gamma,\quad  \|u_i\|^2_{H^1_0(\Omega)}\leq M
\end{equation}
for every $i=1,\ldots, k$. Then, the function 
\[
 x\mapsto u_i(x)+\gamma\frac{|x|^2-\rho^2}{2N}
 \]
 is subharmonic in $B_\rho$.

 By Lemma \ref{lemma:kind_of_meanvalueth} and using the assumption in \eqref{eq:nondegmean}, for every $\theta\in(0,1)$ there exists $c=c(\theta,N)$ such that 
 \begin{align}
 \delta_\rho&:=\sup_{B_{\sqrt{\theta}\rho\cap \Omega}} u_i \leq \sup_{B_{\sqrt{\theta}\rho}}(u_i+\gamma\frac{|x|^2-\rho^2}{2N})+\gamma\frac{\rho^2(1-\theta)}{2N} \nonumber \\ 
            &\leq c\left(\dashint_{\partial B_\rho\cap \Omega}{u_i\,d\mathcal H^{N-1}}+\gamma \rho^2\right)\leq c(K_0\rho+\gamma\rho^2). \label{eq:4.20}
 \end{align}
Let us consider the nonnegative solution $w$ of \begin{equation}\label{www}
 \begin{cases}
 -\Delta w=M\gamma,\qquad &\text{in }B_{\sqrt{\theta}\rho}\setminus B_{\theta \rho},\\
 w=\delta_\rho,\qquad &\text{on }\partial B_{\sqrt{\theta}\rho},\\
 w=0,\qquad &\text{on }B_{\theta\rho}.
 \end{cases}
 \end{equation}
Observe that the function $w$ is explicit in the annulus $B_{\sqrt{\theta}\rho}\setminus B_{\theta \rho}$:
\[
w(x)=\frac{M\gamma}{2N}((\theta\rho)^2-|x|^2)+
\left(\delta_\rho + \frac{M\gamma ((\sqrt{\theta}\rho )^2-(\theta\rho)^2)}{2N}\right)\frac{(\theta^{3/2}\rho^2)^{N-2}}{(\sqrt{\theta}\rho)^{N-2}-(\theta\rho)^{N-2}}\left(\frac{1}{(\theta \rho)^{N-2}}-\frac{1}{|x|^{N-2}}\right),
\]
if $N\geq 3$, while
\[
w(x)=\frac{\gamma}{2N}((\theta \rho)^2-|x|^2)+\frac{\delta_\rho+\frac{\gamma }{2N}((\sqrt{\theta}\rho)^2-(\theta \rho)^2)}{\log (1/\sqrt{\theta})}(\log |x|-\log (\theta \rho))
\]
 if $N=2$. By definition, $w\geq u_i$ on $\partial B_{\sqrt{\theta}\rho}$, therefore the function 
 \[
 v:=
 \begin{cases}
 u_i,\qquad &\text{in }{\R^N}\setminus B_{\sqrt{\theta}\rho},\\
 \min\{u_i,w\},\qquad &\text{in }B_{\sqrt{\theta}\rho},
 \end{cases}
 \]
 satisfies $v=w=0$ in $B_{\theta \rho}$, {$v=u_i=0$ in $\R^N\setminus \Omega$ } and observe that
 \[
 v\leq u_i,\qquad \Omega_{v}\subset \Omega_{u_i},\qquad \Omega_{v}\setminus B_{\sqrt{\theta}\rho}=\Omega_{u_i}\setminus B_{\sqrt{\theta}\rho}.
 \]
 \smallbreak

 \noindent \textbf{Step 2.} (Comparing energies)
Since $v\in H^1_0(\Omega)$ and from the properties listed above, observe that $(u_1,\dots, u_{i-1},{v|_\Omega},u_{i+1},\dots, u_k)\in \overline{H}$, which we can test against the minimality of $(u_1,\dots, u_k)$. At the level of the corresponding nodal sets, this can be thought as an inner variation of the optimizer, giving
\begin{equation}\label{eq:e1}
\begin{split}
\int_{\Omega} |\nabla u_i|^2+f_\eta(\sum_j|\Omega_{u_j}|)\leq \frac{\displaystyle \int_{\Omega} |\nabla v|^2}{\displaystyle \int_\Omega v^2}+f_\eta(\sum_{j\not=i}|\Omega_{u_j}|+ |\Omega_{v}|)
\end{split}
\end{equation}
or, equivalently,
\begin{equation}\label{eq:e1.co}
\int_\Omega v^2 \left[\int_{\Omega} |\nabla u_i|^2+f_\eta(\sum_j|\Omega_{u_j}|)-f_\eta(\sum_{j\not=i}|\Omega_{u_j}| +|\Omega_{v}|)\right]\leq \int_{\Omega} |\nabla v|^2.    
\end{equation}
As $v=0$ in $B_{\theta\rho}$, using also the Lipschitz-like property~ Lemma \ref{lemma:feta}-(2) of $f_\eta$, we obtain 
 \[
 \begin{split}
 \eta|\Omega_{u_i}\cap B_{\theta\rho}|&=\eta|(\Omega_{u_i}\setminus \Omega_v)\cap B_{\theta \rho}|\leq \eta|(\Omega_{u_i}\setminus \Omega_{v})\cap B_{\sqrt{\theta}\rho}|=\eta \left(|\Omega_{u_i}|-|\Omega_{v}|  \right)\\
 & \leq f_\eta(\sum_j|\Omega_{u_j}|)-f_\eta(\sum_{j\not= i}|\Omega_{u_j}|+|\Omega_{v}|).
 \end{split}
 \]
On the other hand, notice that, since $v=u_i$ in $\Omega\setminus B_{\sqrt{\theta}\rho}$ and $v=u_i=0$ in $\R^N\setminus \Omega$,
\[
\begin{split}
\int_\Omega v^2&= \int_\Omega u_i^2+ \int_\Omega (v^2-u_i^2){=1-\int_{B_{\sqrt{\theta}\rho}\cap \Omega}(u_i^2-v^2)}=1-\int_{B_{\sqrt{\theta}\rho}}(u_i^2-v^2).
\end{split}
\]
Putting the information above together into~\eqref{eq:e1.co}, we have 
 \[
\left(1-\int_{B_{\sqrt{\theta}\rho}}(u_i^2-v^2)\right)\left(\int_{\Omega} |\nabla u_i|^2+\eta|\Omega_{u_i}\cap B_{\theta\rho}|\right)\leq \int_{\Omega} |\nabla v|^2.
 \]
Moreover, since $v=0$ in $B_{\theta \rho}$,
\begin{align*}
 \int_{B_{\sqrt{\theta}\rho}} |\nabla u_i|^2+\eta\left(1-\int_{B_{\sqrt{\theta}\rho}}(u_i^2-v^2)\right)|\Omega_{u_i}\cap B_{\theta\rho}| & \leq \int_{B_{\sqrt{\theta}\rho}} |\nabla v|^2+ \int_{B_{\sqrt{\theta}\rho}}(u_i^2-v^2)\int_\Omega |\nabla u_i|^2\\
 &= \int_{B_{\sqrt{\theta}\rho}\setminus B_{\theta \rho}} |\nabla v|^2+ \int_{B_{\sqrt{\theta}\rho}}(u_i^2-v^2)\int_\Omega |\nabla u_i|^2.
 \end{align*}
Clearly, by \eqref{eq:universalestimates_nondeg} and by taking $\rho_0$ sufficiently small, 
\[
0\leq \int_{B_{\sqrt{\theta}\rho}}(u_i^2-v^2)\leq  \|u_i\|^2_{L^\infty(\Omega)}|B_{\rho_0}|\leq \gamma^2 \rho_0^N |B_1| \leq\frac12.
\]
Recalling also the definition of $M>0$ from \eqref{eq:universalestimates_nondeg},
\[
 \begin{split}
 \int_{B_{\sqrt{\theta}\rho}} |\nabla u_i|^2+\frac{\eta}{2}|\Omega_{u_i}\cap B_{\theta\rho}|\leq \int_{B_{\sqrt{\theta}\rho}\setminus B_{\theta\rho}} |\nabla v|^2+ M\int_{B_{\sqrt{\theta}\rho}}(u_i^2-v^2).
 \end{split}
\]

 Thanks to the inequalities above, we infer 
 \begin{align}
 \int_{B_{\theta\rho}}|\nabla u_i|^2+\frac{\eta}{2}|\Omega_{u_i}\cap B_{\theta\rho}|&=\int_{B_{\sqrt{\theta}\rho}}|\nabla u_i|^2+\frac{\eta}{2}|\Omega_{u_i}\cap B_{\theta\rho}|-\int_{B_{\sqrt{\theta}\rho}\setminus B_{\theta \rho}}|\nabla u_i|^2 \nonumber\\
 &\leq \int_{B_{\sqrt{\theta}\rho}\setminus B_{\theta\rho}} (|\nabla v|^2-|\nabla u_i|^2)+ M\int_{B_{\sqrt{\theta}\rho}}(u_i^2-v^2).\label{eq:4.21}
 \end{align} 

 \noindent \textbf{Step 3.} (estimating the right-hand-side of \eqref{eq:4.21}) Now we estimate both terms of the right-hand-side of \eqref{eq:4.21}. From the definition of $v$, we obtain
\begin{align}
\int_{B_{\sqrt{\theta}\rho}\setminus B_{\theta \rho}} (|\nabla v|^2-|\nabla u_i|^2)&=
\int_{(B_{\sqrt{\theta}\rho}\setminus B_{\theta \rho})\cap\{u_i\leq w\}} (|\nabla u_i|^2-|\nabla u_i|^2)+\int_{(B_{\sqrt{\theta}\rho}\setminus B_{\theta \rho})\cap \{u_i>w\}} (|\nabla w|^2-|\nabla u_i|^2)\nonumber\\
&=\int_{(B_{\sqrt{\theta}\rho}\setminus B_{\theta \rho})\cap \{u_i>w\}} (|\nabla w|^2-|\nabla u_i|^2) \nonumber\\
&\leq 2\int_{(B_{\sqrt{\theta}\rho}\setminus B_{\theta\rho})\cap\{u_i>w\}}(|\nabla w|^2-\nabla u_i\cdot\nabla w). \label{eq:nondeg_aux1}
\end{align}
Moreover, since $0\leq u_i+w\leq 2\|u_i\|_{L^\infty(\Omega)}\leq 2\gamma$ in $(B_{\sqrt{\theta}\rho}\setminus B_{\theta \rho})\cap \{u_i>w\}$,
\begin{align}
\int_{B_{\sqrt{\theta}\rho}} (u_i^2-v^2)&=\int_{B_{\sqrt{\theta}\rho}\setminus B_{\theta \rho}} (u_i^2-v^2)+ \int_{B_{\theta \rho}} (u_i^2-v^2) =\int_{(B_{\sqrt{\theta}\rho}\setminus B_{\theta \rho})\cap \{u_i>w\} } (u_i^2-w^2) + \int_{B_{\theta \rho}} u_i^2\nonumber \\
            &\leq 2\gamma \int_{(B_{\sqrt{\theta}\rho}\setminus B_{\theta \rho})\cap \{u_i>w\} } (u_i-w) + \int_{B_{\theta \rho}} u_i^2. \label{eq:nondeg_aux2}
\end{align}
 By combining \eqref{eq:nondeg_aux1} and \eqref{eq:nondeg_aux2} with \eqref{eq:4.21}, we conclude that
\begin{align}
\int_{B_{\theta\rho}}|\nabla u_i|^2+\frac{\eta}{2}|\Omega_{u_i}\cap B_{\theta\rho}|\leq & 2\int_{(B_{\sqrt{\theta}\rho}\setminus B_{\theta\rho})\cap\{u_i>w\}}(|\nabla w|^2-\nabla u_i\cdot\nabla w)\nonumber \\
        &+2 M \gamma\int_{(B_{\sqrt{\theta}\rho}\setminus B_{\theta \rho})\cap \{u_i>w\}}(u_i-w)+M\int_{B_{\theta \rho}}u_i^2. \label{eq:4.22.2}
\end{align}

On the other hand, testing \eqref{www} with $(u_i-w)_+$ and integrating over ${B_{\sqrt{\theta}\rho}\setminus B_{\theta\rho}}$, we obtain 
\[
\int_{B_{\sqrt{\theta}\rho}\setminus B_{\theta\rho}} \nabla w \cdot \nabla (u_i-w)_+ {+} \int_{\partial B_{\sqrt{\theta}\rho}} \frac{\partial w}{\partial \nu} (u_i-w)_+ \,d\mathcal H^{N-1}{-} \int_{\partial B_{{\theta}\rho}} \frac{\partial w}{\partial \nu} (u_i-w)_+\,d\mathcal H^{N-1} =M\gamma \int_{B_{\sqrt{\theta}\rho}\setminus B_{\theta\rho}} (u_i-w)_+,
\]
where $\nu$ stands for the inner unit normal with respect to $B_{\sqrt{\theta}\rho}$ and $B_{{\theta}\rho}$, respectively. After multiplying this identity by a factor 2, and since $u_i\leq \delta_\rho=w$ on $\partial B_{\sqrt{\theta}\rho}$ and $w=0$ on $\partial B_{\theta \rho}$, we have 
 \begin{equation}\label{eq:4.22}
2\int_{(B_{\sqrt{\theta}\rho}\setminus B_{\theta\rho})\cap \{u_i>w\}}(|\nabla w|^2-\nabla u_i\cdot\nabla w)+2M\gamma\int_{(B_{\sqrt{\theta}\rho}\setminus B_{\theta \rho})\cap \{u_i>w\}}(u_i-w)=-2\int_{\partial B_{\theta\rho}}\frac{\partial w}{\partial \nu}u_i\,d\mathcal H^{N-1},
 \end{equation}
 where, in the last integral, $\nu$ denotes the inner unit normal to $B_{\theta\rho}$.

Recalling the explicit expression of the torsion function on an annulus (see also ~\cite[Equations (4.22)-(4.23)]{BrascoDePhilippisVelichkov}), with a direct computation one obtains, for $x\in \partial B_{\theta \rho}$,
\[
\left|\frac{\partial w}{\partial \nu}(x)\right|=|w'(\theta \rho)|\leq \begin{cases}
    \frac{M\gamma\theta \rho}{N}+\frac{(2N\delta_\rho+ M\gamma\theta \rho^2)(N-2)\theta^\frac{N-4}{2}}{2N\rho (\theta^\frac{N-2}{2}-\theta^{N-2})} & \text{ if } N\geq 3,\\
    \frac{\gamma\theta \rho}{N}+\frac{2N\delta_\rho+\gamma \theta \rho^2}{2N \log(1/\sqrt{\theta})\rho} & \text{ if } N\geq 2.
\end{cases}
\]
In either case, recalling that $\gamma$ and $M$ are universal constants, there exists $\beta=\beta(N,\theta)$ such that
\begin{equation}\label{eq:4.22.3}
 2\left|\frac{\partial  w}{\partial \nu}\right|\leq \beta\frac{\delta_\rho+\rho^2}{\rho}\qquad \text{on }\partial B_{\theta\rho}.
 \end{equation}

 We can now combine~\eqref{eq:4.22.2},  \eqref{eq:4.22} and \eqref{eq:4.22.3} to obtain
 \begin{equation}\label{eq:4.23}
 \int_{B_{\theta\rho}}|\nabla u_i|^2+\frac{\eta}{2}|\Omega_{u_i}\cap B_{\theta\rho}|\leq \beta\frac{\delta_\rho+\rho^2}{\rho}\int_{\partial B_{\theta\rho}}u_i\,d\mathcal H^{N-1}+M\int_{B_{\theta\rho}}u_i^2.
 \end{equation}

By definition of $\delta_p$, we have the estimate
\begin{equation}\label{eq:estimate_aux_square}
 M\int_{B_{\theta\rho}}u_i^2=M\int_{B_{\theta\rho}\cap\Omega_{u_i}}u_i^2\leq M\delta_\rho^2|\Omega_{u_i}\cap B_{\theta \rho}|.
 \end{equation}
Moreover, using the definition of $\delta_\rho$, the trace inequality in $W^{1,1}$ and the Young inequality we obtain, 
 \begin{align}
 \int_{\partial B_{\theta\rho}}u_i\,d\mathcal H^{N-1}&\leq \alpha(N,\theta)\left(\frac{1}{\rho}\int_{B_{\theta\rho}}u_i+\int_{B_{\theta\rho}}|\nabla u_i|\right) \nonumber \\
        &\leq  \alpha(N,\theta)\left( \frac{1}{\rho}\int_{B_{\theta\rho}\cap \{u_i>0\}}u_i+\int_{B_{\theta\rho}\cap \{u_i>0\}}\left(\frac{|\nabla u_i|^2}{2}+\frac{1}{2} \right)\right) \nonumber \\
 &\leq \alpha(N,\theta)\left(\left(\frac{\delta_\rho}{\rho}+\frac12\right)|\Omega_{u_i}\cap B_{\theta\rho}|+\frac12\int_{B_{\theta\rho}}|\nabla u_i|^2\right),\label{eq:estimate_aux_square2}
 \end{align}
 for some $\alpha=\alpha(N,\theta)>0$. 
 By using \eqref{eq:estimate_aux_square} and \eqref{eq:estimate_aux_square2} in \eqref{eq:4.23}, and recalling again~\eqref{eq:4.20}, we have, for all $\rho\leq \rho_0$ \[
 \begin{split}
 &\int_{B_{\theta\rho}}|\nabla u_i|^2+\frac{\eta}{2}|\Omega_{u_i}\cap B_{\theta\rho}|\\
 &\leq \beta\frac{\delta_\rho+\rho^2}{\rho}\int_{\partial B_{\theta\rho}}u_i\,d\mathcal H^{N-1}+\delta_\rho^2 M|\Omega_{u_i}\cap B_{\theta\rho}|\\
 &\leq \beta(c(K_0+{\gamma} \rho)+\rho)\int_{\partial B_{\theta\rho}}u_i\,d\mathcal H^{N-1}+c^2(K_0\rho+{\gamma}\rho^2)^2M|\Omega_{u_i}\cap B_{\theta\rho}|\\
 &\leq \beta\alpha(c(K_0+{\gamma}\rho)+\rho)\left[ \left(\frac{\delta_\rho}{\rho}+\frac12\right)|\Omega_{u_i}\cap B_{\theta\rho}|+\frac12\int_{B_{\theta\rho}}|\nabla u_i|^2\right]\\
 &\hspace{50pt}+c^2(K_0\rho+{\gamma}\rho^2)^2M|\Omega_{u_i}\cap B_{\theta\rho}|\\
 &\leq \beta\alpha(c(K_0+{\gamma}\rho)+\rho)\left[ \left(c(K_0+\gamma \rho)+\frac12\right)|\Omega_{u_i}\cap B_{\theta\rho}|+\frac12\int_{B_{\theta\rho}}|\nabla u_i|^2\right]\\
 &\hspace{50pt}+c^2(K_0\rho+{\gamma}\rho^2)^2M|\{u_i>0\}\cap B_{\theta\rho}|\\
 &\leq \frac{\beta \alpha}{2}(c(K_0+\gamma \rho_0)+\rho_0)\int_{B_{\theta \rho}}|\nabla u_i|^2\\
 &\hspace{50pt}+\left(\beta \alpha (c(K_0+\gamma \rho_0+\rho_0))(c(K_0+\gamma \rho_0)+\frac{1}{2})+c^2(K_0\rho_0+\gamma \rho_0^2)^2M\right) |\Omega_{u_i}\cap B_{\theta \rho}|.
 \end{split}
 \]
Then, by choosing $K_0,\rho_0$ sufficiently small (and depending on $\eta$) so that
 \[
 \frac{\beta \alpha}{2}(c(K_0+\gamma \rho_0)+\rho_0)<1,\quad \left(\beta \alpha (c(K_0+\gamma \rho_0+\rho_0))(c(K_0+\gamma \rho_0)+\frac{1}{2})+c^2(K_0\rho+\gamma \rho_0^2)^2M\right)<\frac{\eta}{2},
 \]
 we conclude that $u_i\equiv 0$ in $B_{\theta\rho}$, for all $\rho\leq\rho_0$.
 \end{proof}

 \begin{remark}\label{rmk:nondeg}
The results of Lemma~\ref{le:nondegeneracy} and, in particular, equation~\eqref{eq:nondegmean}, can also be written  in the following equivalent ways:\[
 \dashint_{ B_\rho(x_0)\cap \Omega}{u_i\,d\mathcal H^{N-1}}\leq K_0 \rho\,\,\,\,
 \Longrightarrow \,\,\,\,u_i\equiv0 \text{ in }\,\,B_{\theta\rho}(x_0)\cap \Omega,
 \]
 or \begin{equation}\label{eq:nondeg_sup}
 \sup_{B_{\rho}(x_0)\cap \Omega}u_i\leq K_0 \rho\,\,\,\,
 \Longrightarrow \,\,\,\,u_i\equiv0 \text{ in }\,\,B_{\theta\rho}(x_0)\cap \Omega,
 \end{equation}
 see for example~\cite[Remark~2.8]{MazzoleniTerraciniVelichkov}.
 They imply there is a universal constant $C>0$ such that, if $x_0\in \overline \Omega_{u_i}$,  $\sup_{B_\rho(x_0)\cap \Omega}u_i\geq C\rho$.
 Moreover, equation~\eqref{eq:nondegmean} can be also written in the following counterpositive way: 
 \begin{equation}\label{eq:nondegmean2}
B_{\theta \rho}(x_0)\cap \Omega_{u_i}\not=\emptyset \,\,\,\,
 \Longrightarrow \,\,\,\,\dashint_{ B_\rho(x_0)\cap \Omega}{u_i\,d\mathcal H^{N-1}}\geq K_0 \rho.
 \end{equation}
 \end{remark}

 An immediate and fundamental consequence of Lemma~\ref{le:nondegeneracy} and of the Lipschitz continuity of the minimizers $(u_1,\ldots, u_k)$ of $J_\eta$ on ${\overline{H}}$ is the following two-sided density estimate on each $\Omega_{u_i}$.

\begin{lemma}\label{le:densityest}
Given $\eta\in (0,1]$, let $(u_1,\dots,u_k)$ be an $L^2$--normalized minimizer for the problem $c_\eta$, and take $\rho_0$ and $K_0$ from Lemma~\ref{le:nondegeneracy}.
There exists $\rho_1=\rho_1(N,k,a,\Omega,\eta)\in(0,\rho_0)$,  $\xi>0$ (depending on $N,k,a,\Omega$, on the Lipschitz constant of $(u_1,\dots, u_k)$, and on $K_0$ - thus, in particular, on $\eta$) such that, for every $i=1,\dots, k$, 
\begin{equation}\label{eq:densityest}
\xi\leq \frac{|\Omega_{u_i}\cap B_\rho(x_0)|}{|B_\rho|}\leq 1-\xi \qquad \text{whenever  $x_0\in \partial \Omega_{u_i}\cap {\Omega}$, \;$\rho\leq \rho_1$}.
\end{equation}
In particular, 
\begin{equation}\label{eq:measHN-1}
\mathcal H^{N-1}(\partial \Omega_{u_i}\setminus\partial^*\Omega_{u_i})=0.
\end{equation}
\end{lemma}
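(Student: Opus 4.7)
The proof has two main parts corresponding to the two inequalities in \eqref{eq:densityest}, and then \eqref{eq:measHN-1} follows from a standard Federer-type identification. The lower bound is a direct combination of the nondegeneracy Lemma~\ref{le:nondegeneracy} and Lipschitz regularity from Corollary~\ref{coro_Lispchitz}. The upper bound requires a variational competitor built by harmonic replacement, truncated to respect the partition structure.

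\textbf{Lower density.} Fix $x_0 \in \partial \Omega_{u_i} \cap \Omega$ and choose $\rho_1 \leq \rho_0$ with $B_{\rho_1}(x_0) \subset \Omega$. For $\rho \leq \rho_1$, since $x_0 \in \overline{\Omega}_{u_i}$, the ball $B_{\theta \rho/2}(x_0)$ meets $\Omega_{u_i}$ for the $\theta\in(0,1)$ appearing in Lemma~\ref{le:nondegeneracy}; hence the contrapositive of \eqref{eq:nondeg_sup} applied at radius $\rho/2$ gives $\sup_{B_{\rho/2}(x_0)\cap\Omega} u_i \geq K_0 \rho /2$. Pick $y \in \overline{B_{\rho/2}(x_0)}$ with $u_i(y) \geq K_0 \rho/4$, and let $L$ denote the local Lipschitz constant of $u_i$ near $x_0$ from Corollary~\ref{coro_Lispchitz}. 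For $z$ with $|z-y|\leq K_0\rho/(8L)$ one has $u_i(z) \geq u_i(y)-L\cdot K_0\rho/(8L) \geq K_0\rho/8 >0$. Shrinking $\rho_1$ so that $K_0/(8L)\leq 1/2$, the ball $B_{K_0\rho/(8L)}(y)$ lies inside $\Omega_{u_i}\cap B_\rho(x_0)$, which yields $|\Omega_{u_i}\cap B_\rho(x_0)| \geq (K_0/(8L))^N |B_\rho|$.

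\textbf{Upper density and consequence.} For the upper bound I would replace $u_i$ on $B_\rho(x_0)$ by a Dirichlet extension $h$ with boundary values $u_i|_{\partial B_\rho}$, chosen so that $h\geq u_i$ in $B_\rho$ (using $-\Delta u_i \leq \lambda_{u_i} u_i \leq C$ from Lemma~\ref{lemma:otherproperties}), and then truncate $h$ away from the other phases, producing a competitor $v = \max(u_i, h)\,\chi_{\Omega\setminus\bigcup_{j\neq i}\overline{\Omega}_{u_j}}$ with $(u_1,\dots,u_{i-1},v,u_{i+1},\dots,u_k)\in\overline{H}$. Testing $J_\eta$-minimality of $(u_1,\dots,u_k)$ against this perturbation, controlling the Dirichlet defect $\int_{B_\rho}(|\nabla u_i|^2 - |\nabla h|^2)$ by $CL^2\rho^{N+1}$ (exploiting $u_i(x_0)=0$ and the Lipschitz bound $u_i\leq L\rho$ on $B_\rho$), and invoking the one-sided Lipschitz property of $f_\eta$ from Lemma~\ref{lemma:feta}(2), yields a lower bound of order $\rho^N$ on $|B_\rho\setminus\Omega_{u_i}|$, directly in the one-phase case or—in the multi-phase case—via the already established lower density applied to the neighboring $\Omega_{u_j}$. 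For \eqref{eq:measHN-1}, Lipschitz regularity plus nondegeneracy implies that $\Omega_{u_i}$ has locally finite perimeter (by coarea on $\{u_i>t\}$ and letting $t\to 0^+$, using the density control to pass to the limit); then \eqref{eq:densityest} forces every point of $\partial\Omega_{u_i}\cap\Omega$ into the measure-theoretic boundary, and Federer's theorem identifies the latter with $\partial^*\Omega_{u_i}$ up to $\mathcal{H}^{N-1}$-null sets. The main obstacle is the upper-density step: the harmonic competitor has to be carefully truncated against the other phases, and because the penalization $f_\eta$ is only one-sided Lipschitz and not $C^1$, the argument must use Lemma~\ref{lemma:feta}(2) in place of an Euler–Lagrange identity.
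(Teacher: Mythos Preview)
Your lower-density argument, your handling of the two-phase upper bound (via the lower density of the neighboring phase), and your derivation of \eqref{eq:measHN-1} via Federer all match the paper's proof. The genuine gap is in the one-phase upper-density step.

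You write that testing minimality and ``controlling the Dirichlet defect $\int_{B_\rho}(|\nabla u_i|^2 - |\nabla h|^2)$ by $CL^2\rho^{N+1}$'' yields a lower bound of order $\rho^N$ on $|B_\rho\setminus\Omega_{u_i}|$. But this is the wrong direction. Since $\Omega_h\supset\Omega_{u_i}$ in $B_\rho$, minimality together with Lemma~\ref{lemma:feta}(2) gives (up to lower-order $L^2$-normalization terms)
\[
\int_{B_\rho}\bigl(|\nabla u_i|^2 - |\nabla h|^2\bigr)\;\leq\;\frac{1}{\eta}\,|B_\rho\setminus\Omega_{u_i}|,
\]
so an \emph{upper} bound on the left-hand side gives no information on the right. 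What is needed is a \emph{lower} bound on the Dirichlet defect of order $|B_\rho|$, and this does not follow from Lipschitz regularity alone---it requires nondegeneracy. The paper's argument runs: write $\int_{B_\rho}(|\nabla u_i|^2-|\nabla h|^2)=\int_{B_\rho}|\nabla(u_i-h)|^2-2\gamma\int_{B_\rho}(h-u_i)$; the second term is $O(\rho|B_\rho|)$ by the $L^\infty$ bound \eqref{eq:Linftyh}, while the first is bounded below via Poincar\'e by $\frac{C}{|B_\rho|}\bigl(\frac1\rho\int_{B_\rho}(h-u_i)\bigr)^2$. The key point is then that $\frac{1}{\rho|B_\rho|}\int_{B_\rho}(h-u_i)\geq c>0$: nondegeneracy (Remark~\ref{rmk:nondeg}) gives $\sup_{B_{\rho/2}}u_i\geq C\rho$, hence $\sup_{B_{\rho/2}}h\geq C\rho$, and then Harnack for the (nearly) harmonic $h$ propagates this to $h\geq\bar C\rho$ throughout $B_{\rho/2}$, while $u_i\leq L\eps\rho$ on $B_{\eps\rho}$ by Lipschitz. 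This chain---nondegeneracy $\Rightarrow$ Harnack $\Rightarrow$ uniform lower bound on $h-u_i$ $\Rightarrow$ Poincar\'e---is the missing engine in your sketch.

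A secondary point: your competitor $v=\max(u_i,h)\,\chi_{\Omega\setminus\bigcup_{j\neq i}\overline{\Omega}_{u_j}}$ need not lie in $H^1$ because of the hard cutoff. The paper avoids this by treating one-phase points first (where, for $\rho$ small, $B_\rho(x_0)$ misses all other phases and $h$ itself is admissible with no truncation) and then handling two-phase points purely geometrically, as you also do.
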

\begin{proof}
{\bf Step 1.}
Let us start from the bound from below in~\eqref{eq:densityest}. The proof is very similar to the case of harmonic functions treated in~\cite[Lemma~5.1]{Velichkov_onephasebook}.
 The nondegeneracy
condition \eqref{eq:nondeg_sup} from Remark~\ref{rmk:nondeg} implies that 
\[
\|u_i\|_{L^\infty(B_{\rho/2}(x_0))}\geq K_0 \tfrac{\rho}{2}\qquad \text{ for every } \rho\leq 2\rho_0.
\] 
Thus, there is a point $y\in \overline B_{\rho/2}(x_0)$ such that $u_i(y)\geq K_0\tfrac{\rho}{2}$.
On the other hand, by letting $L$ be a Lipschitz constant for $u_i$, then $u_i>0$ in the ball $B_{\tfrac{\rho}{2}\min\{1,\tfrac{K_0}{L}\}}(y)\subset B_\rho(x_0)$. In conclusion,
\[
\frac{|\Omega_{u_i}\cap B_\rho(x_0)|}{|B_\rho(x_0)|}\geq \frac{|B_{\tfrac{\rho}{2}\min\{1,\tfrac{K_0}{L}\}}(y)|}{|B_\rho(x_0)|}=\left(\tfrac{1}{2}\min\{1,\tfrac{K_0}{L}\}\right)^N:=\xi.
\]

\noindent {\bf Step 2.} Upper bound of~\eqref{eq:densityest} at one-phase points. This can be obtained as in \cite{AltCaffarelli}, see also~\cite[Lemma~5.1]{Velichkov_onephasebook}, with a few modifications. 
We adapt these ideas to our situation, where the functions are not harmonic but, instead, are eigenfunctions (see also the related \cite[Theorem 5.4]{Wagner}). Precisely, let $x_0=0\in \Big(\partial\Omega_{u_i}\setminus \cup_{j\not=i}\partial \Omega_{u_j}\Big)\cap \Omega$ and consider the function $h$ solution of
\[
\begin{cases}
-\Delta h =\gamma \quad&\text{in }B_\rho,\\
h = u_i\quad&\text{in~} \Omega\setminus B_\rho,
\end{cases}
\]
where $\gamma$ is the constant of~\eqref{eq:universalestimates_nondeg}.
As a consequence, we obtain that $-\Delta (h-u_i)\geq 0$ in $B_\rho$. In particular, we
have that $u_i \leq h$ and $\{u_i>0\}\subset \{h>0\}$ in $B_\rho$.
Moreover, since $h$ is the torsion
function multiplied by $\gamma$ with boundary datum $u_i$,
recalling that $u_i(0)=0$ and that $u_i$ is Lipschitz continuous with constant $L=L(N,k,a,\Omega, \eta)$, see Theorem~\ref{thm:general_Lipschitz}, we deduce by a comparison argument that
\begin{equation}\label{eq:Linftyh}
\|h\|_{L^\infty(B_\rho)}\leq C_h \rho, 
\end{equation}
with a constant $C_h=C_h(N,k,a,\Omega,\eta)$.
Testing the optimality of $(u_1,\dots, u_k)$ with $(u_1,\dots, u_{i-1},h,u_{i+1},\dots, u_k)$, using also~\eqref{eq:propfeps} and an integration by parts, we have\begin{align*}
\frac1\eta|B_\rho \cap \{u_i=0\}| &\geq
\int_{B_\rho}|\nabla u_i|^2\,dx-\int_{B_\rho}|\nabla h|^2\,dx =
\int_{B_\rho}|\nabla (u_i-h)|^2\,dx +2 \int_{B_\rho} \Big(\nabla h\cdot \nabla (u_i-h)\Big)\, dx\\
&=\int_{B_\rho}|\nabla (u_i-h)|^2\,dx +2 \int_{B_\rho} (-\Delta h) (u_i-h)\, dx+2\int_{\partial B_\rho}(u_i-h)\frac{\partial h}{\partial \nu}\,d\mathcal H^{N-1}\\
&=\int_{B_\rho}|\nabla (u_i-h)|^2\, dx-2\gamma\int_{B_\rho}(h-u_i)\,dx.
\end{align*}
Let us first treat the terms not involving the gradient: using the bound $L^\infty$ on $h$ and the positivity of  $u_i$, we have 
\begin{equation}\label{eq:estnormL1}
\int_{B_\rho}(h-u)\,dx\leq \int_{B_\rho}h\,dx\leq C_h  \rho |B_\rho|.
\end{equation}	
Let us now focus on the gradient term.
By the Poincar\'e and Cauchy-Schwarz inequalities, we have
\[
\int_{B_\rho}|\nabla (u_i-h)|^2\, dx \geq \frac{C_d}{|B_\rho|}\left(\frac1\rho \int_{B_\rho}(h-u_i)\,dx\right)^2,
\]
where $C_d$ is a universal constant depending only on the dimension.
Thus, in order to prove the upper bound in the claim, we first need to show that
$\frac1{\rho|B_\rho|} \int_{B_{\rho}}(h-u_i)\,dx$ is bounded from below by a positive constant. Notice that, by the non-degeneracy of $u_i$ (see Remark~\ref{rmk:nondeg}), we have
\[
C \rho \leq \sup_{B_{\rho/2}} u_i \leq \sup_{B_{\rho/2}}h\,,
\]
for a constant $C=C(N,k,a,\Omega,\eta)$.
On the other hand, since $h(x)+\gamma \frac{|x|^2}{2N}$ is harmonic in $B_\rho$, the Harnack inequality in $B_\rho$ implies
\[
C \rho \le \sup_{B_{\rho/2}}h\le C_N\big(h(x)+\gamma\rho^2\big)\quad\text{for every}\quad x\in B_{\frac{\rho}2}\,,
\]
where $C_N$ is a dimensional constant.
Thus, by taking $\rho_1$ such that $2C_N\rho_1\gamma\le C$, we obtain that $h \geq C_NC \rho = \overline C\rho$ in $B_{\frac{\rho}2}.$
On the other hand, if $L=L(N,k,a,\Omega,\eta)$ is the Lipschitz constant of $u_i$ (by Theorem~\ref{thm:general_Lipschitz}), then for any $\eps\in(0,1)$, $u_i\leq L \eps \rho$ in $B_{\eps \rho}$. Then
$$\int_{B_\rho}(h-u_i)\,dx\geq\int_{B_{\eps \rho}} (h-u_i)\,dx\geq (\overline C \rho-L\eps \rho)|B_{\eps \rho}|,$$
which, after choosing $\eps\le \frac12$ small enough, shows that \[
\frac1{\rho} \int_{B_{\rho}}(h-u_i)\,dx\geq C_0|B_\rho|,
\]
for some constant $C_0=C_0(N,k,a,\Omega,\eta)>0$ and thus \[
\int_{B_\rho}|\nabla (u_i-h)|^2\,dx\geq C_b|B_\rho|,
\]
for some $C_b=C_b(N,k,a,\Omega,\eta)>0$.
			
At this point, using also~\eqref{eq:estnormL1}, we have \[
C_b|B_\rho|\leq \frac1\eta|B_\rho \cap \{u=0\}|+C_h\rho|B_\rho|.
\]
It is then enough to take\[
\rho_1\leq \frac{C_b}{2C_h},
\]
and we obtain that \[
\frac{C_b}{2}|B_\rho|\leq \frac1\eta|B_\rho \cap \{u_i=0\}|,
\]
which entails the density estimate from above, so the upper bound is proved for one-phase points.

\noindent \textbf{Step 3.} Upper bound of~\eqref{eq:densityest} at two-phase points. Suppose now that there exists $i\neq j$ such that $x_0\in \partial \Omega_{u_i}\cap \partial \Omega_{u_j}\cap {\Omega}$, so that $x_0$ is a two-phase point (thus, in a neighborhood of $x_0$ there are no other components). Then
\[
1 = \frac{|B_\rho(x_0)|}{|B_\rho(x_0)|} = \frac{|\Omega_{u_i}\cap B_\rho(x_0)|+ |\Omega_{u_j}\cap B_\rho(x_0)|+|\{u_i=u_j=0\}\cap B_\rho(x_0)|}{|B_\rho(x_0)|}
\]
Hence, by using the bound from below proved in Step 1 we obtain
\begin{align*}
\frac{|\Omega_{u_i}\cap B_\rho(x_0)|}{|B_\rho(x_0)|} & = 1-\frac{|\Omega_{u_j}\cap B_\rho(x_0)|}{|B_\rho(x_0)|}-\frac{|\{u_i=u_j=0\}\cap B_\rho(x_0)|}{|B_\rho(x_0)|} \leq 1-\xi,
\end{align*}
as wanted.

\noindent {\bf Step 4.} We now focus on the final claim, estimate \eqref{eq:measHN-1}. Notice that from Federer's Structure Theorem (see~Theorem \ref{Federer_thm}), we have
\[
\mathcal H^{N-1}(\partial^e \Omega_{u_i}\setminus\partial^*\Omega_{u_i})=0.
\]
On the other hand, the definition of essential boundary and the results of Steps $1,2,3$ imply
\[
\partial^e \Omega_{u_i}=\partial\Omega_{u_i}\setminus (\Omega_{u_i}^{(0)}\cap \Omega_{u_i}^{(1)})=\partial \Omega_{u_i}.\qedhere
\]
\end{proof}

One can also deduce the following nondegeneracy property of the gradient, see for instance~\cite[Corollary~3.4]{BogoselVelichkov}.
\begin{corollary}\label{cor:nondeggrad} 
Given $\eta\in(0,1]$, let $(u_1,\dots,u_k)\in \overline{H}$ be an $L^2$ normalized minimizer for $J_\eta$ achieving  $c_\eta$. Then there exist $\rho_0,K_0>0$ (the constants from Lemma~\ref{le:nondegeneracy}) and $C_N>0$, a constant depending only on the dimension $N$, such that
\begin{equation}
\dashint_{B_\rho(x_0)}{|\nabla u_i|^2}\geq C_N K_0^2,\qquad \text{for all }\rho\leq \rho_0,\ x_0\in \partial\Omega_{u_i}.
\end{equation}
\end{corollary}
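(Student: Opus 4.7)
The idea is a two-step chain: (i) convert the pointwise nondegeneracy of Lemma~\ref{le:nondegeneracy} into an $L^1$, and then $L^2$, lower bound for $u_i$ on $B_\rho(x_0)$; and (ii) pass from $u_i$ to $\nabla u_i$ via a Poincaré inequality, exploiting the density estimate of Lemma~\ref{le:densityest}.

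For the first step, let $x_0\in\partial\Omega_{u_i}$ and $\rho\le\rho_0$. Since $u_i$ is continuous (Corollary~\ref{coro_Lispchitz}) and $x_0\in\partial\Omega_{u_i}$, there are positivity points of $u_i$ in every neighborhood of $x_0$, so $u_i\not\equiv 0$ in $B_{\theta\rho'}(x_0)\cap\Omega$ for every $\rho'\in(0,\rho]$. The contrapositive of Lemma~\ref{le:nondegeneracy} (see also~\eqref{eq:nondegmean2}) then yields
\[
\dashint_{\partial B_{\rho'}(x_0)\cap\Omega}u_i\,d\mathcal{H}^{N-1}\ \geq\ K_0\rho',\qquad\forall\,\rho'\in(0,\rho].
\]
Extending $u_i$ by zero outside $\Omega$ and integrating in polar coordinates,
\[
\int_{B_\rho(x_0)}u_i\,dx\ =\ \int_0^\rho\int_{\partial B_{\rho'}(x_0)\cap\Omega}u_i\,d\mathcal H^{N-1}\,d\rho'\ \geq\ \int_0^\rho N|B_1|(\rho')^{N-1}\,K_0\rho'\,d\rho'\ =\ \tfrac{N|B_1|}{N+1}K_0\,\rho^{N+1},
\]
so that by the Cauchy--Schwarz inequality
\[
\int_{B_\rho(x_0)}u_i^2\,dx\ \geq\ \frac{1}{|B_\rho|}\left(\int_{B_\rho(x_0)}u_i\right)^{2}\ \geq\ C_N\,K_0^2\,\rho^{N+2}.
\]

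For the second step, by Lemma~\ref{le:densityest}, for $\rho\le\rho_1$ the set $\{u_i=0\}\cap B_\rho(x_0)$ has measure at least $\xi|B_\rho|$. The standard Poincar\'e inequality for $H^1$ functions vanishing on a set of positive proportion of a ball gives
\[
\int_{B_\rho(x_0)}u_i^2\,dx\ \leq\ C_N\,\rho^2\int_{B_\rho(x_0)}|\nabla u_i|^2\,dx,
\]
and combining the two estimates and dividing by $|B_\rho|$ yields $\dashint_{B_\rho(x_0)}|\nabla u_i|^2\ge C_N K_0^2$, as claimed.

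\textbf{Main obstacle.} The argument is essentially routine once one recognizes that the right form of nondegeneracy to use is the one on spherical means in~\eqref{eq:nondegmean2}, which is custom-made to be integrated radially. The only delicate point is that the Poincar\'e constant in the second step depends in principle on the density proportion~$\xi$; but $\xi$ is a universal constant (fixed by the problem data, independent of $\rho$ and $x_0$), and so it is absorbed into $C_N$. For points $x_0$ close to $\partial\Omega$, a Lipschitz assumption on $\Omega$ ensures that $\mathcal{H}^{N-1}(\partial B_{\rho'}(x_0)\cap\Omega)$ is a definite fraction of $|\partial B_{\rho'}|$, so the spherical-shell integration still produces the same bound up to a dimensional constant.
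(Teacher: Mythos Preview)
Your argument is correct and reaches the same conclusion, but the route differs from the paper's. The paper does not integrate the nondegeneracy radially nor invoke a separate Poincar\'e inequality; instead it applies in one stroke the potential estimate of Lemma~\ref{le:potentialestimate},
\[
\frac{1}{\rho^2}\,|\{u_i=0\}\cap B_\rho(x_0)|\left(\dashint_{\partial B_\rho(x_0)}u_i\,d\mathcal H^{N-1}\right)^2\le C_N\int_{B_\rho(x_0)}|\nabla u_i|^2,
\]
and feeds it directly with the spherical nondegeneracy~\eqref{eq:nondegmean2} and the upper density bound from~\eqref{eq:densityest}. Your two-step chain (radial integration plus Cauchy--Schwarz to control $\|u_i\|_{L^2(B_\rho)}$, then Poincar\'e on functions vanishing on a set of definite proportion) is essentially an unpacking of that lemma into elementary pieces; it buys you independence from the cited result of Bucur--Velichkov, at the cost of three inequalities instead of one.

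One small correction to your ``main obstacle'' paragraph: the density constant~$\xi$ from Lemma~\ref{le:densityest} depends on $K_0$ and on the Lipschitz constant of $(u_1,\dots,u_k)$, hence on~$\eta$, not only on~$N$. So your (and the paper's) final constant is not of the clean form $C_NK_0^2$ with $C_N$ purely dimensional; what actually comes out is a positive lower bound depending on the problem data through $\xi$ and $K_0$. This is harmless for every application in the paper (only positivity of the lower bound is ever used), but your remark that ``$\xi$\ldots is absorbed into $C_N$'' is not literally correct.
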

\begin{proof}
It is clear that, for all $\rho>0$,
\[
B_\rho(x_0)\cap \Omega_{u_i} \not =\emptyset.
\]
Moreover, we have that for all $\rho\leq \rho_0$\[
\partial B_\rho(x_0)\cap \Omega_{u_i}\not =\emptyset.
\]
In fact, if for the sake of contradiction this does not hold, and  for some $\rho\in (0,\rho_0]>0$ we have  $\Omega_{u_i}\cap \partial B_\rho(x_0) =\emptyset$, then, recalling also that $\Omega_{u_i}$ is connected (Remark~\ref{rem:connected}), then $\Omega_{u_i}\subset B_{\rho}(x_0)$ and thus $\lambda_1(\Omega_{u_i})\geq \lambda_1(B_\rho)$, which is in contradiction with the minimality of $(u_1,\dots,u_k)$ up to taking $\rho_0$ sufficiently small.
Then, the conclusion follows by applying    Lemma~\ref{le:potentialestimate}, \eqref{eq:nondegmean2} and \eqref{eq:densityest}.
\end{proof}

\subsection{The optimal sets have finite perimeter}

We close this section with another important property for minimizers $(u_{1,\eta},\ldots, u_{k,\eta})$ of $c_\eta$: the sets $\Omega_{u_{i, \eta}}$ have finite perimeter.

\begin{lemma}\label{le:finiteper}
Let $(u_{1,\eta},\dots,u_{k,\eta})\in \overline{H}$ be an $L^2$--normalized minimizer of $J_\eta$ attaining $c_\eta$.
There exists a positive constant $C=C(N,k,a,\Omega)$ such that, for every $i=1,\dots, k$,
\[
\Per(\Omega_{u_{i,\eta}}) \leq C\eta^{-1/2}.
\]
\end{lemma}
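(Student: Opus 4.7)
The plan is to adapt a standard competitor argument (as in~\cite{BucurVelichkov,BogoselVelichkov}) to the penalized functional $J_\eta$, exploiting crucially the fact that property~(2) of Lemma~\ref{lemma:feta} gives a \emph{linear} lower bound on the increment of $f_\eta$ with slope $\eta$. Fix $i\in\{1,\dots,k\}$, denote $u=u_{i,\eta}$, and test the minimality of $(u_1,\ldots,u_k)$ against the competitor $(u_1,\ldots,u_{i-1},(u-t)^+,u_{i+1},\ldots,u_k)\in\overline{H}$ for small $t>0$. The two-sided Lipschitz estimate of Lemma~\ref{lemma:feta}-(2) converts this minimality into two quantitative bounds on the ``transition strip'' $\{0<u\le t\}$ (one on $|\{0<u\le t\}|$, one on $\int_{\{0<u\le t\}}|\nabla u|^2$), and the coarea formula combined with Cauchy--Schwarz then converts these strip estimates into a perimeter bound. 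The factor $\eta^{-1/2}$ will arise precisely from balancing the two strip estimates through Cauchy--Schwarz.

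\textbf{Key step: strip estimates.} Writing $\lambda_{u}=\int_\Omega|\nabla u|^2$, the minimality $J_\eta(u_1,\ldots,u_k)\le J_\eta(u_1,\ldots,(u-t)^+,\ldots,u_k)$ reduces, after using $\Omega_{(u-t)^+}=\{u>t\}$ and property~(2) of $f_\eta$, to
\begin{equation*}
\lambda_u\,\|(u-t)^+\|_2^2 \;\le\; \int_{\{u>t\}}|\nabla u|^2 \;-\; \eta\,|\{0<u\le t\}|\,\|(u-t)^+\|_2^2.
\end{equation*}
Splitting $\int_{\{u>t\}}|\nabla u|^2 = \lambda_u-\int_{\{0<u\le t\}}|\nabla u|^2$ and expanding $1-\|(u-t)^+\|_2^2$ using $\|u\|_2=1$ gives $1-\|(u-t)^+\|_2^2 \le 2t\int_\Omega u+t^2|\{0<u\le t\}|\le Ct$ for $t$ small (by the uniform $L^\infty$ bound of Lemma~\ref{lemma:otherproperties}). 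Combined with the uniform upper bound on $\lambda_u$ from Lemma~\ref{lemma:unifbound_ceta}, we obtain, for all $t$ small,
\begin{equation*}
\int_{\{0<u\le t\}}|\nabla u|^2 \;+\; \tfrac{\eta}{2}\,|\{0<u\le t\}| \;\le\; C\,t,
\end{equation*}
where $C=C(N,k,a,\Omega)$ is independent of $\eta$.

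\textbf{Conclusion via coarea.} By the coarea formula applied to $u\in W^{1,1}$ and Cauchy--Schwarz,
\begin{equation*}
\int_0^t \mathcal{H}^{N-1}(\{u=s\})\,ds \;=\; \int_{\{0<u\le t\}}|\nabla u|\,dx \;\le\; \bigl(\tfrac{}{}\int_{\{0<u\le t\}}|\nabla u|^2\bigr)^{1/2}\,|\{0<u\le t\}|^{1/2} \;\le\; \frac{C\,t}{\sqrt{\eta}},
\end{equation*}
using the two strip estimates above. By the mean value theorem, for each such $t$ we can pick $t_*\in(0,t)$ with $\mathcal{H}^{N-1}(\{u=t_*\})\le C/\sqrt{\eta}$; moreover, for a.e. $t_*$ the set $\{u>t_*\}$ has finite perimeter equal to $\mathcal{H}^{N-1}(\{u=t_*\})$. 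Sending $t\to 0^+$ along a suitable sequence, $\chi_{\{u>t_*\}}\to\chi_{\Omega_u}$ in $L^1$, and by lower semicontinuity of the perimeter,
\begin{equation*}
\Per(\Omega_{u_{i,\eta}}) \;\le\; \liminf_{t_*\to 0^+}\Per(\{u>t_*\}) \;\le\; C\,\eta^{-1/2}.
\end{equation*}

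\textbf{Main obstacle.} No single step is deep; the one point that requires care is that $f_\eta$ is not $C^1$, so one cannot differentiate under minimality. The crucial observation is that, thanks to Lemma~\ref{lemma:feta}-(2), the lower Lipschitz bound with slope $\eta$ on $f_\eta$ is exactly what is needed to obtain the quantitative ``measure of the strip'' estimate $|\{0<u\le t\}|\lesssim t/\eta$, without which the Cauchy--Schwarz step would not close. This $\eta$-dependence is precisely what produces the $\eta^{-1/2}$ factor in the perimeter bound (and hence explains the need, later on, for an independent argument to upgrade to a uniform-in-$\eta$ perimeter estimate).
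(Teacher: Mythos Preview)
Your proof is correct and follows essentially the same approach as the paper: the same competitor $(u_1,\ldots,(u_i-t)^+,\ldots,u_k)$, the same use of Lemma~\ref{lemma:feta}-(2) to extract the strip estimates, and the same coarea-plus-mean-value-plus-lower-semicontinuity conclusion. The only cosmetic difference is that you combine the two strip bounds via Cauchy--Schwarz whereas the paper uses Young's inequality $2\sqrt{\eta}\,|\nabla u_i|\le |\nabla u_i|^2+\eta$; both yield the same $\eta^{-1/2}$ factor.
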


\begin{proof}
We drop the dependence of the minimizers on $\eta$, for simplicity. For $t > 0$ and $i = 1, \ldots, k$, we consider the perturbations 
\[
(u_{1}, \ldots, u_{i-1}, (u_{i}-t)^+, u_{i+1}, \ldots, u_{k}).
\]
The minimality of $(u_1, \ldots, u_k)$ implies that 
\[
J_{\eta}(u_{1}, \ldots, u_{k}) \leq J_{\eta}(u_1,\ldots, u_{i-1}, (u_{i}-t)^+, u_{i+1}, \ldots, u_k).
\]
Hence,
\begin{align*}
\sum_{j=1}^k\int_{\Omega}|\nabla u_{j}|^2 + f_\eta\left(\sum_{j=1}^k|{\Omega}_{u_{i}}|\right) \leq  \sum_{j \neq i}^k\int_{\Omega} |\nabla u_{j}|^2 + \frac{\int_{\Omega} |\nabla (u_{i} - t)^{+}|^2}{\int_{\Omega} |(u_{i} - t)^{+}|^2} + f_\eta\left(\sum_{j\neq i}^k|{\Omega}_{u_j}| + |{\Omega}_{(u_i-t)^+}|\right),
\end{align*}                                                                                
and, by using the fact that $|\Omega_{(u_i-t)^+}|\leq |\Omega_{u_j}|$ and Lemma \ref{lemma:feta}-(2),
\begin{align*}
\sum_{j=1}^k\int_{\Omega}|\nabla u_{i}|^2 + \eta\left(|\Omega_{u_i}|-|\Omega_{(u_i-t)^+}|\right) \leq  \sum_{j \neq i}^k\int_{\Omega} |\nabla u_{j}|^2 + \frac{\int_{\Omega} |\nabla (u_{i} - t)^{+}|^2}{\int_{\Omega} |(u_{i} - t)^{+}|^2} .
\end{align*}     
This implies (by using \cite[Lemma A.1]{ASST})
\begin{align*}
\int_{\Omega}|\nabla u_{i}|^2 +  \eta|\{0 < u_{i} \leq t \}|  \leq & \int_{\{ u_{i}> t \}} |\nabla u_{i}|^2 + 2t \int_{\Omega}u_{i} \int_{\Omega} |\nabla (u_{i} - t)^{+}|^2\\
& + Ct^2\int_{\Omega} |\nabla (u_{i} - t)^{+}|^2,
\end{align*}
for $t$ sufficiently small.

Hence, by using $\|u_{i}\|_2 = 1$, $t\leq 1$ and the H\"{o}lder inequality, we infer that
\begin{align*}
\int_{\{0<u_{i}\leq t\}}|\nabla u_{i}|^2 +  \eta|\{0 < u_{i} \leq t \}| & \leq  \Big(2t |\Omega_{u_{i}}|^{1/2}+Ct^2\Big)\int_{\Omega} |\nabla (u_{i} - t)^{+}|^2 ,\\  
& \leq (2a^{1/2}+C)c_\eta t.
\end{align*}
By the Coarea formula and the Young inequality, we deduce 
\[
2\sqrt{\eta}\int_0^t{\mathcal H}^{N-1}(\{u_{i}=s\})\, ds = 2\sqrt{\eta}\int_{\{0\leq u_i\leq t\}}{|\nabla u_{i}|\, dx }\leq \int_{\{0<u_{i}\leq t\}}|\nabla u_{i}|^2 +  \eta|\{0 < u_{i} \leq t \}|\leq (C+2a^{1/2})c_\eta t,
\]
or, equivalently,
\[
\int_0^t{\mathcal H}^{N-1}(\{u_{i}=s\})\, ds \leq \frac{(C+2a^{1/2})c_\eta t}{2\sqrt{\eta}}.
\]
Now, we take $t = 1/n$ to obtain
\[
n\int_0^{1/n} \Per(\{u_{i}>s\})\, ds= n\int^{1/n}_0{\mathcal H}^{N-1}(\{u_{i}=s\}) \, ds \leq \frac{(C+2a^{1/2})c_\eta}{2\sqrt{\eta}}.
\]
Hence, there exists $\delta_n \in [0, 1/n]$ such that
\[
\Per(\{u_{i}> \delta_n\}) \leq n\int^{1/n}_0\Per(\{u_{i}>s\})\, ds\leq \frac{(C+2a^{1/2})c_\eta}{2\sqrt{\eta}}
\]
and, by taking the limit as $n \to 0$ and using the lower semicontinuity of the perimeter with respect to $L^1$-convergence, we conclude
\[
\Per(\{u_{i} > 0\}) \leq \frac{(C+2a^{1/2})c_\eta}{2\sqrt{\eta}}.\qedhere
\]
\end{proof}

\section{Two-phase points in the interior and one-phase point on the boundary}\label{sec:notriple}

\subsection{Absence of triple points in $\Omega$}

First, we rule out triple points in the interior of $\Omega$, using a three phase monotonicity formula, as in~\cite{BogoselVelichkov,BucurVelichkov}.

\begin{theorem}\label{thm:notriplepoints}
Given $\eta\in (0,1]$, let $(u_1,\dots,u_k)$ be a minimizer for the functional $J_\eta$ over $\overline{H}$, achieving $c_\eta$. 
Then, if $1\leq i,j,l\leq k$ are three different indexes, we have that 
\[
\partial \Omega_{u_i}\cap \partial \Omega_{u_j} \cap \partial \Omega_{u_l}\cap \Omega=\emptyset.
\]
\end{theorem}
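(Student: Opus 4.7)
The plan is to argue by contradiction via a three-phase Alt-Caffarelli-Friedman-type monotonicity formula in the spirit of Caffarelli-Jerison-Kenig and Conti-Terracini-Verzini, as exploited in~\cite{BogoselVelichkov,BucurVelichkov}. Suppose, for a contradiction, that there exist three pairwise distinct indices, which we may take to be $1,2,3$, and a point $x_0\in \partial\Omega_{u_1}\cap\partial\Omega_{u_2}\cap\partial\Omega_{u_3}\cap\Omega$. Up to translation, set $x_0=0$ and fix $R>0$ with $B_R\Subset\Omega$. By Lemma~\ref{lemma:otherproperties}, each $u_m$ is nonnegative and satisfies $-\Delta u_m\leq \lambda_{u_m}u_m$ in $\Omega$ with $\lambda_{u_m}u_m$ uniformly bounded; adding the quadratic correction $\gamma|x|^2/(2N)$ (with $\gamma$ as in~\eqref{eq:universalestimates_nondeg}) renders each $u_m$ subharmonic in $B_R$. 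The three positivity sets $\Omega_{u_1},\Omega_{u_2},\Omega_{u_3}$ are pairwise disjoint and all three $u_m$ vanish at $0$.

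The classical two-phase ACF inequality would only tell us that the relevant product is bounded, which is insufficient. For three phases, however, the corresponding monotonicity formula provides an \emph{extra} power decay: there exist constants $\alpha,r_0,C>0$ such that
\begin{equation*}
\prod_{m=1}^{3}\frac{1}{r^{2}}\int_{B_r}\frac{|\nabla u_m|^2}{|x|^{N-2}}\,dx\;\leq\; C\,r^{\alpha},\qquad 0<r\leq r_0.
\end{equation*}
The gain $r^{\alpha}$ reflects the fact that any $1$-homogeneous, nontrivial configuration of three nonnegative functions on $\R^N$ with pairwise disjoint positivity sets has characteristic (Almgren) exponent strictly larger than $1$, whereas two-phase configurations admit exponent exactly $1$.

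To close the contradiction, I would combine this decay with the gradient nondegeneracy Corollary~\ref{cor:nondeggrad}, which gives $r^{-N}\int_{B_r}|\nabla u_m|^2\geq c_N K_0^2>0$ for each $m=1,2,3$ and all $r\leq \rho_0$. A standard dyadic decomposition, together with the bound $|x|^{2-N}\gtrsim r^{2-N}$ on $B_r\setminus B_{r/2}$, upgrades this to $r^{-2}\int_{B_r}|\nabla u_m|^2|x|^{2-N}\,dx\geq c'>0$, uniformly in small $r$. The product of three such lower bounds is then a strictly positive constant, contradicting $Cr^{\alpha}\to 0$ as $r\to 0^+$. The main obstacle will be the precise adaptation of the three-phase monotonicity inequality from the subharmonic setting to eigenfunctions: the quadratic correction above already yields subharmonicity, and the resulting $O(r)$ error terms (coming from $u_m(0)=0$ and the Lipschitz continuity of $u_m$) must be shown not to spoil the crucial $r^{\alpha}$ gain inherent to the three-phase configuration.
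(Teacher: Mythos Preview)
Your proposal is correct and follows essentially the same route as the paper: contradiction via the three-phase monotonicity formula (Lemma~\ref{tpml}) combined with the gradient nondegeneracy of Corollary~\ref{cor:nondeggrad}. The paper's execution is slightly leaner---the cited three-phase lemma already applies to functions satisfying $\Delta u\geq -1$, so no quadratic correction is needed, and the pointwise bound $|x|^{2-N}\geq r^{2-N}$ on $B_r$ replaces your dyadic step.
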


\begin{proof}
We argue by contradiction. Suppose that we can find $x_0\in \partial \Omega_{u_i}\cap\partial \Omega_{u_j}\cap \partial \Omega_{u_l}\cap \Omega$. Then, by Corollary~\ref{cor:nondeggrad}, we have 
\begin{equation}\label{eq:contdpb0.5}
\dashint_{B_r(x_0)}|\nabla u_i|^2\geq 4K_0^2,\qquad \dashint_{B_r(x_0)}|\nabla u_j|^2\geq 4K_0^2, \qquad \dashint_{B_r(x_0)}|\nabla u_l|^2\geq 4K_0^2,
\end{equation}
for $r$ sufficiently small. By applying Lemma \ref{tpml} (in the form of inequality \eqref{CJK_remark}), we obtain
\[
\prod_{m=i, j, l}\left( \dfrac{1}{r^{2+\varepsilon}}\int_{B_r}\dfrac{|\nabla u_m|^2}{|x|^{N-2}}dx\right) \leq C\left(1+ \sum_{m = i, j, l}\int_{B_2}u_m^2\right)^3,
\]
which implies
\begin{align*}
\prod_{m=i, j, l}\left(\dfrac{1}{r^{N}}\int_{B_r}|\nabla u_m|^2dx\right) & \leq r^{3\varepsilon} C\left(1+ \sum_{m = i, j, l}\int_{B_2}u_m^2\right)^3 \leq r^{{3\varepsilon}} C.
\end{align*}
Therefore, for $r$ sufficiently small 
\[
\prod_{m=i, j, l}\left(\dashint_{B_r}|\nabla u_m|^2dx\right)  < (4K_0^2)^3,
\]
which is a contradiction with \eqref{eq:contdpb0.5}.
\end{proof}

\subsection{Absence of two-phase point on the boundary of $\Omega$} \label{eq:no2-phase-bdary}
We can also show that there are no two-phase points on the boundary of $\Omega$, inspired by, for example,~\cite[Section~4]{MazzoleniTreyVelichkov}.

\begin{theorem}\label{thm:notpbdry}
Let $\Omega\subset \R^N$ be an open bounded set with Lipschitz boundary, $\eta\in (0,1]$ and $(u_1,\dots,u_k)$ be a minimizer for $J_\eta$ on $\overline H$. Then
\[
\partial \Omega_{u_i}\cap \partial \Omega_{u_j}\cap \partial \Omega=\emptyset,\qquad\text{ for all $i\not=j$.}
\]
\end{theorem}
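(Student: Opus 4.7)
The plan is to assume by contradiction the existence of $i \neq j$ and $x_0 \in \partial \Omega_{u_i} \cap \partial\Omega_{u_j} \cap \partial \Omega$, and to mimic the strategy of Theorem \ref{thm:notriplepoints}, with the role of the ``third phase'' now played by the exterior of $\Omega$.

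First, I would exploit the Lipschitz regularity of $\partial\Omega$ at $x_0$ to produce an open truncated cone
\[
\mathcal C := \{x_0 + t\omega : t\in (0,r_0),\ \omega \cdot \nu \geq \cos\alpha\} \subset \R^N \setminus \overline\Omega,
\]
with $r_0 > 0$, $\alpha\in(0,\pi/2)$ and $\nu \in \mathbb S^{N-1}$. Extending $u_i, u_j$ by zero to all of $\R^N$, the extensions are nonnegative, have disjoint supports, and both vanish identically on $\mathcal C$; moreover, combining Lemma \ref{lemma:otherproperties} with a Green's identity across $\partial\Omega$ (where $u_m = 0$ and the inward normal derivative of $u_m$ is nonnegative), one checks that $-\Delta u_m \leq C$ still holds distributionally on $\R^N$. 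By Corollary \ref{cor:nondeggrad}, both $u_i$ and $u_j$ are gradient-nondegenerate at $x_0$.

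Next, I would choose a third nonnegative function $w$ supported in $\overline{\mathcal C}$, with $w(x_0)=0$ and nontrivial growth from the vertex. A natural choice is the truncated homogeneous harmonic extension
\[
w(x) := |x-x_0|^\gamma \, \psi\!\left(\tfrac{x-x_0}{|x-x_0|}\right),
\]
where $\psi$ is the first Dirichlet eigenfunction of the Laplace--Beltrami operator on the spherical cap $\mathcal C \cap \mathbb S^{N-1}$ (extended by zero to $\mathbb S^{N-1}$), and $\gamma > 0$ is the associated characteristic exponent solving $\gamma(\gamma+N-2)=\mu_1$. A short Green's identity argument (using that $w$ is harmonic inside $\mathcal C$, vanishes on $\partial \mathcal C$, and has nonpositive outward normal derivative there) shows that $-\Delta w \leq 0$ distributionally on $\R^N$. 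Thus $(u_i, u_j, w)$ is a triple of nonnegative, essentially-subharmonic functions with pairwise disjoint supports, all vanishing at $x_0$, so Lemma \ref{tpml} (in the form of inequality \eqref{CJK_remark}) applies and yields some $\varepsilon > 0$ with
\[
\prod_{u \in \{u_i, u_j, w\}} \left(\frac{1}{r^{2+\varepsilon}} \int_{B_r(x_0)} \frac{|\nabla u|^2}{|x - x_0|^{N-2}}\,dx\right) \leq C \qquad \text{for every small } r>0.
\]

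To close the argument, I would combine this CJK estimate with the nondegeneracy of $u_i, u_j$: one gets $\int_{B_r(x_0)} |\nabla u_m|^2/|x - x_0|^{N-2}\,dx \geq c\,r^2$ for $m = i, j$, so that each of the first two factors is $\geq c\,r^{-\varepsilon}$. An explicit computation using the $\gamma$-homogeneity of $w$ yields $\int_{B_r(x_0)}|\nabla w|^2/|x - x_0|^{N-2}\,dx = c_w\,r^{2\gamma}$, so the product is bounded below by $c^2 c_w\,r^{2\gamma - 2 - 3\varepsilon}$, which diverges as $r \to 0^+$ as soon as $2\gamma < 2 + 3\varepsilon$, yielding the desired contradiction. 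I expect the main obstacle to be precisely this compatibility condition: for wide Lipschitz cones (say, $\alpha$ close to $\pi/2$, giving $\gamma \leq 1$) any $\varepsilon > 0$ suffices, but for narrow cones $\gamma$ can exceed $1$ and one must use a refined version of Lemma \ref{tpml} in which the gain $\varepsilon$ can be made arbitrarily large by exploiting the angular size of $\mathcal C \cap \mathbb S^{N-1}$ through a Friedland--Hayman-type spectral improvement on the sphere. This is exactly the scheme implemented in \cite[Section~4]{MazzoleniTreyVelichkov}, which I would adapt to the present setting.
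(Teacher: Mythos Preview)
Your proposal is essentially the paper's approach: build a third auxiliary phase as the positive homogeneous harmonic function on an exterior cone at $x_0$, apply the three-phase monotonicity formula (Lemma~\ref{tpml}), and contradict the gradient nondegeneracy of $u_i,u_j$ (Corollary~\ref{cor:nondeggrad}). The only difference is in how the cone's homogeneity exponent $\gamma$ is matched with the gain $\varepsilon$: you place the cone inside $\R^N\setminus\overline\Omega$ (aperture fixed by the Lipschitz constant of $\partial\Omega$) and propose a refined monotonicity when the cone is narrow, whereas the paper argues that one may take $\mathcal C_\delta$ wide ($\delta$ small, hence $\gamma\to 0$), so that the fixed dimensional $\varepsilon$ of Lemma~\ref{tpml} already suffices; both resolutions trace back to \cite[Section~4]{MazzoleniTreyVelichkov}.
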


\begin{proof}
For the sake of contradiction, let $x_0\in \partial \Omega\cap\partial \Omega_{u_i}\cap \partial \Omega_{u_j}$. Then by Corollary~\ref{cor:nondeggrad} we have 
\begin{equation}\label{eq:contdpb}
\dashint_{B_r(x_0)}|\nabla u_i|^2\geq 4K_0^2,\qquad \dashint_{B_r(x_0)}|\nabla u_j|^2\geq 4K_0^2,\qquad\text{for $r\leq \bar r$ sufficiently small.} 
\end{equation}

Since $\partial \Omega$ is Lipschitz, then $ B_{\bar r} (x_0)\setminus(\overline{\Omega_{u_i}\cup\Omega_{u_j}})$ is a nonempty domain with Lipschitz boundary; for simplicity, from now on we assume without loss of generality that $x_0=0$. We need to build an auxiliary third phase there to employ the three phase monotonicity formula. We follow the approach of~\cite[Proof of Proposition~4.2]{MazzoleniTreyVelichkov}.
Let $v\in H^1(\R^N)$ be the $(1+\gamma)$-homogeneous, non-negative harmonic function on the cone $\mathcal C_\delta =\big\{x\in\R^N\ :\ x_N>\delta |x|\big\}$, which vanishes on $\partial \mathcal C_\delta$. We note that, for $\delta$ small enough, it is clear that, up to a rotation,  $\mathcal C_\delta\subset \R^N\setminus \overline{\Omega_{u_i}\cup\Omega_{u_j}}$.
In polar coordinates, 
\[v=r^{1+\gamma}\phi(\theta),\]
where $\phi$ is the first eigenfunction of the spherical Laplacian on $\mathcal C_\delta\cap \mathbb S^{N-1}$, that is,
$$-\Delta_{\mathbb S^{N-1}}\phi=(1+\gamma)(N-1+\gamma)\phi\quad\text{in}\quad \mathcal C_\delta\cap \mathbb S^{N-1},\qquad \phi=0\quad\text{on}\quad \partial \mathcal C_\delta\cap\mathbb S^{N-1},\qquad \int_{\mathbb S^{N-1}}\phi^2(\theta)\,d\theta=1,$$
where we notice that $\gamma$ is uniquely determined by $\delta$ (and the dimension $N$) and 
\[\lim_{\delta\to0}\gamma(\delta)=0.\]
Moreover, we have that 
\[\Delta v\ge 0\quad\text{in sense of distributions in } \R^N.\]
By the three-phase monotonicity formula, Lemma~\ref{tpml}, which we can apply thanks to the subharmonicity of $v$, there are constants $C>0$ and $\eps>0$ such that 
\[Cr^{3\eps}\geq \left(\mean{B_r}{|\nabla u_i|^2\,dx}\right)\left(\mean{B_r}{|\nabla u_j|^2\,dx}\right)\left(\mean{B_r}{|\nabla v|^2\,dx}\right).
\]
Now, from \eqref{eq:contdpb},
\begin{align*}
r^{3\eps}&\ge C_NK_0^4\frac{1}{|B_r|}\int_{B_r}|\nabla v|^2\,dx\\
&= C_NK_0^4 \frac{1}{|B_r|}\int_0^r\int_{\mathbb S^{N-1}}\Big((1+\gamma)^2\phi^2(\theta)+|\nabla_\theta\phi(\theta)|^2\Big)\rho^{N-1+2\gamma}\,d\theta\,d\rho= C_NK_0^4(1+\gamma)r^{2\gamma}, 
\end{align*}
which is impossible when $\delta$ (and thus $\gamma$) is small enough ($\eps$ being a fixed constant, depending on $N$, $\lambda_1(\Omega_i)$ and $\lambda_1(\Omega_j)$, but not on $\delta$).
\end{proof}

\section{Existence of a one-phase point, shape variations at the free boundary, and equivalence with the constrained problem}\label{sec:equivalence}

Let us first show that at least a regular one-phase point exists in the reduced boundary of one component of the partition. This is something that is natural to expect, but its proof is not completely trivial.

\begin{lemma}\label{le:onephase}
Given $\eta\in (0,1]$, let $(u_1,\dots,u_k)$ be optimal for problem~\eqref{eq:ceta}.
    There exists $i\in\{1,\ldots, k\}$ such that $\Gamma_{OP}(\partial\Omega_{u_i})\cap \partial^* \Omega_{u_i}\not=\emptyset$.
\end{lemma}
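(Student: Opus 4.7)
\emph{Plan.} My strategy is to work not with a single component but with the union $W := \bigcup_{i=1}^k \Omega_{u_i}$, and to use the measure constraint to produce a reduced-boundary point of $W$ strictly inside $\Omega$, which will then automatically upgrade to a reduced one-phase point of some $\Omega_{u_i}$ via the uniform density estimate. I assume throughout that $\eta \leq \bar{\eta}$ so that Lemma~\ref{lem:measmalla} is available.

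First, $W$ has finite perimeter as a finite union of finite-perimeter sets (Lemma~\ref{le:finiteper}), and satisfies
\[
0 \,<\, |W| \,=\, \sum_{i=1}^{k}|\Omega_{u_i}| \,\leq\, a \,<\, |\Omega|,
\]
where the upper bound is Lemma~\ref{lem:measmalla}. Since $\Omega$ is a bounded Lipschitz (hence connected) domain, the relative isoperimetric inequality forces $\mathcal{H}^{N-1}(\partial^{*} W \cap \Omega) > 0$.

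Next, Federer's theorem, applied to each of the finite-perimeter sets $W, \Omega_{u_1}, \ldots, \Omega_{u_k}$, tells us that their $N$-dimensional densities belong to $\{0, \tfrac{1}{2}, 1\}$ at $\mathcal{H}^{N-1}$-a.e.\ point of $\R^N$. I may therefore pick $x_0 \in \partial^{*} W \cap \Omega$ at which the density of $W$ equals $\tfrac{1}{2}$ and the density of every $\Omega_{u_i}$ lies in $\{0, \tfrac{1}{2}, 1\}$. Since the $\Omega_{u_i}$ are pairwise disjoint we have $\chi_W = \sum_i \chi_{\Omega_{u_i}}$, so the densities add up and their sum at $x_0$ equals $\tfrac{1}{2}$. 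No $\Omega_{u_i}$ can carry density $1$ (that would force the density of $W$ to be at least $1$), hence every individual density is either $0$ or $\tfrac{1}{2}$, so exactly one index $i_0$ yields density $\tfrac{1}{2}$ while the remaining $\Omega_{u_j}$ all have density $0$ at $x_0$. In particular $x_0 \in \partial^{*} \Omega_{u_{i_0}}$.

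Finally, for every $j \neq i_0$ the vanishing of the density of $\Omega_{u_j}$ at $x_0$ excludes $x_0 \in \partial \Omega_{u_j}$, for otherwise the lower density estimate in Lemma~\ref{le:densityest} would give density at least $\xi > 0$ at $x_0$. Combined with $x_0 \in \partial \Omega_{u_{i_0}} \cap \Omega$, this places $x_0$ in $\Gamma_{OP}(\partial \Omega_{u_{i_0}}) \cap \partial^{*} \Omega_{u_{i_0}}$, as required. The only nontrivial ingredient is the positivity of $\mathcal{H}^{N-1}(\partial^{*} W \cap \Omega)$, which rests on the strict measure gap $|W| < |\Omega|$ produced by Lemma~\ref{lem:measmalla}; once that is in hand, the density trichotomy and the uniform lower density estimate of Lemma~\ref{le:densityest} do the rest automatically.
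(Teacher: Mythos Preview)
Your argument is correct (under your stated restriction $\eta\le\bar\eta$) and takes a genuinely different route from the paper. The paper argues by cases: if some $\partial\Omega_{u_i}$ touches $\partial\Omega$, it invokes Theorem~\ref{thm:notpbdry} (absence of two-phase points on $\partial\Omega$, proved via the three-phase monotonicity formula) to find a one-phase point near $\partial\Omega$; otherwise it slides a half-space to manufacture a smaller box $\Omega'$ whose boundary does touch some $\partial\Omega_{u_i}$, reducing to the first case. Your approach instead works entirely inside $\Omega$: from $0<|W|\le a<|\Omega|$ and the relative isoperimetric inequality you get $\mathcal H^{N-1}(\partial^*W\cap\Omega)>0$, and then Federer's density trichotomy combined with the additivity $\theta_W=\sum_i\theta_{\Omega_{u_i}}$ and the lower density bound of Lemma~\ref{le:densityest} pins down a reduced one-phase point directly. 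This is more streamlined and, notably, avoids Theorem~\ref{thm:notpbdry} altogether. The trade-off is that you need $|W|<|\Omega|$, which you obtain from Lemma~\ref{lem:measmalla} and hence only for $\eta<\bar\eta$; the paper's proof covers all $\eta\in(0,1]$. Since the lemma is only ever applied for small $\eta$ (cf.\ Lemma~\ref{le:measOK} and Remark~\ref{rmk:etafixed}), this restriction is harmless for the paper's purposes, but it does mean your argument does not establish the lemma exactly as stated.
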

\begin{proof}
If, for some $i\in\{1,\dots,k\}$, we have that $\partial \Omega_{u_i}\cap \partial \Omega\not=\emptyset$, then thanks to Theorem~\ref{thm:notpbdry}, we infer that all points $p$ in the relative boundary of $\partial \Omega_{u_i}$ with respect to $\partial \Omega$, i.e.   $p\in \partial_{rel} \Big(\partial \Omega_{u_i}\cap \partial \Omega\Big)\not=\emptyset$,  are one-phase points. 
As a consequence, also using the relative isoperimetric inequality (see for example~\cite[Remark 12.38]{Maggi}), we deduce that  $\mathcal H^{N-1}(\partial \Omega_{u_i}\cap B_r(p))>0$ for all $r>0$.
Since the set of  one-phase points $\Gamma_{OP}(\partial\Omega_{u_i})$ is a relatively open set of $\partial\Omega_{u_i}$, and $\mathcal H^{N-1}(\partial\Omega_{u_i}\setminus \partial^*\Omega_{u_i})=0$, we can find a point $p'\in \partial^*\Omega_{u_i}\cap \Gamma_{OP}(\partial\Omega_{u_i}).$

If this does not happen, namely for all $i=1,\dots,k$ it holds  $\partial\Omega_{u_i}\cap\partial\Omega=\emptyset$, then we can find another open, bounded set with Lipschitz boundary $\Omega'\subset \Omega$ such that the optimal vector $(u_1,\dots,u_k)$ still solves problem~\eqref{eq:ceta} in $\Omega'$ and there is at least a contact point $p\in \partial\Omega'\cap\partial\Omega_{u_i}$ for some $i\in \{1,\dots,k\}$, so we are reduced to the situation of the previous paragraph.
To show the existence of such an $\Omega'$, it is sufficient to choose a direction, for example $e_1$, and consider the intersection $\Omega_t=\Omega\cap \{x\in \R^N : x\cdot e_1=x_1>t\}$, for $t\in \R$.
Then we can define $\Omega'=\Omega_{\overline t}$ (which is clearly an open, bounded set with Lipschitz boundary), where $\overline t$ is the largest  $t$ such that $\Omega_t$ contains all $\cup_l\Omega_{u_l}$.
This choice of $\overline t$ also ensures that $\partial\Omega'\cap \partial \Omega_{u_i}\not=\emptyset$ at least for some $i$, so we have concluded.
\end{proof}

We now prove (see Propositions  \ref{prop:blowupconv} and \ref{prop:optimalitycond} and Remark~\ref{rem:reduced_2phase} below) that, in the sense of  measures as in~\cite{AguileraAltCaffarelli,AltCaffarelli}, there exists $m_\eta$ such that an optimal vector $(u_1,\dots,u_k)$ for~\eqref{eq:ceta} is a solution to\[
\begin{cases}
-\Delta u_i=\lambda_1(\Omega_{u_i})u_i-|\nabla u_i|\mathcal H^{N-1}\resmeas{\partial^*\Omega_{u_i}}\qquad \text{in }\quad\Omega,\\
|\nabla u_i|=m_\eta\qquad \text{on }\Big(\partial^* \Omega_{u_i}\setminus \cup_{j\not =i}\partial \Omega_{u_j}\Big)\cap \Omega,\\
|\nabla u_i|\ge m_\eta,\quad |\nabla u_j|\geq m_\eta \quad\text{and}\quad |\nabla u_i|-|\nabla u_j|=0\quad\text{on}\quad \partial^*\Omega_{u_i}\cap \partial^*\Omega_{u_j}\cap \Omega.
\end{cases}
\]

At this point the notation $|\nabla u_i|$ is purely formal (see Propositions \ref{prop:blowupconv} and \ref{prop:optimalitycond} and Remark \ref{rem:reduced_2phase} for the actual statements), but it will be precise once we prove that $\partial^* \Omega_{u_i}$ is regular.
We note that we do not have different constants for each component because the measure-penalization term computes $f_\eta$ on the measure of the union of \emph{all} $\Omega_{u_i}$, $i=1,\ldots, k$, and does not penalize each set with a different weight.

\begin{proposition}\label{prop:blowupconv}
Let $(u_1,\dots,u_k)$ be optimal for problem~\eqref{eq:ceta}.
For all $i=1,\dots, k$, there is a nonnegative Borel function $q_{u_i}:\partial^* \Omega_{u_i}\to \R$ such that, in the sense of distributions, one has
\begin{equation}\label{eq:radonmeasure}
-\Delta u_i = \lambda_1(\Omega_{u_i})u_i -q_{u_i}(x)\mathcal H^{N-1}\resmeas\partial^*\Omega_{u_i}\qquad\text{ in }\Omega.
\end{equation}
Moreover, for all points $\overline x\in \partial^*\Om_{u_i}$, the measure theoretic inner unit normal $\nu_{u_i}(\overline x)$ is well defined and, as $\varepsilon\to0$, 
\begin{equation}\label{eq:bdp4.32}
\frac{\Om_{u_i}-\overline x}{\varepsilon}\rightarrow \{x : x\cdot \nu_{u_i}(\overline x)\geq 0\}\qquad \text{ in }L^1(\Omega).
\end{equation}
 Finally, for every $\overline x\in \partial^*\Omega_{u_i}$, we have 
\begin{equation}\label{eq:bdp4.33}
\frac{u_i(\overline x+\varepsilon x)}{\varepsilon}\longrightarrow q_{u_i}(\overline x)(x\cdot \nu_{u_i}(\overline x))_+\qquad \text{in } L^1(\Omega),  \text{ as } \varepsilon \to 0.
\end{equation}
\end{proposition}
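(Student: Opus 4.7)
The plan is to adapt the classical Alt--Caffarelli approach to our eigenvalue setting. Consider the distribution $\mu_i := -\Delta u_i - \lambda_1(\Omega_{u_i})\,u_i$ on $\Omega$. By Lemma~\ref{lemma:otherproperties}, $\mu_i$ has a fixed sign as a distribution (the inequality $-\Delta u_i \leq \lambda_1(\Omega_{u_i})u_i$ rephrased) and hence defines a Radon measure on $\Omega$. The interior identity of the same lemma gives $\mu_i = 0$ on $\Omega_{u_i}$, while $u_i \equiv 0$ on $\Omega\setminus\overline{\Omega_{u_i}}$ gives $\mu_i = 0$ there too, so $\supp \mu_i\subset \partial \Omega_{u_i}\cap\Omega$. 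The next step is the upper bound $|\mu_i|(B_r(\overline x)) \leq C\,r^{N-1}$ for $\overline x\in \partial\Omega_{u_i}\cap\Omega'$, $\Omega'\Subset\Omega$, and $r$ small. This follows by testing $\mu_i$ against a Lipschitz cutoff of $\chi_{B_r(\overline x)}$ and integrating by parts: the surface integral of $|\nabla u_i|$ on $\partial B_r$ is controlled by the Lipschitz continuity (Corollary~\ref{coro_Lispchitz}), while the volume contribution from $\lambda_1(\Omega_{u_i}) u_i$ is of lower order. Combining with Lemma~\ref{le:finiteper} and the identity~\eqref{eq:measHN-1}, the Radon--Nikodym theorem produces a nonnegative Borel function $q_{u_i}$ on $\partial^*\Omega_{u_i}$ with $|\mu_i| = q_{u_i}(x)\,\mathcal H^{N-1}\resmeas \partial^*\Omega_{u_i}$, which is~\eqref{eq:radonmeasure}.

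For the blowup analysis, fix $\overline x \in \partial^*\Omega_{u_i}$. By De Giorgi's structure theorem applied to the finite-perimeter set $\Omega_{u_i}$, the measure-theoretic inner unit normal $\nu_{u_i}(\overline x)$ is well-defined and $(\Omega_{u_i}-\overline x)/\varepsilon \to H := \{x : x\cdot \nu_{u_i}(\overline x)\geq 0\}$ in $L^1_{\mathrm{loc}}$, which is~\eqref{eq:bdp4.32}. Set $u_{i,\varepsilon}(x) := u_i(\overline x + \varepsilon x)/\varepsilon$; these functions are uniformly Lipschitz (with the same constant as $u_i$) and vanish at the origin, so by Arzel\`a--Ascoli a subsequence converges locally uniformly on $\R^N$ to a nonnegative Lipschitz $u_0$. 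A direct scaling of the interior equation gives $-\Delta u_{i,\varepsilon} = \varepsilon^2\, \lambda_1(\Omega_{u_i})\,u_{i,\varepsilon}$ on the scaled positivity set; passing to the limit, $u_0$ is harmonic in the open half-space $\{x\cdot \nu_{u_i}(\overline x)>0\}$. Combining~\eqref{eq:bdp4.32} with the uniform continuity of the $u_{i,\varepsilon}$ and the density estimates of Lemma~\ref{le:densityest}, one deduces $u_0\equiv 0$ on $\{x\cdot \nu_{u_i}(\overline x)\leq 0\}$. A standard Liouville-type argument (nonnegative Lipschitz function, harmonic on a half-space, zero on the boundary hyperplane, with linear growth at infinity) then forces $u_0(x) = \alpha\,(x\cdot \nu_{u_i}(\overline x))_+$ for some $\alpha\geq 0$.

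The last step matches $\alpha$ with $q_{u_i}(\overline x)$ at $\mathcal H^{N-1}$-a.e.~point of $\partial^*\Omega_{u_i}$, thereby proving the uniqueness of the blowup limit and upgrading the subsequential convergence of $u_{i,\varepsilon}$ to the full limit~\eqref{eq:bdp4.33}. The plan is to compute $\lim_{\varepsilon\to 0}|\mu_i|(B_\varepsilon(\overline x))/(\omega_{N-1}\varepsilon^{N-1})$ in two ways: Lebesgue differentiation of the Radon--Nikodym decomposition gives $q_{u_i}(\overline x)$ at $\mathcal H^{N-1}$-a.e.~reduced-boundary point, while testing the rescaled distributional equation for $u_{i,\varepsilon}$ against a smooth cutoff and passing to the limit, using the explicit form $u_0 = \alpha(x\cdot \nu_{u_i}(\overline x))_+$, yields the surface mass $\alpha$. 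Matching the two gives $\alpha = q_{u_i}(\overline x)$ a.e., and since the limit is then independent of the subsequence, the convergence in~\eqref{eq:bdp4.33} holds along the full parameter $\varepsilon\to 0$.

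The main technical obstacle is the rigorous derivation of the upper estimate $|\mu_i|(B_r(\overline x))\leq C\,r^{N-1}$ uniformly in $\overline x \in \partial \Omega_{u_i}\cap\Omega'$, because $u_i$ is only Lipschitz (not $C^1$) and $\partial\Omega_{u_i}$ is a priori only of locally finite perimeter; one must carefully choose cutoffs so that integration by parts produces controllable surface and volume terms. A secondary delicate point is ensuring that $u_0$ vanishes identically (not merely almost everywhere) on $\{x\cdot\nu_{u_i}(\overline x)\leq 0\}$, which uses the uniform Lipschitz bound on $u_{i,\varepsilon}$ combined with the pointwise density estimate of Lemma~\ref{le:densityest} to propagate the measure-theoretic vanishing of $\Omega_{u_{i,\varepsilon}}$ to a genuine vanishing of $u_0$ on the closed half-space.
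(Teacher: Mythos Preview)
Your proposal is correct and follows essentially the same approach as the paper's own proof, which simply refers to \cite[Proposition~2.3]{BucurMazzoleniPratelliVelichkov} for the Radon measure structure~\eqref{eq:radonmeasure}, to De Giorgi's structure theorem for~\eqref{eq:bdp4.32}, and to \cite[Theorem~4.8]{AltCaffarelli} for~\eqref{eq:bdp4.33}. You have, in effect, unpacked those references accurately: the sign of $\mu_i$ from Lemma~\ref{lemma:otherproperties}, the support on $\partial\Omega_{u_i}$, the $r^{N-1}$ upper bound via Lipschitz continuity, the absolute continuity with respect to $\mathcal H^{N-1}\resmeas\partial^*\Omega_{u_i}$ via Lemma~\ref{le:finiteper} and~\eqref{eq:measHN-1}, and the blowup identification via Liouville and Lebesgue differentiation are exactly the standard ingredients.
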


\begin{proof} 
Checking that equation~\eqref{eq:radonmeasure} holds is standard and boils down to showing that $-\Delta u_i-\lambda_1(\Omega_{u_i})u_i$ is locally a Radon measure concentrated on $\partial^*\Omega_{u_i}$ and can be done for example as in~\cite[Proposition~2.3]{BucurMazzoleniPratelliVelichkov}, see also Lemma~\ref{lemma:otherproperties}.
Claim \eqref{eq:bdp4.32} is a direct consequence of Theorem \ref{thm:deGiorgi}, while \eqref{eq:bdp4.33} follows as in ~\cite[Theorem 4.8]{AltCaffarelli}. 
\end{proof}
We are now in position to prove an optimality condition at one-phase points of the reduced boundary of each component of an optimal partition, following the approach of~\cite{AguileraAltCaffarelli}.

\begin{proposition}\label{prop:optimalitycond}
Under the assumptions and notations of Proposition~\ref{prop:blowupconv}, there exists $m_\eta\geq 0$ such that the functions $(q_{u_1}, \dots, q_{u_k})$ satisfy the following:
\begin{equation}\label{eq:m_eta}
q_{u_i}=m_\eta\qquad \text{on }\Big(\partial^* \Omega_{u_i}\setminus \cup_{j\not =i}\partial \Omega_{u_j}\Big)\cap \Omega,\qquad \text{for all $i=1,\dots, k$}.
\end{equation}
\end{proposition}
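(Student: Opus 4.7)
The plan is to follow the approach of \cite{AguileraAltCaffarelli} and perform inner (domain) variations $\Phi_t = \mathrm{id} + t\xi$ compactly supported near one-phase reduced boundary points, extracting an Euler--Lagrange condition on $q_{u_i}$ from the minimality of $J_\eta$. The central idea is to choose $\xi$ so that the variation is \emph{total-volume-preserving} at first order, that is, $\sum_l|\Omega_{\tilde u_{l,t}}| = \sum_l|\Omega_{u_l}| + O(t^2)$ for the competitor $(\tilde u_{1,t},\dots,\tilde u_{k,t})$. Since $f_\eta$ is Lipschitz by Lemma~\ref{lemma:feta}(2), its contribution to $J_\eta(\tilde u_t) - J_\eta(u)$ is then only $O(t^2)$, irrespective of whether $\sum_l|\Omega_{u_l}|$ hits the non-differentiability point $a$ of $f_\eta$. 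This effectively reduces the first-order optimality of $J_\eta$ to that of $\sum_l \lambda_1(\Omega_{u_l})$ under a volume constraint, from which constancy of $q_{u_i}$ on the one-phase reduced boundary will follow.

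Concretely, by Lemma~\ref{le:onephase} I fix a one-phase reduced boundary point $x_0\in (\partial^*\Omega_{u_{i_0}}\setminus\bigcup_{j\neq i_0}\partial\Omega_{u_j})\cap\Omega$; given any further such point $x_1 \in (\partial^*\Omega_{u_{i_1}}\setminus\bigcup_{j\neq i_1}\partial\Omega_{u_j})\cap\Omega$ (possibly $i_1 = i_0$), I pick small disjoint balls $B_0,B_1\subset\Omega$ centered at $x_0, x_1$ avoiding $\overline{\Omega_{u_l}}$ for $l\notin\{i_0,i_1\}$, together with vector fields $\xi_l\in C_c^\infty(B_l;\R^N)$ normalized by
\begin{equation*}
\int_{\partial^*\Omega_{u_{i_0}}}\!\xi_0\cdot\nu_{u_{i_0}}\,d\mathcal H^{N-1}\;=\;-\int_{\partial^*\Omega_{u_{i_1}}}\!\xi_1\cdot\nu_{u_{i_1}}\,d\mathcal H^{N-1}\;\neq\;0.
\end{equation*}
Setting $\Phi_t = \mathrm{id} + t(\xi_0 + \xi_1)$, $\tilde u_{l,t} = u_l\circ\Phi_t^{-1}$ for $l\in\{i_0,i_1\}$ and $\tilde u_{l,t} = u_l$ otherwise, the disjointness of supports is preserved so that $(\tilde u_{1,t},\dots,\tilde u_{k,t})\in\overline{H}$. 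The Hadamard-type expansion of the Dirichlet Rayleigh quotient, combined with the blowup description~\eqref{eq:bdp4.33} and the Radon-measure representation~\eqref{eq:radonmeasure}, identifies the boundary integrand as $q_{u_l}^2$ and gives
\begin{equation*}
J_\eta(\tilde u_t) - J_\eta(u) \;=\; t\!\sum_{l\in\{i_0,i_1\}}\!\int_{\partial^*\Omega_{u_l}} q_{u_l}^2\,(\xi_l\cdot\nu_{u_l})\,d\mathcal H^{N-1} + O(t^2).
\end{equation*}
Minimality applied with both signs of $t$ forces the first-order coefficient to vanish; shrinking the supports of $\xi_0,\xi_1$ down around $x_0, x_1$ and invoking a Lebesgue differentiation argument on the rectifiable set $\partial^*\Omega_{u_l}$ yields $q_{u_{i_0}}^2(x_0) = q_{u_{i_1}}^2(x_1)$ for $\mathcal H^{N-1}$-a.e.\ pair of one-phase reduced boundary points. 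Non-negativity of $q_{u_l}$ then produces a single constant $m_\eta \geq 0$ such that~\eqref{eq:m_eta} holds.

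The principal difficulty is the rigorous validation of the first-order expansion of the Rayleigh quotient in this low-regularity setting: the sets $\Omega_{u_l}$ have only finite perimeter (Lemma~\ref{le:finiteper}) and the eigenfunctions $u_l$ are merely Lipschitz (Corollary~\ref{coro_Lispchitz}), so the classical smooth-boundary Hadamard formula is not directly applicable. The remedy is to combine the blowup identity~\eqref{eq:bdp4.33}---which identifies $q_{u_l}$ with the amplitude of the half-plane blowup and thus with a natural trace of $|\nabla u_l|$ on $\partial^*\Omega_{u_l}$---with the distributional representation~\eqref{eq:radonmeasure} and a careful flow-based integration by parts, exactly in the spirit of~\cite{AguileraAltCaffarelli}. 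Once this expansion is in hand, the combinatorial scheme above concludes the proof.
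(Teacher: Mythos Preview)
Your proposal is correct and follows essentially the same approach as the paper: both perform inner (domain) variations supported near two one-phase reduced boundary points, arranged to be total-volume-preserving at first order so that the $f_\eta$ term contributes only at higher order, and then identify the boundary coefficient as $q_{u_l}^2$ via the blowup characterization of Proposition~\ref{prop:blowupconv}. The paper makes the diffeomorphism explicit (a radial cutoff with two small parameters $\rho,\kappa$) and obtains equality by exchanging the roles of $x_0$ and $x_1$, whereas you use generic vector fields with a normalization and both signs of $t$; these are equivalent implementations of the same idea.
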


\begin{proof} 
Let  $i,j\in \{1,\ldots, k\}$, $i\leq j$, and
\[
x_0\in\Big(\partial^*\Omega_{u_i}\setminus \cup_{\ell\not=i}\partial \Omega_{u_\ell}\Big)\cap \Omega, \qquad x_1\in\Big(\partial^*\Omega_{u_j}\setminus \cup_{\ell\not=j}\partial \Omega_{u_\ell}\Big)\cap \Omega
\]
such that $x_0\neq x_1$. Then we construct a family of volume preserving diffeomorphisms  as follows: let $\rho>0$ be such that
\begin{equation}\label{eq:ball_rho_def}
B_\rho(x_0)\subset \Omega\setminus \cup_{\ell\not=i}\partial \Omega_{u_\ell},\quad  B_\rho(x_1)\subset \Omega\setminus \cup_{\ell\not=j}\partial \Omega_{u_\ell},\quad B_\rho(x_0)\cap B_\rho(x_1)=\emptyset,
\end{equation}
 and let $\varphi\in C^1([0,1])$ be a nontrivial function, such that $\varphi=0$ in a neighborhood of $1$. We define, for $\kappa>0$,
\[
\tau_{\rho,\kappa}(x)=\tau(x):=x+\kappa\rho\varphi\left(\frac{|x-x_0|}{\rho}\right)\nu_{x_0} \chi_{B_\rho(x_0)}-\kappa\rho\varphi\left(\frac{|x-x_1|}{\rho}\right)\nu_{x_1} \chi_{B_\rho(x_1)},
\]
where $\nu_{x_m}$, with $m\in\{0,1\}$, is the measure theoretic inner normals to $\partial^*\Omega_{u_\ell}$ at $x_m$, for $\ell=i,j$ and $m=0,1$.
We have:
\begin{equation}\label{eq:dettau}
D\tau(x)=Id+\kappa\varphi'\left(\frac{|x-x_0|}{\rho}\right)\frac{x-x_0}{|x-x_0|}\otimes\nu_{x_0} \chi_{B_\rho(x_0)}-\kappa\varphi'\left(\frac{|x-x_1|}{\rho}\right)\frac{x-x_1}{|x-x_1|}\otimes\nu_{x_1} \chi_{B_\rho(x_1)}. 
\end{equation}
We have that $\tau(x)-x$ vanishes outside $B_\rho(x_0)\cup B_\rho(x_1)$ while, for $\kappa$ small enough, $\tau$ is a diffeomorphism. Using the Jacobi's formula $\det(Id+\xi A)=1+ {\rm trace}(A)\xi+o(\xi)$, valid for any matrix $A\in\R^{N\times N}$, we have
\begin{align}
\det(D\tau(x)) =& 1+ \kappa\varphi'\left(\frac{|x-x_0|}{\rho}\right)\frac{x-x_0}{|x-x_0|}\cdot\nu_{x_0} \chi_{B_\rho(x_0)}(x)\nonumber \\
&-\kappa\varphi'\left(\frac{|x-x_1|}{\rho}\right)\frac{x-x_1}{|x-x_1|}\cdot\nu_{x_1} \chi_{B_\rho(x_1)}(x)+o(\kappa)\chi_{B_\rho(x_0)\cup B_\rho(x_1)}(x)  \label{eq:espdet}
\end{align}
as $\kappa\to 0$, uniformly in $\rho$. We call $\Omega_{\rho,\ell} =\tau(\Omega_{u_\ell})$ and $\widetilde u_{\rho,\ell}(z)=u_\ell(\tau^{-1}(z))$, noting that $\widetilde u_{\rho,\ell}\in H^1_0(\Omega_{\rho,\ell})$, for $\ell=i,j$. 
Notice that, by \eqref{eq:ball_rho_def}, it is clear that $(u_1,\dots, \widetilde u_{\rho,i},\dots,\widetilde u_{\rho,j},\dots, u_k)\in \overline H$, so it is an admissible vector.
We now perform the first variation of each term of the sum defining $J_\eta$. Regarding the variation of the $L^2$-norm, 
\[
\begin{aligned}
\sum_{\ell\in \{i,j\}}\frac{1}{\rho^N}& \left(\int_{\Omega_{\rho,\ell}}\widetilde u^2_{\rho,\ell}(z)\,dz-\int_{\Omega_{u_\ell}}u_l^2(x)\,dx\right)  =\sum_{\ell\in \{i,j\}}\frac{1}{\rho^N}\int_{\Omega_{u_\ell}} \Big(u_{\ell}^2(x)\det(D\tau(x))-u^2_\ell(x)\Big)\,dx\\
=& \frac{1}{\rho^N}\int_{B_\rho(x_0)\cap\Omega_{u_i}} \Big(u_{i}^2(x)\det(D\tau(x))-u^2_i(x)\Big)\,dx+\frac{1}{\rho^N}\int_{B_\rho(x_1)\cap\Omega_{u_j}} \Big(u_{j}^2(x)\det(D\tau(x))-u^2_j(x)\Big)\,dx\\
=&\int_{B_1(0)\cap \frac{\Omega_{u_i}-x_0}{\rho}} \Big(u_i^2(x_0+\rho y)\det(D\tau(x_0+\rho y))- u_i^2(x_0+\rho y)\Big)\,dy\\
& +\int_{B_1(0)\cap \frac{\Omega_{u_j}-x_1}{\rho}} \Big(u_j^2(x_1+\rho y)\det(D\tau(x_1+\rho y))- u_j^2(x_1+\rho y)\Big)\,dy\\
=&\int_{B_1\cap\left(\frac{\Omega_{u_i}-x_0}{\rho}\right)} \frac{u_i^2(x_0+\rho y)}{\rho^2}\rho^2\,\kappa\,\varphi'(|y|)\frac{y}{|y|}\cdot\nu_{x_0}\,dy\\
&-\int_{B_1\cap\left(\frac{\Omega_{u_j}-x_1}{\rho}\right)} \frac{u_j^2(x_1+\rho y)}{\rho^2}\rho^2\,\kappa\,\varphi'(|y|)\frac{y}{|y|}\cdot\nu_{x_1}\,dy+o(\kappa)\\
=&o_\kappa(1)(\rho^2+\kappa)
\end{aligned}
\]
(where $o_\kappa(1)\to 0$ as $\kappa\to 0$), where we performed the change of variable $x=x_m+\rho y$, exploited~\eqref{eq:espdet} and used Proposition~\ref{prop:blowupconv}. We stress that the computations regarding the volume and the Dirichlet integral contributions are identical to those performed originally in \cite[Theorem 3]{AguileraAltCaffarelli}. 
For the volume term, one has
\begin{equation*}
\sum_{\ell\in\{i,j\}}(|\Omega_{\rho,\ell}|-|\Omega_{u_\ell}|)= O(\kappa)\rho^N+o(\kappa)\rho^N\qquad \text{as }\kappa, \rho\rightarrow 0;
	\end{equation*}
 from the properties of $f_\eta$ (Lemma \ref{lemma:feta}), one immediately infers that 
		\begin{equation*}
	f_\eta(|\Omega_{\rho,i}\cup \Omega_{\rho,j}\cup (\cup_{\ell\not\in\{i,j\}} \Omega_{u_j})|)-f_\eta(|\cup_\ell\Omega_{u_\ell}|)= o_\kappa(1)\rho^N+o(\kappa)\rho^N,\qquad \text{as }\kappa,\rho\rightarrow 0.
	\end{equation*}
 For the Dirichlet energy term we rewrite all the details from \cite{AguileraAltCaffarelli}, since we are going to need some intermediate steps in the proof of Proposition \ref{prop:m(x0)>=m_eta} below:
\begin{align*}
\sum_{\ell=1}^k &\frac{1}{\rho^N} \Big(\int_{\Omega_{\rho,\ell}}|\nabla \widetilde u_{\rho,\ell}|^2-\int_\Omega |\nabla u_\ell|^2\Big)\\
 =&\sum_{\ell\in \{i,j\}} \frac{1}{\rho^N}\int_{\Omega_{\rho,\ell}} |Du_\ell(x) [D\tau(x)]^{-1}|^2\det(D\tau(x))-|\nabla u_\ell|^2\, dx   \\
 =&\int_{ B_1\cap \frac{\Omega_{u_i}-x_0}{\rho}}  |Du_i(x_0+\rho y) [D\tau(x_0+\rho y)]^{-1}|^2\det(D\tau(x_0+\rho y))-|\nabla u_i|^2\, dx\\
 &+\int_{ B_1\cap \frac{\Omega_{u_j}-x_1}{\rho}}  |Du_j(x_1+\rho y) [D\tau(x_1+\rho y)]^{-1}|^2\det(D\tau(x_1+\rho y))-|\nabla u_j|^2\, dx\\
 =& -\kappa \int_{B_1\cap \frac{\Omega_{u_i}-x_0}{\rho}} 2\varphi'(|y|)\left( \nabla \widetilde u_{\rho,i}\cdot \frac{y}{|y|}\right) \left( \nabla \widetilde u_{\rho,i}\cdot \nu_{x_0}\right) -\varphi'(|y|)|\nabla \widetilde u_{\rho,i}|^2 \left( \nu_{x_0}\cdot \frac{y}{|y|}\right) \\
 &+ \kappa \int_{B_1\cap \frac{\Omega_{u_j}-x_1}{\rho}} 2\varphi'(|y|)\left( \nabla \widetilde u_{\rho,j}\cdot\frac{y}{|y|}\right) \left( \nabla \widetilde u_{\rho,j}\cdot \nu_{x_1}\right) -\varphi'(|y|)|\nabla \widetilde u_{\rho,j}|^2 \left( \nu_{x_1}\cdot\frac{y}{|y|}\right)+o(\kappa)\\
 =&\kappa{ (C_0(\varphi)q_{u_i}^2(x_0)-C_1(\varphi)q^2_{u_j}(x_1))} + o_\rho(1)+o(\kappa),
	\end{align*}
 
where we have used Proposition \ref{prop:blowupconv} (which implies that $\nabla \widetilde u_{\rho,i}\to q_{u_i}(x_0) \nu_{x_0}\chi_{\{x\cdot \nu_{x_0}\geq 0\}}$ a.e., and the same for $u_j$ at $x_1$) and for $m\in\{0,1\}$
\begin{equation}\label{rfk}
C_m(\varphi):=-\int_{B_1\cap \{y\cdot \nu_{x_m}>0\}}\varphi'(|y|)\frac{y\cdot \nu_{x_m}}{|y|}\,dy=\int_{B_1\cap\{y\cdot\nu_{x_m}=0\}}\varphi(|y|)\,d\mathcal H^{N-1}(y).
\end{equation}
The last equality follows from the Divergence Theorem, recalling that $\nu_{x_m}$ is an inner normal and 
$$
{\mathrm{div}}(\varphi(|y|)\nu_{x_m})=\varphi'(|y|)\frac{y\cdot \nu_{x_m}}{|y|}.
$$
Notice also that, by the radial symmetry of $\varphi$, $C_0(\varphi)=C_1(\varphi)= C(\varphi)$, namely this quantity is not affected by the choice of the normal $\nu$.

Summarizing, we have that (recalling also that $\int_\Omega u_i^2=1$), and using the minimality of $(u_1,\ldots, u_k)$
\begin{align*}\label{eq:conclvariation}
J_\eta(u_1,\ldots, u_k)&\leq J_\eta(u_1,\dots, \widetilde u_{\rho,i},\dots,\widetilde u_{\rho,j},\dots, u_k)\\
&\leq J_\eta(u_1,\ldots, u_k)+\kappa \rho^N C(\varphi)((q_{u_i}(x_0))^2-(q_{u_j}(x_1))^2)+o(\rho^N)+\rho^No(\kappa).
\end{align*}
Therefore
\[
0\leq \kappa \rho^N C(\varphi)((q_{u_i}(x_0))^2-(q_{u_j}(x_1))^2)+o(\rho^N)+\rho^No(\kappa).
\] Dividing by $\kappa \rho^N$, letting $\rho\to 0$, and then $\kappa\to 0$, we deduce that 
\[
q_{u_j}(x_1)\leq q_{u_i}(x_0),
\]
using also the fact that $q_{u_i},q_{u_j}$ are nonnegative. Since $x_0,x_1$ are arbitrary (and taking $i=j$ in the above arguments), then there exists a constant $m_\eta$ such that
\[
q_{u_i}(x_0)\equiv m_\eta \qquad \text{ for all } x_0\in \left(\partial^* \Omega_{u_i}\setminus \cup_{j\neq i} \partial \Omega_{u_j}\right)\cap \Omega,\ i=1,\ldots, k.
\]
\end{proof}

\begin{remark}\label{rem:reduced_2phase}
    Using a proof similar to the one of Proposition \ref{prop:optimalitycond}, it is possible to show the following result at two-phase points of the reduced boundaries:
    \[
    q_{u_i}\ge m_\eta,\quad q_{u_j}\geq m_\eta \quad\text{and}\quad q_{u_i}-q_{u_j}=0\quad\text{on}\quad \partial^*\Omega_{u_i}\cap \partial^*\Omega_{u_j}\cap \Omega,\qquad \text{for all $i\not=j$.}
    \]
    However, we will prove this kind of result at \emph{all} two-phase points, see Proposition \ref{prop:m(x0)>=m_eta} below.
\end{remark}

The density estimates from above and from below of Lemma~\ref{le:densityest} then entail the following result.
\begin{lemma}\label{le:boundmeta}
There are universal constants $0<c\leq C$ such that, for all $\eta\in (0,1]$ and for all $i$, 
\[
0<c\leq q_{u_i}(x)\leq C \text{ for }x\in \partial^* \Omega_{u_i}\cap \Gamma_{OP}(\partial \Omega_{u_i}),\qquad \text{ which implies}\qquad 0<c\leq m_\eta\leq C.
\]
\end{lemma}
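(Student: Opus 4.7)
The plan is to extract both bounds from the blowup representation \eqref{eq:bdp4.33} of $q_{u_i}(x_0)$, combining it with the two-sided pointwise control on $u_i$ provided by its Lipschitz regularity (Corollary \ref{coro_Lispchitz}) and its non-degeneracy (Remark \ref{rmk:nondeg}, in the form \eqref{eq:nondeg_sup}). Concretely, fix $i$, a reduced one-phase point $x_0\in\partial^*\Omega_{u_i}\cap\Gamma_{OP}(\partial\Omega_{u_i})$, and the rescalings $u_{i,\rho}(y):=u_i(x_0+\rho y)/\rho$. The first step will be to upgrade the $L^1$ convergence in \eqref{eq:bdp4.33} to locally uniform convergence. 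By Corollary \ref{coro_Lispchitz}, the family $\{u_{i,\rho}\}$ is equi-Lipschitz in $\overline{B_1}$ with constant $L$ (depending on $N,k,a,\Omega,\eta$ but not on $x_0,\rho$); since they also vanish at $0$, Arzelà–Ascoli combined with the $L^1$ identification of the limit gives $u_{i,\rho}\to u_{i,0}$ uniformly on $\overline{B_1}$, where $u_{i,0}(y)=q_{u_i}(x_0)(y\cdot\nu_{u_i}(x_0))_+$.

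Once the uniform convergence is in hand, both bounds are immediate. For the upper bound, since $u_i(x_0)=0$ the Lipschitz control gives $u_{i,\rho}(y)\le L|y|$ on $\overline{B_1}$; passing to the uniform limit at $y=\nu_{u_i}(x_0)$ yields $q_{u_i}(x_0)\le L=:C$. For the lower bound, applying \eqref{eq:nondeg_sup} at $x_0\in\partial\Omega_{u_i}$ (with $\rho$ small enough so $B_\rho(x_0)\subset\Omega$, which is permitted since $x_0$ is an interior point of $\Omega$) gives $\sup_{B_\rho(x_0)}u_i\ge K_0\rho$ for every $\rho\le\rho_0$, equivalently $\sup_{\overline{B_1}}u_{i,\rho}\ge K_0$; passing to the uniform limit and noting that $\sup_{\overline{B_1}}u_{i,0}=q_{u_i}(x_0)$ (attained at $y=\nu_{u_i}(x_0)$) produces $q_{u_i}(x_0)\ge K_0=:c$.

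The bounds on $m_\eta$ are then a direct consequence: by Proposition \ref{prop:optimalitycond} one has $q_{u_i}\equiv m_\eta$ on $\partial^*\Omega_{u_i}\cap\Gamma_{OP}(\partial\Omega_{u_i})\cap\Omega$, and by Lemma \ref{le:onephase} this set is non-empty for at least one index $i$, so evaluating the just-proven inequalities there gives $0<c\le m_\eta\le C$. I do not see a genuine obstacle in this argument; the only thing to track carefully is that, in the sense of the paper, ``universal'' means independent of $x_0$ and $i$ (the constants $L$ and $K_0$ still depend on $N,k,a,\Omega,\eta$ through Corollary \ref{coro_Lispchitz} and Lemma \ref{le:nondegeneracy}). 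No monotonicity formula or competitor construction is needed for this lemma, only the blowup already established in Proposition \ref{prop:blowupconv} and the basic a priori estimates of Section \ref{sec:penalized}.
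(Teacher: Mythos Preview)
Your proof is correct and follows essentially the same route as the paper, which simply invokes \cite[Theorem~4.3]{AltCaffarelli}; that argument likewise reads off the upper bound from the Lipschitz estimate and the lower bound from non-degeneracy via the blowup, and your Arzel\`a--Ascoli upgrade of \eqref{eq:bdp4.33} from $L^1$ to locally uniform convergence makes this explicit. Your closing caveat that $L$ and $K_0$ carry an $\eta$-dependence is accurate and is in tension with the paper's phrasing ``for all $\eta\in(0,1]$'', a point the paper's one-line proof does not address either.
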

\begin{proof}
The first statement follows as in \cite[Theorem~4.3]{AltCaffarelli}. Then, since $q_{u_i}$ is equal to $m_\eta$ at one-phase points, we deduce the same bounds also on $m_\eta$.
\end{proof}

We are now in position to prove the existence of a constant $\eta$ for which there is equivalence between the constrained and the unconstrained problem, showing in particular that $\Lambda_{1,k}(a)$ and $c_\eta$ coincide and have the same solutions.
\begin{lemma}\label{le:measOK}
There exists $\eta_0>0$ such that, for $0<\eta<\eta_0$, if $(u_{1,\eta},\dots, u_{k,\eta})$ is optimal for problem~\eqref{eq:ceta}, then we have
\[
\sum_{i=1}^k |\Omega_{u_{i,\eta}}|=a.
\]
In particular $c_\eta=\Lambda_{1,k}(a)$, and the partition $(\Omega_{u_{1,\eta}},\ldots, \Omega_{u_{k,\eta}})$ solves our original problem \eqref{eigenvalue_problem}. Conversely, if $(\Omega_1,\ldots, \Omega_k)$ is a solution to \eqref{eigenvalue_problem}, then the corresponding eigenfunctions  $(u_1,\ldots, u_k)$ optimize~\eqref{eq:ceta}.
\end{lemma}

\begin{proof}
\textbf{Step 1.} (Saturation of the constraint) We already know that $\sum_{i=1}^k |\Omega_{u_{i,\eta}}|\leq a$, thanks to Lemma~\ref{lem:measmalla}. 
For the sake of contradiction, let us assume that \[
\sum_{i=1}^k |\Omega_{u_{i,\eta}}|<a.
\]
Then, following the same idea of~\cite{AguileraAltCaffarelli}, it is enough to do an outward perturbation at a one-phase (regular) point $x_0\in \partial^*\Omega_{u_{i,\eta}}\cap\Omega$ for some $i=1,\dots,k$. Such a point exists thanks to Lemma~\ref{le:onephase}. 
We use the optimality condition at the one-phase free boundary, $|\nabla u_{i,\eta}|^2(x_0)=m_\eta$, with $m_\eta$ uniformly bounded from below (and from above) by Lemma~\ref{le:boundmeta}, and a standard first variation argument on the first eigenvalue of the Dirichlet Laplacian.
More precisely, we consider a small ball  $B_r(x_0)\subset \Omega\setminus \cup_{j\not= i}\Omega_{u_{j,\eta}}$  such that $|\cup_{j}\Omega_{u_{j,\eta}}\cup B_r(x_0)|<a$, and take a smooth vector field compactly supported in this ball, $\xi\in C^\infty_c(B_r(x_0),\R^N)$. For $t>0$ sufficiently small, we define 
\[
 \Omega_{u_{i,\eta},t}:=(Id+t\xi)(\Omega_{u_{i,\eta}})\subset \Omega_{u_{i,\eta}}\cup B_r(x_0).
\]
We note that we do not require that $\xi$ only  pushes outward the set.
A shape variation formula (see~\cite[Section~5.9.3]{HenrotPierre}) leads to 
\begin{align*}
\lambda_1(\Omega_{u_{i,\eta}})-\lambda_1(\Omega_{u_{i,\eta},t})&=t\int_{\partial^*\Omega_{u_{i,\eta}}\cap B_r(x_0)}q_{u_{i,\eta}}\xi\cdot\nu\,d\mathcal H^{N-1}+o(t)\\
&=tm_\eta \int_{\partial^*\Omega_{u_{i,\eta}}\cap B_r(x_0)}\xi\cdot\nu\,d\mathcal H^{N-1}+o(t)\quad\text{ as }t\to 0,
\end{align*}
where, in the last identity, we used Proposition~\ref{prop:optimalitycond}.
Then we use the Divergence Theorem (on the set $\Omega_{u_{i,\eta}}\cap B_r(x_0)$), and we can choose a vector field as a suitable multiple of the one in~\cite[Lemma~11.3]{Velichkov_onephasebook} and $t$ sufficiently small so that \[
\lambda_1(\Omega_{u_{i,\eta}})-\lambda_1(\Omega_{u_{i,\eta},t})=t m_\eta\int_{\Omega_{u_{i,\eta}}\cap B_r(x_0)}\div \xi\,dx+o(t)=m_\eta|\Omega_{u_{i,\eta,t}}\setminus \Omega_{u_{i,\eta}}|t+o(t).
\]
Using the minimality of $(u_{1,\eta},\dots, u_{k,\eta})$, the information above and Lemma~\ref{le:boundmeta}, we finally obtain
\[
\begin{split}
c|\Omega_{u_{i,\eta},t}\setminus \Omega_{u_{i,\eta}}|&\leq m_\eta |\Omega_{u_{i,\eta},t}\setminus \Omega_{u_{i,\eta}}|\leq \sum_{j=1}^k\lambda_1(\Omega_{u_{j,\eta}})-\sum_{j=1}^k
\lambda_1(\Omega_{u_{j,\eta},t})\\
&\leq f_\eta(|\cup_{j\not=i}\Omega_{u_{j,\eta}}\cup \Omega_{u_{j,\eta},t}|)-f_\eta(|\cup_j\Omega_{u_{j,\eta}}|)\\
&=\eta(|\Omega_{u_{i,\eta},t}\setminus \Omega_{u_{i,\eta}}|-|\Omega_{u_{i,\eta}}\setminus \Omega_{u_{i,\eta},t}|)\leq\eta|\Omega_{u_{i,\eta},t}\setminus \Omega_{u_{i,\eta}}|.
\end{split}
\]
For $\eta\leq \eta_0$ small enough we obtain a contradiction.

\smallbreak
\noindent \textbf{Step 2.} (Equivalence between \eqref{eigenvalue_problem} and \eqref{eq:ceta}) From Lemma \ref{lemma:bounds_ceta}, $c_\eta\leq \Lambda_{1,k}(a)$. On the other hand, if  $(u_{1,\eta},\dots, u_{k,\eta})$ is optimal for problem~\eqref{eq:ceta}, then by Step 1, it belong to $H_a$ and $f_\eta(\sum_{i=1}^k|\Omega_{u_i}|)=a$, so
\[
c_\eta=J_\eta(u_{1,\eta},\dots, u_{k,\eta})=J(u_{1,\eta},\dots, u_{k,\eta})\geq \Lambda_{1,k}(a).
\]
Therefore $\Lambda_{1,k}(a)=c_\eta$, and the rest of the claims follow directly.
\end{proof}

\begin{remark}\label{rmk:etafixed}
Thanks to Lemma~\ref{le:measOK}, we can now fix $\eta>0$ small enough so that, taking an optimal vector $(u_1,\dots,u_k)\in \overline{H}$ of problem~\eqref{eq:ceta}, the nodal sets $\Omega_{u_1},\dots, \Omega_{u_k}$ are mutually disjoint and satisfy the measure constraint $|\cup_{i=1}^k\Omega_{u_i}|=a$.
Thus $f_\eta(\cup_{i=1}^k\Omega_{u_i})=0$, $J_\eta(u_1,\ldots, u_k)=\sum_{i=1}^k \lambda_1(\Omega_{u_i})$, and 
\[
(\Omega_{u_1},\ldots, \Omega_{u_k})\text{ is an optimal partition of the original constrained optimal partition problem } \eqref{eigenvalue_problem}.
\] 

In order to conclude the proof of Theorem \ref{thm:main}, we will use, in Section \ref{sec:viscosity}, the fact that we have found \emph{both} a solution of \eqref{eq:ceta} and \eqref{eigenvalue_problem}. In the following section, in order to classify blowups at two-phase points, we exploit \eqref{eq:ceta}.
\end{remark}

\section{Classification of blowups at two-phase points}\label{sec:butwophase}
Up to now, we have only partially classified blowups at points of the reduced boundary (recall Propositions \ref{prop:blowupconv} and \ref{prop:optimalitycond}, and Remark \ref{rem:reduced_2phase}). In this section, using other arguments, we completely classify blowup limits at \emph{all} two-phase points, combining then all these information together to obtain a common lower bounds of some quantities.  More precisely, we prove the following result.
\begin{theorem}\label{thm:blowuplimits}
Take $\eta\in (0,1]$ and let $(u_1,\dots, u_k)$ be an $L^2$-normalized solution for problem~\eqref{eq:ceta}. Let $i\neq j$ and  $x_0\in \Gamma_{TP}(\partial \Omega_i)\cap \Gamma_{TP}(\partial \Omega_j)$, that is, $x_0\in   (\partial \Omega_{u_i}\cap \partial\Omega_{u_j}\cap \Omega)\setminus \cup_{\ell\not=i,j}\overline \Omega_{u_\ell}$ is a two-phase point at the boundary of $\Omega_{u_i}$ and $\Omega_{u_j}$.

Take the blowup sequences centered at $x_0$: for small $\varepsilon>0$,
\[
u_{i,\eps}(x)=\frac{u_i(x_0+\varepsilon x)}{\varepsilon},\qquad u_{j,\eps}(x)=\frac{u_j(x_0+\varepsilon x)}{\varepsilon}, \quad \text{ for } x\in \Omega_\varepsilon:=\frac{\Omega-x_0}{\varepsilon},
\]
extended by zero outside $\Omega_\varepsilon$. Then there exists $m(x_0)$ such that, up to pass to a subsequence, as $\eps\to0$,  
\begin{equation}\label{blow_up_limits_tp}
u_{i,\eps}\rightarrow m(x_0) (x\cdot \nu)_+,\qquad u_{j,\eps}\rightarrow m(x_0)(x\cdot \nu)_- \quad \text{ strongly in } L^\infty_{loc}(\R^N) \text{ and } H^1_{loc}(\R^N),
\end{equation}
for some $\nu\in \partial B_1$. Moreover, 
\[
\frac{\Omega_{u_{i}}-x_0}{\eps}\to \{x\cdot \nu>0\},\quad \frac{\Omega_{u_{j}}-x_0}{\eps}\to \{x\cdot \nu<0\} \quad \text{ in } L^1_{loc}(\R^N).
\]
\end{theorem}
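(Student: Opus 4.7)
My plan is to combine compactness from the uniform Lipschitz estimates (Corollary \ref{coro_Lispchitz}) with the ACF two-phase monotonicity formula (Lemma \ref{tpml}) and the pointwise variational inequalities of Proposition \ref{prop:classS}. The central obstacle will be identifying the common constant: because $f_\eta$ is not $C^1$, the blowup is \emph{not} a local minimizer of any natural limiting functional, so the matching of slopes $\alpha=\beta$ cannot be read off from an Euler--Lagrange equation in the limit and must be extracted by passing the inequalities of Proposition \ref{prop:classS} through the rescaling.

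\textbf{Step 1: compactness and limit equations.} Since $u_i,u_j$ are locally Lipschitz and vanish at $x_0$, the rescalings $u_{i,\eps},u_{j,\eps}$ are uniformly Lipschitz on compacts of $\R^N$ with $u_{i,\eps}(0)=u_{j,\eps}(0)=0$. By Arzelà--Ascoli and Rellich, along a subsequence
\begin{equation*}
u_{i,\eps}\to U_i,\qquad u_{j,\eps}\to U_j \quad \text{in } C^{0,\alpha}_{loc}(\R^N) \text{ and weakly in } H^1_{loc}(\R^N),
\end{equation*}
with $U_i,U_j\ge 0$ locally Lipschitz and $U_i\cdot U_j\equiv 0$. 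Rescaling the PDEs from Lemma \ref{lemma:otherproperties} gives $-\Delta u_{i,\eps}=\eps^2\lambda_{u_i}u_{i,\eps}$ in $\Omega_{u_{i,\eps}}$ and $-\Delta u_{i,\eps}\le \eps^2\lambda_{u_i}u_{i,\eps}$ globally, so letting $\eps\to 0$, $U_i$ is globally subharmonic and harmonic in $\{U_i>0\}$ (and similarly for $U_j$). Non-degeneracy (Lemma \ref{le:nondegeneracy}) applied at scale $\eps$ yields $\sup_{B_1}U_i\ge K_0>0$ and likewise for $U_j$, so both limits are nontrivial.

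\textbf{Step 2: ACF rigidity.} Applying the two-phase monotonicity formula to $(u_i,u_j)$ at $x_0$ (Lemma \ref{tpml}; the $\eps$-correction coming from the eigenvalue equation is absorbed in the lower-order term), the ACF functional $\Phi_{u_i,u_j}(x_0,\cdot)$ admits a finite limit $L$ as $r\to 0^+$. Rescaling gives $\Phi_{u_{i,\eps},u_{j,\eps}}(0,r)=\Phi_{u_i,u_j}(x_0,\eps r)\to L$ for every $r>0$; combined with strong $H^1_{loc}$ convergence (upgraded from weak convergence by testing the rescaled equations against cutoffs and comparing energies), this yields $\Phi_{U_i,U_j}(0,\cdot)\equiv L$. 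The rigidity statement in ACF, together with the non-triviality of both phases, forces
\begin{equation*}
U_i(x)=\alpha\,(x\cdot\nu)_+,\qquad U_j(x)=\beta\,(x\cdot\nu)_-,
\end{equation*}
for some $\alpha,\beta>0$ and $\nu\in\partial B_1$.

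\textbf{Step 3: matching slopes and convergence of sets.} This is the main difficulty. I pass Proposition \ref{prop:classS}(1) through the blowup: since no third component reaches $x_0$ (by Theorem \ref{thm:notriplepoints}), for small $\eps$ the rescaled inequality reduces, near the origin, to
\begin{equation*}
\bigl\langle -\Delta(u_{i,\eps}-u_{j,\eps})-\eps^2\lambda_{u_i}u_{i,\eps}+\eps^2\lambda_{u_j}u_{j,\eps},\varphi\bigr\rangle \ge -C\eps\,R(\varphi),
\end{equation*}
for nonnegative $\varphi\in C^\infty_c(B_R)$, where $R(\varphi)$ depends on $\varphi$ and $R$ but not on $\eps$. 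Letting $\eps\to 0$ yields $-\Delta(U_i-U_j)\ge 0$ in $\R^N$; the symmetric statement obtained by swapping $i,j$ in Proposition \ref{prop:classS}(1) gives the reverse inequality, so $U_i-U_j$ is harmonic across the interface $\{x\cdot\nu=0\}$. A direct distributional computation shows
\begin{equation*}
-\Delta\bigl(\alpha\,(x\cdot\nu)_+-\beta\,(x\cdot\nu)_-\bigr)=(\beta-\alpha)\,\mathcal H^{N-1}\resmeas\{x\cdot\nu=0\},
\end{equation*}
hence $\alpha=\beta=:m(x_0)$. Finally, the $L^1_{loc}$ convergence $(\Omega_{u_i}-x_0)/\eps\to\{x\cdot\nu>0\}$ (and the analogous statement for $\Omega_{u_j}$) follows from the density bounds of Lemma \ref{le:densityest} at scale $\eps$ together with the locally uniform convergence of $u_{i,\eps}$ to the explicit limit, which rules out any mass escaping to either side of the hyperplane.
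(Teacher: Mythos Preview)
Your overall structure follows the paper's closely, and Steps 1--2 are essentially correct (though note that Lemma~\ref{tpml} is the \emph{three}-phase formula; the paper develops a dedicated two-phase ACF-type formula in Proposition~\ref{ACF}). The real problem is in Step~3.

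Your scaling of Proposition~\ref{prop:classS}(1) is wrong: the right-hand side does \emph{not} pick up an extra factor of $\eps$ under blowup. Take $\psi\in C^\infty_c(B_R)$ with $0\le\psi\le 1$, set $\varphi(y)=\psi((y-x_0)/\eps)$ and $r\sim\eps R$. Then
\[
\bigl\langle -\Delta\hat u_i - \lambda_{u_i}u_i+\textstyle\sum_{\ell\neq i}\lambda_{u_\ell}u_\ell,\varphi\bigr\rangle
= \eps^{N-1}\bigl\langle -\Delta(u_{i,\eps}-u_{j,\eps})-\eps^2\lambda_{u_i}u_{i,\eps}+\eps^2\lambda_{u_j}u_{j,\eps},\psi\bigr\rangle,
\]
while $r^{N-1}\sim\eps^{N-1}$ and $r\|\nabla\varphi\|_2^2\sim\eps^{N-1}$. (Equivalently, \eqref{eq:scalingineq} is written for the \emph{undivided} rescaling $u_i(x_0+r_nx)$; passing to the blowup $u_{i,\eps}=u_i(x_0+\eps x)/\eps$ costs one more factor $\eps^{-1}$.) After division by $\eps^{N-1}$ you obtain only
\[
\bigl\langle -\Delta(u_{i,\eps}-u_{j,\eps})-\eps^2(\ldots),\psi\bigr\rangle \;\ge\; -C\bigl(R^{N-1}+R\|\nabla\psi\|_2^2\bigr),
\]
and the limit $\eps\to 0$ yields $-\Delta(U_i-U_j)\ge -C(R^{N-1}+R\|\nabla\psi\|_2^2)$, not $\ge 0$. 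Testing with dilates of a fixed bump shows this gives at best $|\alpha-\beta|\le C$, never $\alpha=\beta$. The $r^{N-1}$ term in \eqref{eq:almss} originates from the crude estimate $f_\eta(\cdots+|\Omega_\varphi|)-f_\eta(\cdots)\le\eta^{-1}|\Omega_\varphi|$, which is exactly of the critical scaling order.

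The paper's fix is to \emph{not} linearise in $t$ before blowing up. It returns to the bare inequality $J_\eta(u)\le J_\eta(u_{t,\eps})$ with the deformation \eqref{eq:deformation}, keeping $t>0$ fixed while sending $\eps\to 0$ (Lemma~\ref{lemma:inequalities_classS_blowup}). The key new ingredient is Lemma~\ref{lemma:limitmeas}: because the two blowup half-spaces fill $B_R$ up to a null set, for a.e.\ $t$ the support of the perturbed configuration also has full measure in $B_R$, and the penalty $\eps^{-N}\bigl[f_\eta(\ldots)-f_\eta(\ldots)\bigr]$ vanishes as $\eps\to 0$. Only after this does one divide by $t$ and let $t\to 0$, obtaining $-\Delta(\bar u_1-\bar u_2)=0$ (Lemma~\ref{lemma:blowup_harmonic}). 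This order of limits is exactly what your approach via Proposition~\ref{prop:classS}(1) forecloses.
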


The remainder of this section is dedicated to the proof of this theorem. To simplify notation, we take from now on $i=1$, $j=2$ and assume without loss of generality that $x_0=0$. We also consider $u_1,u_2$ extended by zero to $\R^N\setminus \Omega$.

We start with a first characterization of the blowup limits.

\begin{lemma}\label{lemma:blowuplimits_def}
Under the assumptions of Theorem~\ref{thm:blowuplimits}, there exists $\bar u_1,\bar u_2\in C^{0,1}_\text{loc}(\R^N)\cap H^1_\text{loc}(\R^N)$ such that, up to pass to a subsequence,
\[
u_{1,\varepsilon}\to \bar u_1,\qquad u_{2,\varepsilon}\to \bar u_2 \qquad \text{strongly in } L^\infty_\text{loc}(\R^N),\text{ and  } H^1_\text{loc}(\R^N).
\]
Moreover,
\begin{equation}\label{eq:subharmonic_blowuplimits}
-\Delta \bar u_i\leq 0\ \text{ in } \R^N,\quad -\Delta \bar u_i=0\ \text{ in } \Omega_{\bar u_i}\quad \text{ and } \quad\bar u_1\cdot \bar u_2\equiv 0.
\end{equation}

\end{lemma}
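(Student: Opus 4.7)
The plan is to extract a convergent subsequence from uniform Lipschitz bounds, pass to the limit in the distributional relations satisfied by the $u_{i,\varepsilon}$, and then upgrade the (automatic) weak $H^1_{loc}$ convergence to strong convergence via a subharmonic test computation.

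By Corollary~\ref{coro_Lispchitz}, $u_1$ and $u_2$ are locally Lipschitz with some constant $L$ on a fixed neighborhood of $0$. Since $0\in \partial\Omega_{u_1}\cap \partial\Omega_{u_2}\cap \Omega$, continuity forces $u_1(0)=u_2(0)=0$, so the rescalings $u_{i,\varepsilon}(x)=\varepsilon^{-1}u_i(\varepsilon x)$ are nonnegative and satisfy $|u_{i,\varepsilon}(x)|\le L|x|$ with $|\nabla u_{i,\varepsilon}|\le L$ a.e., on any fixed ball $B_R$ and for $\varepsilon$ sufficiently small. By Arzel\`a--Ascoli and a diagonal argument, up to a subsequence $u_{i,\varepsilon}\to \bar u_i$ in $L^\infty_{loc}(\R^N)$, with $\bar u_i$ nonnegative and locally Lipschitz (hence in $H^1_{loc}(\R^N)$), and $u_{i,\varepsilon}\rightharpoonup \bar u_i$ weakly in $H^1_{loc}(\R^N)$. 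The identity $u_{1,\varepsilon}\cdot u_{2,\varepsilon}\equiv 0$ passes to the uniform limit, yielding $\bar u_1\bar u_2\equiv 0$.

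Setting $\lambda_i:=\lambda_1(\Omega_{u_i})$, Lemma~\ref{lemma:otherproperties} gives $-\Delta u_i\le \lambda_i u_i$ in $\Omega$ and $-\Delta u_i=\lambda_i u_i$ in $\Omega_{u_i}$. After rescaling, $-\Delta u_{i,\varepsilon}\le \varepsilon^2 \lambda_i u_{i,\varepsilon}$ on $\Omega_\varepsilon:=(\Omega-x_0)/\varepsilon$, with equality on $\Omega_{u_{i,\varepsilon}}$. Since $u_{i,\varepsilon}$ is locally uniformly bounded and $\varepsilon^2\to 0$, passing to the distributional limit produces $-\Delta \bar u_i\le 0$ in $\R^N$. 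For the interior equation, any compact set $K\Subset \Omega_{\bar u_i}=\{\bar u_i>0\}$ (which is open since $\bar u_i$ is continuous) satisfies $\bar u_i\ge c>0$ on $K$; uniform convergence then gives $K\subset \Omega_{u_{i,\varepsilon}}$ for all small $\varepsilon$, so $-\Delta u_{i,\varepsilon}=\varepsilon^2\lambda_i u_{i,\varepsilon}\to 0$ on $K$, whence $-\Delta \bar u_i=0$ in $\Omega_{\bar u_i}$.

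It remains to upgrade to strong $H^1_{loc}$ convergence, which is the main point. Fix $\varphi\in C^\infty_c(\R^N)$ with $\varphi\ge 0$ and test the subharmonic inequality against $\varphi u_{i,\varepsilon}\ge 0$; expanding and using $\int u_{i,\varepsilon}\nabla u_{i,\varepsilon}\cdot\nabla\varphi=-\tfrac{1}{2}\int u_{i,\varepsilon}^2\Delta\varphi$ yields
\[
\int \varphi |\nabla u_{i,\varepsilon}|^2\,dx\le \varepsilon^2\lambda_i\int\varphi u_{i,\varepsilon}^2\,dx+\tfrac{1}{2}\int u_{i,\varepsilon}^2 \Delta\varphi\,dx.
\]
By $L^\infty_{loc}$ convergence, the right-hand side tends to $\tfrac{1}{2}\int \bar u_i^2 \Delta\varphi$. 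On the other hand, continuity and vanishing of $\bar u_i$ on $\partial\Omega_{\bar u_i}$ ensure $\varphi\bar u_i\in H^1_0(\Omega_{\bar u_i})$, so testing the harmonicity of $\bar u_i$ on $\Omega_{\bar u_i}$ against this function gives $\int\varphi|\nabla\bar u_i|^2=\tfrac{1}{2}\int\bar u_i^2\Delta\varphi$. Together with the weak lower semicontinuity $\int\varphi|\nabla\bar u_i|^2\le \liminf\int\varphi|\nabla u_{i,\varepsilon}|^2$, this forces $\int\varphi|\nabla u_{i,\varepsilon}|^2\to \int\varphi|\nabla\bar u_i|^2$ for every such $\varphi$; combined with the weak convergence, this upgrades to strong convergence in $H^1_{loc}(\R^N)$. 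The delicate part of the argument is the admissibility of $\varphi\bar u_i$ as a test in $H^1_0(\Omega_{\bar u_i})$, which rests on the continuity of $\bar u_i$ obtained from its uniform Lipschitz bound.
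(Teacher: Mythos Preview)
Your argument is correct, and the first two parts (extraction of the limit via Arzel\`a--Ascoli and passage to the limit in the distributional inequalities) match the paper exactly. The strong $H^1_{loc}$ convergence, however, is obtained by a genuinely different route.

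The paper introduces the defect measures $\mu_{i,\varepsilon}:=\Delta u_{i,\varepsilon}+\varepsilon^2\lambda_1(\Omega_{u_i})u_{i,\varepsilon}$ and $\bar\mu_i:=\Delta\bar u_i$, shows they are positive Radon measures with locally uniformly bounded mass, and then tests the equation for $u_{i,\varepsilon}-\bar u_i$ against $(u_{i,\varepsilon}-\bar u_i)\varphi$ to bound $\int_{B_R}|\nabla(u_{i,\varepsilon}-\bar u_i)|^2$ directly. Your approach bypasses the measures entirely: you test $-\Delta u_{i,\varepsilon}\le \varepsilon^2\lambda_i u_{i,\varepsilon}$ against $\varphi u_{i,\varepsilon}$ and $-\Delta\bar u_i=0$ in $\Omega_{\bar u_i}$ against $\varphi\bar u_i$, yielding convergence of the weighted energies $\int\varphi|\nabla u_{i,\varepsilon}|^2\to\int\varphi|\nabla\bar u_i|^2$, which together with weak convergence gives strong convergence. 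This is more elementary and self-contained; the paper's version is the one more commonly cited in the literature (e.g.\ \cite{TavaresTerracini1,RamosTavaresTerracini,RussTreyVelichkov}) and has the advantage of working verbatim when the limit equation carries a nontrivial measure on the free boundary. One small point worth making explicit in your write-up: the fact that $\varphi\bar u_i\in H^1_0(\Omega_{\bar u_i})$ is not automatic from continuity alone, but follows from the truncation $(\varphi\bar u_i-1/n)^+$, which has compact support in $\Omega_{\bar u_i}$ and converges to $\varphi\bar u_i$ in $H^1$ since $\nabla\bar u_i=0$ a.e.\ on $\{\bar u_i=0\}$.
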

\begin{proof}
Observe that, since $u_1$ is Lipschitz continuous and $0\in \partial \Omega_{u_1}$ then, in each compact $K\Subset \R^N$, 
\[
|u_{1,\varepsilon}(x)|=|u_{1,\varepsilon}(x)-u_{1,\varepsilon}(0)|\leq L |x|\leq C,\qquad  |\nabla u_{1,\varepsilon}(x)| = |\nabla u_{1}(\varepsilon x)|\leq C,
\]
and the same for $u_{2,\varepsilon}$. Hence, the sequences are bounded in $H^1_{loc}(\R^N)$ and, by Ascoli-Arzel\`a's theorem, the blowup sequences converge locally uniformly, up to pass to a subsequence, and also weakly in $H^1_{loc}(\R^N)$. On the other hand,
\[
-\Delta u_{i,\varepsilon}\leq \varepsilon^2 \lambda_{1}(\Omega_{u_i})u_{i,\varepsilon} \text{ in } \R^N,\quad -\Delta u_{i,\varepsilon}=\varepsilon^2 \lambda_{1}(\Omega_{u_i})u_{i,\varepsilon} \text{ in } \Omega_{u_{i,\varepsilon}}\quad \text{ and }\quad  u_{1,\varepsilon}\cdot u_{2,\varepsilon}\equiv 0 \text{ in } \R^N, 
\]
which shows \eqref{eq:subharmonic_blowuplimits}. The only thing left to prove is the strong $H^1_{loc}$-limit of blowup sequences. For that, we follow \cite[Lemmas 3.7 and 3.11]{TavaresTerracini1} (see also \cite[Step 5 of the proof of Theorem 4.1]{RamosTavaresTerracini} or \cite[Lemma 7.4]{RussTreyVelichkov}). We know that
\[
\mu_{i,\varepsilon}:=\Delta u_{i,\varepsilon}+\varepsilon^2\lambda_1(\Omega_{u_i}) u_{i,\varepsilon},\quad \bar \mu_i:=\Delta \bar u_i
\]
are positive Radon measures, concentrated on $\partial \Omega_{u_{i,\varepsilon}}$ and $\partial \Omega_{\bar u_i}$, respectively. Let $R>0$ and take $\varphi$ to be a smooth cutoff function such that $0\leq \varphi \leq 1$, $\varphi=1$ in $B_R$, $\varphi=0$ in $\R^N\setminus B_{2R}$. Then
\[
\mu_{i,\varepsilon} (B_R) \leq \int_{B_{2R}} \varphi \, d\mu_{i,\varepsilon}=\int_{B_{2R}}-\nabla u_{i,\varepsilon}\cdot \nabla \varphi + \varepsilon^2 \lambda_1(\Omega_{u_i})u_{i,\varepsilon}\varphi\, dx \leq C
\]
and, by testing the equation $-\Delta (\bar u_{i,\varepsilon}-\bar u_i)=\varepsilon^2 \lambda_1(\Omega_{u_{i,\varepsilon}}) u_{i,\varepsilon}-\mu_{i,\varepsilon}+\bar \mu_i$ by $(u_{i,\varepsilon}-\bar u_i)$, we obtain
\begin{align*}
\int_{B_R}|\nabla (u_{i,\varepsilon}-\bar u_i)|^2\leq & \int_{B_{2R}}|\nabla (u_{i,\varepsilon}-\bar u_i)|^2\varphi=-\int_{B_{2R}}(\nabla (u_{i,\varepsilon}-\bar u_i)\cdot \nabla \varphi)(u_{i,\varepsilon}-\bar u_i)\\
        &+ \varepsilon^2 \lambda_1(\Omega_{i,\varepsilon})\int_{B_{2R}}u_{i,\varepsilon}(u_{i,\varepsilon}-\bar u_i)\varphi - \int_{B_{2R}}(u_{i,\varepsilon}-\bar u_i)\varphi \, d\mu_{i,\varepsilon}\\
        &+ \int_{B_{2R}}(u_{i,\varepsilon}-\bar u_i)\varphi \, d\bar \mu_{i} \to 0
\end{align*}
as $\varepsilon\to 0$.
\end{proof}

The rest of the proof of Theorem \ref{thm:blowuplimits} goes as follows:
\begin{enumerate}
\item First, we prove that there exists $a,b>0$ and $\nu\in \partial B_1$ such that
\[
\bar u_1(x)=a(x\cdot \nu)_+,\qquad \bar u_2(x)=b(x\cdot \nu)_-.
\]
(i.e., $\bar u_1,\bar u_2$ are two-plane functions). This is done by showing an Alt-Caffarelli-Friedman  type monotonicity formula for $(u_{1,\varepsilon},u_{2,\varepsilon})$ - see Proposition \ref{ACF} -, which then provides that the Alt-Caffarelli-Friedman functional for the blowup limit $(\bar u_1,\bar u_2)$ is constant.
\item Then, using a suitable variation argument and using the fact that $(u_1,u_2)$ minimizes problem~\eqref{eq:ceta}, we deduce that $a=b$. Observe that, due to the form of the perturbed problem with functional $J_\eta$, we do not have any local minimality condition for the blowup limit $(\bar u_1,\bar u_2)$, which makes this step particularly delicate.
\item Finally, we show that $a\geq m_\eta$, combining Proposition \ref{prop:optimalitycond} with a continuity in $x$ of an Alt-Caffarelli-Friedman type functional.
\end{enumerate}

We proceed with these two steps in the following two subsection.

\subsection{Alt-Caffarelli-Friedman type monotonicity formulas. Blowups are two-plane functions}

In this subsection we prove an Alt-Caffarelli-Friedman type monotonicity formula for subsolutions of divergence-type operators (see Proposition \ref{ACF} below), which we then apply to the blowup sequences $u_{1,\varepsilon}$, $u_{2,\varepsilon}$. Even though monotonicity formulas have been used in several contexts (see e.g. \cite{AltCaffarelliFriedman, Caffarelli88,CJK2002, ContiTerraciniVerziniOptimalNonlinear, ContiTerraciniVerziniVariation, DavidEngelsteinGarciaToro, DiasTavares, MatPet,  SoaveTerracini, Tortone}, for a nonexhaustive list), we could not find in the literature a version we could use directly to our purposes. For that reason, we present here a proof. To start with, we recall the Friedland-Hayman inequality \cite{FH_inequality} (see also \cite{AltCaffarelliFriedman} or \cite[Chapter 12]{CaffarelliSalsa}).

\begin{lemma}\label{lemma:OPPsphere}
Given $\omega\subset \partial B_1$ relatively open, let $\lambda_1(\omega)$ denote the first eigenvalue of the Dirichlet Laplace-Beltrami operator on $\omega$, and let 
\[
\gamma(t):=- \frac{N-2}{2}+\sqrt{\left(\frac{N-2}{2}\right)^2 + t} , \text{ for } t>0.
\]
Then
\[
\gamma(\lambda_1(\omega_1))+\gamma(\lambda_1(\omega_2)) \geq 2 
\]
for every $\omega_1,\omega_2\subset \partial B_1$, relatively open, with $\omega_1\cap \omega_2=\emptyset$.

Moreover, equality is achieved if and only if each $\omega_i$ is a half-sphere.
\end{lemma}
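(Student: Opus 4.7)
The plan is to follow the classical Friedland--Hayman strategy, reducing the inequality to a one-variable convexity statement via spherical symmetrization. First, I would invoke Sperner's inequality (the spherical analogue of Faber--Krahn): among all relatively open subsets of $\partial B_1$ of prescribed $\mathcal{H}^{N-1}$-measure, a geodesic cap minimizes the first Dirichlet eigenvalue $\lambda_1$ of the spherical Laplacian. Applying this to $\omega_1$ and $\omega_2$ separately, I replace each by a cap $C(\alpha_i)$ of the same measure and place the two caps at antipodal points of $\partial B_1$. A short calculation using $\sin(\pi-\alpha)=\sin\alpha$ shows that the measure condition $|\omega_1|+|\omega_2|\leq |\partial B_1|$ (which follows from disjointness of the originals) is equivalent to $\alpha_1+\alpha_2\leq \pi$, which is in turn equivalent to disjointness of the antipodal caps. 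Since $\gamma$ is strictly increasing and Sperner's inequality decreases $\lambda_1$, the LHS only decreases under this rearrangement, so it suffices to treat the cap configuration.

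Next, I would define $F(\alpha):=\gamma(\lambda_1(C(\alpha)))$. By strict domain monotonicity of the first Dirichlet eigenvalue, $F$ is strictly decreasing on $(0,\pi)$; hence enlarging $\alpha_1$ and $\alpha_2$ until $\alpha_1+\alpha_2=\pi$ only decreases $F(\alpha_1)+F(\alpha_2)$. Consequently, it is enough to establish
\[
F(\alpha)+F(\pi-\alpha)\geq 2 \quad \text{for every } \alpha\in(0,\pi),
\]
with equality exactly at $\alpha=\pi/2$.

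The third step is the explicit analysis of $F$. By the defining identity $\gamma(\gamma+N-2)=\lambda_1(\omega)$, the value $F(\alpha)$ is precisely the characteristic exponent such that $u(r,\theta):=r^{F(\alpha)}\phi_\alpha(\theta)$ is harmonic on the circular cone $\{r\theta : r>0,\ \theta\in C(\alpha)\}$ and vanishes on its lateral boundary, where $\phi_\alpha$ is the first eigenfunction on $C(\alpha)$. By axial symmetry, $\phi_\alpha$ depends only on the geodesic distance to the axis and satisfies a Legendre-type ODE, so $F(\alpha)$ is the smallest positive order of the associated Gegenbauer function whose first zero lies at $\alpha$. The value $F(\pi/2)=1$ is immediate, since the hemisphere cone is a half-space and admits the linear harmonic function $x\mapsto x\cdot e$. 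Given convexity of $F$ on $(0,\pi)$, Jensen's inequality then yields
\[
F(\alpha)+F(\pi-\alpha)\geq 2 F\!\left(\tfrac{\pi}{2}\right) = 2,
\]
with equality iff $\alpha=\pi/2$, which tracing back through Step 1 forces both $\omega_i$ to be hemispheres.

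The main obstacle is exactly the convexity of $F$: this is the technical heart of Friedland--Hayman and admits no soft argument. I would prove it by differentiating the Legendre-type ODE satisfied by $\phi_\alpha$ in the parameter $\alpha$, combining a Hadamard-type variational identity for $F(\alpha)$ with a monotonicity analysis of the normalized eigenfunction on the cap; this ultimately rests on the explicit Gegenbauer representation (or an equivalent ODE-comparison scheme). An alternative, essentially equivalent, route is a direct ``two-cap sliding'' variation along the antipodal axis, which produces a minimization problem in one variable whose stationary point is forced to be the balanced configuration $\alpha_1=\alpha_2=\pi/2$; this strategy leads to the same convexity inequality in disguise.
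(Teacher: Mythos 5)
The paper does not actually prove this lemma: it is quoted as the Friedland--Hayman inequality, with pointers to \cite{FH_inequality}, \cite{AltCaffarelliFriedman} and \cite[Chapter 12]{CaffarelliSalsa}. Your first two steps are the standard reduction used in those references and are fine: spherical Faber--Krahn (Sperner) decreases each $\lambda_1$, hence each $\gamma(\lambda_1)$, so one may assume the $\omega_i$ are caps, and monotonicity of $\alpha\mapsto F(\alpha):=\gamma(\lambda_1(C(\alpha)))$ reduces the problem to complementary caps, i.e.\ to $F(\alpha)+F(\pi-\alpha)\ge 2$ with $F(\pi/2)=1$. (For the equality case you also need the rigidity part of Sperner's inequality -- equality in the symmetrization only for caps -- which you use only implicitly.)

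The genuine gap is your finishing step: you propose to obtain $F(\alpha)+F(\pi-\alpha)\ge 2F(\pi/2)$ from convexity of $F$ on $(0,\pi)$ via Jensen, deferring the convexity as the ``technical heart''. That convexity claim is in fact false in general, so the step would fail rather than merely being hard. Take $N=3$: $F$ is decreasing, $F(\pi/2)=1$, and $F(\alpha)\to 0$ as $\alpha\to\pi$ (a point has zero capacity on $\mathbb{S}^{2}$), so convexity on $(0,\pi)$ would force the chord bound $F(\alpha)\le 2(\pi-\alpha)/\pi$ on $(\pi/2,\pi)$, i.e.\ at least linear vanishing of the complementary-cap exponent in the cap angle $\varepsilon=\pi-\alpha$. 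But the classical small-cap asymptotics on the $2$-sphere give $\lambda_1\big(\mathbb{S}^2\setminus C(\varepsilon)\big)\asymp 1/\ln(1/\varepsilon)$ (capacity of a small disk over the volume), hence $F(\pi-\varepsilon)\asymp 1/\ln(1/\varepsilon)$, which is incompatible with any linear upper bound; passing to the variables $\cos\alpha$ or the measure fraction only makes the discrepancy worse. This is precisely why the complementary-cap inequality is delicate: Friedland--Hayman (and the account in \cite[Chapter 12]{CaffarelliSalsa}) prove $F(\alpha)+F(\pi-\alpha)\ge 2$ by a direct analysis of the Legendre/Gegenbauer eigenvalue problem for the two complementary caps, not by a global convexity-plus-midpoint argument. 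To make your outline into a proof you would have to replace Step 3 by such a direct two-cap estimate (or simply quote the result, as the paper does), and add the symmetrization rigidity to settle the equality case.
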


\begin{remark}
The function $\gamma$ arises from the equation $t=\gamma(N-2+\gamma)$, appearing when looking for harmonic functions with homogeneity $\gamma$ on cones.
\end{remark}

From now on we will mostly follow the notation and structure of \cite{stz}, where a one-phase monotonicity formula was shown (check Proposition 2.5 therein), adapting it for the two-phase case (see Proposition \ref{ACF} below).

By Lemmas \ref{lemma:unifbound_ceta} and \ref{lemma:otherproperties}, there exists $\bar \lambda>0$ (independent of $\eta\leq 1$) such that
\begin{equation}\label{eq:choice_barlambda}
\lambda_1(\Omega_{u_i})=\int_\Omega |\nabla u_i|^2\leq \bar \lambda \qquad \text{ for every $i=1,2$.}
\end{equation}
Next, let $\bar R = \bar R (\bar \lambda)> 0$ be such that the ball $B_{2\bar R}=B_{2\bar R}(0)$ has first Dirichlet eigenvalue equal to $\bar \lambda$. We denote by $\varphi$ the corresponding positive eigenfunction, normalized in $L^\infty$:
\begin{equation}\label{eq:eigenfunction_def}
\begin{cases}
	-\Delta \varphi = \bar \lambda \varphi & \text{in $B_{2 \bar R}$},\\
	\varphi = 0 &  \text{on $\partial B_{2 \bar R}$},\\
	\varphi(0) = 1.
\end{cases}
\end{equation}
We recall that $\varphi$ is radially symmetric and radially decreasing, attaining its only maximum at the origin.

\begin{lemma}\label{eq:subsolution}
We have
\begin{equation}\label{eq:divergencetype}
-\div \left(\varphi^2 \nabla \left(\frac{u_i}{\varphi}\right)\right)\leq 0\quad  \text{ in } B_{\bar R},\ \text{ for }i=1,2.
\end{equation}
\end{lemma}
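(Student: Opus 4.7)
The plan is to prove this by a straightforward Picone-type computation, using only the two facts available on hand: by Lemma~\ref{lemma:otherproperties} combined with~\eqref{eq:choice_barlambda}, the function $u_i$ satisfies $-\Delta u_i \leq \bar\lambda u_i$ in $\Omega$ in the distributional sense, while $\varphi$ satisfies by construction $-\Delta \varphi = \bar\lambda \varphi$ in $B_{2\bar R}$, with $\varphi>0$ on the compactly contained sub-ball $B_{\bar R}$.

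First I would simplify the vector field of interest: since $\varphi>0$ on $B_{\bar R}$, the pointwise identity
\[
\varphi^2 \nabla\!\left(\frac{u_i}{\varphi}\right) = \varphi\,\nabla u_i - u_i\,\nabla\varphi
\]
is valid there, so that the claim~\eqref{eq:divergencetype} reduces to showing
\[
\int_{B_{\bar R}} (\varphi\,\nabla u_i - u_i\,\nabla\varphi)\cdot \nabla\psi\, dx \leq 0
\]
for every non-negative $\psi\in C^\infty_c(B_{\bar R})$.

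The key observation is then the algebraic identity
\[
(\varphi\,\nabla u_i - u_i\,\nabla\varphi)\cdot \nabla\psi = \nabla u_i\cdot\nabla(\varphi\psi) - \nabla\varphi\cdot\nabla(u_i\psi),
\]
obtained by expanding the right-hand side via the Leibniz rule, noting that the two cross terms $\psi\,\nabla u_i\cdot\nabla\varphi$ cancel. The functions $\varphi\psi$ and $u_i\psi$ both lie in $H^1_0(B_{\bar R})$ and are non-negative, so they are admissible test functions.

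Testing the differential inequality $-\Delta u_i \leq \bar\lambda u_i$ against $\varphi\psi\geq 0$ yields
\[
\int_{B_{\bar R}} \nabla u_i\cdot\nabla(\varphi\psi)\,dx \leq \bar\lambda \int_{B_{\bar R}} u_i\,\varphi\,\psi\,dx,
\]
while testing the exact equation $-\Delta\varphi = \bar\lambda\varphi$ against $u_i\psi$ gives
\[
\int_{B_{\bar R}} \nabla\varphi\cdot\nabla(u_i\psi)\,dx = \bar\lambda \int_{B_{\bar R}} \varphi\,u_i\,\psi\,dx.
\]
Subtracting, the right-hand sides cancel exactly and we recover the desired inequality; this is the whole content of the lemma. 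I do not expect any genuine obstacle: the only minor technical point is to make sure that $B_{\bar R}\subset\Omega$ (or, equivalently, that the differential inequality for $u_i$ is applicable on $B_{\bar R}$), which is tacit in how this lemma will be used at interior two-phase points via translation.
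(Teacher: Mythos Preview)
Your proof is correct and is essentially the same computation as the paper's, only written out carefully in weak form: the paper simply records the pointwise identity $-\div(\varphi^2\nabla(u_i/\varphi)) = -\varphi\Delta u_i + u_i\Delta\varphi = u_i\varphi(\lambda_1(\Omega_{u_i}) - \bar\lambda) \leq 0$, which is exactly what your integration-by-parts argument makes rigorous. Your remark about needing the differential inequality for $u_i$ to hold on $B_{\bar R}$ is well taken; note that since $u_i\in H^1_0(\Omega)$ is nonnegative and extended by zero, the inequality $-\Delta u_i \leq \lambda_1(\Omega_{u_i}) u_i$ in fact holds in all of $\R^N$ (cf.\ equation~\eqref{eq:universalestimates_nondeg} in the paper), so no restriction on $\bar R$ is needed.
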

\begin{proof}
This is a direct computation (cf. \cite[Lemma 9.1]{ContiTerraciniVerziniVariation}):
\[
-\div \left(\varphi^2\nabla \left(\frac{u_i}{\varphi}\right)\right)= -\div\left(\nabla u_i \varphi-u_i \nabla \varphi\right)=-\varphi \Delta u_i+u_i\Delta \varphi = u_i \varphi (\lambda_{1}(\Omega_{u_i})-\bar \lambda)\leq 0,
\]
where the last inequality follows from \eqref{eq:choice_barlambda}.
\end{proof}
Based on the fact that $u_1,u_2$ satisfy \eqref{eq:divergencetype}, we now prove an Alt-Caffarelli-Friedman-type monotonicity formula. We assume first that $N\geq 3$. 

With that in mind, let $\Gamma_\varphi \in C^2(B_{3\bar R/2}\setminus\{0\})$ be the Dirichlet Green function of the operator $-\div \left(\varphi^2 \nabla \cdot \right)$ in $B_{3\bar R/2}$, that is,  the solution to
\begin{equation}\label{eq:fundamental_eq}
\begin{cases}
	-\div\left(\varphi^2 \nabla \Gamma_\varphi\right) = \delta & \text{in $B_{3\bar R/2}$},\\
 \Gamma_\varphi=0 & \text{on $\partial B_{3\bar R/2}$,}
\end{cases}
\end{equation}
where $\delta$ is the Dirac delta centered at the origin. This Green function is given explicitly by
\[
\Gamma_\varphi(|x|)=\Gamma_\varphi(r) = (N-2) \int_r^{3 \bar R/2} \frac{s^{1-N}}{\varphi^2(s)} ds,\qquad r=|x|,
\]
and so, in particular, $\displaystyle \Gamma_\varphi'(r)=(2-N)\frac{r^{1-N}}{\varphi^2(r)}$. We also define, for $0<r\le \bar R$, 
\begin{equation}\label{eq:def_psi}
\psi(r):=r^{N-2}\varphi^2(r)\Gamma_\varphi(r),\quad \psi(0):=\lim_{r\to 0^+} \psi(r)=1.
\end{equation}
We recall the following result.
\begin{lemma}[{{\cite[Lemma 2.4]{stz}}}]\label{lem psi}
	 Let $N\geq 3$. We have:
  \begin{enumerate}
      \item $\psi(r) > 0$ for any $r \in [0,\bar R]$;
      \item $\psi(3\bar R/2) = 0$;
      \item there exists $C=C(N, \bar \lambda) > 0$ such that
	\begin{equation}\label{eq:psi}
		|\psi(r)-1| \le C r \qquad \text{for $r\in (0,3\bar R/2)$}, 
	\end{equation}
 and in particular $\psi$ is Lipschitz continuous in $B_{3\bar R/2}$.
  \end{enumerate} 
\end{lemma}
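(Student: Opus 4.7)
The plan is to dispatch (1) and (2) by inspection and spend the effort on (3), where I will use the smoothness and radial symmetry of $\varphi$ at the origin to extract a sharp linear rate on $\psi(r) - 1$. For (1) and (2): since $\varphi > 0$ on $B_{2\bar R}$, the integrand $s^{1-N}\varphi^{-2}(s)$ in the definition of $\Gamma_\varphi$ is strictly positive on $(0, 3\bar R/2)$, so $\Gamma_\varphi > 0$ on this interval and vanishes at $3\bar R/2$. Multiplying by the positive factors $r^{N-2}$ and $\varphi^2(r)$ shows that $\psi > 0$ on $(0, 3\bar R/2)$ and $\psi(3\bar R/2) = 0$; the limit $\psi(0) = 1$ will be a by-product of (3).

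For (3), the starting point is that $\varphi$ is $C^\infty$ and radial, and the ODE $-\varphi'' - \tfrac{N-1}{r}\varphi' = \bar\lambda\varphi$ together with the normalization $\varphi(0) = 1$ forces $\varphi'(0) = 0$ and $\varphi''(0) = -\bar\lambda/N$, so Taylor expansion furnishes a constant $M = M(\bar\lambda, \bar R)$ with $|\varphi^2(r) - 1| \leq Mr^2$ and $|\varphi^{-2}(s) - 1| \leq Ms^2$ uniformly on $[0, 3\bar R/2]$. I then split the Green function as
\[
\Gamma_\varphi(r) = (N-2)\int_r^{3\bar R/2} s^{1-N}\,ds + (N-2)\int_r^{3\bar R/2} s^{1-N}\big(\varphi^{-2}(s) - 1\big)\,ds = r^{2-N} - (3\bar R/2)^{2-N} + I(r),
\]
and bound $|I(r)| \leq CM \int_r^{3\bar R/2} s^{3-N}\,ds$: for $N = 3$ this is $O(1)$, for $N = 4$ it is $O(\log(1/r))$, and for $N \geq 5$ it is $O(r^{4-N})$. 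In each case the prefactor $r^{N-2}$ coming from the definition of $\psi$ absorbs the growth to give $r^{N-2}|I(r)| \leq Cr$. Combining this with the analogous bounds on the other two terms in
\[
\psi(r) - 1 = \big(\varphi^2(r) - 1\big) - \varphi^2(r) r^{N-2}(3\bar R/2)^{2-N} + \varphi^2(r) r^{N-2} I(r)
\]
yields the pointwise estimate $|\psi(r) - 1| \leq Cr$, which in particular gives $\lim_{r \to 0^+}\psi(r) = 1$.

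To promote this to global Lipschitz continuity I will differentiate $\psi$, use $\Gamma_\varphi'(r) = -(N-2)r^{1-N}\varphi^{-2}(r)$, and arrive at the ODE
\[
\psi'(r) = \frac{N-2}{r}\big(\psi(r) - 1\big) + \frac{2\varphi'(r)}{\varphi(r)}\psi(r),
\]
valid on $(0, 3\bar R/2)$. The just-established bound $\psi - 1 = O(r)$ together with $\varphi'(r)/\varphi(r) = O(r)$ (from $\varphi'(0) = 0$) then makes $\psi'$ uniformly bounded on $(0, 3\bar R/2)$, and since $\psi$ extends continuously to both endpoints, this promotes the pointwise estimate to full Lipschitz continuity on $[0, 3\bar R/2]$. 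The main obstacle in the argument is the behaviour of the correction $I(r)$ in the borderline dimensions $N = 3, 4$, where $I(r)$ does not vanish at the origin (it stays bounded or blows up logarithmically); the linear estimate survives only because $\varphi^{-2}(s) - 1$ vanishes to second order at $s = 0$ (which in turn uses $\varphi'(0) = 0$, i.e., the radial symmetry of the eigenfunction) and because $r^{N-2}$ provides just enough compensation.
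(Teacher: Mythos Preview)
The paper does not prove this lemma; it simply recalls it as \cite[Lemma 2.4]{stz} and uses it as a black box. So there is no in-paper argument to compare against.

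Your proof is correct and self-contained. The splitting of $\Gamma_\varphi$ into the Newtonian piece $r^{2-N} - (3\bar R/2)^{2-N}$ plus the correction $I(r)$, together with the second-order vanishing $\varphi^{-2}(s) - 1 = O(s^2)$ (coming from $\varphi'(0)=0$), gives exactly the right cancellation in every dimension $N\geq 3$; your case distinction $N=3,4,\geq 5$ for the integral $\int_r^{3\bar R/2} s^{3-N}\,ds$ is the standard and correct way to handle this. The subsequent ODE
\[
\psi'(r) = \frac{N-2}{r}\big(\psi(r)-1\big) + \frac{2\varphi'(r)}{\varphi(r)}\psi(r)
\]
is derived correctly from $\Gamma_\varphi'(r) = -(N-2)r^{1-N}\varphi^{-2}(r)$, and feeding the pointwise bound $|\psi-1|\leq Cr$ back in to control $\psi'$ is a clean way to upgrade to Lipschitz continuity on the closed interval. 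One minor remark: the Lipschitz bound on $[0,3\bar R/2]$ already follows from the pointwise estimate $|\psi(r)-1|\leq Cr$ combined with smoothness of $\psi$ away from $0$ (since $\varphi$, $\Gamma_\varphi$ are smooth on $(0,3\bar R/2]$), so the ODE step, while elegant, is not strictly necessary---but it does make the argument uniform.
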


\begin{proposition}[Alt-Caffarelli-Friedman-type monotonicity formula]\label{ACF}
Let $N\geq 3$, and let $w_1,w_2\in C(\R^N)\cap H^1_{loc}(\R^N)$ be nonnegative functions such that $w_1\cdot w_2 \equiv 0$, and
\begin{equation}\label{eq:inequality_subsol}
-\div \left(\varphi^2 \nabla w_i\right)\leq 0 \quad \text{ in } B_{\bar R}\quad \text{ for $i=1,2$.}
\end{equation}
 Then there exist $C = C(N)> 0$ and $\tilde r = \tilde r(N)\in (0,\bar R)$ such that
the function
\begin{align}
	\Psi(w_1,w_2,r) &:= e^{ C r } \frac{1}{r^2} \int_{B_r} \frac{\psi(|x|)}{|x|^{N-2}} \left|\nabla w_1\right|^2\cdot \frac{1}{r^2} \int_{B_r} \frac{\psi(|x|)}{|x|^{N-2}} \left|\nabla w_2\right|^2\nonumber \\
 &=e^{ C r } \frac{1}{r^2} \int_{B_r} \varphi^2 \Gamma_\varphi\left|\nabla w_1\right|^2\cdot \frac{1}{r^2} \int_{B_r} \varphi^2 \Gamma_\varphi\left|\nabla w_2\right|^2.\label{eq:Psi}
\end{align}
	is nondecreasing in $r \in (0,\tilde r)$.
\end{proposition}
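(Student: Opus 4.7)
The plan is to adapt the classical Alt--Caffarelli--Friedman strategy to the weighted divergence operator $\div(\varphi^2\nabla\cdot)$, carefully controlling the perturbation produced by the weight. I set $I_i(r) := \int_{B_r} \varphi^2 \Gamma_\varphi |\nabla w_i|^2 \, dx$, so that
$$\log \Psi(r) = Cr + \log I_1(r) + \log I_2(r) - 4 \log r.$$
The coarea formula yields $I_i'(r) = \int_{\partial B_r} \varphi^2 \Gamma_\varphi |\nabla w_i|^2 \, d\mathcal{H}^{N-1}$, so the monotonicity reduces to the lower bound
$$\frac{I_i'(r)}{I_i(r)} \geq \frac{2 \gamma_i(r)}{r} - C_1, \qquad \gamma_i(r) := \gamma\bigl(\lambda_1(\Sigma_i(r))\bigr),$$
for each $i=1,2$, where $\Sigma_i(r) := \{\theta \in \partial B_1 : w_i(r\theta) > 0\}$ and $\gamma$ is as in Lemma \ref{lemma:OPPsphere}. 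If one of the $w_i$ vanishes in a neighborhood of the origin, then $\Psi \equiv 0$ near $0$ and the claim is trivial, so by continuity I may assume $w_1(0) = w_2(0) = 0$.

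The heart of the proof is a weighted Rellich--Pohozaev identity. Testing the subsolution inequality $-\div(\varphi^2 \nabla w_i) \leq 0$ against the nonnegative function $w_i (\Gamma_\varphi - \Gamma_\varphi(r))$ on $B_r$ (which vanishes on $\partial B_r$), using integration by parts and the defining equation $-\div(\varphi^2 \nabla \Gamma_\varphi) = \delta_0$, I obtain an identity of the shape
$$I_i(r) + \frac{1}{2}\int_{\partial B_r} w_i^2 \varphi^2 \partial_\nu \Gamma_\varphi \, d\mathcal{H}^{N-1} \leq \Gamma_\varphi(r) \int_{\partial B_r} \varphi^2 \, w_i \, \partial_r w_i \, d\mathcal{H}^{N-1} + O(r) \, I_i(r),$$
the $O(r)$ error arising from the estimate $|\psi(r) - 1| \leq C r$ of Lemma \ref{lem psi} and from the variation of $\varphi$ across $B_r$. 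Combining this with Cauchy--Schwarz on $\partial B_r$, the orthogonal decomposition $|\nabla w_i|^2 = (\partial_r w_i)^2 + r^{-2}|\nabla_\theta w_i|^2$, and the spherical Poincar\'e inequality
$$\int_{\partial B_r \cap \{w_i > 0\}} w_i^2 \, d\mathcal{H}^{N-1} \leq \frac{r^2}{\lambda_1(\Sigma_i(r))} \int_{\partial B_r} |\nabla_\theta w_i|^2 \, d\mathcal{H}^{N-1}$$
(valid because $w_i$ vanishes on $\partial B_r \setminus \Sigma_i(r)$), together with the algebraic identity $\lambda_1(\Sigma_i(r)) = \gamma_i(r)(N-2+\gamma_i(r))$, produces the desired lower bound on $I_i'(r)/I_i(r)$ after the standard optimization.

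To conclude, since $w_1 w_2 \equiv 0$ the spherical supports $\Sigma_1(r)$ and $\Sigma_2(r)$ are disjoint subsets of $\partial B_1$, so the Friedland--Hayman inequality (Lemma \ref{lemma:OPPsphere}) yields $\gamma_1(r) + \gamma_2(r) \geq 2$. Summing the two bounds gives
$$\frac{\Psi'(r)}{\Psi(r)} \geq C - 2 C_1 + \frac{2(\gamma_1(r) + \gamma_2(r) - 2)}{r} \geq 0$$
provided the constant $C$ in \eqref{eq:Psi} is chosen with $C \geq 2 C_1$; the claim then follows on $(0, \tilde r)$ for $\tilde r$ small enough that the linearization $\psi(r) = 1 + O(r)$ produces a genuine error of size $O(r) I_i(r)$. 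The main technical difficulty lies in the Rellich identity itself: because $\varphi^2$ is non-constant and $\Gamma_\varphi$ no longer equals $c/|x|^{N-2}$, several additional interior and boundary terms appear in the integration by parts, and the sharp size estimates of Lemma \ref{lem psi} must be used to show that each of these terms is strictly lower order and can be absorbed by the exponential prefactor $e^{Cr}$.
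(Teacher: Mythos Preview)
Your overall strategy is the same as the paper's: bound $I_i(r)$ from above by boundary integrals using the subsolution inequality and the Green function identity, form the ratio $I_i'(r)/I_i(r)$, optimize on the sphere, and close with Friedland--Hayman. There is, however, a real gap in the error bookkeeping.

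The intermediate bound $I_i'(r)/I_i(r)\geq 2\gamma_i(r)/r - C_1$ with a \emph{fixed} constant $C_1$ is stronger than what the argument actually delivers. What one obtains (following the paper's Steps~2--3) is
\[
\frac{I_i'(r)}{I_i(r)}\ \geq\ \frac{2}{r\,\psi(r)}\,\gamma\bigl(\psi^2(r)\lambda_1(\Sigma_i(r))\bigr)\ \geq\ \frac{2\gamma_i(r)}{r}\bigl(1-C_1 r\bigr),
\]
so the error is $-2C_1\gamma_i(r)$, proportional to the characteristic exponent. Since $\gamma_i(r)\sim\sqrt{\lambda_1(\Sigma_i(r))}$ is not bounded a priori (it blows up when the spherical support $\Sigma_i(r)$ is small), this cannot be replaced by an absolute constant, and your final line $\Psi'/\Psi\geq C-2C_1+2(\gamma_1+\gamma_2-2)/r$ does not follow. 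The fix is to keep the multiplicative factor and apply Friedland--Hayman \emph{before} splitting off the error: for small $r$ one has $(1-C_1 r)>0$ and
\[
\sum_{i}\gamma_i(r)\,(1-C_1 r)\ \geq\ 2(1-C_1 r),
\]
whence $\Psi'/\Psi\geq C-4C_1\geq 0$ for $C$ large. This ordering is exactly how the paper closes the argument.

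Two smaller remarks. First, testing the subsolution with $w_i(\Gamma_\varphi-\Gamma_\varphi(r))$ produces an \emph{interior} term $\Gamma_\varphi(r)\int_{B_r}\varphi^2|\nabla w_i|^2$ on the right, not the boundary term you display; to obtain the inequality you wrote one should test the subsolution with $\Gamma_\varphi w_i$ and the Green equation with $w_i^2/2$, keeping the boundary contributions. Second, that inequality is in fact \emph{exact}---no $O(r)I_i(r)$ correction is needed at that stage. The perturbation $\psi(r)-1=O(r)$ only enters later, in the spherical optimization, because the two boundary terms carry mismatched weights $\psi(r)/r^{N-2}$ and $(N-2)/(2r^{N-1})$.
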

\begin{proof}
We follow closely the computations in the proof of \cite[Proposition 2.5]{stz}, to which we refer to for more details. 
	
 \smallbreak
 
\noindent  {\bf Step 1}. Let $i\in \{1,2\}$. By (formally) using $\Gamma_\varphi w_i\geq 0$ as test function in \eqref{eq:inequality_subsol}, and $w_i^2/2$ as test function in  \eqref{eq:fundamental_eq}, we see that
\begin{align}
	\int_{B_r} \frac{\psi(|x|)}{|x|^{N-2}} |\nabla w_i|^2&= \int_{B_r} \varphi^2\Gamma_\varphi |\nabla w_i|^2  
		\leq \int_{\partial B_r}\varphi^2 \Gamma_\varphi w_i (\partial_\nu w_i)- \int_{B_r} \langle \varphi^2 \nabla(\frac{w_i^2}{2}), \nabla \Gamma_\varphi \rangle \nonumber \\
         & =\int_{\partial B_r}\varphi^2 \Gamma_\varphi w_i (\partial_\nu w_i)-\int_{\partial B_r} \varphi^2 (\partial_\nu \Gamma_\varphi) \frac{w_i^2}{2} - \frac{w_i^2(0)}{2} \nonumber\\
         &\leq \int_{\partial B_r}\left( \varphi^2 \Gamma_\varphi w_i (\partial_\nu w_i)- \frac{1}{2} w_i^2 \varphi^2 (\partial_\nu \Gamma_\varphi) \right) = \int_{\partial B_r} \left(\frac{\psi(r)}{r^{N-2}} w_i (\partial_\nu w_i) + \frac{N-2}{2r^{N-1}} w^2\right)   \label{eq:aux_for_later}
\end{align}

\noindent {\bf Step 2}. Denote $\Psi(w_1,w_2,r)$ by $\Psi(r)$. We compute the logarithmic derivative of $\Psi$:
\begin{align*}
	\frac{d}{dr} \log \Psi(r) &= C - \frac{4}{r} + \sum_{i=1}^2\frac{\displaystyle \int_{\partial B_r} \frac{\psi(|x|)}{|x|^{N-2}} |\nabla w_i|^2}{\displaystyle\int_{B_r} \frac{\psi(|x|)}{|x|^{N-2}} |\nabla w_i|^2} \geq C-\frac{4}{r} + \sum_{i=1}^2\frac{\displaystyle  \frac{\psi(r)}{r^{N-2}}\int_{\partial B_r}  |\nabla w_i|^2}{\displaystyle \int_{\partial B_r}\left( \frac{\psi(r)}{r^{N-2}}w_i (\partial_\nu w_i) + \frac{N-2}{2r^{N-1}}  w_i^2 \right) }.
\end{align*}
For $i=1,2$, let $v_i=v_i^{(r)} := w_i(rx):\partial B_1\to \R$ and $\omega_{i,r}:=\{x\in \partial B_1:\ v_i^{(r)}(x)>0 \}$. Observe that $\omega_{1,r}\cap \omega_{2,r}=\emptyset$. Moreover, 
\begin{align*}
	\frac{d}{dr} \log \Psi(r) & \geq C - \frac{4}{r} + \frac{\psi(r)}{r} \sum_{i=1}^2\frac{\displaystyle \int_{\omega_{i,r}} |\nabla v_i|^2}{\displaystyle \int_{\omega_{i,r}} \left(\psi(r)v_i(\partial_\nu v_i)+\frac{N-2}{2}v_i^2\right)}\\
					&= C - \frac{4}{r} + \frac{1}{r\psi(r)} \sum_{i=1}^2\frac{\displaystyle \int_{\omega_{i,r}} \left(\psi^2(r)(\partial_\nu v_i)^2 + \psi^2(r) |\nabla_\theta v_i|^2\right)}{\displaystyle \int_{\omega_{i,r}}\left(\psi(r)v_i(\partial_\nu v_i)+\frac{N-2}{2}v_i^2\right)}\\
					&\geq C - \frac{4}{r} + \frac{1}{r\psi(r)} \sum_{i=1}^2 \frac{\displaystyle \int_{\omega_{i,r}} \left(\psi^2(r)(\partial_\nu v_i)^2 + \psi^2(r)\lambda_1(\omega_{i,r}) v_i^2 \right)}{\displaystyle \int_{\omega_{i,r}}\left(\psi(r)v_i(\partial_\nu v_i)+\frac{N-2}{2}v_i^2\right)},\\
\end{align*}
where $\lambda_1(\omega_{i,r})$ denotes the first eigenvalue of the Dirichlet Laplace-Beltrami operator on $\omega_{i,r}$. Since
\begin{align*}
\int_{\omega_{i,r}}\left(\psi(r)v_i(\partial_\nu v_i)+\frac{N-2}{2}v_i^2\right) \leq \int_{\omega_{i,r}} \left( \frac{\psi^2(r)}{2a(N-2)}(\partial_\nu v_i)^2+\frac{(N-2)(a+1)}{2}v_i^2 \right),
\end{align*}
by choosing $a>0$ such that 
\[
\frac{(N-2)(a+1)}{2}=\frac{\psi^2(r)\lambda_1(\omega_r)}{2a(N-2)} \iff a=-\frac{1}{2}+\frac{1}{N-2}\sqrt{\left(\frac{N-2}{2}\right)^2+\psi^2(r)\lambda_1(\omega_r) } = \frac{\gamma(\lambda_1(\omega_r)  \psi^2(r))}{N-2},
\] 
where the function $\gamma(\cdot)$ is as in Lemma \ref{lemma:OPPsphere},
\begin{align}
	\frac{d}{dr} \log \Psi(r) &\geq C - \frac{4}{r} + \frac{2}{r \psi(r)} \sum_{i=1}^2\gamma \left( \psi^2(r) \lambda_1(\omega_{i,r}) \right) \nonumber \\
		&= \frac{2}{r} \left(-2 + \frac{C}{2} r  + \frac{1}{\psi(r)} \sum_{i=1}^2\gamma \left( \psi^2(r) \lambda_1(\omega_{i,r}) \right)\right). \label{eq:lowerbound_ACF_aux}
\end{align}

\smallbreak

\noindent {\bf Step 3}. Observe that
\begin{align*}
|\gamma\left(\psi^2(r)\lambda_1({\omega_{i,r}})\right)-\gamma(\lambda_1(\omega_{i,r}))|&=\frac{|\psi^2(r)-1|\lambda_1({\omega_{i,r}})}{\sqrt{\left(\frac{N-2}{2}\right)^2 + \psi^2(r)\lambda_1({\omega_{i,r}})}+\sqrt{\left(\frac{N-2}{2}\right)^2 + \lambda_1({\omega_{i,r}})}}\\
&\leq \frac{|\psi^2(r)-1|}{\psi(r)+1}\frac{\lambda_1({\omega_{i,r}})}{\sqrt{\lambda_1({\omega_{i,r}})}}=|\psi(r)-1|\frac{\sqrt{\lambda_1({\omega_{i,r}})}}{\gamma(\lambda_1({\omega_{i,r}}))}\gamma(\lambda_1({\omega_{i,r}}))\\
&\leq Cr\gamma(\lambda_1({\omega_{i,r}})),
\end{align*}
using the fact that $t\in \R^+\mapsto \sqrt{t}/\gamma(t)$ is bounded, combined with \eqref{eq:psi}. 
Therefore there exists $C_1>0$ such that, for $r>0$ small,
\begin{align*}
\frac{\gamma(\psi^2(r)\lambda_1(\omega_{i,r}))}{\psi(r)}&\geq 
\frac{\gamma(\lambda_1(\omega_{1,r}))(1-Cr)}{1+Cr} \geq \gamma(\lambda_1(\omega_{1,r})) (1-C_1r).
\end{align*}
Going back to \eqref{eq:lowerbound_ACF_aux} and using Lemma \ref{lemma:OPPsphere} we have, for small $r$,
\begin{align*}
\frac{d}{dr} \log \Psi(r)&\geq \frac{2}{r} \left(-2 + \frac{C}{2} r  + \sum_{i=1}^2 \gamma(\lambda_1(\omega_{i,r}))(1-C_1r)\right)\\
&\geq \frac{2}{r} \left(-2 + \frac{C}{2} r  +  2(1-C_1r) \right)=2\left(\frac{C}{2}-2C_1\right),
\end{align*}
which is positive by choosing $C>4C_1$.

\end{proof}

\begin{lemma}\label{lemma:twoplanesolution}
Let $\bar u_1,\bar u_2$ be the blowup limits as in Lemma \ref{lemma:blowuplimits_def}. Then
\begin{equation}\label{eq:ACF_constant}
\frac{1}{ r^2} \int_{B_{ r}} \frac{\left|\nabla \bar u_{1}\right|^2}{|x|^{N-2}}  \cdot \frac{1}{r^2} \int_{B_{ r}} \frac{\left|\nabla \bar u_{2}\right|}{|x|^{N-2}}\equiv \gamma,
\end{equation}
for $\gamma:=\Psi(u_1/\varphi,u_2/\varphi,0^+):=\lim_{r\to 0^+}\Psi(u_1/\varphi,u_2/\varphi,r)>0$. In particular, $(\bar u_1,\bar u_2)$ is  a two plane configuration: 
\[
\bar u_1(x)=a (x\cdot \nu)_+,\qquad \bar u_2(x)=b (x\cdot \nu)_-,
\]
for some $a,b\geq 0$ with $ab=\gamma$, and $\nu\in \partial B_1$.
\end{lemma}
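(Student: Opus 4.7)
The plan is to combine the monotonicity of $\Psi(u_1/\varphi, u_2/\varphi, \cdot)$ with a scaling argument, and then invoke the equality case of Friedland--Hayman to obtain the two-plane structure. First, Lemma \ref{eq:subsolution} shows that $w_i := u_i/\varphi$ satisfies the subsolution hypothesis \eqref{eq:inequality_subsol}, so Proposition \ref{ACF} applies and yields that $r \mapsto \Psi(u_1/\varphi, u_2/\varphi, r)$ is nondecreasing on $(0, \tilde r)$. Being nonnegative, it admits a finite limit
\[
\gamma := \lim_{r \to 0^+} \Psi(u_1/\varphi, u_2/\varphi, r) \geq 0.
\]

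Second, I would transfer this constancy to the blowup limits via scaling. A direct change of variable $x = \varepsilon y$ gives, for every fixed $r > 0$,
\[
\frac{1}{r^2}\int_{B_r}\frac{|\nabla u_{i,\varepsilon}|^2}{|y|^{N-2}}\,dy = \frac{1}{(\varepsilon r)^2}\int_{B_{\varepsilon r}}\frac{|\nabla u_i|^2}{|x|^{N-2}}\,dx,
\]
while $\Psi(u_1/\varphi, u_2/\varphi, \varepsilon r)$ differs from the product of two such terms only by multiplicative factors tending to $1$ as $\varepsilon \to 0$. Indeed, by Lemma \ref{lem psi} and the smoothness and radial symmetry of $\varphi$ at $0$, one has $\psi(|x|) = 1 + O(\varepsilon r)$, $\varphi(x) = 1 + O((\varepsilon r)^2)$ and $\nabla \varphi(x) = O(\varepsilon r)$ on $B_{\varepsilon r}$; combined with $u_i(0) = 0$ and the Lipschitz bound on $u_i$, the $\varphi$-correction terms in $|\nabla(u_i/\varphi)|^2$ are negligible with respect to the principal part $|\nabla u_i|^2$. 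Passing $\varepsilon \to 0$ with $r > 0$ fixed, the left hand side of the displayed identity converges to $\gamma$. On the other hand, the strong $H^1_{\mathrm{loc}}$-convergence $u_{i,\varepsilon} \to \bar u_i$ from Lemma \ref{lemma:blowuplimits_def}, together with the uniform Lipschitz bound on $u_{i,\varepsilon}$ and $|y|^{-(N-2)} \in L^1_{\mathrm{loc}}(\R^N)$ for $N \geq 3$, lets me pass to the limit in the integrals and obtain \eqref{eq:ACF_constant}.

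Third, I would derive the two-plane structure from this constancy. Since $\bar u_1, \bar u_2 \geq 0$, $\bar u_1 \bar u_2 \equiv 0$, and $\bar u_i$ is globally subharmonic and harmonic on its own positivity set (see \eqref{eq:subharmonic_blowuplimits}), the hypotheses of Proposition \ref{ACF} apply to $(\bar u_1, \bar u_2)$ directly with $\varphi \equiv 1$, and the functional on the left of \eqref{eq:ACF_constant} is precisely the resulting ACF functional. Tracing back the chain of inequalities in the proof of Proposition \ref{ACF}, constancy forces simultaneously: equality in Friedland--Hayman (Lemma \ref{lemma:OPPsphere}), so on every sphere $\partial B_r$ the positivity sets of $\bar u_1, \bar u_2$ are opposite open half-spheres across a common hyperplane $\{x \cdot \nu = 0\}$; equality in the eigenvalue Rayleigh quotient, which identifies the angular profile with the first Dirichlet eigenfunction on a half-sphere (a linear function restricted to the sphere); and $1$-homogeneity of each $\bar u_i$ in $r$ (from saturation in the logarithmic-derivative computation). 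Combined, these yield $\bar u_1(x) = a(x\cdot \nu)_+$ and $\bar u_2(x) = b(x\cdot \nu)_-$, and a direct evaluation of \eqref{eq:ACF_constant} on this explicit ansatz gives $ab = \gamma$. The main obstacle is step two, namely the careful quantitative comparison between $\Psi(u_1/\varphi, u_2/\varphi, \varepsilon r)$ and the unweighted ACF functional for the blowup; the rigidity in step three, while delicate, follows a well-established Alt--Caffarelli--Friedman template.
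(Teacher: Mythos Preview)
Your proposal is correct and follows essentially the same approach as the paper: apply Proposition~\ref{ACF} to $u_i/\varphi$, use the scaling identity to identify $\Psi(u_1/\varphi,u_2/\varphi,\varepsilon r)$ with the ACF-type functional of the blowup sequences, pass to the limit via the strong $H^1_{\mathrm{loc}}$ convergence of Lemma~\ref{lemma:blowuplimits_def}, and conclude that the classical ACF functional for $(\bar u_1,\bar u_2)$ is constant in $r$. The only cosmetic difference is that the paper invokes the classical rigidity result of Alt--Caffarelli--Friedman (cited as \cite{AltCaffarelliFriedman} and \cite[p.~224]{CaffarelliSalsa}) for the two-plane conclusion, whereas you sketch that rigidity yourself by tracing the equality cases through the proof of Proposition~\ref{ACF}; both routes are valid and amount to the same argument.
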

\begin{proof}
We aim to apply Proposition~\ref{ACF} to the functions $w_i=u_i/\varphi$. 
First, we make a change of variable so that 
\begin{align*}
&e^{ C \varepsilon r } \frac{1}{ r^2} \int_{B_{ r}}\frac{\psi(|\varepsilon x|)}{|x|^{N-2}} \left|\nabla \left(\frac{u_{1,\varepsilon}}{\varphi(\eps x)}\right)\right|^2\,dx \cdot \frac{1}{r^2} \int_{B_{ r}} \frac{\psi(|\varepsilon x|)}{|x|^{N-2}} \left|\nabla \left(\frac{u_{2,\varepsilon}}{\varphi(\eps x)}\right)\right|^2\,dx\\
&=e^{ C \varepsilon r } \frac{1}{ (\varepsilon r)^2} \int_{B_{\varepsilon r}} \frac{\psi(|x|)}{|x|^{N-2}} \left|\nabla \left(\frac{u_{1}}{\varphi}\right)\right|^2\,dx \cdot \frac{1}{(\varepsilon r)^2} \int_{B_{\varepsilon r}} \frac{\psi(| x|)}{|x|^{N-2}} \left|\nabla \left(\frac{u_{2}}{\varphi}\right)\right|^2\,dx=\Psi\left(\frac{u_1}{\varphi},\frac{u_2}{\varphi},\varepsilon r\right).
\end{align*}
By Proposition \ref{ACF}, there exists $\Psi(u_1/\varphi,u_2/\varphi,0^+):=\lim_{\varepsilon\to 0^+}\Psi(u_1/\varphi,u_2/\varphi,\varepsilon r)\equiv \gamma\in [0,\infty)$. On the other hand, since $\frac{\bar u_{i,\varepsilon}}{\varphi(\eps x)}\to \bar u_i$ strongly in $H^1_{loc}(\R^N)$ (recall Lemma \ref{lemma:blowuplimits_def} and the fact that $\varphi(0)=1$), then
\[
e^{ C \varepsilon r } \frac{1}{ r^2} \int_{B_{ r}} \frac{\psi(|\varepsilon x|)}{|x|^{N-2}} \left|\nabla \left(\frac{u_{1,\varepsilon}}{\varphi(\varepsilon x)}\right)\right|^2 \cdot \frac{1}{r^2} \int_{B_{ r}} \frac{\psi(|\varepsilon x|)}{|x|^{N-2}} \left|\nabla \left( \frac{u_{2,\varepsilon}}{\varphi(\varepsilon x)}\right)\right|^2\to \frac{1}{ r^2} \int_{B_{ r}} \frac{\left|\nabla \bar u_{1}\right|^2}{|x|^{N-2}}  \cdot \frac{1}{r^2} \int_{B_{ r}} \frac{\left|\nabla \bar u_{2}\right|}{|x|^{N-2}}
\]
as $\varepsilon\to 0$. In conclusion, the functional of the classical Alt-Caffarelli-Friedman monotonicity formula is constant in $r$, for the functions $\bar u_1, \bar u_2$. Then, by \cite{AltCaffarelliFriedman} (see also \cite[p. 224]{CaffarelliSalsa}), there exists $a,b\geq 0$ and $\nu\in \partial B_1$ such that $\bar u_1(x)=a(x\cdot \nu)_+$, $\bar u_2(x)=b(x\cdot \nu)_-$. By going back to \eqref{eq:ACF_constant}, we deduce that $ab=\gamma$.
\end{proof}

\begin{lemma}\label{lemma:L1_convergence_of_posit.}
Under the notations of Lemma \ref{lemma:blowuplimits_def}, for each $i=1,2$ we have
\[
\chi_{\Omega_{u_{i,\varepsilon}}}\to \chi_{\Omega_{\bar u_i}} \quad \text{ a.e. in $\R^N$, strongly in $L^1_{loc}(\R^N)$}.
\]
\end{lemma}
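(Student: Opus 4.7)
The plan is to combine the two-plane structure of the blowup limits obtained in Lemma~\ref{lemma:twoplanesolution} with the scale-invariant nondegeneracy property from Lemma~\ref{le:nondegeneracy}. By Lemma~\ref{lemma:twoplanesolution}, there exist $a,b\geq 0$ with $ab=\gamma>0$ and $\nu\in\partial B_1$ such that $\bar u_1(x)=a(x\cdot\nu)_+$ and $\bar u_2(x)=b(x\cdot\nu)_-$; in particular $a,b>0$, so $\Omega_{\bar u_1}=\{x\cdot \nu>0\}$ and $\Omega_{\bar u_2}=\{x\cdot \nu<0\}$, and $\partial \Omega_{\bar u_i}=\{x\cdot\nu=0\}$ has Lebesgue measure zero.

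First I would verify that nondegeneracy passes to the blowup sequence. Fix $i\in\{1,2\}$. From Lemma~\ref{le:nondegeneracy} (in the equivalent form of Remark~\ref{rmk:nondeg}), there exist universal constants $\rho_0, K_0>0$ such that for every $y\in \overline{\Omega_{u_{i,\varepsilon}}}$ and every $\rho>0$ with $\varepsilon\rho\leq \rho_0$, the change of variables $x=\varepsilon y+$blowup center gives
\[
\sup_{B_\rho(y)} u_{i,\varepsilon} \;=\; \frac{1}{\varepsilon}\sup_{B_{\varepsilon\rho}(\varepsilon y)} u_i \;\geq\; \frac{K_0\,\varepsilon\rho}{\varepsilon} \;=\; K_0\,\rho.
\]
Thus, for any fixed $\rho>0$, the scaled nondegeneracy $\sup_{B_\rho(y)} u_{i,\varepsilon}\geq K_0\rho$ holds on $\overline{\Omega_{u_{i,\varepsilon}}}$ for all sufficiently small $\varepsilon$.

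Next I would prove a.e.\ pointwise convergence of the characteristic functions. Let $x\in \R^N\setminus\{x\cdot\nu=0\}$; since this exceptional set is negligible, it suffices to treat such $x$. If $x\cdot\nu>0$ (the case $i=1$; the case $i=2$ with $x\cdot\nu<0$ is analogous), then $\bar u_1(x)=a(x\cdot\nu)>0$, and the local uniform convergence $u_{1,\varepsilon}\to \bar u_1$ provided by Lemma~\ref{lemma:blowuplimits_def} gives $u_{1,\varepsilon}(x)>0$ for all small $\varepsilon$, so $\chi_{\Omega_{u_{1,\varepsilon}}}(x)=1=\chi_{\Omega_{\bar u_1}}(x)$ eventually. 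If instead $x\cdot\nu<0$, choose $r>0$ so small that $\overline{B_r(x)}\subset\{y\cdot\nu<0\}$, in particular $\bar u_1\equiv 0$ on $B_r(x)$. Then $\sup_{B_r(x)} u_{1,\varepsilon}\to 0$ by uniform convergence, whereas if we had $x\in \overline{\Omega_{u_{1,\varepsilon}}}$ the nondegeneracy displayed above would force $\sup_{B_r(x)} u_{1,\varepsilon}\geq K_0 r>0$. Hence $x\notin \overline{\Omega_{u_{1,\varepsilon}}}$ for all small $\varepsilon$, and $\chi_{\Omega_{u_{1,\varepsilon}}}(x)=0=\chi_{\Omega_{\bar u_1}}(x)$ eventually.

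Finally, the characteristic functions are uniformly bounded by $1$, so on any compact $K\Subset \R^N$ the dominated convergence theorem upgrades the a.e.\ convergence to convergence in $L^1(K)$, completing the proof. The only delicate step is the transfer of nondegeneracy through the blowup scaling, which works cleanly because the nondegeneracy threshold $K_0\rho$ is $1$-homogeneous under the same parabolic-free scaling $u\mapsto u(\varepsilon \cdot)/\varepsilon$ used to define $u_{i,\varepsilon}$; crucially, the fact that $a,b>0$ (a consequence of $\gamma>0$ in the ACF-type formula) is what rules out degenerate blowup geometries where the argument could fail.
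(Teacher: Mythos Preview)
Your proof is correct and follows essentially the same approach as the paper's: both use the two-plane structure (Lemma~\ref{lemma:twoplanesolution}), local uniform convergence (Lemma~\ref{lemma:blowuplimits_def}), the scale-invariant nondegeneracy from Remark~\ref{rmk:nondeg}, and dominated convergence. The only cosmetic difference is that you invoke $a,b>0$ upfront (via $\gamma>0$) to identify $\Omega_{\bar u_i}$ explicitly, whereas the paper phrases the pointwise argument so as to also cover the a priori possibility $\bar u_i\equiv 0$, which it then rules out separately in Lemma~\ref{lemma:nontrivial_blowups}.
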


\begin{proof}
We adapt to our context the proof of the second part of \cite[Lemma 7.4]{RussTreyVelichkov}. For each $i=1,2$, by Lemma \ref{lemma:twoplanesolution}, either $\partial \Omega_{\bar u_i}=\emptyset$ (when $\bar u_i\equiv 0$) or $\partial \Omega_{\bar u_i}$ is the hyperplane $\{x\cdot \nu=0\}$. We prove that, in any case,
$\chi_{\Omega_{u_{i,\varepsilon}}}\to \chi_{\Omega_{\bar u_i}}$ pointwise  in $\R^N\setminus \{x\cdot \nu=0\}$.  Fix $i=1,2$. For $x_0\in \Omega_{\bar u_i}$, by the continuity of $\bar u_i$ there exists $\delta>0$ such that $\bar u_i>0$ in $B_\delta(x_0)$. Since $u_{i,\varepsilon}\to \bar u_i$ strongly in $L^\infty_{loc}(\R^N)$, then $u_{i,\varepsilon}(x_0)>0$ for $\varepsilon>0$ small, and
\[
\chi_{\Omega_{u_{i,\varepsilon}}}(x_0)=1\to 1=\chi_{\Omega_{\bar u_{i}}}(x_0).
\]
Now let $x_0\in \R^N\setminus \overline \Omega_{u_i}$. In particular, $\bar u_i=0$ in $\overline B_\delta(x_0)$, for some $\delta>0$. Assume, in view of a contradiction, that $\bar u_{i,\varepsilon}(x_0)>0$ for all $\varepsilon>0$ sufficiently small.

Then, using the nondegeneracy property of $u_i$ in the form of Remark \ref{rmk:nondeg}, there exists a universal constant $C>0$ and  $x_\varepsilon\in B_\delta (x_0)$ such that
\[
u_{i,\varepsilon}(x_\varepsilon)=\|u_{i,\varepsilon}\|_{L^\infty(B_\delta(x_0))}\geq C\delta.
\]
Then, as $\varepsilon\to 0$ and up to pass to a subsequence, $x_\varepsilon\to x_0\in \overline B_\delta (x_0)$, and by uniform convergence of blowup sequences (Lemma \ref{lemma:blowuplimits_def}),
\[
\bar u_{i}(x_0)\geq C\delta>0, 
\]
a contradiction. Therefore, the sequence of characteristic functions converges a.e. in $\R^N$, while the $L^1$ convergence is a direct consequence of Lebesgue's dominated convergence theorem.
\end{proof}

\begin{lemma}\label{lemma:nontrivial_blowups} Under the notations of Lemma \ref{lemma:blowuplimits_def}, we have $\bar u_1, \bar u_2\not\equiv 0$.
\end{lemma}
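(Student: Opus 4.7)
The plan is to invoke the nondegeneracy property at the origin, which is a two-phase point for $(u_1,u_2)$, and then rescale. Specifically, since $0\in \partial \Omega_{u_i}\subset \overline{\Omega_{u_i}}$ for both $i=1,2$, Remark~\ref{rmk:nondeg} (in particular, the consequence $\sup_{B_\rho(x_0)\cap\Omega}u_i\geq C\rho$ for $x_0\in\overline\Omega_{u_i}$) gives a universal constant $C>0$ such that, for all sufficiently small $\rho>0$,
\[
\sup_{B_\rho(0)} u_i \;\geq\; C\rho, \qquad i=1,2.
\]

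Rescaling, for every $\varepsilon\in(0,\rho_0)$ one has
\[
\sup_{B_1} u_{i,\varepsilon} \;=\; \frac{1}{\varepsilon}\sup_{B_\varepsilon(0)} u_i \;\geq\; C.
\]
By Lemma~\ref{lemma:blowuplimits_def}, $u_{i,\varepsilon}\to \bar u_i$ strongly in $L^\infty_{\text{loc}}(\R^N)$ (in particular, uniformly on $\overline{B_1}$), so passing to the limit yields
\[
\sup_{B_1}\bar u_i \;\geq\; C \;>\; 0,
\]
which shows $\bar u_i\not\equiv 0$ for $i=1,2$.

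No subtle issue is expected here: the only ingredients are (i) that $0$ lies in both free boundaries (by assumption $x_0=0\in \Gamma_{TP}(\partial\Omega_{u_1})\cap\Gamma_{TP}(\partial\Omega_{u_2})$), (ii) the universal nondegeneracy lower bound for $\sup u_i$ on small balls centered at free boundary points, and (iii) the already established local uniform convergence of the blowup sequences. The scaling invariance of the bound (linear growth rescaled by $1/\varepsilon$) is exactly what makes the inequality pass to the limit with a constant independent of $\varepsilon$.
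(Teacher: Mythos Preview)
Your proof is correct, but it follows a different route from the paper. The paper argues via the density estimate (Lemma~\ref{le:densityest}): since $0\in\partial\Omega_{u_i}$, one has $\xi\leq \dfrac{|\Omega_{u_i}\cap B_\varepsilon|}{|B_\varepsilon|}=\dfrac{|\Omega_{u_{i,\varepsilon}}\cap B_1|}{|B_1|}$, and then passes to the limit using the $L^1_{loc}$ convergence of characteristic functions from Lemma~\ref{lemma:L1_convergence_of_posit.} to conclude $|\Omega_{\bar u_i}\cap B_1|\geq \xi|B_1|>0$. Your argument instead uses directly the nondegeneracy sup bound from Remark~\ref{rmk:nondeg} together with the local uniform convergence already provided by Lemma~\ref{lemma:blowuplimits_def}. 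Your route is slightly more self-contained in that it does not invoke Lemma~\ref{lemma:L1_convergence_of_posit.}; on the other hand, the paper needs that $L^1$ convergence anyway for the statement of Theorem~\ref{thm:blowuplimits}, so reusing it here costs nothing. Both approaches are valid and of comparable length.
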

\begin{proof}
Let $i=1,2$. By Lemma \ref{le:densityest}, there exist a constant $\xi>0$ such that, for $\varepsilon>0$ small, we have
\[
\xi\leq \frac{|\Omega_{\bar u_i}\cap B_\varepsilon(0)|}{|B_\varepsilon(0)|}=\frac{|\Omega_{u_{i,\varepsilon}}\cap B_1|}{|B_1|}.
\]
On the other hand, by Lemma \ref{lemma:L1_convergence_of_posit.}, 
$|\Omega_{u_{i,\varepsilon}}\cap B_1|\to |\Omega_{\bar u_{i}}\cap B_1|$, and so $\bar u_i\not \equiv 0$.
\end{proof}

\subsection{Conclusion of the classification of blowup limits at two-phase points. Proof of Theorem \ref{thm:blowuplimits}}

So far we have show that $\bar u_1=a(x\cdot \nu)_+$, $\bar u_2=b(x\cdot \nu)_-$, for $a,b>0$ such that $ab=\gamma:=\Psi(u_1/\varphi,u_2/\varphi,0^+)$, together with strong convergences  of blowup sequences. In order to conclude the proof of Theorem \ref{thm:blowuplimits}, it remains to show  that $a=b$.

Let $R>0$. We show that $\bar u_1-\bar u_2$ is harmonic in $B_R$.  Take $0\leq \varphi \in C^\infty_c(B_R)$ and define the blowdown sequence
\[
\varphi_\eps(x)=\eps\varphi\left(\frac{x}{\eps}\right),\qquad \eps>0,
\]
satisfying $\supp(\varphi_\eps) \subset B_{\eps R}$ and the following scaling properties
\begin{equation}\label{eq:scaling}
\|\varphi_\eps\|_1 = \eps^{N+1}\|\varphi\|_1, \; \|\varphi_\eps\|_2^2 = \eps^{N+2}\|\varphi\|_2^2,\;\| \nabla \varphi_\eps\|_2^2 = \eps^{N}\|\nabla\varphi\|_2^2\,\, \mbox{ and }\,\, |\Omega_{\varphi_\eps}| = \eps^N|\Omega_{\varphi}|;
\end{equation}

Since $x_0=0\in   (\partial \Omega_{u_1}\cap \partial\Omega_{u_2}\cap \Omega)\setminus \cup_{\ell\not=1,2}\overline \Omega_{u_\ell}$, the for $\eps>0$ sufficiently small, we have $B_{\eps R}\subset \Omega \setminus \cup_{\ell\not=1,2}\overline \Omega_{u_\ell}$. 
We also observe that:

\begin{equation}\label{eq:scaling2}
\|u_{i}\|_1 = \eps^{N+1}\|u_{i,\eps}\|_1, \; 1=\|u_i\|_2^2 = \eps^{N+2}\|u_{i,\eps}\|_2^2,\;\| \nabla u_i\|_2^2 = \eps^{N}\|\nabla u_{i,\eps}\|_2^2\,\, \mbox{ and }\,\, |\Omega_{u_i}| = \eps^N|\Omega_{u_{i,\eps}}|.
\end{equation}

\begin{lemma}\label{lemma:inequalities_classS_blowup}
    Under the previous notations, there exists $C>0$ (depending on $\varphi$) such that
    \begin{align*}
    0\leq & 2t \int_{B_R} \nabla (u_{1,\eps}-u_{2,\eps})\cdot \nabla \varphi + C t \eps^2 + t^2 \|\nabla \varphi\|_2^2\\
    &+\frac{1}{\eps^N} \left[f_\eta\left(\eps^N|\Omega_{(\hat{u}_{1,\eps}+t\varphi)^+}|+ \eps^N\sum_{i=2}^k|\Omega_{(\hat{u}_{i,\eps}-t\varphi)^+}|\right) -f_\eta\left(\eps^N\sum_{i=1}^k|\Omega_{{u}_{i,\eps}}|\right)\right]
    \end{align*}
    and
    \begin{align*}
    0\leq & 2t \int_{B_R} \nabla (u_{2,\eps}-u_{1,\eps})\cdot \nabla \varphi + C t \eps^2 + t^2 \|\nabla \varphi\|_2^2\\
    &+\frac{1}{\eps^N} \left[f_\eta\left(\eps^N|\Omega_{(\hat{u}_{2,\eps}+t\varphi)^+}|+ \eps^N\sum_{i\neq 2}^k|\Omega_{(\hat{u}_{i,\eps}-t\varphi)^+}|\right) -f_\eta\left(\eps^N\sum_{i=1}^k|\Omega_{{u}_{i,\eps}}|\right)\right] 
    \end{align*}
    for $\eps>0$ sufficiently small (depending on $R>0$) and $t\in (0,1)$. Here, for each $i$, $\hat{u}_{i,\eps}:=u_{i,\eps}-\sum_{j\neq i} u_{j,\eps}$.
\end{lemma}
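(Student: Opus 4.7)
The plan is to adapt the admissible deformation from Proposition~\ref{prop:classS} to the rescaled setting by replacing the generic test function $\varphi$ with the blowdown $\varphi_\eps$. For the first inequality, I will test the minimality of $(u_1,\ldots,u_k)$ for $J_\eta$ against the competitor
\[
u_t := \bigl((\hat{u}_1+t\varphi_\eps)^+,\,(\hat{u}_2-t\varphi_\eps)^+,\,(\hat{u}_3-t\varphi_\eps)^+,\ldots,(\hat{u}_k-t\varphi_\eps)^+\bigr) \in \overline{H};
\]
for the second inequality I will use the symmetric perturbation in which $+t\varphi_\eps$ is placed on the second slot. Since $\supp(\varphi_\eps)\subset B_{\eps R}$ and $x_0 = 0$ lies in $(\partial\Omega_{u_1}\cap\partial\Omega_{u_2})\setminus\cup_{\ell\neq 1,2}\overline{\Omega_{u_\ell}}$, for $\eps$ sufficiently small this support is disjoint from $\cup_{\ell\geq 3}\overline{\Omega_{u_\ell}}$, and a routine verification shows $(\hat{u}_\ell - t\varphi_\eps)^+ \equiv u_\ell$ for $\ell\geq 3$. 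Thus only the first two components are effectively perturbed.

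The key algebraic observation is that on $B_{\eps R}$ one has $\hat{u}_1 = u_1-u_2$ and $\hat{u}_2 = u_2-u_1$, so $(\hat{u}_1+t\varphi_\eps)^+$ and $(\hat{u}_2-t\varphi_\eps)^+$ are the positive and negative parts of $u_1-u_2+t\varphi_\eps$, hence have disjoint supports. Combined with $\nabla u_1\cdot\nabla u_2=0$ a.e.\ (a consequence of $u_1u_2\equiv 0$), this yields
\[
\int_{B_{\eps R}}\bigl(|\nabla(\hat{u}_1+t\varphi_\eps)^+|^2+|\nabla(\hat{u}_2-t\varphi_\eps)^+|^2\bigr) = \int_{B_{\eps R}}\bigl(|\nabla u_1|^2+|\nabla u_2|^2\bigr)+2t\int \nabla\varphi_\eps\cdot\nabla(u_1-u_2)+t^2\int|\nabla\varphi_\eps|^2,
\]
which accounts for the gradient terms in the claimed inequality.

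For the $L^2$ normalizations appearing in $J_\eta$, I will exploit that $u_1,u_2$ are Lipschitz (Corollary~\ref{coro_Lispchitz}) and vanish at $x_0=0$, so $|u_i|\leq L\eps R$ on $B_{\eps R}$. This gives $\int u_i\varphi_\eps = O(\eps^{N+2})$ and
\[
\|(\hat{u}_1+t\varphi_\eps)^+\|_2^2 = 1+O(t\eps^{N+2})+O(t^2\eps^{N+2}),
\]
with an analogous expansion for the second slot. Since each $\int|\nabla u_i|^2$ is universally bounded, expanding $1/\|\cdot\|_2^2$ shows that the $L^2$ renormalization contributes an error of order $O(t\eps^{N+2})$. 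Combining the minimality $J_\eta(u)\leq J_\eta(u_t)$ with the computations above yields
\[
0\leq 2t\int\nabla(u_1-u_2)\cdot\nabla\varphi_\eps+t^2\int|\nabla\varphi_\eps|^2+Ct\eps^{N+2}+\Bigl[f_\eta\Bigl(|\Omega_{(\hat{u}_1+t\varphi_\eps)^+}|+\sum_{i=2}^k|\Omega_{(\hat{u}_i-t\varphi_\eps)^+}|\Bigr)-f_\eta\Bigl(\sum_{i=1}^k|\Omega_{u_i}|\Bigr)\Bigr],
\]
and dividing through by $\eps^N$, combined with the scaling identities \eqref{eq:scaling} and \eqref{eq:scaling2} (which convert $\int\nabla u_i\cdot\nabla\varphi_\eps$ into $\eps^N\int\nabla u_{i,\eps}\cdot\nabla\varphi$, $\int|\nabla\varphi_\eps|^2$ into $\eps^N\|\nabla\varphi\|_2^2$, and each $|\Omega_{u_i}|$ into $\eps^N|\Omega_{u_{i,\eps}}|$), produces exactly the first displayed inequality. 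The second follows identically by swapping $u_1$ and $u_2$ in the deformation.

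The main obstacle is the bookkeeping in the $L^2$ renormalization: one must verify that the correction really is $O(t\eps^{N+2})$ rather than $O(t\eps^{N+1})$, so that upon dividing by $\eps^N$ it contributes the stated $O(t\eps^2)$ error. The extra factor of $\eps$ beyond the $\eps^{N+1}$ from $\|\varphi_\eps\|_1$ comes precisely from $u_i(x_0)=0$ together with Lipschitz continuity, and losing it would be fatal for the subsequent blowup analysis.
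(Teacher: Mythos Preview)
Your proposal is correct and follows essentially the same strategy as the paper: test minimality of $(u_1,\ldots,u_k)$ against the competitor $((\hat u_1+t\varphi_\eps)^+,(\hat u_2-t\varphi_\eps)^+,\ldots)$, expand the Rayleigh quotients, and rescale by $\eps^{-N}$ using \eqref{eq:scaling}--\eqref{eq:scaling2}. The only organizational difference is how the $L^2$--normalization error is controlled: the paper invokes the explicit quadratic bound $\|(\hat u_1+t\varphi_\eps)^+\|_2^{-2}\leq 1-2t\int u_1\varphi_\eps+C_1t^2\|\varphi_\eps\|_2^2$ from \cite[Lemma~A.1]{ASST} and then rewrites $\int u_1\varphi_\eps=\eps^{N+2}\int u_{1,\eps}\varphi$ via scaling, bounding the last integral by the local uniform convergence of the blowups; you instead estimate $\int u_i\varphi_\eps$ directly via the Lipschitz bound $|u_i|\leq L\eps R$ on $B_{\eps R}$. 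These are equivalent, since the Lipschitz bound at $x_0$ is precisely what makes the blowups locally bounded.
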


\begin{proof}
We prove only the first inequality, as the other case is analogous. Using the minimality of ${u}_1, \ldots, {u}_k$, we have
\[
J_\eta({u}_1, \ldots, {u}_k) \leq J_\eta(u_{1,t,\eps}, \ldots, u_{k,t,\eps}),
\]
where, as in \eqref{eq:deformation}, 
\[
u_{t,\eps}=\left(u_{1,t,\eps}, \ldots, u_{k,t,\eps}\right):=\left(\left(\hat{u}_1+t \varphi_\eps\right)^{+}, \left(\hat{u}_2-t \varphi_\eps \right)^{+}, \ldots, \left(\hat{u}_k-t \varphi_\eps\right)^{+}\right),
\]
recalling that $\hat{u}_i=u_i - \sum_{j\neq i} u_j$, for $i=1,\ldots, k$.
This yields
\begin{align*}
\sum_{i=1}^k\int_{\Omega}|\nabla {u}_i|^2 + f_\eta\left(\sum_{i=1}^k|\Omega_{{u}_i}|\right)  \leq & \;\displaystyle \dfrac{\int_{\Omega}|\nabla (\hat{u}_1 + t\varphi_\eps)^+|^2}{\|(\hat{u}_1 + t\varphi_\eps)^+\|^2_2} + \sum_{i=2}^k\dfrac{\int_{\Omega}|\nabla (\hat{u}_i - t\varphi_\eps)^+|^2}{\|(\hat{u}_i - t\varphi_\eps)^+\|^2_2} \\
 & + f_\eta\left(|\Omega_{u_{1,t,\eps}}|+\sum_{i=2}^k|\Omega_{u_{i,t,\eps}}|\right).
\end{align*}
By scaling the inequality above and using both \eqref{eq:scaling} and \eqref{eq:scaling2} we obtain
\begin{align}
\eps^N\sum_{i=1}^k\int_{\Omega_\eps}|\nabla {u}_{i,\eps}|^2 + f_\eta\left(\eps^N\sum_{i=1}^k|\Omega_{{u}_{i,\eps}}|\right)  \leq & \;\displaystyle\dfrac{\int_{\Omega}|\nabla (\hat{u}_1 + t\varphi_\eps)^+|^2}{\|(\hat{u}_1 + t\varphi_\eps)^+\|^2_2} + \sum_{i=2}^k\dfrac{\int_{\Omega}|\nabla (\hat{u}_i - t\varphi_\eps)^+|^2}{\|(\hat{u}_i - t\varphi_\eps)^+\|^2_2} \nonumber \\
 & + f_\eta\left(\eps^N|\Omega_{(\hat{u}_{1,\eps}+t\varphi)^+}|+\eps^N\sum_{i=2}^k|\Omega_{(\hat{u}_{i,\eps}-t\varphi)^+}|\right), \label{eq:scaling_classS_1}
\end{align}
where $\Omega_\eps = \Omega/\eps$. 
By \cite[Lemma A.1]{ASST}, since $\|u_i\|_2=1$ for every $i=1,\ldots, k$, there exists $C_1>0$ such that, for $t>0$ small,
\[
\frac{1}{\|(\hat u_1+t\varphi)^+\|_2^2}\leq 1-2t \int_\Omega u_1 \varphi_\eps + C_1 \|\varphi_\eps\|_2^2t^2,\quad \frac{1}{\|(\hat u_i-t\varphi)^+\|_2^2}\leq 1+2t \int_\Omega u_1 \varphi_\eps + C_1 \|\varphi_\eps\|_2^2t^2\ (i\geq 2),
\]
and so there exists $C_2$ (depending on $\varphi$) such that
\begin{align}
\dfrac{\int_{\Omega}|\nabla (\hat{u}_1 + t\varphi_\eps)^+|^2}{\|(\hat{u}_1 + t\varphi_\eps)^+\|^2_2}\leq &\int_{\Omega}|\nabla (\hat{u}_1 + t\varphi_\eps)^+|^2\left(1-2t\eps^{N+2}\int_{\Omega_\eps} u_{1,\eps}\varphi+ C_1 t^2 \eps^{N+2}\|\varphi\|_2^2\right)\nonumber \\
=& \eps^N \int_{\Omega_\eps}|\nabla (\hat{u}_{1,\eps} + t\varphi)^+|^2-2t \eps^{N+2}\int_{B_R} u_{1,\eps} \varphi   \int_{\Omega}|\nabla (\hat{u}_1 + t\varphi_\eps)^+|^2 \nonumber \\
&+ C_1 t^2 \eps^{N+2}\|\varphi\|_2^2 \int_{\Omega}|\nabla (\hat{u}_1 + t\varphi_\eps)^+|^2\label{eq:scaling_classS_2}\nonumber \\
\leq & \eps^N \int_{\Omega_\eps}|\nabla (\hat{u}_{1,\eps}+t\varphi)^+|^2 + C_2 t \eps^{N+2},
\end{align}
for every $t\in (0,1)$. Here, we have used again \eqref{eq:scaling}, \eqref{eq:scaling2}, as well as the locally uniform convergence of $u_{1,\eps}$. Similarly, for $i\geq 2$,
\begin{align}
\dfrac{\int_{\Omega}|\nabla (\hat{u}_i - t\varphi_\eps)^+|^2}{\|(\hat{u}_i - t\varphi_\eps)^+\|^2_2}\leq & \eps^N \int_{\Omega_\eps}|\nabla (\hat{u}_{i,\eps}-t\varphi)^+|^2 + C_2 t \eps^{N+2}.\label{eq:scaling_classS_3}
\end{align}
Therefore, by plugging \eqref{eq:scaling_classS_2} and \eqref{eq:scaling_classS_3} into \eqref{eq:scaling_classS_1}, and given that  
\[
|\nabla (\hat{u}_{1,\eps}+t\varphi)^+|^2 + \sum_{i=2}^k |\nabla (\hat{u}_{i,\eps}-t\varphi)^+|^2=|\nabla (\hat{u}_{1,\eps}+t\varphi)|^2,
\]
we have
\begin{align*}
\eps^N\sum_{i=1}^k\int_{\Omega_\eps}|\nabla {u}_{i,\eps}|^2  \leq & \,\eps^{N}\int_{\Omega_\eps}|\nabla (\hat{u}_{1,\eps} + t\varphi)|^2 +2C_2 t \eps^{N+2}\\
&+f_\eta\left(\eps^N|\Omega_{(\hat{u}_{1,\eps}+t\varphi)^+}|+ \eps^N\sum_{i=2}^k|\Omega_{(\hat{u}_{i,\eps}-t\varphi)^+}|\right) -f_\eta\left(\eps^N\sum_{i=1}^k|\Omega_{{u}_{i,\eps}}|\right).
\end{align*}
Dividing both sides by $\eps^N$, and using the fact that
\begin{align*}
\int_{\Omega_\eps}|\nabla (\hat{u}_{1,\eps} + t\varphi)|^2&=\sum_{i=1}^k \int_{\Omega_\eps} |\nabla u_{i,\eps}|^2 + 2t \int_{\Omega_\eps} \nabla \hat{u}_{1,\eps}\cdot \nabla\varphi + t^2 \int_\Omega |\nabla \varphi|^2\\
&=\sum_{i=1}^k \int_{\Omega_\eps} |\nabla u_{i,\eps}|^2 + 2t \int_{B_R} \nabla ({u}_{1,\eps}-u_{2,\eps})\cdot \nabla\varphi + t^2 \int_\Omega|\nabla \varphi|^2,
\end{align*}
for $\eps>0$ sufficiently small, we obtain the desired conclusion.
\end{proof}

\begin{lemma}\label{lemma:limitmeas} Under the previous notations, for a.e. $t\in (0,1)$ and $\ell=1,2$, we have
\begin{equation*}\label{eq:limitmeas}
\frac{1}{\eps^N}\left[
f_\eta\left(\eps^N|\Omega_{(\hat{u}_{\ell,\eps}+t\varphi)^+}|+ \eps^N\sum_{i\neq \ell}^k|\Omega_{(\hat{u}_{i,\eps}-t\varphi)^+}|\right) -f_\eta\left(\eps^N\sum_{i=1}^k|\Omega_{{u}_{i,\eps}}|\right)\right] \to 0,    
\end{equation*}
as $\eps\to 0$.
\end{lemma}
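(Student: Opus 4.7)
The plan is to reduce the bracketed expression to a measure-change computation localized on $B_{\eps R}$, apply the Lipschitz property of $f_\eta$, and exploit the hyperplane structure of the blowup limits $\bar u_1,\bar u_2$ classified in Lemma~\ref{lemma:twoplanesolution}.

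First, by the scaling identities \eqref{eq:scaling} and \eqref{eq:scaling2}, I would set $A_\eps:=\eps^N\sum_i|\Omega_{u_{i,\eps}}|=\sum_i|\Omega_{u_i}|$ (independent of $\eps$) and $B_\eps(t):=\eps^N|\Omega_{(\hat u_{\ell,\eps}+t\varphi)^+}|+\eps^N\sum_{i\neq\ell}|\Omega_{(\hat u_{i,\eps}-t\varphi)^+}|=|\Omega_{(\hat u_\ell+t\varphi_\eps)^+}|+\sum_{i\neq\ell}|\Omega_{(\hat u_i-t\varphi_\eps)^+}|$. Since $x_0=0$ is a two-phase point between $\Omega_{u_1}$ and $\Omega_{u_2}$ only, for $\eps$ small $\Omega_{u_j}\cap B_{\eps R}=\emptyset$ and $\hat u_j-t\varphi_\eps\le 0$ on $B_{\eps R}$ for every $j\ge 3$; hence the $j\ge 3$ components contribute the same to $B_\eps(t)$ and $A_\eps$ and cancel. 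Taking $\ell=1$ (the case $\ell=2$ is symmetric), I would partition $B_{\eps R}$ into $A_1:=\Omega_{u_1}\cap B_{\eps R}$, $A_2:=\Omega_{u_2}\cap B_{\eps R}$, $A_0:=\{u_1=u_2=0\}\cap B_{\eps R}$, and observe that in $B_{\eps R}$ one has $\hat u_1+t\varphi_\eps=u_1-u_2+t\varphi_\eps$ and $\hat u_2-t\varphi_\eps=u_2-u_1-t\varphi_\eps$. A case analysis on the signs of these expressions on $A_0,A_1,A_2$ then yields
$$|\Omega_{(\hat u_1+t\varphi_\eps)^+}\cap B_{\eps R}|=|A_1|+|\{0<u_2<t\varphi_\eps\}|+|A_0\cap\{\varphi_\eps>0\}|,$$
$$|\Omega_{(\hat u_2-t\varphi_\eps)^+}\cap B_{\eps R}|=|A_2|-|\{0<u_2\le t\varphi_\eps\}|.$$
Summing and adding the unchanged contributions outside $B_{\eps R}$, I obtain
$$B_\eps(t)-A_\eps=|A_0\cap\{\varphi_\eps>0\}|-|\{u_2=t\varphi_\eps,\,u_2>0\}|.$$

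Next, the Lipschitz bound in Lemma~\ref{lemma:feta}(2) gives $|f_\eta(B_\eps(t))-f_\eta(A_\eps)|\le\tfrac{1}{\eta}|B_\eps(t)-A_\eps|$. For the first summand, the rescaling $y=x/\eps$ yields $|A_0\cap\{\varphi_\eps>0\}|=\eps^N|\{u_{1,\eps}=u_{2,\eps}=0\}\cap\{\varphi>0\}|$; dividing by $\eps^N$ and letting $\eps\to 0$, Lemma~\ref{lemma:L1_convergence_of_posit.} delivers convergence to $|\{\bar u_1=\bar u_2=0\}\cap\{\varphi>0\}|$. By Lemmas~\ref{lemma:twoplanesolution} and~\ref{lemma:nontrivial_blowups}, combined with the positivity $\gamma=ab>0$ inherited from Corollary~\ref{cor:nondeggrad}, one has $\bar u_1=a(x\cdot\nu)_+$ and $\bar u_2=b(x\cdot\nu)_-$ with $a,b>0$, so $\{\bar u_1=\bar u_2=0\}=\{x\cdot\nu=0\}$ is a hyperplane of Lebesgue measure zero. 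For the second summand, for each fixed $\eps$ the non-decreasing function $\tau\mapsto|\{u_2\le\tau\varphi_\eps,\,u_2>0\}|$ on $(0,1)$ has at most countably many jumps, and continuity at $t$ is equivalent to $|\{u_2=t\varphi_\eps,u_2>0\}|=0$; along the countable blowup subsequence $\eps_n\to 0$ of Lemma~\ref{lemma:blowuplimits_def}, the union of the jump sets is still countable, so for a.e.\ $t\in(0,1)$ this second summand vanishes identically along the whole sequence. The case $\ell=2$ is handled identically, after exchanging the roles of $u_1$ and $u_2$.

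The main obstacle is the explicit identification $B_\eps(t)-A_\eps=|A_0\cap\{\varphi_\eps>0\}|-|\{u_2=t\varphi_\eps,\,u_2>0\}|$: showing that, modulo a term vanishing for a.e.\ $t$, the only mass newly created or destroyed by the perturbation lies in the joint zero set $A_0$ of both phases. This is precisely what reduces the question to the measure of the hyperplane $\{\bar u_1=\bar u_2=0\}$; without the two-plane classification of Lemma~\ref{lemma:twoplanesolution} combined with the gradient nondegeneracy of Corollary~\ref{cor:nondeggrad}, the set $A_0$ could have macroscopic density at $x_0$ and the limit would not be zero.
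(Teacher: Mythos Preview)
Your argument is correct and arrives at the same key quantity as the paper. Both proofs cancel the $j\ge 3$ phases, apply the Lipschitz bound of Lemma~\ref{lemma:feta}(2), and reduce to the rescaled measure difference $\tilde m_{1,\eps}(t)$; your explicit identity
\[
\tilde m_{1,\eps}(t)=\bigl|\{u_{1,\eps}=u_{2,\eps}=0\}\cap\{\varphi>0\}\cap B_R\bigr|-\bigl|\{u_{2,\eps}=t\varphi,\,u_{2,\eps}>0\}\cap B_R\bigr|
\]
is precisely what the paper's undecomposed expression $|\{u_{1,\eps}-u_{2,\eps}+t\varphi\neq 0\}\cap B_R|-|\Omega_{u_{1,\eps}}\cap B_R|-|\Omega_{u_{2,\eps}}\cap B_R|$ unpacks to. The one genuine difference is how the level-set term is eliminated: the paper passes to the limit first and then invokes Sard's theorem on $\{\varphi(x)/(x\cdot\nu)=b/t\}$, while you remove it \emph{before} the limit via the countability of jumps of the monotone map $\tau\mapsto|\{0<u_2\le\tau\varphi_\eps\}|$ along the countable subsequence $(\eps_n)$. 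Your variant is slightly cleaner, since it avoids having to justify the convergence $|\{u_{1,\eps}-u_{2,\eps}+t\varphi\neq 0\}|\to|\{\bar u_1-\bar u_2+t\varphi\neq 0\}|$, which in the paper's presentation tacitly relies on the very a.e.\ $t$ conclusion being established.
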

\begin{proof} 
We present the proof for $\ell=1$, as for $\ell=2$ is analogous. First, recall that, from \eqref{eq:scaling2} and Lemma \ref{le:measOK}, we have     $\eps^N\sum_{i=1}^k|\Omega_{{u}_{i,\eps}}| = \sum_{i=1}^k|\Omega_{{u}_i}| = a$, which yields to 
\[
\frac{1}{\eps^N}f_\eta\left(\eps^N\sum_{i=1}^k|\Omega_{{u}_{i,\eps}}|\right) = \frac{\eta}{\eps^N}\left(\eps^N\sum_{i=1}^k|\Omega_{{u}_{i,\eps}}| - a\right) = 0. 
\]
Let us denote $m_{1,\eps}=m_{1,\eps}(t):= \eps^N|\Omega_{(\hat{u}_{1,\eps}+t\varphi)^+}|+ \eps^N\sum_{i=2}^k|\Omega_{(\hat{u}_{i,\eps}-t\varphi)^+}|$. We have two possibilities, either $m_{1, \eps} \leq \eps^N\sum_{i=1}^k|\Omega_{{u}_{i,\eps}}|=a$ or $m_{1, \eps} > \eps^N\sum_{i=1}^k|\Omega_{{u}_{i,\eps}}|=a$.\\

\noindent{\it Case 1:} If $m_{1, \eps} \leq a$, then 
\begin{align*}
\frac{1}{\eps^N}f_\eta\left(\eps^N|\Omega_{(\hat{u}_{1,\eps}+t\varphi)^+}|+ \eps^N\sum_{i=2}^k|\Omega_{(\hat{u}_{i,\eps}-t\varphi)^+}|\right) & = \frac{\eta}{\eps^N}\left(\eps^N|\Omega_{(\hat{u}_{1,\eps}+t\varphi)^+}|+ \eps^N\sum_{i=2}^k|\Omega_{(\hat{u}_{i,\eps}-t\varphi)^+}|- a\right) \\
& = \eta\left(|\Omega_{(\hat{u}_{1,\eps}+t\varphi)^+}|+ \sum_{i=2}^k|\Omega_{(\hat{u}_{i,\eps}-t\varphi)^+}|- \frac{a}{\eps^N}\right), 
\end{align*}
and hence
\begin{align*}
\frac{1}{\eps^N}f_\eta\left(m_{1,\eps}\right) - \frac{1}{\eps^N}f_\eta\left(\eps^N\sum_{i=1}^k|\Omega_{{u}_{i,\eps}}|\right) & = \eta\left( |\Omega_{(\hat{u}_{1,\eps}+t\varphi)^+}|+ \sum_{i=2}^k|\Omega_{(\hat{u}_{i,\eps}-t\varphi)^+}| - \sum_{i=1}^k|\Omega_{{u}_{i,\eps}}|\right).\\    
\end{align*}

\noindent{\it Case 2:} On the other hand, if $m_{1, \eps} > a$, then  we have 
\begin{align*}
\frac{1}{\eps^N}f_\eta\left(m_{1,\eps}\right) - \frac{1}{\eps^N}f_\eta\left(\eps^N\sum_{i=1}^k|\Omega_{{u}_{i,\eps}}|\right) & =\frac{1}{\eps^N\eta}\left(\eps^N|\Omega_{(\hat{u}_{1,\eps}+t\varphi)^+}|+ \eps^N\sum_{i=2}^k|\Omega_{(\hat{u}_{i,\eps}-t\varphi)^+}| - \eps^N\sum_{i=1}^k|\Omega_{{u}_{i,\eps}}|\right)\\
& = \frac{1}{\eta}\left(|\Omega_{(\hat{u}_{1,\eps}+t\varphi)^+}|+ \sum_{i=2}^k|\Omega_{(\hat{u}_{i,\eps}-t\varphi)^+}| - \sum_{i=1}^k|\Omega_{{u}_{i,\eps}}|\right).
\end{align*}

In both cases, we need to examine the quantity 
\[
\tilde{m}_{1,\eps}(t) := |\Omega_{(\hat{u}_{1,\eps}+t\varphi)^+}|+ \sum_{i=2}^k|\Omega_{(\hat{u}_{i,\eps}-t\varphi)^+}| - \sum_{i=1}^k|\Omega_{\bar{u}_{i,\eps}}|,
\]
and show that, for a.e. $t>0$, $\lim_{\eps\to 0}\tilde{m}_{1, \eps}(t) = 0$, so that the claim of the lemma holds. Observe that,  since $\supp\varphi \subset B_R \subset \Omega\setminus\cup_{\ell \geq 3}^k \Omega_{u_{\ell,\eps}}$, we have $\Omega_{(\hat{u}_{i,\eps} - t\varphi)^+} = \Omega_{{u}_{i,\eps}}$, for $i=3,\ldots, k$. 
Moreover, 
\[
\Omega_{(\hat{u}_{1,\eps} + t\varphi)^+} = \left(\Omega_{({u}_{1, \eps}-{u}_{2,\eps}+t\varphi)^+}\cap B_R\right)\cup\left(\Omega_{{u}_{1,\eps}} \cap(B_R)^c\right),
\]
and
\[
\Omega_{(\hat{u}_{2,\eps} - t\varphi)^+} = \left(\Omega_{({u}_{2, \eps}-{u}_{1,\eps}-t\varphi))^+}\cap B_R\right)\cup\left(\Omega_{{u}_{2,\eps}} \cap(B_R)^c\right).
\]
Hence, 
\begin{align*}
\tilde{m}_{1,\eps}(t) = &\, |\Omega_{({u}_{1, \eps}-{u}_{2,\eps}+t\varphi)^+}\cap B_R|+|\Omega_{{u}_{1,\eps}} \cap(B_R)^c| + |\Omega_{({u}_{2, \eps}-{u}_{1,\eps}-t\varphi)^+}\cap B_R| + |\Omega_{{u}_{2,\eps}} \cap(B_R)^c| \\
 & + \sum_{i=3}^k|\Omega_{{u}_{i,\eps}}| - \sum_{i=1}^k|\Omega_{{u}_{i,\eps}}| \\
= & \, |\Omega_{({u}_{1, \eps}-{u}_{2,\eps}+t\varphi)^+}\cap B_R| + |\Omega_{({u}_{2, \eps}-{u}_{1,\eps}-t\varphi)^+}\cap B_R| -|\Omega_{{u}_{1,\eps}} \cap B_R| - |\Omega_{{u}_{2,\eps}} \cap B_R| \\
= & \, |\Omega_{({u}_{1, \eps}-{u}_{2,\eps}+t\varphi)^+}\cap B_R| + |\Omega_{({u}_{1, \eps}-{u}_{2,\eps}+t\varphi)^-}\cap B_R| -|\Omega_{{u}_{1,\eps}} \cap B_R| - |\Omega_{{u}_{2,\eps}} \cap B_R| \\
= & \, |\{x \in B_R \,:\, ({u}_{1, \eps}-{u}_{2,\eps}+t\varphi)(x) \neq 0\}| -|\Omega_{{u}_{1,\eps}} \cap B_R| - |\Omega_{{u}_{2,\eps}} \cap B_R|.
\end{align*}
By passing the limit as $\eps\to 0$, and using Lemmas \ref{lemma:twoplanesolution} and \ref{lemma:L1_convergence_of_posit.}, we obtain
\begin{align*}
 \lim_{\eps\to 0}\tilde{m}_{1,\eps}(t) &= |\{x \in B_R \,:\, (a(x\cdot \nu)^+-b(x\cdot \nu)^-+t\varphi)(x) \neq 0\}| - |B_R|\\
 &=|\{x \in B_R \,:\, a(x\cdot \nu)^+-b(x\cdot \nu)^-+t\varphi(x) = 0\}|.
\end{align*}
Finally, since $\varphi\geq 0$,
\begin{align*}
\{x \in B_R \,:\, a(x\cdot \nu)^+-&b(x\cdot \nu)^-+t\varphi(x) = 0\}  = \{x \in B_R \,:\, a(x\cdot \nu)^+-b(x\cdot \nu)^- = -t\varphi(x)\} \\
& \subseteq \{x \in B_R \,:\,  (x\cdot \nu)< 0,\ t\varphi(x)=b(x\cdot \nu)\}\cup \{x \in B_R \,\big|\, x\cdot \nu=0\} \\
& = \left\{x \in B_R \,\big|\, (x\cdot \nu)< 0,\ \frac{\varphi(x)}{(x\cdot \nu)}=\frac{b}{t}\right\}\cup \{x\in B_R\,:\, x\cdot \nu=0\}. 
\end{align*}
Since $|\{x\in B_R\,:\, x\cdot \nu=0\}|=0$ and, by Sard's Theorem, $\left|\left\{x \in B_R \,:\, (x\cdot \nu)< 0,\ \frac{\varphi(x)}{(x\cdot \nu)}=\frac{b}{t}\right\}\right|=0$ for a.e. $t>0$, the conclusion follows. 
\end{proof}

\begin{lemma}\label{lemma:blowup_harmonic}
Let $\bar u_1,\bar u_2$ be as in Lemma \ref{lemma:blowuplimits_def}. Then
\[
\Delta (\bar u_{1}-\bar u_{2})=0 \quad \text{ in } \R^N
\]
and, in particular, $a=b$.
\end{lemma}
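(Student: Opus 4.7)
The plan is to pass to the limit in the two one-sided inequalities from Lemma \ref{lemma:inequalities_classS_blowup} to deduce that $\bar u_1-\bar u_2$ is weakly harmonic, and then exploit the explicit form of the blowup profiles given by Lemma \ref{lemma:twoplanesolution} to force $a=b$. The key difficulty, already flagged in the introduction, is that the blowup limit is \emph{not} itself a minimizer of any nice functional (because the penalization $f_\eta$ is not $C^1$), so we cannot read off the Euler--Lagrange equation directly; we must instead control the measure-penalization error terms in the limit, and this is precisely what Lemma \ref{lemma:limitmeas} provides.

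Fix $R>0$ and a nonnegative test function $\varphi\in C^\infty_c(B_R)$. For $\eps>0$ small enough that $B_{\eps R}\subset \Omega\setminus\bigcup_{\ell\geq 3}\overline{\Omega_{u_\ell}}$ and all $t\in(0,1)$, Lemma \ref{lemma:inequalities_classS_blowup} gives
\begin{align*}
0&\leq 2t\int_{B_R}\nabla(u_{1,\eps}-u_{2,\eps})\cdot\nabla\varphi+Ct\eps^2+t^2\|\nabla\varphi\|_2^2+E^{(1)}_\eps(t),\\
0&\leq 2t\int_{B_R}\nabla(u_{2,\eps}-u_{1,\eps})\cdot\nabla\varphi+Ct\eps^2+t^2\|\nabla\varphi\|_2^2+E^{(2)}_\eps(t),
\end{align*}
where $E^{(\ell)}_\eps(t)$ denotes the penalization-difference term appearing in that lemma. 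By Lemma \ref{lemma:limitmeas}, for $\ell=1,2$ and for a.e. $t\in(0,1)$ we have $E^{(\ell)}_\eps(t)\to 0$ as $\eps\to 0$. By the strong $H^1_{loc}$-convergence from Lemma \ref{lemma:blowuplimits_def}, $\int_{B_R}\nabla u_{i,\eps}\cdot\nabla\varphi\to \int_{B_R}\nabla\bar u_i\cdot\nabla\varphi$. Hence, for a.e. $t\in(0,1)$,
\begin{equation*}
\Bigl|\,2t\int_{B_R}\nabla(\bar u_1-\bar u_2)\cdot\nabla\varphi\,\Bigr|\leq t^2\|\nabla\varphi\|_2^2.
\end{equation*}
Dividing by $t$ and letting $t\to 0^+$ along such a sequence yields $\int_{B_R}\nabla(\bar u_1-\bar u_2)\cdot\nabla\varphi=0$ for every nonnegative $\varphi\in C^\infty_c(B_R)$.

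To upgrade this to signed test functions, given $\varphi\in C^\infty_c(B_R)$ pick a cutoff $\eta\in C^\infty_c(B_R)$ with $\eta\equiv 1$ on $\supp\varphi$ and $\eta\geq 0$, and set $M=\|\varphi\|_\infty$; then $\varphi+M\eta\geq 0$ and $M\eta\geq 0$, so applying the previous identity to each and subtracting gives $\int_{B_R}\nabla(\bar u_1-\bar u_2)\cdot\nabla\varphi=0$. As $R$ is arbitrary, $\Delta(\bar u_1-\bar u_2)=0$ in $\R^N$. Finally, by Lemma \ref{lemma:twoplanesolution}, $\bar u_1-\bar u_2=a(x\cdot\nu)_+-b(x\cdot\nu)_-$, which equals the linear function $a(x\cdot\nu)$ on $\{x\cdot\nu>0\}$ and $b(x\cdot\nu)$ on $\{x\cdot\nu<0\}$. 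A direct integration by parts shows that its distributional Laplacian is $(b-a)\mathcal H^{N-1}\resmeas\{x\cdot\nu=0\}$, coming from the jump in the normal derivative across the hyperplane. The harmonicity just established forces this measure to vanish, whence $a=b$, as required.
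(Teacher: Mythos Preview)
Your proof is correct and follows essentially the same approach as the paper's: both pass to the limit $\eps\to 0$ in the inequalities of Lemma~\ref{lemma:inequalities_classS_blowup}, using Lemma~\ref{lemma:limitmeas} to kill the penalization error along a sequence $t\to 0$, and then conclude harmonicity of $\bar u_1-\bar u_2$. The only cosmetic differences are that the paper handles the two one-sided inequalities separately (obtaining $-\Delta(\bar u_1-\bar u_2)\geq 0$ and $\leq 0$ as distributions) rather than combining them into an absolute-value bound, and leaves the deduction $a=b$ implicit; your explicit computation of the distributional Laplacian of the two-plane function has a harmless sign slip (it is $(a-b)\,\mathcal H^{N-1}\resmeas\{x\cdot\nu=0\}$, not $(b-a)$), which does not affect the conclusion.
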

\begin{proof}

By Lemma \ref{lemma:limitmeas}, there exists $t_n\to 0$ such that, for each $n$, 
\[
\frac{1}{\eps^N}\left[
f_\eta\left(\eps^N|\Omega_{(\hat{u}_{1,\eps}+t_n\varphi)^+}|+ \eps^N\sum_{i=2}^k|\Omega_{(\hat{u}_{i,\eps}-t_n\varphi)^+}|\right) -f_\eta\left(\eps^N\sum_{i=1}^k|\Omega_{{u}_{i,\eps}}|\right)\right] \to 0  
\]
as $\eps\to 0$. By Lemma \ref{lemma:inequalities_classS_blowup}, 
\begin{align*}
    0\leq & 2t_n \int_{B_R} \nabla (u_{1,\eps}-u_{2,\eps})\cdot \nabla \varphi + C t_n \eps^2 +t^2\|\nabla \varphi\|_2^2\\
    & + \frac{1}{\eps^N} \left[f_\eta\left(\eps^N|\Omega_{(\hat{u}_{1,\eps}+t_n\varphi)^+}|+ \eps^N\sum_{i=2}^k|\Omega_{(\hat{u}_{i,\eps}-t_n\varphi)^+}|\right) -f_\eta\left(\eps^N\sum_{i=1}^k|\Omega_{{u}_{i,\eps}}|\right)\right]
    \end{align*}
By letting $\eps\to 0$, and using the convergences of the blowup sequences provided by Lemma \ref{lemma:blowuplimits_def}, we have
\[
0\leq 2t_n \int_{B_R} \nabla (\bar u_{1}-\bar u_{2})\cdot \nabla \varphi + t_n^2\|\nabla \varphi\|_2^2. 
\]
Finally, we divide the inequality above by $t_n$ and let $n\to \infty$ to see that
\[
0\leq \int_{B_R}\nabla(\bar{u}_{1}-\bar{u}_2)\cdot\nabla\varphi.
\]
Since $R>0$ and $\varphi$ are arbitrary, this means that
\[
-\Delta(\bar{u}_{1}-\bar{u}_2) \geq 0 \quad \mbox{ in } \R^N.
\]
By doing a similar computation, we can also show that 
\[
-\Delta(\bar{u}_{2}-\bar{u}_1) \geq 0 \quad \mbox{ in } \R^N,
\]
hence $\bar u_1-\bar u_2$ is harmonic in $\R^N$.
\end{proof}

\begin{proof}[Conclusion of the proof of Theorem \ref{thm:blowuplimits}]
This result is now a direct consequence of Lemmas \ref{lemma:blowuplimits_def}, \ref{lemma:L1_convergence_of_posit.}, \ref{lemma:nontrivial_blowups} and \ref{lemma:blowup_harmonic}.
\end{proof}

Finally, we relate the information obtained at one-phase reduced boundaries (Proposition \ref{prop:optimalitycond}) with the one obtained at all two-phase points (Theorem \ref{thm:blowuplimits}).

\begin{proposition}\label{prop:m(x0)>=m_eta}
Let $i\neq j$ and  take $x_0\in \Gamma_{TP}(\partial \Omega_i)\cap \Gamma_{TP}(\partial \Omega_j)$.   
Then $m(x_0)\geq m_\eta$, where $m_\eta$ is the constant appearing in \eqref{eq:m_eta}.
\end{proposition}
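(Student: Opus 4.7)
The plan is to combine a shape variation at the two-phase point $x_0$ with a volume-compensating shape variation at a one-phase reduced-boundary point $x_1$, and then invoke the minimality of $(u_1,\dots,u_k)$ for $J_\eta$. By Lemma~\ref{le:onephase}, choose $x_1\in\Gamma_{OP}(\partial\Omega_{u_\ell})\cap \partial^*\Omega_{u_\ell}$ for some $\ell\in\{1,\dots,k\}$, and $\rho>0$ small enough that $B_\rho(x_0)\Subset\Omega$ meets only $\Omega_{u_i}\cup\Omega_{u_j}$, $B_\rho(x_1)\Subset\Omega$ meets only $\Omega_{u_\ell}$, and $B_\rho(x_0)\cap B_\rho(x_1)=\emptyset$. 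Let $\nu_0$ be the blowup normal at $x_0$ given by Theorem~\ref{thm:blowuplimits}, chosen as the outer normal to $\Omega_{u_j}$ in the blowup, and $\nu_1$ the inner normal to $\Omega_{u_\ell}$ at $x_1$ from Proposition~\ref{prop:blowupconv}.

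\textbf{The perturbation.} For a radial profile $\varphi\in C_c^1([0,1);[0,\infty))$ and $\kappa>0$ small, define two diffeomorphisms of $\R^N$
\[
\sigma(x)=x-\kappa\rho\varphi(|x-x_0|/\rho)\nu_0\chi_{B_\rho(x_0)},\qquad \sigma'(x)=x-\kappa\rho\varphi(|x-x_1|/\rho)\nu_1\chi_{B_\rho(x_1)}.
\]
Apply $\sigma$ \emph{only} to the $j$-th component and $\sigma'$ only to the $\ell$-th: set $v_j:=u_j\circ\sigma^{-1}$, $v_\ell:=u_\ell\circ(\sigma')^{-1}$ (with $\sigma$ and $\sigma'$ composed consistently on the same function if $\ell\in\{i,j\}$, which is legitimate since they have disjoint supports), and $v_m:=u_m$ otherwise. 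The displacement $-\kappa\rho\varphi\nu_0$ points strictly into $\Omega_{u_j}$, and the $L^1_{loc}$ convergence $(\Omega_{u_j}-x_0)/\rho\to\{y\cdot\nu_0<0\}$ from Theorem~\ref{thm:blowuplimits} guarantees that, for $\kappa,\rho$ small, $\sigma(\Omega_{u_j})$ stays inside $\Omega_{u_j}$ up to a negligible set; at $x_1$, the one-phase character ensures the expansion $\sigma'(\Omega_{u_\ell})$ reaches only into the complement of $\cup_m\Omega_{u_m}$. Hence $(v_1,\dots,v_k)\in\overline H$.

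\textbf{First-variation computation.} I would compute the change of $J_\eta$ exactly as in Proposition~\ref{prop:optimalitycond}. By Jacobi's formula,
\[
|\Omega_{v_j}|-|\Omega_{u_j}|=-\kappa\rho^N C(\varphi)+o(\kappa\rho^N),\qquad |\Omega_{v_\ell}|-|\Omega_{u_\ell}|=+\kappa\rho^N C(\varphi)+o(\kappa\rho^N),
\]
where $C(\varphi)=\int_{B_1\cap\{y\cdot\nu=0\}}\varphi\,d\mathcal{H}^{N-1}>0$ is the same constant at both points by radiality of $\varphi$ (formula~\eqref{rfk}), so the total measure is preserved at leading order and the contribution of $f_\eta$ is $o(\kappa\rho^N)$ (using Lemma~\ref{lemma:feta}). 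The change of the Dirichlet Rayleigh quotients is computed in rescaled variables, using the strong $L^2_{loc}$ gradient limits $\nabla u_{j,\rho}\to -m(x_0)\nu_0\chi_{\{y\cdot\nu_0<0\}}$ (from Theorem~\ref{thm:blowuplimits}) and $\nabla u_{\ell,\rho}\to m_\eta\nu_1\chi_{\{y\cdot\nu_1>0\}}$ (from Proposition~\ref{prop:blowupconv} and~\eqref{eq:m_eta}), yielding
\[
\sum_m\Big(\tfrac{\int|\nabla v_m|^2}{\int v_m^2}-\int|\nabla u_m|^2\Big)=\kappa\rho^N C(\varphi)\bigl(m(x_0)^2-m_\eta^2\bigr)+o(\kappa\rho^N)+\rho^N o(\kappa).
\]
Testing the minimality $J_\eta(u_1,\dots,u_k)\le J_\eta(v_1,\dots,v_k)$, dividing by $\kappa\rho^N$, and letting first $\kappa\to 0^+$ and then $\rho\to 0^+$, gives $C(\varphi)(m(x_0)^2-m_\eta^2)\ge 0$, hence $m(x_0)\ge m_\eta$.

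\textbf{Main obstacle.} The hard part will be making the first-variation computation rigorous at the two-phase point $x_0$, where $u_j$ is only Lipschitz and $\partial\Omega_{u_j}$ is not a priori regular. My approach is to work throughout with the rescaled functions $u_{j,\rho}$ and identify almost-everywhere limits of the integrands via the strong $L^2_{loc}$ gradient convergence from Theorem~\ref{thm:blowuplimits}, exactly mirroring how Proposition~\ref{prop:optimalitycond} uses Proposition~\ref{prop:blowupconv} at one-phase reduced-boundary points. The secondary technical point is admissibility: checking that $\sigma(\Omega_{u_j})$ does not protrude into $\Omega_{u_i}$ relies precisely on the two-plane structure of the blowup at $x_0$ and the fact that $\sigma$ is a shift toward the interior of the half-space $\{y\cdot\nu_0<0\}$.
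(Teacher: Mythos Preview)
Your proposal is correct and follows essentially the same approach as the paper's own proof: choose a one-phase reduced-boundary point $x_1$ via Lemma~\ref{le:onephase}, build a localized volume-compensating diffeomorphism that pushes one phase inward at $x_0$ and pushes $\Omega_{u_\ell}$ outward at $x_1$, and then rerun the first-variation computation of Proposition~\ref{prop:optimalitycond}, now justified at $x_0$ by the blowup convergences of Theorem~\ref{thm:blowuplimits} rather than Proposition~\ref{prop:blowupconv}. The only cosmetic difference is that the paper perturbs $u_i$ at $x_0$ while you perturb $u_j$, which is immaterial since both limits are $m(x_0)(x\cdot\nu)_\pm$; one small point to fix is the order of limits, which (as in Proposition~\ref{prop:optimalitycond}) should be $\rho\to 0$ first and then $\kappa\to 0$, matching the structure of the error terms coming from the blowup convergence.
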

\begin{proof}
Take $x_0\in \partial\Omega_{u_i}\cap \partial\Omega_{u_j}\cap \Omega$, and let $x_1\in \partial^*\Omega_{u_l}$ be a one-phase point for some $l\in \{1,\dots, k\}$ - which exists, by Lemma \ref{le:onephase}.
Take the blowup sequences
\[
u_{i,\eps}(x)=\frac{u_i(x_0+\eps x)}{\eps},\quad u_{l,\eps}(x)=\frac{u_l(x_1+\eps x)}{\eps}.
\]
By Propositions \ref{prop:blowupconv} and \ref{prop:optimalitycond}, there exists $\nu_{u_l}(x_1)$, the measure theoretical inner normal to the boundary of $\partial \Omega_{u_l}$ at $x_1$, and
\[
u_{l,\eps}\to m_\eta(x\cdot \nu_{u_l}(x_1))_+ \quad \text{ in } L_{loc}^\infty\cap H^1_{loc},\qquad \frac{\Omega_{u_l}-x_1}{\eps}\to \{x\cdot \nu_{u_l}(x_1)>0\}
\quad \text{a.e. and $L^1_{loc}$} .
\]
On the other hand, by Theorem \ref{thm:blowuplimits}, up to considering a subsequence, there exists $\nu(x_0)\in \partial \Omega_{u_i}$ such that 
\[
u_{i,\eps}\to m(x_0)(x\cdot \nu(x_0))_+ \quad \text{ in } L_{loc}^\infty\cap H^1_{loc},\qquad \frac{\Omega_{u_i}-x_0}{\eps}\to \{x\cdot \nu(x_0)>0\}
\quad \text{a.e. and $L^1_{loc}$},
\]
(observe that while $\nu(x_0)$ may depend on the sequence, this is not the case for $m(x_0)$). Now we simply repeat the same procedure of the proof of Proposition~\ref{prop:optimalitycond}, considering a diffeomorphism that pushes inward $\Omega_{u_i}$ at $x_0$ (using $\nu(x_0)$ as normal) and pushes outward $\Omega_{u_l}$ at $x_1$ (using $\nu_{u_l}(x_1)$). More precisely, we set
\[
\tau_{\rho,\kappa}(x)=\tau(x):=x+\kappa\rho\varphi\left(\frac{|x-x_0|}{\rho}\right)\nu(x_0) \chi_{B_\rho(x_0)\cap \Omega_{u_i}}-\kappa\rho\varphi\left(\frac{|x-x_1|}{\rho}\right)\nu_{u_l}(x_1)\chi_{B_\rho(x_1)}.
\]
and consider $\widetilde u_{\rho,i}(z)=u_{i}(\tau^{-1}(z))$, $\widetilde u_{\rho,l}(z)=u_{l}(\tau^{-1}(z))$. For sufficiently small $\kappa,\rho$, we use 
\[
(u_1,\dots, \widetilde u_{\rho,i},\dots,\widetilde u_{\rho,l},\dots, u_k)\in \overline H
\] as a test function for $J_\eta$. By repeating the proof of Proposition \ref{prop:optimalitycond} (which relies only on the convergence statements for blowup sequences!) we obtain this time
\[
0\leq \kappa \rho^N C(\varphi)((m(x_0))^2-(m_\eta)^2)+o(\rho^N)+\rho^No(\kappa)
\] 
and so $m(x_0)\geq m_\eta$, as wanted.
\end{proof}

\section{Optimality conditions in the viscosity sense. Conclusion of the proof}\label{sec:viscosity}

From now on, we fix $\eta>0$ small enough, and take an optimal vector $(u_1,\dots,u_k)\in \overline{H}$ for problem~\eqref{eq:ceta}, such that
$(\Omega_{u_1},\ldots, \Omega_{u_k})$  is an optimal partition of the original constrained optimal partition problem  \eqref{eigenvalue_problem} (recall Remark~\ref{rmk:etafixed}).
In this section, we show that $(u_1,\dots, u_k)$  satisfies, in the viscosity sense, suitable optimality conditions at the free boundary. This will allow to apply the regularity result of~\cite{DePhilippisSpolaorVelichkov} at two-phase points, and the regularity result of~\cite{RussTreyVelichkov} at one-phase points, concluding the proof of Theorem \ref{thm:main}.

First of all, we introduce the notion of viscosity solution of a PDE.
\begin{definition}
Let $\Omega\subset\R^N$ be an open set. 
We say that a continuous function $Q\colon \Omega\to \R$ \emph{touches from below} (resp. \emph{touches from above}) a function $w\colon \Omega\to \R$ at $x_0\in \Omega$ if $Q(x_0)=w(x_0)$ and \[
Q(x)\leq w(x), \qquad (\text{resp. }Q(x)\geq w(x)),\qquad\text{for all $x$ in a neighborhood of $x_0$}.
\]
\end{definition}
\begin{definition}
We say that $Q\colon \Omega\to \R$ is an admissible \emph{comparison function} in $\Omega$ if 
\begin{itemize}
\item $Q\in C^1(\overline{\{Q>0\}}\cap \Omega)\cap C^1(\overline{\{Q<0\}}\cap \Omega)$;
\item $Q\in C^2({\{Q>0\}}\cap \Omega)\cap C^2({\{Q<0\}}\cap \Omega)$;
\item $\partial\{Q>0\}\cap \Omega$ and $\partial\{Q<0\}\cap \Omega$ are smooth manifolds in $\Omega$.
\end{itemize}
\end{definition}

We start with the results for the one-phase points. Recall that $m_\eta$ is the same constant of Proposition~\ref{prop:optimalitycond} and~\ref{prop:m(x0)>=m_eta}.

\begin{proposition}\label{prop:viscosityonephase}
Let $(u_1,\dots,u_k)$ be optimal for~\eqref{eq:ceta}. Let $i\in\{1,\dots,k\}$ and $x_0\in  \left(\partial\Omega_{u_i}\setminus \cup_{j\not=i}\partial\Omega_{u_j}\right)\cap \Omega$ be an interior one-phase point. We have the following:
\begin{enumerate}
\item[(i)] Suppose that $Q$ is a comparison function and $Q_+$ touches $u_i$ from below at $x_{0}$, then $\left|\nabla Q_{+}\left(x_{0}\right)\right|^2 \leq m_\eta$;
\item[(ii)] Suppose that $Q$ is a comparison function  and $Q_+$ touches $u_i$ from above at $x_{0}$, then $\left|\nabla Q_{+}\left(x_{0}\right)\right|^2 \geq m_\eta$.
\end{enumerate}
Moroever, $\Gamma_{OP}(\partial\Omega_{i})$, that is the one-phase free boundary of $\Omega_{u_i}\cap \Omega$, can be decomposed in a regular part $\Reg_i$ and in a singular part $\Sing_i$ such that
\begin{itemize}
\item $\Reg_i$ is locally the graph of a $C^{1,\alpha}$ function for any $\alpha\in [0,1)$,
\item There exists a universal constant $N^*\in\{5,6,7\}$ such that $\Sing_i$ is empty if $N<N^*$, discrete if $N=N^*$, and $\dim_H(\Sing_i)\leq N-N^*$ otherwise.
\end{itemize}
\end{proposition}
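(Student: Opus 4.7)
The strategy is to reduce to a one-phase free boundary problem around $x_0$, classify blow-up limits of $u_i$ as $1$-homogeneous minimizers of a limiting Alt-Caffarelli functional with constant $m_\eta$, derive the two viscosity inequalities by comparison with the blow-up of $Q$, and then invoke the existing regularity theory to obtain the decomposition into $\Reg_i$ and $\Sing_i$.

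First, since $x_0$ is an interior one-phase point, there exists $r_0>0$ such that $B_{r_0}(x_0) \subset D_i := \Omega \setminus \cup_{j\neq i}\overline\Omega_{u_j}$, and by Remark~\ref{rem:connected} the function $u_i$ is a local minimizer on $B_{r_0}(x_0)$ of the scalar functional $v \mapsto \int |\nabla v|^2/\int v^2 + g_\eta(|\Omega_v|)$, where $g_\eta$ satisfies the same one-sided Lipschitz-type bounds as $f_\eta$ (Lemma~\ref{lemma:feta}). This places us exactly in the one-phase framework of \cite{AltCaffarelli, BrianconLamboley, RussTreyVelichkov, Wagner}. By the Lipschitz bound (Corollary~\ref{coro_Lispchitz}), nondegeneracy (Lemma~\ref{le:nondegeneracy}), and density estimates (Lemma~\ref{le:densityest}), the blow-up sequence $u_{i,\rho}(x) := u_i(x_0 + \rho x)/\rho$ converges, up to a subsequence, in $C^{0,\alpha}_{\text{loc}} \cap H^1_{\text{loc}}$ to a nontrivial nonnegative function $\bar u$ which is harmonic in $\{\bar u > 0\}$ and has nondegenerate density for its positivity set. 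A Weiss-type monotonicity formula, in which the Rayleigh-quotient contributes only an $O(\rho^2)$ error at the blow-up scale and the penalization $g_\eta$ is absorbed into the constant $m_\eta$ via the optimality condition of Proposition~\ref{prop:optimalitycond}, then forces $\bar u$ to be a $1$-homogeneous global minimizer of the Alt-Caffarelli functional $v \mapsto \int(|\nabla v|^2 + m_\eta \chi_{\{v > 0\}})$.

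The viscosity conditions then follow by a standard sliding/comparison argument. For (i), if $Q_+$ touches $u_i$ from below at $x_0$, set $\alpha := |\nabla Q_+(x_0)|$ and $\nu := \nabla Q(x_0)/\alpha$; then the inequality $u_i \geq Q_+$ passes to blow-ups to give $\bar u \geq \alpha(x\cdot\nu)_+$ on $\R^N$. Since $x \mapsto \alpha(x\cdot\nu)_+$ is an admissible competitor for $\bar u$ in the Alt-Caffarelli functional, the minimality of $\bar u$ together with the touching inequality forces $\alpha^2 \leq m_\eta$ (see for instance \cite[Sec.~5]{Caffarelli88} or the analogous arguments in \cite{RussTreyVelichkov}). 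Statement (ii) follows by the symmetric argument when $Q_+$ touches $u_i$ from above.

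Once the viscosity inequalities are in place, the improvement-of-flatness theorem of \cite{RussTreyVelichkov}, applied with the (constant, hence trivially H\"older continuous) boundary weight $m_\eta$, yields $C^{1,\alpha}$ regularity of the free boundary at every flat one-phase point for every $\alpha \in [0,1)$; this defines $\Reg_i$. The complementary set $\Sing_i := \Gamma_{OP}(\partial\Omega_i) \setminus \Reg_i$ consists of points where no blow-up is a half-plane, and the classical classification of $1$-homogeneous Alt-Caffarelli minimizers \cite{Weiss99, CFK04, JerisonSavin15, DeSilvaJerison22, EdelenEngelstein19} yields the stated stratification with critical dimension $N^* \in \{5,6,7\}$ and the bound $\dim_\mathcal{H}(\Sing_i) \leq N - N^*$. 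The principal obstacle is the blow-up classification: since $J_\eta$ is not $C^1$, one cannot derive an Euler-Lagrange equation nor a clean Weiss monotonicity directly, and one must carefully convert the measure penalization into the constant $m_\eta$ through the already-established optimality condition while controlling the lower-order error coming from the Rayleigh-quotient form of the spectral term.
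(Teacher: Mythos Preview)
Your outline is in the right direction and would ultimately yield the result, but it is more laborious than the paper's argument and contains a soft spot you yourself flag without resolving. The paper's proof is essentially a citation: since by this point $\eta$ has been fixed small enough for the penalized and constrained problems to coincide (Lemma~\ref{le:measOK} and Remark~\ref{rmk:etafixed}), the set $\Omega_{u_i}$ solves the \emph{constrained} one-phase problem
\[
\min \left\{\lambda_1(\omega) : \omega\subset D_i,\ |\omega|=a-\textstyle\sum_{j\not=i}|\Omega_{u_j}|\right\},
\]
and \cite[Lemma~5.30 and Theorem~1.2]{RussTreyVelichkov} then give the viscosity inequalities with some constant $m_i>0$ together with the full $\Reg_i/\Sing_i$ decomposition in one stroke; Proposition~\ref{prop:optimalitycond} at a reduced-boundary one-phase point identifies $m_i=m_\eta$. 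You instead remain in the penalized framework with $g_\eta$, where---as you correctly observe---the non-smoothness of $g_\eta$ obstructs a clean Weiss formula. Your claim that $g_\eta$ is ``absorbed into the constant $m_\eta$ via Proposition~\ref{prop:optimalitycond}'' is not quite a proof: that proposition only gives $q_{u_i}=m_\eta$ at points of the \emph{reduced} boundary, not a local almost-minimality of $u_i$ for $\int(|\nabla v|^2+m_\eta\chi_{\{v>0\}})$, which is what a Weiss monotonicity argument with constant $m_\eta$ actually requires. The paper's switch to the constrained formulation is precisely what sidesteps this, since for the constrained problem \cite{RussTreyVelichkov} already supplies the Lagrange-multiplier-type almost-minimality needed to run the blow-up and Weiss machinery internally.
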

When the claim of Proposition~\ref{prop:viscosityonephase} holds, we say that $|\nabla u_i|^2=m_\eta$ on $(\partial \Omega_{u_i}\setminus \cup_{j\not=i}\partial\Omega_{u_j})\cap \Omega$ in the sense of viscosity solutions.

\begin{remark}
The constant $N^*$ is the lowest dimension at which there exists singular local minimizers for the Alt-Caffarelli one-phase functional in $\R^N$. 
\end{remark}

\begin{proof}
In view of Remark~\ref{rem:connected}, we note that the set $\Omega_{u_i}$ is a solution to the shape optimization problem:\begin{equation}\label{eq:minonephasepenalized}
\min \left\{\lambda_1(\omega) : \omega\subset \Omega\setminus \cup_{j\not=i}\overline \omega_j,\; |\omega|=a-\sum_{j\not=i}|\omega_j|\right\}.
\end{equation}

Thanks to~\cite[Lemma~5.30]{RussTreyVelichkov} we deduce the claim with a certain constant $m_i>0$ on the one-phase free boundary, namely $(\partial \Omega_{u_i}\setminus \cup_{j\not=i}\partial\Omega_{u_j})\cap \Omega$.
The fact that the constant $m_i$ is exactly $m_\eta$ (and does not depend on $i$) follows thanks to Proposition~\ref{prop:optimalitycond}, and performing a blowup at a one-phase point of the reduced boundary.
The last claim concerning the regular and singular part also follows from~\cite[Theorem~1.2]{RussTreyVelichkov}.
\end{proof}

We now focus on the two-phase points.
Let $(u_1,\dots, u_k)$ be a solution for problem~\eqref{eq:ceta} and $x_0\in \partial \Omega_{u_i}\cap \partial\Omega_{u_j}\setminus \cup_{l\not=i,j}\Omega_{u_l}$ be a two-phase point. Up to relabeling the indexes, we can assume that $i=1,\;j=2$. We denote $u=u_1-u_2$ and since there are no triple points (and no two-phase points at the boundary of the box $\Omega$), we take $R>0$ sufficiently small such that $B_R(x_0) \subset \Omega\setminus\cup_{\ell \geq 3}^k \Omega_{u_\ell}$.
We can now prove that, at two-phase points (branching and not branching), the following optimality conditions, in the sense of viscosity solutions, hold true:
\[
|\nabla u_1|^2\ge m_\eta,\quad |\nabla u_2|^2\geq m_\eta, \quad\text{and}\quad |\nabla u_1|^2-|\nabla u_2|^2=0\quad\text{on}\quad \partial\Omega_{u_1}\cap \partial\Omega_{u_2}\cap \Omega,
\]
where $m_\eta>0$ is the constant from Proposition~\ref{prop:m(x0)>=m_eta}, see also Theorem~\ref{thm:blowuplimits}.
Let us state it in a more precise way.
\begin{proposition}\label{prop:viscositysol}
With the notation above, we have the following:
\begin{enumerate}
\item[(i)] If $Q$ is a comparison function touching from \emph{below} $u=u_1-u_2$ at a two-phase point $x_0$, then \[
|\nabla Q^-(x_0)|^2\geq m_\eta,\qquad\text{and},\qquad |\nabla Q^+(x_0)|^2-|\nabla Q^-(x_0)|^2\leq 0.
\] 
\item[(ii)] If $Q$ is a comparison function touching from \emph{above} $u=u_1-u_2$ at a two-phase point $x_0$, then \[
|\nabla Q^+(x_0)|^2\geq m_\eta,\qquad\text{and},\qquad |\nabla Q^+(x_0)|^2-|\nabla Q^-(x_0)|^2\geq 0.
\] 
\end{enumerate}
\end{proposition}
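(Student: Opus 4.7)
The strategy is to pass to the limit in the touching relation through a blowup at $x_0$, combining the classification of blowup limits at two-phase points (Theorem~\ref{thm:blowuplimits}) with the lower bound $m(x_0)\geq m_\eta$ of Proposition~\ref{prop:m(x0)>=m_eta}. I will describe (i); (ii) follows verbatim after the symmetric substitution $Q\mapsto -Q$, $u_1\leftrightarrow u_2$, which interchanges touching from above and from below.

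Let $Q$ touch $u=u_1-u_2$ from below at $x_0$, so that $Q(x_0)=u(x_0)=0$ and $Q\leq u$ on some ball $B_r(x_0)\subset\Omega$. I would consider the rescalings
\[
u_\varepsilon(x):=\varepsilon^{-1}u(x_0+\varepsilon x),\qquad Q_\varepsilon(x):=\varepsilon^{-1}Q(x_0+\varepsilon x).
\]
By Theorem~\ref{thm:blowuplimits}, along some subsequence $\varepsilon_n\to 0$,
\[
u_{\varepsilon_n}(x)\longrightarrow m(x_0)(x\cdot\nu)_+ -m(x_0)(x\cdot\nu)_-=m(x_0)\,(x\cdot\nu)\qquad\text{locally uniformly on }\R^N,
\]
so the blowup of $u$ is a \emph{linear} function changing sign across the hyperplane $\{x\cdot\nu=0\}$. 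On the other hand, the smoothness of $Q$ on $\overline{\{Q>0\}}$ and $\overline{\{Q<0\}}$, together with $Q(x_0)=0$, gives via a first order Taylor expansion that $Q_{\varepsilon_n}\to \bar Q$ locally uniformly, where
\[
\bar Q(x)=\alpha\,(x\cdot\nu_Q)_+ -\beta\,(x\cdot\nu_Q)_-,\qquad \alpha:=|\nabla Q^+(x_0)|,\quad \beta:=|\nabla Q^-(x_0)|,
\]
and $\nu_Q$ is the inward unit normal to $\{Q>0\}$ at $x_0$ (with $\alpha=0$ if $x_0\notin\partial\{Q>0\}$, and similarly for $\beta$). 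Passing to the limit in the relation $Q_{\varepsilon_n}\leq u_{\varepsilon_n}$, which after rescaling holds on the exhausting balls $B_{r/\varepsilon_n}$, yields the global one-sided comparison
\[
\bar Q(x)\leq m(x_0)\,(x\cdot\nu)\qquad\text{for every }x\in\R^N.
\]

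The claimed inequalities are then extracted from this pointwise comparison against a sign-changing linear function by testing on suitable rays. First, $\bar Q\not\equiv 0$: otherwise $0\leq m(x_0)(x\cdot\nu)$ on $\R^N$ would contradict $m(x_0)\geq m_\eta>0$. Next, evaluating the comparison on vectors with $x\cdot\nu=0$ forces the component of $\nu_Q$ orthogonal to $\nu$ to vanish whenever $\alpha>0$ or $\beta>0$, and a sign check on the half-space $\{x\cdot\nu<0\}$ rules out $\nu_Q=-\nu$; hence $\nu_Q=\nu$. Restricting the comparison to the line $x=t\nu$, $t\in\R$, then reduces it to the scalar conditions $\alpha\leq m(x_0)$ (for $t>0$) and $\beta\geq m(x_0)$ (for $t<0$). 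Combining the second with Proposition~\ref{prop:m(x0)>=m_eta} yields the first claim $|\nabla Q^-(x_0)|^2\geq m_\eta$, while $\alpha\leq\beta$ is exactly the second claim $|\nabla Q^+(x_0)|^2-|\nabla Q^-(x_0)|^2\leq 0$.

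The main technical delicacy I expect lies in handling the degenerate configurations in which $x_0$ belongs to only one of $\partial\{Q>0\}$ or $\partial\{Q<0\}$, so that either $\alpha$ or $\beta$ vanishes. What makes the argument close is precisely the fact that the blowup of $u$ at a two-phase point is a \emph{linear} function straddling both signs: this rigidity excludes the configuration $\alpha>0$, $\beta=0$ when touching from below (the comparison would fail on $\{x\cdot\nu<0\}$), while the configuration $\alpha=0$, $\beta>0$ remains admissible and satisfies both inequalities trivially. The proof of (ii) is identical after reversing the inequality in the global comparison; the resulting alignment analysis produces $\alpha\geq m(x_0)\geq m_\eta$ and $\beta\leq\alpha$, which are precisely $|\nabla Q^+(x_0)|^2\geq m_\eta$ and $|\nabla Q^+(x_0)|^2-|\nabla Q^-(x_0)|^2\geq 0$.
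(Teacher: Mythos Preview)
Your proposal is correct and follows essentially the same route as the paper: blow up both $u$ and $Q$ at $x_0$, invoke Theorem~\ref{thm:blowuplimits} and Proposition~\ref{prop:m(x0)>=m_eta} to identify the limit of $u$ as $m(x_0)(x\cdot\nu)$ with $m(x_0)\geq m_\eta$, pass the pointwise inequality to the limit, and read off the alignment $\nu_Q=\nu$ together with $\alpha\leq m(x_0)\leq\beta$. Your treatment is in fact slightly more explicit than the paper's in two respects---you spell out the ray-testing argument for $\nu_Q=\nu$, and you address the degenerate cases $\alpha=0$ or $\beta=0$ (which the paper's formula $\nu'=|\nabla Q^+(x_0)|^{-1}\nabla Q^+(x_0)$ tacitly assumes away)---but these are refinements of the same argument, not a different approach.
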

\begin{proof}
This proof, since we now have the information on the behavior of the blowup limits at two-phase points (see Theorem~\ref{thm:blowuplimits}), follows as~\cite[Lemma~2.5]{DePhilippisSpolaorVelichkov}. 
For the sake of completeness, we report here the proof of part $(i)$ (the other case is analogous). Let $x_{0}\in \partial \Omega_{u_1}\cap \partial\Omega_{u_2}\setminus \cup_{l\geq 3}\Omega_{u_l}$ and $Q$ be an admissible function that touches $u=u_1-u_2$ from below at $x_{0}$. Let $u_{x_{0}, r_{n}}$ and $Q_{x_{0}, r_{n}}$ be blowup sequences of $u$ and $Q$ at $x_{0}$. Then, up to extracting a subsequence, we can assume that $u_{x_{0}, r_{n}}$ converges uniformly to a blowup limit of the form (see Theorem~\ref{thm:blowuplimits})
\[
u_{\infty}(x)=m(x_0)(x\cdot \nu)^+- m(x_0)(x\cdot \nu)^-,
\]
for some $\nu\in \partial B_1$. Recall also from Proposition \ref{prop:m(x0)>=m_eta} that $m(x_0)\geq m_\eta$.

On the other hand, since $Q^{+}$and $Q^{-}$are differentiable at $x_{0}$ (respectively in $\overline{\{Q>0\}}$ and $\overline{\{Q<0\}}$), we deduce that $Q_{x_{0}, r_{n}}$ converges to the function
\[
Q_\infty(x)=\left|\nabla Q^{+}\left(x_{0}\right)\right|\left(x \cdot \nu^{\prime}\right)_{+}-\left|\nabla Q^{-}\left(x_{0}\right)\right|\left(x \cdot \nu^{\prime}\right)_{-},
\]
where $\nu^{\prime}=\left|\nabla Q^{+}\left(x_{0}\right)\right|^{-1} \nabla Q^{+}\left(x_{0}\right)=-\left|\nabla Q^{-}\left(x_{0}\right)\right|^{-1} \nabla Q^{-}\left(x_{0}\right)$. Now since, $Q_\infty$ touches $u_\infty$ from below, we have that $\nu^{\prime}=\nu$,
\[
\begin{aligned}
\left|\nabla Q^{+}\left(x_{0}\right)\right|^{2}-\left|\nabla Q^{-}\left(x_{0}\right)\right|^{2} \leq 0 
 \quad \text { and } \quad\left|\nabla Q^{+}\left(x_{0}\right)\right| \leq m_\eta, \quad\left|\nabla Q^{-}\left(x_{0}\right)\right| \geq m_\eta,
\end{aligned}
\]
thus $(i)$ is proved.
\end{proof}

We are now in position to prove the regularity result at two-phase points, again following~\cite{DePhilippisSpolaorVelichkov} (where the most difficult step, namely the improvement of flatness, is proved).
\begin{theorem}\label{thm:regtwophase}
Let $(u_1,\dots, u_k)$ be a solution for problem~\eqref{eq:ceta} and $x_0\in \partial \Omega_{u_i}\cap \partial\Omega_{u_j}\setminus \cup_{l\not=i,j}\Omega_{u_l}$ be a two-phase point.
There exists $r_0>0$ (depending possibly also on $x_0$) such that $\partial \Omega_{u_i}\cap B_{r_0}(x_0)$ and $\partial \Omega_{u_j}\cap B_{r_0}(x_0)$ are $C^{1,\alpha}$ graphs for all $\alpha\in[0,1/2)$.
\end{theorem}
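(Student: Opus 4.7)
The plan is to apply the improvement of flatness of \cite{DePhilippisSpolaorVelichkov} at the two-phase point $x_0$ to the scalar function $v := u_i - u_j$ localized in a small ball, and thereby deduce the $C^{1,1/2}$ regularity of the two free boundaries. First I would localize the problem: combining Theorem \ref{thm:notriplepoints} with Theorem \ref{thm:notpbdry}, one finds $\rho > 0$ such that $\overline{B_{\rho}(x_0)} \subset \Omega$ and $B_{\rho}(x_0) \cap \overline{\Omega_{u_\ell}} = \emptyset$ for every $\ell \neq i,j$. In this ball only two phases are present, $v \in C^{0,1}(B_{\rho}(x_0))$ by Corollary \ref{coro_Lispchitz}, and one has $v^+ = u_i$, $v^- = u_j$, $\Omega_{v^{\pm}} = \Omega_{u_i}, \Omega_{u_j}$.

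Next I would verify that $v$ meets the structural hypotheses of a two-phase Bernoulli-type problem with lower-order eigenvalue perturbation. By Lemma \ref{lemma:otherproperties}, $v$ solves $-\Delta v = \lambda_1(\Omega_{u_i}) v^+ - \lambda_1(\Omega_{u_j}) v^-$ in $\Omega_{u_i} \cup \Omega_{u_j}$, so $-\Delta v$ is bounded on the positivity and negativity sets (a lower-order term that fits the standard viscosity framework, cf.\ \cite[Sec.\ 2]{DePhilippisSpolaorVelichkov}). By Proposition \ref{prop:viscositysol} combined with Proposition \ref{prop:viscosityonephase}, $v$ satisfies at every free boundary point of $B_\rho(x_0)$ the viscosity condition $|\nabla v^+|^2 = m_\eta = |\nabla v^-|^2$ at two-phase points and $|\nabla v^\pm|^2 = m_\eta$ at one-phase points. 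Crucially, the \emph{same} constant $m_\eta > 0$ appears on both sides, a feature of the penalization $f_\eta$ which acts on the total measure. Finally, Theorem \ref{thm:blowuplimits} together with Proposition \ref{prop:m(x0)>=m_eta} guarantee that every blowup of $v$ at any two-phase point $y \in B_\rho(x_0)$ converges strongly in $L^\infty_{\mathrm{loc}} \cap H^1_{\mathrm{loc}}$ (with nodal sets converging in $L^1_{\mathrm{loc}}$) to a two-plane function $m(y)\bigl[(x\cdot\nu)_+ - (x\cdot\nu)_-\bigr]$ with $m(y) \geq m_\eta$.

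With these ingredients in place, I would invoke the $\varepsilon$-regularity theorem of \cite{DePhilippisSpolaorVelichkov}. The convergence of blowups to two-plane solutions implies that, at a sufficiently small scale around $x_0$, the (normalized) function $v/m(x_0)$ is $\varepsilon$-flat with respect to a two-plane configuration in the sense required there. The DPSV improvement of flatness — which treats uniformly both the \emph{non-branching} case ($m(x_0) > m_\eta$, where the classical two-phase Caffarelli theory applies) and the \emph{branching} case ($m(x_0) = m_\eta$, where the two-plane solution degenerates tangentially to a one-phase configuration) — then yields that both $\partial\{v > 0\} \cap B_{r_0}(x_0)$ and $\partial\{v < 0\} \cap B_{r_0}(x_0)$ are $C^{1,1/2}$ graphs for some $r_0 > 0$, hence $C^{1,\alpha}$ graphs for every $\alpha \in [0, 1/2)$.

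The main obstacle, entirely subsumed in the cited improvement of flatness, is the analysis at branching points, where standard Caffarelli-type partial Harnack and linearization arguments fail and one must instead rely on a finer Weiss-type monotonicity combined with delicate competitor constructions (this is the core technical contribution of \cite{DePhilippisSpolaorVelichkov}). The role of the preparatory sections of the present paper is precisely to verify, in our eigenvalue-partition setting with measure constraint, the structural hypotheses — a \emph{common} viscosity constant $m_\eta$ on both sides, the complete classification of blowups at all two-phase points, the absence of triple and boundary two-phase points, and the Lipschitz regularity of the eigenfunctions — that render the DPSV machinery directly applicable.
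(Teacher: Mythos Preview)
Your proposal is correct and follows essentially the same route as the paper: localize away from other phases, verify the two-phase viscosity structure via Propositions~\ref{prop:viscosityonephase} and~\ref{prop:viscositysol}, use the blowup classification of Theorem~\ref{thm:blowuplimits} to obtain flatness at small scales, and then feed everything into the DPSV improvement of flatness. One inaccuracy to fix: at two-phase points the viscosity condition is \emph{not} $|\nabla v^+|^2 = m_\eta = |\nabla v^-|^2$ but rather $|\nabla v^+|^2 = |\nabla v^-|^2 \ge m_\eta$ (cf.\ Proposition~\ref{prop:viscositysol}); you use this correctly later when distinguishing branching from non-branching points, so the slip is only in the summary sentence. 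The paper's proof makes one organizational step more explicit than yours: after extracting H\"older continuity of $x_0 \mapsto (m(x_0),\nu(x_0))$ on the two-phase set from DPSV's improvement of flatness, it patches this with the constant value $m_\eta$ on the one-phase set to build a globally $C^{0,\alpha}$ coefficient on each $\partial\Omega_{u_i}$, and then concludes via De~Silva's one-phase $\varepsilon$-regularity~\cite{DeSilva} rather than invoking the full DPSV statement as a black box.
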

\begin{proof}
 As usual, up to relabeling the indexes, we can assume that $i=1,j=2$. We denote $u=u_1-u_2$ and since there are no triple points, we take $R>0$ sufficiently small such that $B_R(x_0) \subset \Omega\setminus\cup_{\ell \geq 3}^k \Omega_{u_\ell}$.

We divide the proof in some steps.

{\bf Step 1.} 
 There is a unique blowup of $u$ at $x_0$.
Moreover there exists $\alpha>0$ such that for every open set $D^{\prime} \Subset B_R(x_0)$ there is a constant $C\left(D^{\prime}, N\right)>0$ such that, for every two-phase points $x_{0}, y_{0} \in \Big(\partial \Omega_{u_1}\cap \partial\Omega_{u_2}\setminus \cup_{l\geq 3}\Omega_{u_l}\Big)\cap  D^{\prime}$, we have
\begin{equation*}
\left|m\left(x_{0}\right)-m\left(y_{0}\right)\right| \leq C\left|x_{0}-y_{0}\right|^{\alpha} \quad \text { and } \quad\left|\nu\left(x_{0}\right)-\nu\left(y_{0}\right)\right| \leq C_{0}\left|x_{0}-y_{0}\right|^{\alpha} \text {, } \tag{4.1}
\end{equation*}
where \[
m\left(x_{0}\right) (x\cdot \nu(x_0))^+-m(x_0)(x\cdot \nu(x_0))^-\qquad\text{and}\qquad m\left(y_{0}\right) (x\cdot \nu(y_0))^+-m(y_0)(x\cdot \nu(y_0))^-\] 
denote the blowup limits of $u$ at $x_{0}$ and $y_{0}$ respectively. 
In particular, $ \Big(\partial \Omega_{u_1}\cap \partial\Omega_{u_2}\setminus \cup_{l\geq 3}\Omega_{u_l}\Big)\cap D^{\prime}$ is locally a closed subset of the graph of a $C^{1, \alpha}$ function.
\begin{proof}[Proof of Step 1]
This follows from the improvement of flatness result at two-phase points~\cite[Theorem~4.3]{DePhilippisSpolaorVelichkov}.
\end{proof}

{\bf Step 2.} There are $C^{0, \alpha}$ continuous functions $m_1: \partial \Omega_{u}^{+} \rightarrow \mathbb{R}, m_2: \partial \Omega_{u}^{-} \rightarrow \mathbb{R}$ such that $m_1 \geq m_\eta, m_2 \geq m_\eta$, and $u_1,u_2$ are viscosity solutions of the one-phase problem
\[
-\Delta u_1=\lambda_1(\Omega_{u_1})u_1 \quad \text { in } \quad \Omega_{u_1}, \quad\left|\nabla u_1\right|=m_1 \quad \text { on } \quad \partial \Omega_{u_1}
\]
and
\[
-\Delta u_2=\lambda_1(\Omega_{u_2})u_2 \quad \text { in } \quad \Omega_{u_2}, \quad\left|\nabla u_2\right|=m_2 \quad \text { on } \quad \partial \Omega_{u_2} .
\]
\begin{proof}[Proof of Step 2]
We show the proof only for $u_1$, being the other case analogous. We already know the equation satisfied by $u_1$ in $\Omega_{u_1}$, thanks to Lemma~\ref{lemma:otherproperties}.
By~\cite[Theorem~4.3]{DePhilippisSpolaorVelichkov}, we infer that for all two-phase points $x_0\in D'$, we have \begin{equation}\label{eq:4.3}
|u_1(x)-m_1\left(x_{0}\right)\left(x-x_{0}\right) \cdot \nu\left(x_{0}\right)| \leq C_{0}\left|x-x_{0}\right|^{1+\gamma}, 
\end{equation}
where $x\in B_{r_0}(x_0)\cap \Omega_{u_1}$, with $\gamma\in(0,1/2)$, $0<r_0<R$ and $C_0>0$ depending only on $D'$.

To conclude we only need to prove that $m_1\in C^{0, \alpha}\left(\partial \Omega_{u_1}\right)$. Since $m_1$ is $\alpha$-H\"older continuous on the set of two-phase points by Step 1 and constant on the set of one-phase points, thanks to Proposition~\ref{prop:viscosityonephase}, we just need to show that if $x_{0}$ is a two-phase point such that there is a sequence $x_{n}$ of one-phase points converging to $x_{0}$,
then $m_1\left(x_{0}\right)=m_\eta$. To this end, let $y_{n}\in \partial \Omega_{u_1}\cap \partial\Omega_{u_2}\cap B_R(x_0)$ be such that
\[
\operatorname{dist}\left(x_{n}, \partial \Omega_{u_1}\cap \partial\Omega_{u_2}\cap B_R(x_0)\right)=\left|x_{n}-y_{n}\right| .
\]
Then we set
\[
r_{n}=\left|x_{n}-y_{n}\right| \quad \text { and } \quad u_{1,n}(x)=\frac{1}{r_{n}} u_1\left(x_{n}+r_{n} x\right)
\]
and note that $u_{1,n}$ is a viscosity solution of the free boundary problem
\[
-\Delta u_{1,n}=r_n\lambda_1(\Omega_1)u_{1,n} \quad \text { in } \quad \Omega_{u_{1,n}} \cap B_{1}, \quad\left|\nabla u_{1,n}\right|=m_\eta \quad \text { on } \quad \partial\Omega_{u_{1,n}} \cap B_{1} .
\]
Since $u_{1,n}$ are uniformly Lipschitz they converge to a function $u_{1,\infty}$ which is also a viscosity solution of the same problem (thanks to De Silva regularity paper for viscosity solutions~\cite{DeSilva}). On the other hand, by~\eqref{eq:4.3}, we have that
\[
u_{1,\infty}(x)=m_1\left(x_{0}\right)\left(x \cdot \nu\left(x_{0}\right)\right)^{+},
\]
which gives that $m_1\left(x_{0}\right)=m_\eta$.
\end{proof}

{\bf Step 3. Conclusion of the proof of Theorem~\ref{thm:regtwophase}.}
Let $x_{0}\in \partial \Omega_{u_1} \cap \partial \Omega_{u_2}\cap B_R(x_0)$ and let $\bar{\varepsilon}$ be the constant in~\cite[Theorem 1.1]{DeSilva}. Thanks to the classification of blowups at two-phase points, we can choose $r_{0}>0$, depending on $x_{0}$, such that
\[
\left\|u_{x_{0}, r_{0}}-m_\eta (x\cdot \nu)^\pm\right\|_{\infty}<\bar{\varepsilon},
\]
 so that thanks to Step 2, we can apply~\cite[Theorem 1.1]{DeSilva} to conclude that locally at $x_{0}$ the free boundaries $\partial \Omega_{u_1}$ and $\partial \Omega_{u_2}$ are $C^{1, \alpha}$ graphs. By the arbitrariness of $x_{0}$ this concludes the proof for some $\alpha$. The fact that the result follows for all $\alpha\in [0,1/2)$ is pointed out in \cite{FerreriVelichkov}.
\end{proof}

To conclude, we present a summary with the complete proof of the main theorem of this paper.

\begin{proof}[Conclusion of the proof of Theorem \ref{thm:main}]

By Lemma~\ref{le:measOK}, we may fix $\eta>0$ sufficiently small to that, by taking an optimal vector $(u_1,\dots,u_k)\in \overline{H}$ of problem~\eqref{eq:ceta}, then the associated nodal sets
\[
(\Omega_{u_1},\ldots, \Omega_{u_k})\text{ form an optimal partition of the original constrained optimal partition problem } \eqref{eigenvalue_problem},
\] 
and every possible optimizer of \eqref{eigenvalue_problem} is of this form (being therefore connected - see Remark \ref{rem:connected}). We check that this partition satisfies the statement of Theorem \ref{thm:main}. First of all, given $i$,
\[
\partial \Omega_{u_i}=\Gamma_{OP}(\partial\Omega_{u_i})\cup \Gamma_{TP}( \partial\Omega_{u_i})\cup  \Gamma_{B} (\partial\Omega_{u_i})
\]
as a consequence of Theorems \ref{thm:notriplepoints} and \ref{thm:notpbdry}.

The fact that $\Gamma_{TP}(\Omega_{u_i})\subset \text{Reg}(\partial \Omega_i)$, which is of class $C^{1,\alpha}$, follows from Theorem \ref{thm:regtwophase}. All the remaining properties in (1) follow from Proposition \ref{prop:viscosityonephase}.

Finally, assuming $C^{1,1}$ regularity of $\Omega$, since the boundary of the box $\Omega$ only admits one-phase points and \eqref{eq:minonephasepenalized} holds,  \cite[Theorem 1.2]{RussTreyVelichkov} shows that there are no singular points on $\partial \Omega$ and that the regular part is $C^{1,1/2}$. In particular, item (2) is proved.
 \end{proof}

\begin{remark}
As a final remark, we point out that the proof of Lemma~\ref{le:onephase} was already showing that branching points had necessarily to be cusps. In fact, with the same argument therein, if a two-phase branching point were not a cusp, then we could find a new box $\Omega'$ for which the point is on the boundary of the box. But then, thanks to Theorem~\ref{thm:notpbdry}, we know that there can not be two-phase points in the boundary of the box and we have a contradiction.
\end{remark}


 \appendix
 \section{Some useful results and remarks}

\begin{lemma}[{{\cite[Lemma A.1]{ASST}}}]\label{lemma:expansion_L^2} Let $u \in L^2(\Omega)$ with $u^+\not\equiv 0$. Then, for all $\varphi \in L^2(\Omega)$, 
\begin{equation*}\label{inverse-norm-lemma}
\displaystyle \frac{1}{\|(u \pm t\varphi)^{+}\|_2^2 }= \dfrac{1}{\|u^+\|_2^2} \mp \frac{2t}{\|u^+\|_2^4} \int_{\Omega}u^+\varphi + o(t) \,\qquad \text{ as } t\to 0^+.
\end{equation*}
\end{lemma}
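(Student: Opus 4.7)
The plan is to reduce the statement to a first-order expansion of $\|(u\pm t\varphi)^+\|_2^2$ in $t$, and then apply the elementary one-variable expansion $\frac{1}{A+\epsilon} = \frac{1}{A} - \frac{\epsilon}{A^2} + o(\epsilon)$ valid for any fixed $A>0$. Note that the assumption $u^+\not\equiv 0$ guarantees $A := \|u^+\|_2^2 > 0$, so this last step is licit as soon as $\epsilon = \epsilon(t)$ is $o(1)$.

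For the numerator part, the key point is that although the positive part $s\mapsto s^+$ is only Lipschitz, the map $s \mapsto (s^+)^2$ is of class $C^1$ on $\mathbb{R}$ with derivative $2 s^+$. By the chain rule, for each $x$ the map $t \mapsto [(u(x)+t\varphi(x))^+]^2$ is differentiable at $t=0$ with value $2 u^+(x)\varphi(x)$. Moreover, using $|(a+b)^+ - a^+| \leq |b|$ and $|[(a+b)^+]^2 - (a^+)^2| \leq |b|(2|a| + |b|)$, I can bound the difference quotient uniformly in $t\in(0,1)$ by the $L^1$ function $|\varphi|(2|u|+|\varphi|)$, which belongs to $L^1(\Omega)$ since $u,\varphi \in L^2(\Omega)$. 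Dominated convergence then yields
\begin{equation*}
\|(u+t\varphi)^+\|_2^2 = \|u^+\|_2^2 + 2t\int_{\Omega} u^+\varphi + o(t),\qquad \text{as } t\to 0^+,
\end{equation*}
and the analogous expansion with $+t\varphi$ replaced by $-t\varphi$ follows by replacing $\varphi$ with $-\varphi$.

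Substituting this into $1/(A+\epsilon)$ with $A = \|u^+\|_2^2$ and $\epsilon(t) = \pm 2t\int_\Omega u^+\varphi + o(t) \to 0$, one obtains immediately
\begin{equation*}
\frac{1}{\|(u\pm t\varphi)^+\|_2^2} = \frac{1}{\|u^+\|_2^2} \mp \frac{2t}{\|u^+\|_2^4}\int_{\Omega}u^+\varphi + o(t),
\end{equation*}
which is the claimed expansion. The only (minor) obstacle is the regularity of the positive part across the nodal set $\{u=0\}$, but as observed above, squaring saves one degree of regularity so that no separate analysis of $\{u=0\}$ is needed and a single application of dominated convergence closes the argument.
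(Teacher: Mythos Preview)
Your proof is correct. The paper does not actually prove this lemma; it merely records it in the appendix with a citation to \cite[Lemma~A.1]{ASST}, so there is nothing to compare against beyond noting that your argument is the standard one: differentiate $t\mapsto \|(u+t\varphi)^+\|_2^2$ via dominated convergence (exploiting that $s\mapsto (s^+)^2$ is $C^1$ with derivative $2s^+$), then invert using the first-order Taylor expansion of $A\mapsto 1/A$ at $A=\|u^+\|_2^2>0$.
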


\begin{lemma}[{{\cite[Lemma 4.5]{BucurVelichkov}}}]\label{le:potentialestimate} There exists $C>0$  such that, for every $R>0$ and  $u \in H^1\left(B_r\right)$,
\[
\frac{1}{r^2}\left|\{u=0\} \cap B_r\right|\left(\dashint_{\partial B_r} u \, d \mathcal H^{N-1}\right)^2 \leq C_N \int_{B_r}|\nabla u|^2\, d x,
\]
where $C_N$ is a constant that depends only on the dimension $N$.
\end{lemma}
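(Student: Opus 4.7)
\textbf{Proof plan for Lemma \ref{le:potentialestimate}.} The inequality is scale-invariant, so the first step is to reduce to the case $r=1$ by introducing $\tilde u(y):=u(ry)$ defined on $B_1$. One checks that
\[
\dashint_{\partial B_1}\tilde u\,d\mathcal H^{N-1}=\dashint_{\partial B_r}u\,d\mathcal H^{N-1},\qquad |\{\tilde u=0\}\cap B_1|=r^{-N}|\{u=0\}\cap B_r|,
\]
\[
\int_{B_1}|\nabla \tilde u|^2\,dx=r^{2-N}\int_{B_r}|\nabla u|^2\,dx,
\]
so that the desired inequality on $B_r$ reduces, after multiplying by $r^N$ and dividing by $r^2$, to the cleaner statement
\[
|\{u=0\}\cap B_1|\,\Big(\dashint_{\partial B_1}u\,d\mathcal H^{N-1}\Big)^2\le C_N\int_{B_1}|\nabla u|^2\,dx.
\]

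For the unit ball case, the plan is to introduce $m:=\dashint_{\partial B_1}u\,d\mathcal H^{N-1}$ and the centered function $v:=u-m\in H^1(B_1)$, which satisfies $\dashint_{\partial B_1}v\,d\mathcal H^{N-1}=0$ and $\nabla v=\nabla u$. On $A:=\{u=0\}\cap B_1$ one has $v\equiv -m$, so
\[
m^2|A|=\int_{A}v^2\,dx\le \int_{B_1}v^2\,dx.
\]
The heart of the argument is then a Poincar\'e-type inequality of the form
\begin{equation}\label{eq:poincareboundary}
\int_{B_1}\Big(w-\dashint_{\partial B_1}w\,d\mathcal H^{N-1}\Big)^2\,dx\le C_N\int_{B_1}|\nabla w|^2\,dx\qquad \forall\,w\in H^1(B_1),
\end{equation}
applied to $w=u$, which immediately gives $\int_{B_1}v^2\le C_N\int_{B_1}|\nabla u|^2$ and closes the proof.

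The inequality \eqref{eq:poincareboundary} itself is the main technical point, but it follows from a standard compactness-contradiction argument: if it failed, one could find a sequence $w_n\in H^1(B_1)$ with $\dashint_{\partial B_1}w_n\,d\mathcal H^{N-1}=0$ (subtracting a constant, which does not affect the gradient), $\int_{B_1}w_n^2\,dx=1$ and $\int_{B_1}|\nabla w_n|^2\,dx\to 0$; by Rellich compactness and continuity of the trace operator one would extract a limit $w_\infty$ which is constant, has unit $L^2$-norm, and vanishing boundary average, which is impossible. The constant obtained depends only on the geometry of $B_1$, hence only on $N$, completing the plan. The only subtlety worth flagging is that the statement involves no sign assumption on $u$, so one must indeed work with the signed function $v=u-m$ rather than with $u$ directly; no monotonicity or maximum-principle ingredient is needed.
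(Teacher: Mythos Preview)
Your proof is correct. The scaling reduction is clean, and the key observation that on $\{u=0\}$ the centered function $v=u-m$ equals $-m$, combined with the boundary-mean Poincar\'e inequality \eqref{eq:poincareboundary}, is exactly what is needed. The compactness argument for \eqref{eq:poincareboundary} is standard and complete as you state it.

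Note that the paper does not actually prove this lemma: it is quoted directly from \cite[Lemma~4.5]{BucurVelichkov}. Your self-contained argument is therefore a useful addition rather than a reproduction of the paper's reasoning. The original proof in \cite{BucurVelichkov} proceeds somewhat differently (working with the harmonic extension of the boundary data and estimating via the mean-value property), but your route through the boundary-mean Poincar\'e inequality is arguably more transparent and avoids any auxiliary construction.
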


\begin{lemma}\label{lemma:kind_of_meanvalueth} 
Let $u$ be an $H^1_{loc}(\R^N)$ nonnegative function,  subharmonic in $\R^N$, and let $\omega\Subset B_\rho\subset \mathbb{R}^N$. Then
\[
\sup_{\omega} u\leq \left(\frac{\rho}{dist(\omega,\partial B_\rho)}\right)^N \dashint_{\partial B_\rho} u(y)\, d\mathcal H^{N-1}(y).
\]
In particular, given $\theta\in (0,1)$, 
\[
\sup_{B_{\theta\rho}\cap \Omega} u \leq \frac{1}{(1-\theta)^N}\dashint_{\partial B_\rho} u(y)\, d\mathcal H^{N-1}(y).
\]
\end{lemma}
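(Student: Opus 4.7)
The plan is to combine two classical facts about nonnegative subharmonic functions in $\R^N$: first, the solid sub-mean value inequality
\[
u(x) \leq \dashint_{B_r(x)} u\, dy \qquad \text{whenever } B_r(x) \subset \R^N,
\]
obtained by integrating the spherical sub-mean value inequality in the radial variable; and second, the monotonicity in $r$ of the spherical average $r\mapsto \dashint_{\partial B_r(0)} u\, d\mathcal H^{N-1}$, which is a consequence of differentiating the spherical average and using $\Delta u\geq 0$ (standard; any $H^1_{loc}$ subharmonic function admits an upper semicontinuous representative for which these pointwise statements make sense).

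Fix $x\in \omega$ and set $\delta := \mathrm{dist}(\omega,\partial B_\rho)$, so $B_\delta(x)\subset B_\rho$. The solid sub-mean value inequality at radius $\delta$, together with $u\geq 0$, gives
\[
u(x) \leq \frac{1}{|B_\delta|}\int_{B_\delta(x)} u\, dy \leq \frac{1}{|B_\delta|}\int_{B_\rho} u\, dy.
\]
Now, writing $B_\rho=B_\rho(0)$ in polar coordinates and using the monotonicity of spherical averages,
\[
\int_{B_\rho} u\, dy \;=\; \int_0^\rho |\partial B_r|\left(\dashint_{\partial B_r} u\, d\mathcal H^{N-1}\right) dr \;\leq\; \left(\dashint_{\partial B_\rho} u\, d\mathcal H^{N-1}\right) \int_0^\rho |\partial B_r|\, dr \;=\; |B_\rho|\dashint_{\partial B_\rho} u\, d\mathcal H^{N-1}.
\]
Combining the two displays and using $|B_\rho|/|B_\delta|=(\rho/\delta)^N$ yields
\[
u(x) \leq \left(\frac{\rho}{\delta}\right)^N \dashint_{\partial B_\rho} u\, d\mathcal H^{N-1},
\]
and taking the supremum over $x\in\omega$ proves the first inequality. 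The particular case $\omega=B_{\theta\rho}\cap\Omega\Subset B_\rho$ gives $\delta=(1-\theta)\rho$, so the factor becomes $(1-\theta)^{-N}$, as claimed.

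There is really no serious obstacle here; the only point requiring a tiny bit of care is that $u\in H^1_{loc}$ is a priori only defined up to a null set, but the standard fact that a subharmonic distribution has an upper semicontinuous representative (unique quasi-everywhere) lets us interpret the pointwise inequality and the supremum in the usual way. The monotonicity of spherical averages is where subharmonicity is actually used; everything else is Fubini and a ratio of ball volumes.
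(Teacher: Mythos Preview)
Your argument is correct and takes a genuinely different route from the paper. The paper proceeds via the harmonic comparison: it lets $v$ be the harmonic extension of $u|_{\partial B_\rho}$ into $B_\rho$, uses the maximum principle to get $u\le v$, and then estimates $v(x)$ directly from the Poisson representation by bounding the kernel $\frac{\rho^2-|x|^2}{|x-y|^N}$ crudely via $|x-y|\ge \mathrm{dist}(\omega,\partial B_\rho)$ and $\rho^2-|x|^2\le\rho^2$. You instead stay intrinsic to $u$: the solid sub-mean value inequality on $B_\delta(x)\subset B_\rho$, nonnegativity to enlarge the domain of integration to $B_\rho$, and then monotonicity of the spherical averages about the origin to pass from $\dashint_{B_\rho}u$ to $\dashint_{\partial B_\rho}u$. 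Both yield exactly the same constant $(\rho/\delta)^N$; your version is arguably a touch more elementary since it avoids constructing the harmonic extension and invoking the Poisson formula, while the paper's version is a one-line estimate once the Poisson kernel is on the table. The remark you make about the upper semicontinuous representative of an $H^1_{loc}$ subharmonic function is the right way to interpret the pointwise supremum in either approach.
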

\begin{proof}
Let $v$ be the harmonic extension of $u|_{\partial B_\rho}$ in $B_\rho$, that is, the unique solution of
\[
\Delta v=0 \text{ in } B_\rho, \qquad v=u \text{ on } \partial B_\rho.
\]
Then, by the maximum principle and Poisson's formula, for $x\in \omega$ and since $|x-y|\geq d:=dist(\omega,\partial B_\rho)$ for every $y\in \partial B_\rho$,
\begin{align*}
u(x)\leq v(x) &= \frac{\rho^2-|x|^2}{\rho|\partial B_1|}\int_{\partial B_\rho} \frac{v(y)}{|x-y|^N}\, d\mathcal H^{N-1}(y)=\frac{\rho^2-|x|^2}{\rho|\partial B_1|}\int_{\partial B_\rho} \frac{u(y)}{|x-y|^N}\, d\mathcal H^{N-1}(y)\\
            &\leq \frac{\rho}{|\partial B_1| d^N} \int_{\partial B_\rho} u(y)\, d\mathcal H^{N-1}(y)=\left(\frac{\rho}{dist(\omega,\partial B_\rho)}\right)^N \dashint_{\partial B_\rho} u(y)\, d\mathcal H^{N-1}(y).
\end{align*}
In the particular case that $\omega=B_{\theta \rho}$, simply observe that $dist(\omega, \partial B_{\rho})=(1-\theta)\rho$. 
\end{proof}

\begin{lemma}[{{\cite[Lemma 2.14]{BucurVelichkov}}}: three-phase monotonicity lemma]\label{tpml} 
Let $u_i \in H^1_{loc}\left(\R^N\right), i=1,2,3$, be three nonnegative Sobolev functions such that $\Delta u_i \geq-1$ for each $i=1,2,3$, and $ u_i\cdot u_j \equiv 0$ in $\R^N$ for each $i \neq j$. Then there are dimensional constants $\varepsilon>0$ and $C>0$ such that
\[
\prod_{i=1}^3\left(\frac{1}{r^{2+\varepsilon}} \int_{B_r} \frac{\left|\nabla u_i\right|^2}{|x|^{N-2}}\, d x\right) \leq C\left(1+\sum_{i=1}^3 \int_{B_1} \frac{\left|\nabla u_i\right|^2}{|x|^{N-2}}\, d x\right)^3\qquad \text{for every $r \in\left(0, \frac{1}{2}\right)$.}
\]
\end{lemma}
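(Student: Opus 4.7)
The plan is to prove an almost-monotonicity property for the rescaled product functional
\[
r\longmapsto r^{-3(2+\varepsilon)}\Phi(r), \qquad \Phi(r):=\prod_{i=1}^3\varphi_i(r), \qquad \varphi_i(r):=\int_{B_r}\frac{|\nabla u_i|^2}{|x|^{N-2}}\,dx,
\]
and then evaluate at $r=1/2$, using AM--GM to bound $\Phi(1/2)\leq \bigl(\tfrac13\sum_i\varphi_i(1)\bigr)^3$. This is the three-phase Alt--Caffarelli--Friedman strategy, combined with the Caffarelli--Jerison--Kenig perturbative correction to handle the inhomogeneity $\Delta u_i\geq -1$ (as opposed to $\Delta u_i \geq 0$ in the classical ACF setting).

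First I would compute the logarithmic derivative $\varphi_i'/\varphi_i$ on each factor. Using the coarea formula together with the inequality $\Delta u_i \geq -1$ (tested against $u_i\chi_{B_r}$ weighted by the fundamental solution $|x|^{2-N}$) and the Cauchy--Schwarz inequality, one obtains an estimate of the form
\[
\varphi_i(r)\leq \int_{\partial B_r}\Bigl(\frac{u_i\,\partial_\nu u_i}{r^{N-2}}+\frac{N-2}{2}\frac{u_i^2}{r^{N-1}}\Bigr)\,d\mathcal H^{N-1}+E_i(r),
\]
where $E_i(r)$ is an error term generated by the $-1$ on the right-hand side of $\Delta u_i\geq -1$, controlled by $\|u_i\|_{L^2(B_r)}$ times a positive power of $r$. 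Separating radial and tangential gradients on the sphere and applying the spherical Poincar\'e inequality on the relatively open set $\omega_i(r):=\{u_i>0\}\cap\partial B_r$ yields the pointwise-a.e.\ differential inequality
\[
\frac{r\,\varphi_i'(r)}{\varphi_i(r)}\geq 2\gamma\bigl(\lambda_1(\omega_i(r))\bigr)-R_i(r),
\]
with $\gamma(t):=-\tfrac{N-2}{2}+\sqrt{\bigl(\tfrac{N-2}{2}\bigr)^2+t}$, $\lambda_1$ the first Dirichlet Laplace--Beltrami eigenvalue, and $R_i(r)$ of order $E_i(r)/\varphi_i(r)$.

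The decisive input is the \emph{three-phase} Friedland--Hayman inequality: there exists a dimensional constant $\varepsilon>0$ such that, whenever $\omega_1,\omega_2,\omega_3\subset \partial B_1$ are pairwise disjoint relatively open sets,
\[
\gamma\bigl(\lambda_1(\omega_1)\bigr)+\gamma\bigl(\lambda_1(\omega_2)\bigr)+\gamma\bigl(\lambda_1(\omega_3)\bigr)\geq 3(1+\varepsilon).
\]
The \emph{strict} improvement over the sharp two-phase lower bound $2$ of Lemma \ref{lemma:OPPsphere} is the entire source of the extra exponent $\varepsilon$ in the statement. Summing the three differential inequalities and using this spectral bound gives
\[
\frac{d}{dr}\log\Bigl(r^{-3(2+\varepsilon)}\Phi(r)\Bigr)\geq -C\sum_{i=1}^3 \frac{E_i(r)}{r\,\varphi_i(r)}.
\]

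The main obstacle will be the perturbative absorption of the right-hand side after integration from $r$ to $1/2$. I would follow the Caffarelli--Jerison--Kenig Gronwall scheme: bound $E_i(r)\leq r^{\sigma}\|u_i\|_{L^2(B_r)}\,\varphi_i(r)^{1/2}$ for some dimensional $\sigma>0$, and obtain an $L^2$-bound on $u_i$ in $B_1$ from the subsolution property $\Delta u_i\geq-1$ via Moser iteration (the $L^2$ norm is then controlled qualitatively by $\varphi_i(1)$ and a universal constant). Integrating the resulting ordinary differential inequality produces a multiplicative factor that gets absorbed into $\bigl(1+\sum_i\varphi_i(1)\bigr)^3$; combined with $\Phi(1/2)\leq \bigl(\tfrac13\sum_i\varphi_i(1)\bigr)^3$ from AM--GM, one obtains the stated estimate for every $r\in(0,1/2)$. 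The restriction $r<1/2$ serves only to ensure that all perturbative integrals remain bounded uniformly in $r$.
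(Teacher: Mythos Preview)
The paper does not give its own proof of this lemma: it is simply quoted from \cite[Lemma~2.14]{BucurVelichkov} (which in turn adapts the Caffarelli--Jerison--Kenig argument), so there is nothing to compare against. Your outline is the correct strategy used in those references---logarithmic differentiation of the product, the three-phase Friedland--Hayman gap $\sum_i\gamma(\lambda_1(\omega_i))\geq 3(1+\varepsilon)$ as the source of the extra exponent, and the CJK perturbative scheme to absorb the inhomogeneity---though the final absorption step is more delicate than your sketch suggests (one has to separate ``good'' radii where each $\varphi_i(r)$ is bounded below from ``bad'' ones and run a dichotomy/Gronwall argument rather than a single clean ODE inequality).
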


\begin{remark}
It is well known that, under the assumptions of the previous lemma,
\[
\int_{B_1}\frac{\left|\nabla u_i\right|^2}{|x|^{N-2}}\, d x\leq C\left(1+\int_{B_2}u_i^2\right)
\]
(see \cite[Remark 1.2]{CJK2002}, or \cite[Lemma 2.1]{Velichkov_CJK} for a proof that does not require the continuity of the functions $u_i$). Therefore,
\begin{equation}\label{CJK_remark}
\prod_{i=1}^3\left(\frac{1}{r^{2+\varepsilon}} \int_{B_r} \frac{\left|\nabla u_i\right|^2}{|x|^{N-2}}\, d x\right) \leq C\left(1+\sum_{i=1}^3 \int_{B_2} u_i^2\right)^3\qquad \text{for every $r \in\left(0, \frac{1}{2}\right)$.}
\end{equation}
\end{remark}

\subsection{Some fact about Geometric Measure Theory}
Here we recall some facts about geometric measure theory, referring for example to~\cite[Chapter~2]{AmbrosioFuscoPallara} for more information.
The De Giorgi perimeter of a Borel set $E\subset\R^N$ is the quantity
	\[
	\Per(E):=\sup\left\{\int_{E}\div(\phi) \, dx \,:\,\phi\in C^1_c(\R^N,\R^N), \,\, \|\phi\|_\infty\le 1 \right\}\,.
	\]
	If $\Per(E)<+\infty$, then we say that $E$ has finite perimeter. 
 
	Equivalently, it can be defined in the setting of  functions of bounded variation as the total variation of the distributional derivative of characteristic functions. We recall that if $\Om\subset\R^N$ is an open set, $u\in L^1(\Om)$ is a function of bounded variation, and we write $u\in BV(\Om)$, when the distributional derivative $Du$ of $u$ is an $\R^N$-valued finite Radon measure.
	Then $E\subset\R^N$ is a set of finite perimeter, if and only if $\chi_E\in BV(\R^N)$ and 
	\begin{equation}\label{eq:perdistrib}
	\Per(E)=|D\chi_E|(\R^N)=:\|D\chi_E\|_{TV(\R^N)}.
	\end{equation}	
	Whenever it exists, the quantity
	\[
	\theta_E(x):=\lim_{r\to0}\frac{|E\cap B_r(x)|}{|B_r(x)|}\in [0,1]\,,
	\]
	is called the density of a Borel set $E$ at $x$. Given $t\in [0,1]$, we denote by $E^t$ the subset of points of $\R^N$ such that $\theta_E(x)=t$, and we call essential boundary of $E$ the set $\partial^e E=E\setminus (E^0\cup E^1)$. Eventually, we define the reduced boundary of $E$ as the set $\partial^*E\subset\partial^eE$ of points of the essential boundary such that the measure theoretic inner unit normal 
	\[
	\nu_E(x):=\lim_{r\to0}\frac{D\chi_E(B_r(x))}{|D\chi_E|(B_r(x))}
	\] 
	exists.
	
	The geometry  of the boundary of sets of finite perimeter is described by the following two fundamental results.
\begin{theorem}[De Giorgi's Structure Theorem]\label{thm:deGiorgi}
Let $E$ be a set of finite perimeter. Then $\partial^*E$ is $\mathcal H^{N-1}-$rectifiable,  $P(E)=\mathcal H^{N-1}(\partial^*E)$ and, if $x\in \partial^*E$, then $(E-x)/r$ converges in $L^1_{\rm loc}$ to the hyperspace orthogonal to $\nu_E(x)$, as $r\to 0$. Moreover the following divergence formula holds:
\[
\int_{E}\div\phi\,dx=-\int_{\partial^*E}\phi\cdot\nu_E \, d \mathcal H^{N-1}\,,
\]  
for any vector field $\phi\in C^1_c(\R^N,\R^N)$.
\end{theorem}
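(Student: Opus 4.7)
The plan is to follow De Giorgi's classical argument, organized around blowup analysis at points of the reduced boundary. The divergence formula is essentially built into the definition of perimeter: by \eqref{eq:perdistrib}, integration by parts against the distributional derivative yields
\[
\int_E \div\phi\,dx = -\int_{\R^N}\phi\cdot dD\chi_E\qquad\text{for all }\phi\in C^1_c(\R^N,\R^N),
\]
and the Besicovitch differentiation theorem gives $dD\chi_E = \nu_E\,d|D\chi_E|$ at $|D\chi_E|$-a.e.\ point, where $\nu_E(x)$ is exactly the limit defining the measure-theoretic inner unit normal. Hence the crux of the theorem is to identify $|D\chi_E|$ with $\mathcal H^{N-1}\resmeas \partial^*E$, which simultaneously delivers $\Per(E)=\mathcal H^{N-1}(\partial^*E)$ and puts the divergence formula into the stated form.

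The core step is the blowup analysis at a fixed $x_0\in\partial^*E$. By translation and rotation I may assume $x_0=0$ and $\nu_E(0)=e_N$, and I consider the rescaled sets $E_r:=E/r$ together with the rescaled measures $\mu_r$ obtained from $|D\chi_E|$ via $y\mapsto y/r$ and the factor $r^{1-N}$. The key preliminary estimate is the uniform perimeter bound $\mu_r(B_R)\leq CR^{N-1}$, which comes from the relative isoperimetric inequality combined with the density property of $|D\chi_E|$ at $x_0$. By BV compactness I extract $r_n\to 0$ and a set $F$ of locally finite perimeter with $\chi_{E_{r_n}}\to\chi_F$ in $L^1_{loc}(\R^N)$ and $\mu_{r_n}\rightharpoonup|D\chi_F|$ weakly-$*$. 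Passing to the limit in the barycenter quotient defining $\nu_E(0)$ forces $\nu_F\equiv e_N$ at $|D\chi_F|$-a.e.\ point; but a set of locally finite perimeter with constant generalized normal is (a translate of) a half-space, so $F=\{y_N\geq 0\}$. This gives the stated $L^1_{loc}$-convergence of $(E-x_0)/r$ to the half-space $\{y\cdot\nu_E(x_0)\geq 0\}$.

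From the identified blowup I read off three further consequences: the density $\theta_E(0)=1/2$, the fact that the $\mathcal H^{N-1}$-density of $|D\chi_E|$ at $0$ equals $1$, and uniform one-sided density bounds on the reduced boundary. The second is the essential input: it allows the standard differentiation-of-Radon-measures argument to identify $|D\chi_E|$ with $\mathcal H^{N-1}\resmeas\partial^*E$. Rectifiability of $\partial^*E$ then follows either from the existence of approximate tangent hyperplanes at every point of $\partial^*E$ (which is precisely the blowup result) together with a Vitali-type covering, or equivalently from Federer's rectifiability criterion applied to $|D\chi_E|$.

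I expect the main obstacle to be the blowup identification: securing a uniform perimeter bound on the rescaled sets and then showing that the limit has constant generalized normal. The standard workaround exploits that $\nu_E(x_0)$, viewed as the Radon--Nikodym density of $D\chi_E$ with respect to $|D\chi_E|$, is a Lebesgue point at $|D\chi_E|$-a.e.\ $x_0$; passing to the limit in the rescaled identity relating $\int_{B_R} \nu_E(x_0+r_n y)\,d|D\chi_E|_{r_n}(y)$ to $|D\chi_F|(B_R)\,\nu_E(x_0)$ and comparing with $\int_{B_R}\nu_F\,d|D\chi_F|$ pins down $\nu_F\equiv \nu_E(x_0)$ and therefore forces $F$ to be a half-space. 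Lower semicontinuity of the perimeter and the relative isoperimetric inequality are what prevent loss of mass along the blowup sequence.
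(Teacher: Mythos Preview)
The paper does not prove this theorem: it is stated in the appendix as a classical fact from geometric measure theory, with an explicit reference to \cite[Chapter~2]{AmbrosioFuscoPallara} for details, and no proof is given. Your proposal is a reasonable outline of the standard De Giorgi argument (blowup at reduced-boundary points, identification of the limit as a half-space via constancy of the generalized normal, and the density argument identifying $|D\chi_E|$ with $\mathcal H^{N-1}\resmeas\partial^*E$), which is essentially what one finds in the reference; but since the paper treats this as background material rather than something to be proved, there is nothing to compare.
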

	
\begin{theorem}[Federer's Structure Theorem]\label{Federer_thm}
Let $E$ be a set of finite perimeter. Then $\partial^*E\subset E^{1/2}$ and $\mathcal H^{N-1}(\partial^eE\setminus\partial^*E)=0$. In particular $\partial^*E$, $E^{1/2}$, and $\partial^eE$ are equivalent, up to a $\mathcal H^{N-1}$-negligible set. 
\end{theorem}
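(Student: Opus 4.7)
The inclusion $\partial^*E \subset E^{1/2}$ is essentially immediate from Theorem~\ref{thm:deGiorgi}. Fix $x \in \partial^*E$; by De Giorgi, $(E-x)/r$ converges in $L^1_{\mathrm{loc}}$, as $r \to 0^+$, to the halfspace $H_\nu := \{y \in \R^N : y \cdot \nu_E(x) \geq 0\}$. Passing this convergence of characteristic functions to the test $\chi_{B_1}$ yields
\[
\frac{|E \cap B_r(x)|}{|B_r|} = \frac{|((E-x)/r) \cap B_1|}{|B_1|} \longrightarrow \frac{|H_\nu \cap B_1|}{|B_1|} = \frac{1}{2},
\]
so that $\theta_E(x) = 1/2$ and hence $x \in E^{1/2}$.

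The harder assertion, $\mathcal{H}^{N-1}(\partial^e E \setminus \partial^* E) = 0$, is the classical Structure Theorem of Federer. The plan would be to prove the stronger statement that $\mathcal{H}^{N-1}$-a.e. point with density $\theta_E(x) \in (0,1)$ actually lies in $\partial^* E$, i.e., admits a measure-theoretic inner unit normal. Two ingredients would be combined. First, from De Giorgi, the total variation measure satisfies $|D\chi_E| = \mathcal{H}^{N-1}\resmeas \partial^* E$, so it is a finite Radon measure concentrated on the reduced boundary. Second, one establishes a uniform lower perimeter-density bound along the essential boundary,
\[
\liminf_{r \to 0^+} \frac{|D\chi_E|(B_r(x))}{r^{N-1}} \geq c_N > 0 \qquad \text{for every } x \in \partial^e E,
\]
which can be obtained from the relative isoperimetric inequality applied to $E \cap B_r(x)$ together with the hypothesis $\theta_E(x) \in (0,1)$, choosing $r$ small enough that neither $\theta$ nor $1-\theta$ is too small.

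From these two inputs the comparison
\[
\mathcal{H}^{N-1}(\partial^e E \setminus \partial^* E) \leq C_N\, |D\chi_E|(\partial^e E \setminus \partial^* E) = 0
\]
would be obtained by applying the Vitali--Besicovitch differentiation theorem to the measure $|D\chi_E|$: the uniform lower density bound yields an upper bound for $\mathcal{H}^{N-1}$ in terms of $|D\chi_E|$ on the essential boundary, and the last equality uses the concentration on $\partial^*E$. The main obstacle is the rigorous justification of this density-measure comparison: it requires a Besicovitch-type covering argument on $\partial^e E \setminus \partial^* E$ and a uniform control of the upper Hausdorff densities, which must be valid at every point of that set and not merely $\mathcal{H}^{N-1}$-a.e.\ (otherwise the argument would be circular). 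Since Federer's theorem is a standard result of geometric measure theory, the paper invokes it as a black box, with the full proof available in Chapter~2 of Ambrosio--Fusco--Pallara, the reference cited immediately above the statement.
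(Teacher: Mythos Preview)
Your proposal is appropriate: the paper does not prove this statement at all. Federer's Structure Theorem is listed in the appendix among ``some facts about geometric measure theory'' that the authors \emph{recall}, with an explicit reference to Chapter~2 of Ambrosio--Fusco--Pallara for details; no proof is supplied in the paper. You correctly identify this situation in your final paragraph, and the sketch you give (the inclusion $\partial^*E\subset E^{1/2}$ via De Giorgi's blowup, then the relative isoperimetric lower density bound combined with $|D\chi_E|=\mathcal H^{N-1}\resmeas\partial^*E$ and a Besicovitch-type comparison) is indeed the standard route found in that reference.
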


\subsection{Some facts about quasi-open sets}\label{ssec:quasiopen}
Finally, we recall some definitions and facts about quasi-open sets which will be needed in the paper.
\begin{definition}\label{def:quasiopen}
A \emph{quasi-open} set is a measurable set $\Omega\subset \R^N$ such that for all $\eps>0$ there exists $K_\eps$ compact such that its (Newtonian) capacity ${\rm cap}(K_\eps)<\eps$ and $\Omega\setminus K_\varepsilon$ is open.  
Similarly, a function $u:\Omega\to\R$ is \emph{quasi-continuous} if for all $\varepsilon>0$ there exists a compact set $K_\varepsilon$ such that ${\rm cap}(K_\varepsilon)<\varepsilon$ and the restriction of $u$ to $\Omega\setminus K_\varepsilon$ is continuous. Eventually, we say that a property holds \emph{quasi-everywhere} on a set if it holds up to a set of null-capacity.
\end{definition}  
It is well-known that every $u\in H^1(\R^N)$ admits a quasi-continuous representative $\widetilde u$. Moreover, if $\widetilde u$ and $\widehat u$ are two quasi-continuous representatives of $u$, then they are equal quasi-everywhere. Therefore, in this paper, for every $u\in H^1(\R^N)$ we identify it with its quasi-continuous representative.  A quasi-open set is then simply a superlevel set of (the quasi-continuous representative of) a function $u\in H^1(\R^N)$.  For more details on quasi-open sets and
quasi-continuous functions, and the definition of capacity we refer
for example to~\cite[Chapter~3]{HenrotPierre}.
		
Let us also stress that it is standard to define the Sobolev space $H^1_0$ on a quasi-open set $\Omega\subset \R^N$ as \[
H^1_0(\Omega)=\{u\in H^1(\R^N) : u=0\text{ quasi-everywhere in }\R^N\setminus \Omega\}.
\]
If $\Omega$ is an open set, this definition coincides with the usual one, see~\cite[Section~3.3.5]{HenrotPierre}  for more details.

\vspace{1cm}
\textbf{Acknowledgements:} Dario Mazzoleni has been partially supported by the MUR via PRIN projects, funded by the European Union -- Next Generation EU, ``$NO^3$'' P2022R537CS (CUP\_F53D23002810006) and ``Mathematics for Industry 4.0'' P2020F3NCPX (CUP\_F19J21017440001), and by the INdAM-GNAMPA project 2024 CUP\_E53C23001670001. Hugo Tavares and Makson S. Santos are partially supported by the Portuguese government through FCT - Funda\c c\~ao para a Ci\^encia e a Tecnologia, I.P., under the projects UID/MAT/04459/2020 and PTDC/MAT-PUR/1788/2020.  Hugo Tavares is also partially supported by FCT - Funda\c c\~ao para a Ci\^encia e a Tecnologia, I.P.  within the scope of the
project \emph{Spectral Optimal Partitions: geometric and numerical analysis}, reference 2023.13921.PEX

\bibliographystyle{abbrv}
\bibliography{MST}

\noindent\textsc{Dario Mazzoleni}\\
Dipartimento di Matematica ``F. Casorati''\\
University of Pavia\\
Via Ferrata 5, 27100 Pavia (Italy)\\
\noindent\texttt{dario.mazzoleni@unipv.it}

\vspace{.15in}

\noindent\textsc{Makson S. Santos}\\
Center for Mathematical Studies (CEMS.UL)\\
University of Lisbon\\
1749-016 Lisboa, Portugal\\
\noindent\texttt{msasantos@ciencias.ulisboa.pt}

\vspace{.15in}

\noindent\textsc{Hugo Tavares}\\
Departamento de Matem\'atica do Instituto Superior T\'ecnico\\
Universidade de Lisboa\\
1049-001 Lisboa, Portugal\\
\noindent\texttt{hugo.n.tavares@tecnico.ulisboa.pt}

\end{document}